\theoremstyle{plain}
\newtheorem{theorem}[subsection]{Theorem}
\newtheorem*{theorem*}{Theorem}
\newtheorem{corollary}[subsection]{Corollary}
\newtheorem*{corollary*}{Corollary}
\newtheorem{proposition}[subsection]{Proposition}
\newtheorem{proposition*}{Proposition}
\newtheorem{lemma}[subsection]{Lemma}
\newtheorem{fact}[subsection]{Fact}
\theoremstyle{definition}
\newtheorem{definition}[subsection]{Definition}
\newtheorem*{definition*}{Definition}
\newtheorem{example}[subsection]{Example}
\theoremstyle{remark}
\newtheorem{remark}[subsection]{Remark}
\newtheorem{notation}[subsection]{Notation}
\providecommand{\Z}{\mathbb{Z}}
\providecommand{\C}{\mathbb{C}}
\providecommand{\Q}{\mathbb{Q}}
\providecommand{\N}{\mathbb{N}}
\providecommand{\bbA}{\mathbb{A}}
\providecommand{\bbG}{\mathbb{G}}
\providecommand{\bbP}{\mathbb{P}}
\providecommand{\bbX}{\mathbb{X}}
\providecommand{\bbD}{\mathbb{D}}
\providecommand{\p}{\mathfrak{p}}
 \providecommand{\cF}{\mathscr{F}}
\providecommand{\cG}{\mathscr{G}}
\providecommand{\cA}{\mathscr{A}}
\providecommand{\cC}{\mathscr{C}}
\providecommand{\cL}{\mathscr{L}}
\providecommand{\cO}{\mathscr{O}}
\providecommand{\cP}{\mathscr{P}}
\providecommand{\cS}{\mathscr{S}}
\providecommand{\cU}{\mathscr{U}}
\providecommand{\cV}{\mathscr{V}}
\providecommand{\cX}{\mathscr{X}}
\providecommand{\spec}{\mathop{\rm Spec}}
\providecommand{\spf}{\mathop{\rm Spf}}
\providecommand{\id}{\mathop{\rm id}\nolimits}
\providecommand{\Aut}{\mathop{\rm Aut}\nolimits}
\providecommand{\Hom}{\mathop{\rm Hom}\nolimits}
\providecommand{\Der}{\mathop{\rm Der}\nolimits}
\providecommand{\op}{{\mathop{\rm op}\nolimits}}
\providecommand{\Const}{\mathop{\rm Const}\nolimits}
\renewcommand{\ker}{\mathop{\rm ker}\nolimits}
\providecommand{\im}{\mathop{\rm im}\nolimits}
\renewcommand{\Im}{\mathop{\rm Im}\nolimits}
\providecommand{\coker}{\mathop{\rm coker}\nolimits}
\providecommand{\Eq}{\mathop{\rm Eq}\nolimits}
\providecommand{\Coeq}{\mathop{\rm Coeq}\nolimits}
\providecommand{\Traj}{\mathop{\rm Traj}\nolimits}
\providecommand{\GL}{\mathop{\rm GL}\nolimits}
\providecommand{\SL}{\mathop{\rm SL}\nolimits}
\providecommand{\Ga}{\widehat{\mathbb{G}}_a}
\providecommand{\Gal}{\mathop{\rm Gal}}
\providecommand{\Split}{\text{\rm Split}}
\providecommand{\DD}{{\rm DD}}
\providecommand{\Ar}{{\rm Ar}}
\providecommand{\Sub}{\text{\rm Sub}}
\providecommand{\Equiv}{\text{\rm Equiv}}
\providecommand{\Cat}{\mathbf{Cat}}
\providecommand{\Precat}{\mathbf{PreCat}}
\providecommand{\Sch}{\mathrm{Sch}}
\providecommand{\Aff}{\text{\rm Aff}}
\providecommand{\DAff}{\delta\text{\rm -Aff}}
\providecommand{\Rng}{\text{\rm Rng}}
\providecommand{\DRng}{\delta\text{\rm -Rng}}
\providecommand{\da}{\text{\rm-}}
\providecommand{\Disc}[1]{\mathbf{\Delta}(#1)}
\providecommand{\DSch}{{\delta\text{\rm -Sch}}}
\providecommand{\ov}{{\kern-1pt\mathord{/}\kern-.7pt}}
\providecommand{\Ov}{{\kern-1pt\sslash\kern-.7pt}}
\providecommand{\lexp}[2]{{\vphantom{#2}}^{#1}{\kern-.1ex#2}}
\providecommand{\lsub}[2]{{\vphantom{#2}}_{#1}{\kern-.1ex#2}}
\providecommand{\lrexp}[3]{{\vphantom{#2}}^{#1}{\kern-.1ex#2^#3}}
\providecommand{\ldexp}[3]{{\vphantom{#2}}^{#1}{\kern-.1ex#2_#3}}
\newcommand*{\doublerightarrow}[2]{\mathrel{
  \settowidth{\@tempdima}{$\scriptstyle#1$}
  \settowidth{\@tempdimb}{$\scriptstyle#2$}
  \ifdim\@tempdimb>\@tempdima \@tempdima=\@tempdimb\fi
  \mathop{\vcenter{
    \offinterlineskip\ialign{\hbox to\dimexpr\@tempdima+1em{##}\cr
    \rightarrowfill\cr\noalign{\kern.5ex}
    \rightarrowfill\cr}}}\limits^{\!#1}_{\!#2}}}
\begin{document}

\subjclass[2020]{12H05, 18F20, 18D40, 18E50, 14L30}


\keywords{Differential algebra, categorical Galois theory, Picard-Vessiot theory}
\title{Galois theory of differential schemes}     
\date{\today}


\author{Behrang Noohi} 
\address{Behrang Noohi\\
School of Mathematical Sciences\\
  	Queen Mary University of London\\
         London, E1 4NS\\
        United Kingdom}
\email{b.noohi@qmul.ac.uk}

\author{Ivan Toma{\v s}i{\'c}} 
\address{Ivan Toma{\v s}i{\'c}\\
         School of Mathematical Sciences\\
  	Queen Mary University of London\\
         London, E1 4NS\\
        United Kingdom}
\email{i.tomasic@qmul.ac.uk}
\thanks{Supported by EPSRC Standard grant EP/V028812/1}

\begin{abstract}
Since 1883, Picard-Vessiot theory had been developed as the Galois theory of differential field extensions associated with linear differential equations. Inspired by categorical Galois theory of Janelidze, and by using novel methods of precategorical descent applied to algebraic-geometric situations, we develop a Galois theory that applies to morphisms of differential schemes, and vastly generalises the linear Picard-Vessiot theory, as well as the strongly normal theory of Kolchin. 
\end{abstract}

\maketitle



\tableofcontents

\section{Introduction}

\subsection{History}
Clasically, a Picard-Vessiot extension $(L,\delta_L)/(K,\delta_K)$ of differential fields yields a linear algebraic group
$$
G={\Gal}^\mathrm{PV}(L/K)
$$
over the common field of constants $k=\Const(L,\delta_L)=\Const(K,\delta_K)$ such that there is a Galois correspondence between the intermediate differential field extensions and Zariski closed subgroups of $G$, and
$$
G(k)\simeq \Aut((L,\delta_L)/(K,\delta_K)),
$$
as explained in \ref{pv-classical}.

The Picard-Vessiot ring is a $(K,\delta_K)$-algebra $$(A,\delta_A)$$ with fraction field $(L,\delta_L)$, such that $(X,\delta_X)=\spec(A,\delta_A)$ is a $G$-torsor over $(Y,\delta_Y)=\spec(K,\delta_K)$ in the sense that
$$
(X,\delta_X)\times_{(Y,\delta_Y)}(X,\delta_X)\simeq (X,\delta_X)\times_{(X_0,0)}(G,0),
$$
where we consider $X_0=\spec(k)$ and $G$ as differential schemes endowed with zero derivations. 
 
Janelidze realised that Picard-Vessiot theory fits into the framework of his categorical Galois theory through the adjunction
$$
 \begin{tikzpicture} 
 [cross line/.style={preaction={draw=white, -,
line width=3pt}}]
\matrix(m)[matrix of math nodes, minimum size=1.7em,
inner sep=0pt, 
row sep=3.3em, column sep=1em, text height=1.5ex, text depth=0.25ex]
 { 
  |(dc)|{\DAff}	\\
 |(c)|{\Aff} 	      \\ };
\path[->,font=\scriptsize,>=to, thin]
%
(dc) edge [bend right=30] node (ss) [left]{$S$} (c)
(c) edge [bend right=30] node (ps) [right]{$C$} (dc)
(ss) edge[draw=none] node{$\dashv$} (ps)
;
\end{tikzpicture}
$$
between the categories of affine differential schemes and affine schemes, where 
$$
S(\spec(R,\delta_R))=\spec(\Const(R,\delta_R)),  \ \ \ \  C(\spec(R))=\spec(R,0).
$$
He emphasised in \cite{janelidze-pv} and \cite{janelidze-pv2} that the key object, the morphism of \emph{relative Galois descent}, is in fact the torsor 
$$f:(X,\delta_X)\to (Y,\delta_Y),$$
and not the extension of differential fields, and that the Picard-Vessiot Galois group agrees with the categorical Galois group,
$$
{\Gal}^\mathrm{PV}(L/K)\simeq \Gal[f]=S(X\times_YX),
$$
as explained in \ref{janelidze-affine}.

In 2023, Akira Masuoka asked the second author whether categorical Galois theory approach automatically recovers the classical Picard-Vessiot Galois correspondence. To our surprise, we observed that Carboni-Janelidze-Magid correspondence from \cite{carboni} in the affine context only establishes a correspondence between split affine quotients of $X$ over $Y$ and \emph{effective} subgroups of $G=\Gal[f]$, i.e., those closed subgroups $H$ such that $G/H$ is represented by an affine scheme, see \ref{affine-pv-corr}.

We realised that, contrary to the popular belief, Picard-Vessiot theory is not an entirely affine affair, and that the categorical Picard-Vessiot theory must be extended to the correspondence between the split \emph{quasi-projective} fpqc quotients of $(X,\delta_X)$ over $(Y,\delta_Y)$, and closed subgroups of $G$, and that the intermediate differential fields from the classical correspondence appear as function fields of those quasi-projective quotients. 

Yves Andr\'e also noted the failure of the affine Picard-Vessiot correspondence in \cite{andre-sol}. He generalises the classical correspondence to that between \emph{solution fields} generated by some but not necessarily all solutions of the differential equation and \emph{observable} subgroups of $G$, i.e., those closed subgroups $H$ such that $G/H$ is quasi-affine. In the geometric direction somewhat different from ours, the associated  \emph{solution algebras} correspond to aﬃne quasi-homogeneous varieties for $G$. In our framework, the extensions generated by a partial set of solutions can be treated through the notion of \emph{pre-Picard-Vessiot} morphisms explained below. 

More generally, the Galois theory of a \emph{strongly normal} differential field extension $(L,\delta_L)/(K,\delta_K)$ of Kolchin \cite{kolchin-sn} is known to give rise to a general algebraic group $G$ over $k$ and a general differential scheme $(X,\delta_X)$ that acts as a torsor for $G$ through a relation analogous to the above, \cite{byalnicki}, \cite{buium}, \cite{umemura}. 

Michael Wibmer informed us of the idea of a number of researchers in differential algebra, including Jerry Kovacic, to interpret the strongly normal theory using  categorical Galois theory, and this was independently posed as a desirable project by Janelidze in \cite{janelidze-pv2}. The difficulty lies in the fact that, with no reasonable notion of differential scheme does the natural extension of the functor $C$ above to a functor between schemes and differential schemes
$$
 \begin{tikzpicture} 
 [cross line/.style={preaction={draw=white, -,
line width=3pt}}]
\matrix(m)[matrix of math nodes, minimum size=1.7em,
inner sep=0pt, 
row sep=3.3em, column sep=1em, text height=1.5ex, text depth=0.25ex]
 { 
  |(dc)|{\DSch}	\\
 |(c)|{\Sch} 	      \\ };
\path[->,font=\scriptsize,>=to, thin]
%
(c) edge [bend right=30] node (ps) [right]{$C$} (dc)
;
\end{tikzpicture}
$$
admit a left adjoint, so the classical categorical Galois theory cannot be invoked. 

We resolve the apparent conundrum by constructing a  partial left adjoint using Bardavid's idea of \emph{categorical scheme of leaves} \cite{bardavid}, and by using an \emph{indexed version} of categorical Galois theory from \cite{borceux-janelidze},  
eventually proving Theorem~\ref{qproj-gal-th} that simultaneously explains the quasi-projective aspect of Picard-Vessiot theory and applies to strongly normal theory. 

We proceed much further and develop a Galois theory of arbitrary differential scheme morphisms, as explained below. We hope that our work fulfils some of the wishes of Jerry Kovacic to bring the techniques of modern algebraic geometry to differential Galois theory. 


\subsection{Differential algebraic geometry}

In this paper, a differential scheme 
$$
(X,\delta_X)=(X,(\cO_X,\delta_X))
$$
is a scheme $(X,\cO_X)$ endowed with a derivation $\delta_X$ on its structure sheaf $\cO_X$, i.e., with a vector field.

Given a scheme $S$, we define a category of $S$-differential schemes 
$$
\DSch_S
$$
consisting of schemes over $S$ endowed by $S$-derivations. 

The spectrum of a differential ring has a natural structure of a differential scheme, affording a right adjoint
$$
\spec:\DRng^\op\to \DSch
$$
to the global sections functor. A differential scheme is affine, if it is isomorphic to a spectrum of a differential ring, and the category of affine differential schemes is anti-equivalent to the category of differential rings, i.e., 
$$
\DAff\simeq \DRng^\op,
$$
which is consistent with our discussion of affine differential schemes above. 

The natural functor
$$
C:\Sch\to \DSch,  \ \ \ \ (X,\cO_X)\mapsto (X,(\cO_X,0))
$$
allows us to consider a scheme as a differential scheme with the trivial derivation/zero vector field, but it does not have a left adjoint. 
Based on the ideas of Bardavid \cite[4.2]{bardavid}, we define a \emph{categorical scheme of leaves}, or a \emph{categorical quotient}
$$\pi_0(X)$$ of a differential scheme $(X,\delta_X)$ to be a morphism 
$$
(X,\delta_X)\to C(\pi_0(X))
$$
that is universal from $(X,\delta_X)$ to $C$. It exists only for rare differential schemes $(X,\delta_X)$, but this construction provides a partial left adjoint $\pi_0$ to $C$ when it does, as needed for the indexed version of categorical Galois theory. 

In this paper, we exploit the interplay between two perspectives on differential schemes. We consider them as precategory actions to access techniques of descent theory, and as formal group actions to construct an algebraic geometric approach to schemes of leaves. 


\subsection{Differential schemes as precategory actions and descent}

We make a key novel observation that $S$-differential schemes can be viewed as actions of an internal precategory 
$$
\bbD(S)
$$
in $\Sch_{\ov S}$ associated to infinitesimal augmentations of $S$ that we can symbolically write
$$
 \begin{tikzpicture} 
\matrix(m)[matrix of math nodes, row sep=0em, column sep=1em, text height=1.5ex, text depth=0.25ex]
 {
|(2)|{S[\epsilon_0,\epsilon_1]/(\epsilon_0^2,\epsilon_0\epsilon_1,\epsilon_1^2)}  & [1em] |(1)|{S[\epsilon]/(\epsilon^2)}		&[1em] |(0)|{S} \\
 }; 
\path[->,font=\scriptsize,>=to, thin]
([yshift=.4em]2.east) edge node[above=-2pt]{} ([yshift=.4em]1.west) 
(2) edge node[above=-2pt]{} (1)
([yshift=-.4em]2.east) edge node[above=-2pt]{} ([yshift=-.4em]1.west) 
([yshift=.4em]1.east) edge node[above=-2pt]{} ([yshift=.4em]0.west) 
(0)  edge node[above=-2pt]{} (1) 
([yshift=-.4em]1.east) edge node[above=-2pt]{} ([yshift=-.4em]0.west) 
;
\end{tikzpicture}
$$
i.e., that there is an equivalence of categories (\ref{diff-sch-precats})
$$
\DSch_S\simeq (\Sch_{\ov S})^{\bbD(S)}.
$$
It was previously known that differential schemes can be viewed as `actions of a pointed set' associated to the augmentation structure $S[\epsilon]/(\epsilon^2)\to S$, but the observation that we can view them as precategory actions gives us a very direct access to descent theory, given that precategory actions are essentially generalised descent data. More generally, we can translate problems from differential algebraic geometry into the familiar realm of algebraic geometry. 

In algebraic geometry, descent usually works only for very specific indexed data on schemes, so we work with a chosen pseudofunctor
$$
\cP:(\Sch_{\ov S})^\op\to \Cat,
$$
and extend it in a natural way, using precategory actions, to a pseudofunctor on differential schemes
$$
\delta\da\cP:\DSch_S^\op\to \Cat. 
$$

In the special case when $\cP(V)$ is a certain class of scheme morphisms with target $V$, the category $\delta\da\cP(X,\delta_X)$ consists of differential scheme morphisms with target $(X,\delta_X)$ whose underlying scheme morphism belongs to the class $\cP$. 
The most important instances of $\cP$ for our applications will come from the class of quasi-projective and polarised quasi-projective morphisms. We will use the most powerful classical descent results identifying the morphisms of effective descent in those cases from \cite{sga1}.

If $\bbX=(X_2,X_1,X_0)$ is the precategory in $\Sch_{\ov S}$ that corresponds to a differential scheme $(X,\delta_0)$ viewed as an $\bbD(S)$-action in the sense of \ref{s:diffschprect}, then the categorical scheme of leaves of $(X,\delta_X)$ agrees with the scheme of connected components of $\bbX$,
$$
\begin{tikzcd}[cramped, column sep=normal, ampersand replacement=\&]
{\pi_0(X)\simeq\pi_0(\bbX)=\Coeq\left(X_1 \right.}\ar[yshift=2pt]{r}{d_0} \ar[yshift=-2pt]{r}[swap]{d_1} \&{\left.X_0\right)}
\end{tikzcd}
$$
whenever either of them exist in $\Sch_{\ov S}$. 

We define $(X,\delta_X)$ to be \emph{simple} with respect to $\cP$, provided the above coequaliser exists and is \emph{universal} for $\cP$. 
This condition ensures that the precategory morphism
$$
\eta_X:\bbX\to \Disc{\pi_0(\bbX)}
$$
to the dicrete precategory associated to the scheme $\pi_0(X)$
is of \emph{precategorical descent}, i.e., the induced functor of precategory actions
$$
C_X:\cP(\pi_0(X))\simeq \cP^{\Disc{\pi_0(\bbX)}}\stackrel{\eta_{\bbX}^*}{\longrightarrow}\cP^{\bbX}\simeq \delta\da\cP(X,\delta_X)
$$
is fully faithful. 

By developing the theory of descent in the category of differential schemes as precategory actions, our main result in this direction \ref{diffl-desc} is that a morphism
$$
f:(X,\delta_X)\to (Y,\delta_Y)
$$
is of effective descent for $\delta\da\cP$, if the associated morphism $(f_0,f_1,f_2)$ of precategory actions satisfies that the underlying scheme morphism $f_0:X\to Y$ is of effective descent for $\cP$, and $f_1$ is descent for $\cP$. 

Consequently, such an $f$ is of effective descent for the fibration of polarised quasi-projective differential morphisms if $f_0$ is faithfully flat quasi-compact (fpqc). Moreover, if the target of $f$ is the spectrum of a differential field, then it is of effective descent for the class of quasi-projective differential morphisms. 

\subsection{Differential schemes as formal group actions and geometric quotients}

We can also view differential schemes as actions of the formal additive group 
$$
\Ga
$$
and apply the principles from geometric invariant theory from \cite{mumford-git} and ideas of Bardavid \cite{bardavid} to 
refine the concept of the categorical scheme of leaves of a differential scheme.

A morphism
$$
\eta:(X,\delta_X)\to (Q,0)
$$
is a \emph{geometric quotient} if the underlying scheme morphism is a quotient in the topological sense, its geometric fibres agree with the scheme-theoretic orbits under the action, and the structure sheaf of $Q$ coincides with the constants of the structure sheaf of $X$, as specified in  \ref{geom-quot}. 

It follows that a geometric scheme of leaves $Q$ is also a categorical scheme of leaves, i.e., 
$$Q\simeq \pi_0(X).$$ 

In order to prove the \emph{universality} of geometric quotients, i.e., their stability under base change, we use  various tools of algebraic geometry, such as base-change theorems and technical results on cohomologically flat morphisms due to Grothendieck \cite{ega3.2} and more recent extensions of the theory from \cite{gabber-lodh}. We obtain effective criteria for universal geometric quotients in terms of good global behaviour of $\eta$ and classical simplicity of its geometric fibres in \ref{univ-geom-quot}. 

In turn, this yields \emph{effective} criteria for categorical simplicity essential for the subsequent development of Galois theory,


%

\subsection{Galois theory of differential schemes}

A good understanding of classical and precategorical descent, and of simplicity, provides all the ingredients needed for a Picard-Vessiot style categorical Galois theory of differential schemes. The framework for the indexed version of categorical Galois theory needed for this purpose involves the following choices:
\begin{enumerate}
\item   a scheme $S$, and $$\cX=\Sch_{\ov S}$$ be the category of schemes over $S$;
\item  the full subcategory $\cA$ of $\DSch_S$ of those $S$-differential schemes that admit a categorical scheme of leaves, hence we obtain a functor
$$
\pi_0:\cA\to \cX;
$$
\item a pseudo-functor $$\cP:\cX^\op\to\Cat,$$ determining a pseudofunctor $\delta\da\cP:\cA^\op\to\Cat$;
\item the pseudo-natural transformation
$$C=C^\cP:\cP\circ\pi_0\to \delta\da\cP$$ 
whose component at $(X,\delta)\in\cA$ is the functor $C_X$ discussed above.
\end{enumerate}

We define the category 
$$
\Split_C(f)
$$
as the full subcategory of objects $P\in\delta\da\cP(Y)$ such that 
$f^*P\simeq C_X(Q)$ for some $Q\in\cP(\pi_0(X))$.

 
Using this notation, we can formulate the key concepts of the paper. 

 \begin{definition*}
 We define a morphism $f:(X,\delta_X)\to (Y,\delta_Y)$ in $\cA$ to be \emph{pre-Picard-Vessiot} with respect to $\cP$, if
 \begin{enumerate}
 \item[(i)]\label{cls-desc} $f$ is a morphism of effective descent for $\delta\da\cP$;
 \item[(ii)]\label{cube-simple} $X$, $X\times_YX$, $X\times_YX\times_YX$ are simple for $\cP$.
 \end{enumerate}
 \end{definition*}

If we have 
 \begin{enumerate}
 \item[(5)] a suitable (in the sense of \ref{sch-pv-setup}) pseudo-natural transformation 
 $$U:\cP\Rightarrow\cS$$ to a fibration $\cS$ associated with a class of scheme morphisms,
 \end{enumerate}
 then we can make the following definition. 
 \begin{definition*}
 A morphism $f$ as above is \emph{Picard-Vessiot} with respect to $U$ if, in addition to the first, it satisfies the strengthening of  the second condition, 
  \begin{enumerate}
 \item[(ii')] $X$ is simple for $\cS$ and $f$ is auto-split for $\cS$ in the sense that $f\in\Split_{C^\cS}(f)$. 
 \end{enumerate}
\end{definition*}
 
 The assumption that $f$ is pre-Picard-Vessiot for $\cP$ ensures that the kernel-pair groupoid 
$$
 \begin{tikzpicture} 
\matrix(m)[matrix of math nodes, row sep=0em, column sep=1em, text height=1.5ex, text depth=0.25ex]
 {
|(n)|{\bbG_f:} &[1em] |(2)|{X\times_Y X\times_Y X}  & [1em] |(1)|{X\times_Y X}		&[1em] |(0)|{X} \\
 }; 
\path[->,font=\scriptsize,>=to, thin]
([yshift=.4em]2.east) edge node[above=-2pt]{} ([yshift=.4em]1.west) 
(2) edge node[above=-2pt]{} (1)
([yshift=-.4em]2.east) edge node[above=-2pt]{} ([yshift=-.4em]1.west) 
([yshift=.4em]1.east) edge node[above=-2pt]{} ([yshift=.4em]0.west) 
(0)  edge node[above=-2pt]{} (1) 
([yshift=-.4em]1.east) edge node[above=-2pt]{} ([yshift=-.4em]0.west) 
;
\end{tikzpicture}
$$
is a category in $\cA$, and that we can define the \emph{Galois precategory}
$$
\Gal[f]=\pi_0(\bbG_f)
$$
as a precategory in $\cX$. Moreover, if $f$ is Picard-Vessiot for $U$, then it is also pre-Picard-Vessiot for $\cP$ and $G[f]$ happens to be an internal \emph{groupoid} in $\cX$.

\begin{theorem*}[Galois theorem for differential schemes, \ref{scheme-dif-Galois}]  
A pre-Picard-Vessiot morphism $f$ for $\cP$ induces an equivalence 
$$
\Split_C(f)\simeq \cP^{\Gal[f]}
$$
between the category of objects of $\delta\da\cP(Y)$ that are $C$-split by $f$ and the category of $\cP$-actions of the precategory $\Gal[f]$.

If $f$ is Picard-Vessiot for $U$, 
the latter becomes the category of $\cP$-actions of the groupoid $\Gal[f]$. 
\end{theorem*}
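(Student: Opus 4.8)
The plan is to deduce the differential Galois theorem from the precategorical descent machinery set up earlier in the paper, essentially by reducing everything to the statement that $\eta_{\bbX}\colon\bbX\to\Disc{\pi_0(\bbX)}$ is of precategorical descent and then running the standard Janelidze-style argument internally to precategory actions. First I would unwind the hypotheses: since $f$ is of effective descent for $\delta\da\cP$, the induced functor $f^*\colon\delta\da\cP(Y)\to\delta\da\cP(X)$ is monadic (equivalently, $\delta\da\cP(Y)$ is equivalent to the category of descent data for $f$ along the kernel-pair groupoid $\bbG_f$), and the simplicity hypothesis (ii) on $X$, $X\times_YX$, $X\times_YX\times_YX$ guarantees that $\pi_0$ is defined on all of $\bbG_f$, that $\bbG_f$ is an internal category in $\cA$, and that the comparison maps $\eta$ at each of these three objects are of precategorical descent, so that $C_X$, $C_{X\times_YX}$, $C_{X\times_YX\times_YX}$ are fully faithful with essential image described by the simplicity/descent condition.

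Next I would assemble the square relating the two descent situations. On one side we have descent for $f$ in $\cA$ for the pseudofunctor $\delta\da\cP$, whose descent category is $\delta\da\cP(Y)$ (using effective descent) and whose "fibre" is $\delta\da\cP(X)$; on the other side we have descent along $\eta_{X}$ for $\cP$, whose descent data live over the precategory $\Gal[f]=\pi_0(\bbG_f)$, i.e. in $\cP^{\Gal[f]}$. The key point is that applying $\pi_0$ to the internal category $\bbG_f$ and invoking the compatibility $\pi_0(\bbX)\simeq\Coeq(X_1\rightrightarrows X_0)$ together with the fact that $\pi_0$ (being a partial left adjoint to $C$) preserves the relevant finite limits appearing in the nerve of $\bbG_f$ up to the simplicity hypothesis — so that $\pi_0(X\times_YX)$, $\pi_0(X\times_YX\times_YX)$ compute the object-of-morphisms and object-of-composable-pairs of $\Gal[f]$ — lets me identify descent data for $f$ that are $C$-split with descent data for $\eta_X$, i.e. with objects of $\cP^{\Gal[f]}$. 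Concretely, an object $P\in\Split_C(f)$ is by definition one with $f^*P\simeq C_X(Q)$ for some $Q\in\cP(\pi_0(X))$; the descent datum on $f^*P$ transported through the fully faithful $C_{X\times_YX}$ becomes a $\Gal[f]$-action structure on $Q$, and conversely any such action gives, via $C$ and effective descent, an object of $\delta\da\cP(Y)$ lying in $\Split_C(f)$. Checking that these two assignments are mutually quasi-inverse, and pseudo-natural, is the content of the equivalence $\Split_C(f)\simeq\cP^{\Gal[f]}$.

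For the second assertion, when $f$ is Picard-Vessiot for $U$, I would use condition (ii') together with the suitability of $U\colon\cP\Rightarrow\cS$ from \ref{sch-pv-setup}: auto-splitness of $f$ for $\cS$ forces the kernel-pair diagram, after applying $\pi_0$, to have invertible structure maps — i.e. the precategory $\Gal[f]$ acquires inverses and is an internal groupoid in $\cX$ — because over a base that (in the Picard-Vessiot case) is the spectrum of a differential field the effective-descent and simplicity hypotheses upgrade: the function-field/quasi-projective descent result quoted in \ref{diffl-desc} applies, and $X$ being simple for $\cS$ together with $f\in\Split_{C^\cS}(f)$ exhibits $X\times_YX$ as, after $\pi_0$, a scheme with the two projections admitting a common section and a symmetry, which is exactly the data making $\pi_0(\bbG_f)$ a groupoid. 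Granting that, $\cP^{\Gal[f]}$ is literally the category of $\cP$-actions of a groupoid, and no further argument is needed.

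I expect the main obstacle to be the exactness properties of the partial left adjoint $\pi_0$: unlike an honest left adjoint it need not commute with the fibre products $X\times_YX$ and $X\times_YX\times_YX$ that appear in the nerve of $\bbG_f$, so the identification $\pi_0(X\times_Y^{n}X)\simeq \Gal[f]^{\times_{\pi_0(X)}^{n-1}}$ — which is what makes $\Gal[f]$ an internal (pre)category with the expected object of composable pairs and what makes the transported descent datum a genuine $\Gal[f]$-action rather than something weaker — has to be extracted precisely from the simplicity hypothesis (ii), via the universality clause in the definition of simple ("the coequaliser exists and is universal for $\cP$") and the precategorical-descent statement. Getting this bookkeeping right, and checking that the resulting functors are pseudo-natural in the indexed sense of \cite{borceux-janelidze} rather than merely defined objectwise, is where the real work lies; the rest is a faithful transcription of the classical Janelidze argument into the internal-precategory-action setting developed in the preceding sections.
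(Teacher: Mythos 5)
Your overall architecture is the paper's: identify $\bbG_f$ as a precategorical decomposition of $f$, use effective descent for $\delta\da\cP$ on one side and simplicity (via \ref{univ-ce-desc}/\ref{CX-P}) for full faithfulness of $C_X$, $C_{X\times_YX}$, $C_{X\times_YX\times_YX}$ on the other, and conclude by the indexed Galois theorem \ref{indexed-gal-th}. But your ``key point'' --- and the obstacle you flag in your last paragraph --- rests on a misreading of what the first assertion claims. You insist that the identification $\pi_0(X\times_Y X)\times_{\pi_0(X)}\pi_0(X\times_YX)\simeq\pi_0(X\times_YX\times_YX)$ must be ``extracted from the simplicity hypothesis'' in order for the transported descent datum to be ``a genuine $\Gal[f]$-action rather than something weaker.'' This identification is neither available nor needed for a pre-Picard-Vessiot morphism: the conclusion is deliberately stated in terms of actions of the \emph{precategory} $\Gal[f]=\pi_0(\bbG_f)$, where $\cP^{\Gal[f]}$ is the bilimit over the three components $\pi_0(X)$, $\pi_0(X\times_YX)$, $\pi_0(X\times_YX\times_YX)$ with the induced structure maps, whatever they are. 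The simplicity hypothesis supplies exactly two things: existence of $\pi_0$ at each of the three levels (so that $\Gal[f]$ is a well-defined precategory in $\cX$, cf.\ \ref{galprecatdef}) and universality of the coequalisers, which by \ref{univ-ce-desc} makes the three comparison functors fully faithful --- precisely the hypothesis of \ref{indexed-gal-th}, whose conclusion is $\Split_\alpha(f)\simeq K^{S\circ\C}$ with no fibre-product compatibility required of $S=\pi_0$. If you pursue your plan as written you will get stuck trying to prove a statement that is false in general for pre-Picard-Vessiot morphisms; the correct move is to recognise that the precategory-action formalism of Section~\ref{s:descent} was built to make that statement unnecessary.

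Your treatment of the groupoid upgrade is also misdirected. The fact that $\Gal[f]$ is an internal groupoid when $f$ is Picard-Vessiot for $U$ has nothing to do with the base being the spectrum of a differential field (that hypothesis belongs only to the application \ref{qproj-gal-th}); it comes from \ref{PV-pre-PV}: auto-splitness $X\times_YX\simeq C^\cS_X(G_1)$ puts $X\times_YX$ in the essential image of $C^\cS_X$, hence makes it simple with $\pi_0(X\times_YX)\simeq G_1$, and the chain of isomorphisms $X\times_YX\times_YX\simeq C^\cS_X(G_1\times_{G_0}G_1)$ then gives $\pi_0(X\times_YX\times_YX)\simeq G_1\times_{G_0}G_1$. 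It is only here --- in the Picard-Vessiot case --- that the fibre-product identification you were worrying about actually holds, and it is what turns the precategory into a groupoid; combined with \ref{PV-pre-PV} showing that Picard-Vessiot implies pre-Picard-Vessiot, the second assertion follows from the first with no separate descent argument.
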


From our perspective, 

\medskip
\centerline{\emph{Differential Galois theory $=$ (precategorical) descent $+$ categorical Galois theory. }}
\medskip
More precisely, condition (i) from the definition of pre-Picard-Vessiot morphisms ensures effective classical descent for precategory actions, while condition (ii) ensures the new type of precategorical descent  needed to apply the powerful abstract techniques of categorical Galois theory.

Our Galois precategory appears to be a differential scheme version of truncated to level 2 higher differential Galois groups developed by Ayoub in \cite{ayoub} as foliated homotopy types of a differential field. We hope to be able to pursue our techniques in the direction of higher category theory along similar lines. 
 
 \subsection{Application: quasi-projective differential Galois theory}
 
 Using the above template Galois theorem, we prove the following result, which simultaneously refines classical Picard-Vessiot theory and strongly normal theory of differential field extensions. 
 
\begin{corollary*}[Quasi-projective differential Galois correspondence~\ref{qproj-pv-corr}]
Let $(K,\delta)$ be a differential field of characteristic $0$ with the field of constants $k$, let $S=\spec(k)$ and assume
$$
f:(X,\delta)\to (Y,\delta)=\spec(K,\delta)
$$
is a quasi-projective morphism of $S$-differential schemes such that $X$ is integral and its only leaf is its generic point and $f$ is auto-split.  Then $f$ is Picard-Vessiot, and there is a Galois correspondence between split $S$-differential quasi-projective fpqc quotients of $f$ in $\cA$ and closed subgroups of the $S$-algebraic group $\Gal[f]$, which takes
$$
 \begin{tikzpicture}
[cross line/.style={preaction={draw=white, -,
line width=4pt}}]
\matrix(m)[matrix of math nodes, row sep=.9em, column sep=.5em, text height=1.5ex, text depth=0.25ex]
{			& |(ij)| {X}	&				\\   [.02em]
	|(i)|{P} 	&			& 		\\  [.02em]
			& |(0)|{Y} 		&				\\};
\path[->,font=\scriptsize,>=to, thin]
(ij) edge node[left,pos=0.2]{$$} (i) edge node[right,pos=0.5]{$f$} (0) 
(i) edge node[below left,pos=0.5]{$p$} (0) 
;
\end{tikzpicture}
$$ 
to $$\Gal[X\to P].$$
Conversely, a closed subgroup $G'$ of $\Gal[f]$ corresponds to the quotient
$$
X/G'.
$$
\end{corollary*}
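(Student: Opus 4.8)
The plan is to deduce the statement from the Galois theorem~\ref{scheme-dif-Galois} applied to the indexed data with $\cX=\Sch_{\ov S}$, with $\cA$ the category of $S$-differential schemes admitting a categorical scheme of leaves, with $\cP$ the pseudofunctor of polarised quasi-projective differential morphisms, with $\cS$ the fibration of quasi-projective scheme morphisms, and with $U\colon\cP\Rightarrow\cS$ the forgetful transformation; the resulting equivalence $\Split_C(f)\simeq\cP^{\Gal[f]}$ will then be unwound into the stated correspondence. The first task is to verify that $f$, under the stated hypotheses, is a Picard-Vessiot morphism for $U$.

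Condition~(i), effective descent for $\delta\da\cP$, holds because $Y=\spec(K,\delta)$ with $K$ a field forces the underlying morphism $f_0\colon X\to Y$ to be faithfully flat and quasi-compact --- every $K$-scheme is flat, and $X$ is nonempty and quasi-compact since $f$ is quasi-projective --- so the effective descent criterion of~\ref{diffl-desc} applies. For~(ii$'$), the hypothesis that $X$ is integral with its only leaf the generic point is exactly the assertion that $\pi_0(X)$ exists and equals $S=\spec(k)$, whence $X$ is simple for $\cS$ by the simplicity results of~\ref{diffl-desc}, and $f$ is auto-split for $\cS$ by hypothesis. For the remaining simplicity clauses~(ii), auto-splitness gives $f^*f\simeq C_X(G_0)$, that is, $X\times_YX\simeq X\times_SG_0$ over $X$ for a quasi-projective $S$-scheme $G_0$, and, iterating, $X\times_YX\times_YX\simeq X\times_SG_0\times_SG_0$; since $\pi_0$ is compatible with base change along the constant schemes $C(-)$ and $\pi_0(X)=S$, this yields $\pi_0(X\times_YX)\simeq G_0$ and $\pi_0(X\times_YX\times_YX)\simeq G_0\times_SG_0$, so $X$, $X\times_YX$ and $X\times_YX\times_YX$ are all simple for $\cP$ and $f$ is (pre-)Picard-Vessiot.

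It follows that $\Gal[f]=\pi_0(\bbG_f)$, computed level by level on the kernel-pair groupoid, is the groupoid $\bigl(G_0\times_SG_0\rightrightarrows G_0\rightrightarrows S\bigr)$ with object scheme $\pi_0(X)=S$, that is, a quasi-projective $S$-group scheme $G_0$, an $S$-algebraic group, and the relation $X\times_YX\simeq X\times_SG_0$ exhibits $X$ as a $G_0$-torsor over $Y$ in $\cA$. The Galois theorem then provides an equivalence $\Split_C(f)\simeq\cP^{\Gal[f]}$ between the $C$-split objects of $\delta\da\cP(Y)$ and the quasi-projective $S$-schemes with $G_0$-action, carrying $f$ to $G_0$ with its translation action. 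Under this equivalence a split quasi-projective fpqc quotient $X\to P\to Y$ of $f$ becomes a quotient object of $G_0$ in $\cP^{\Gal[f]}$, hence a homogeneous space $G_0/G'$ for a closed subgroup $G'\le G_0$; comparing kernel pairs identifies $X\times_PX$ with $X\times_SG'\subseteq X\times_SG_0\simeq X\times_YX$, whence $G'\simeq\pi_0(X\times_PX)=\Gal[X\to P]$. Conversely, a closed subgroup $G'\le\Gal[f]$ acts on $X$ over $Y$ through this identification; by Chevalley's theorem $G_0/G'$ is quasi-projective over $k$ (this is where $\operatorname{char} k=0$ enters, via smoothness of algebraic groups), and its effective descent along the fpqc morphism $f$ is a split quasi-projective quotient $P=X/G'$ of $f$ in $\cA$, which lies in $\cA$ since it has no new constants. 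The assignments $P\mapsto\Gal[X\to P]$ and $G'\mapsto X/G'$ are mutually inverse by the displayed equivalence together with the classification of quotient objects of $G_0$ in $\cP^{\Gal[f]}$, and the correspondence reverses inclusions because $G'_1\subseteq G'_2$ precisely when $P_1$ dominates $P_2$.

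The main obstacle I anticipate lies on two fronts. The first is the interplay of $\pi_0$ with the base changes $-\times_SC(G_0)$ and the universality for the polarised quasi-projective class of the resulting coequalisers at $X$, $X\times_YX$ and $X\times_YX\times_YX$: this is precisely the content of the simplicity and precategorical-descent results imported from~\ref{diffl-desc}, which must be applied carefully to the three-step kernel-pair diagram. The second is staying inside the quasi-projective world on the group side --- one needs that $G_0/G'$ is quasi-projective over $k$ (Chevalley, hence the characteristic $0$ assumption) and that quasi-projectivity, together with its polarisation, descends along $f$, so that $X/G'$ is genuinely an object of $\delta\da\cP(Y)$ over $(Y,\delta)$ in $\cA$. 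Granting these, the corollary is the translation of the equivalence $\Split_C(f)\simeq\cP^{\Gal[f]}$ of~\ref{scheme-dif-Galois} through the dictionary relating $C$-split objects over $Y$ to $G_0$-schemes, quotients of $f$ to homogeneous spaces, and homogeneous spaces to closed subgroups of $\Gal[f]$.
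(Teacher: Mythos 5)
Your proposal is correct in outline and reaches the statement by the same skeleton as the paper (apply the template Galois theorem, then translate split fpqc quotients into closed subgroups via the Carboni--Magid--Janelidze mechanism and the fact that every closed subgroup of an algebraic group is effective for quasi-projective morphisms), but you instantiate the indexed data differently. You take $\cP$ to be the \emph{polarised} quasi-projective fibration with $U\colon\cP\Rightarrow\cS$ forgetful; the paper instead proves the corollary as a consequence of Theorem~\ref{qproj-gal-th}, which runs the template directly on the unpolarised quasi-projective fibration, using the special effective-descent result \ref{efdesc-qp} (valid because the target is the spectrum of a field) in place of polarised descent. The polarised machinery is reserved in the paper for Theorem~\ref{polarised-pv-thm} over an arbitrary base. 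Your route is workable, but it delivers an equivalence for \emph{polarised} split objects, whereas the corollary concerns unpolarised quasi-projective quotients; you would need an extra (routine but unaddressed) step stripping polarisations and checking the induced correspondence of quotient posets is unaffected by the choice of relatively ample sheaf. The paper's choice avoids this entirely.

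Two further points. First, you locate the use of characteristic $0$ in the quasi-projectivity of $G_0/G'$ via Chevalley; in fact $G/H$ is quasi-projective in all characteristics (\ref{effective-alg-subgp}, via Milne or SGA3), and characteristic $0$ enters instead through Bardavid's simplicity result \ref{our-proof-cor}, which is what makes $S=\spec(k)$ the categorical scheme of leaves of the integral $X$ with only the generic leaf. Second, the actual content of the paper's proof of the corollary --- which your write-up only gestures at --- is the back-and-forth transfer of the fpqc condition: forwards, that a split fpqc quotient $h\colon X\to P$ yields $g=S_Xf^*(h)$ a regular (hence effective) epimorphism because fpqc morphisms are universal effective epimorphisms and $S_X$ preserves colimits as a left adjoint, so that $Q=G/G'$ for a closed $G'$; backwards, that $h$ corresponding to the fpqc morphism $G\to G/G'$ is itself fpqc because fpqc-ness is local in the fpqc topology along $f$. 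These verifications should be made explicit for the argument to be complete.
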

 
To the best of our knowledge,  the following is the first result in differential Galois theory which applies to differential schemes with arbitrary categorical  scheme of constants.

\begin{theorem*}[Polarised quasi-projective differential Galois theory, \ref{polarised-pv-thm}]

Let $f:(X,\delta_X)\to (Y,\delta_Y)$ be a morphism of $S$-differential schemes such that 
\begin{enumerate}
\item the underlying scheme morphism $X\to Y$ is fpqc;
\item $(X,\delta_X)$ is simple with respect to $S$-scheme morphisms, with scheme of leaves $G_0$;
\item there is an $S$-morphism $G_1\to G_0$ such that
$$
(X,\delta_X)\times_{(Y,\delta_Y)}(X,\delta_X)\simeq (X,\delta_X)\times_{(G_0,0)}(G_1,0).
$$
\end{enumerate}
Then $f$ is Picard-Vessiot for the forgetful functor $U$ from polarised morphisms to scheme morphisms, $\Gal[f]$ is the groupoid $(G_1\doublerightarrow{}{} G_0)$ and we have an equivalence between the category of quasi-projective polarised $S$-differential morphisms $(P,\cL_P)\to Y$ split by $f$ and the category of quasi-projective polarised actions $(Q,\cL_Q)\to G_0$ of $\Gal[f]$.
\end{theorem*}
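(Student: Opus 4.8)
The plan is to check that $f$ meets the definition of a Picard-Vessiot morphism for the forgetful pseudo-natural transformation $U:\cP\Rightarrow\cS$, where $\cX=\Sch_{\ov S}$, $\cA$ is the category of $S$-differential schemes admitting a scheme of leaves, $\cS$ is the fibration of all $S$-scheme morphisms and $\cP$ its sub-fibration of polarised quasi-projective ones (so that $U$ is suitable in the sense of \ref{sch-pv-setup}), and then to deduce the statement from the template Galois theorem \ref{scheme-dif-Galois}.

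First I would verify condition (i): that $f$ is of effective descent for $\delta\da\cP$. This is precisely the consequence of \ref{diffl-desc} recorded in the introduction — since $f_0:X\to Y$ is fpqc by hypothesis (1), $f_0$ is of effective descent for $\cP$ and the base change $f_1$ is of descent for $\cP$ — and it is here that the restriction to \emph{polarised} morphisms is essential, because relative quasi-projectivity descends along an fpqc cover only when a relatively ample sheaf is supplied as part of the data. Next, condition (ii'): that $X$ is simple for $\cS$ with $\pi_0(X)=G_0$ is hypothesis (2), and for auto-splitness I would unwind the definition of $C_X$, which sends $(Q\to G_0)$ to $(X,\delta_X)\times_{(G_0,0)}(Q,0)\to(X,\delta_X)$; hypothesis (3) then says exactly that the projection $f^*f=X\times_YX\to X$ is isomorphic to $C_X(G_1)$, so $f\in\Split_{C^\cS}(f)$. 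Since (ii') strengthens condition (ii), $f$ is then pre-Picard-Vessiot for $\cP$ as well, and in particular $X$, $X\times_YX$ and $X\times_YX\times_YX$ are simple for $\cP$.

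To identify $\Gal[f]$ I would compute the schemes of leaves of these \v{C}ech powers. The key point is that $\pi_0(X)=G_0$ is a \emph{universal} coequaliser, hence stable under base change in $\cX$; as $(G_1,0)$ carries the zero derivation it is discrete as a $\bbD(S)$-action, so $\pi_0$ commutes with the fibre product over $(G_0,0)$ and hypothesis (3) gives $\pi_0(X\times_YX)=\pi_0\big((X,\delta_X)\times_{(G_0,0)}(G_1,0)\big)=G_0\times_{G_0}G_1=G_1$, with universality inherited. Iterating hypothesis (3) along the nerve of $f$ — identifying the \v{C}ech nerve of $f$ with the nerve of the action groupoid of $G_1$ on $X$ over $G_0$ — yields similarly $\pi_0(X\times_YX\times_YX)=G_1\times_{G_0}G_1$. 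Applying $\pi_0$ levelwise to the kernel-pair groupoid $\bbG_f$ then identifies $\Gal[f]=\pi_0(\bbG_f)$ with $(G_1\times_{G_0}G_1\rightrightarrows G_1\rightrightarrows G_0)$, and since $\bbG_f$ is a groupoid in $\cA$ and $\pi_0$ preserves the fibre products over $(G_0,0)$ involved, applying $\pi_0$ to its source, target, unit, inverse and composition equips $G_1$ with a groupoid structure over $G_0$, so $\Gal[f]$ is the groupoid $(G_1\doublerightarrow{}{}G_0)$. (That $Y$ itself lies in $\cA$, with $\pi_0(Y)=\Coeq(G_1\rightrightarrows G_0)$, follows from descent of simplicity along the fpqc morphism $f_0$.)

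Now Theorem \ref{scheme-dif-Galois} applies and yields $\Split_{C^\cP}(f)\simeq\cP^{\Gal[f]}$; unwinding the two sides, the left-hand side is the category of quasi-projective polarised $S$-differential morphisms $(P,\cL_P)\to Y$ that are $C$-split by $f$, and, $\Gal[f]$ being the groupoid $(G_1\doublerightarrow{}{}G_0)$, the right-hand side is the category of quasi-projective polarised actions $(Q,\cL_Q)\to G_0$ of that groupoid, which is the asserted equivalence. The main obstacle I anticipate is the identification of $\Gal[f]$: one must promote the torsor isomorphism (3) to a coherent description of the entire \v{C}ech nerve of $f$, and — more delicately — verify that the merely partial left adjoint $\pi_0$ really does commute with the fibre products over $(G_0,0)$ occurring there; this is exactly the place where universality of the coequaliser $\pi_0(X)=G_0$ is indispensable, both for the $\pi_0$-computation and, implicitly, for the simplicity of the higher powers on which the Galois precategory is built.
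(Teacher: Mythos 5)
Your overall architecture matches the paper's: verify effective descent for $\delta\da\cP$ from the fpqc hypothesis, read off simplicity for $\cS$ and auto-splitness from hypotheses (2) and (3), identify $\Gal[f]$ from the auto-splitting isomorphism, and invoke the template Theorem~\ref{scheme-dif-Galois}. Your discussion of $\Gal[f]$ is in fact more detailed than the paper's proof, which delegates that computation to \ref{PV-pre-PV} and \ref{galprecatdef}; note that the cleaner route there is not that $\pi_0$ commutes with the fibre products, but that $X\times_YX\simeq C^\cS_X(G_1)$ lies in the essential image of $C^\cS_X$, and any such object is automatically $\cS$-simple with the evident scheme of leaves, by the very definition of simplicity via universal coequalisers.

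There is, however, one genuine gap. You dismiss in a parenthesis --- ``(so that $U$ is suitable in the sense of \ref{sch-pv-setup})'' --- precisely the verification that carries the polarisation-specific content of the theorem. Suitability requires that $\delta\da U$ \emph{reflects} the coequalisers associated with $\cS$-universal connected components; this is what makes Lemma~\ref{properties-pv} available, hence the implication (Lemma~\ref{PV-pre-PV}) from ``$X$ is $\cS$-simple and $f$ is auto-split for $\cS$'' to ``$X$, $X\times_YX$ and $X\times_YX\times_YX$ are simple for $\cP$,'' without which the hypotheses of the template theorem are not met. This reflection is not automatic: $\cP$ is not a full sub-fibration of $\cS$ (its morphisms carry an isomorphism of pulled-back invertible sheaves), so a cocone in $\delta\da\cP$ lying over a universal coequaliser of schemes need not be a coequaliser of polarised morphisms. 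The paper proves this in Proposition~\ref{reflect-quasiproj-coeq}, whose substance is Lemma~\ref{coeq-desc-inv-sh}: an isomorphism $\eta^*\cL\to\eta^*\cL'$ compatible with the two pullbacks to $X_1$ has constant transition functions (by the explicit $2\times 2$ matrix computation with $\epsilon$) and therefore descends uniquely along $\eta:X_0\to\pi_0(X)$, using $\eta_*\Const(\cO_X)=\cO_{\pi_0(X)}$ from \ref{pushfwd-const-sh}. Your proof needs this step, or an equivalent argument, to be complete.
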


\subsection{Application: Parametric differential equations.}

If $f:X\to Y$ is a pre-Picard-Vessiot morphisms with respect to some class of morphisms $\cS$, by using the full strength of categorical simplicity, we prove the specialisation formula \ref{special-gal} for the Galois precategory,
$$
\Gal[q^*(f)]\simeq q^*\Gal[f],
$$
where the pullbacks are taken along a morphism $q:Q\to \pi_0(Y)$ from $\cS$. 

This result makes our Galois theory suitable for treating parametric systems of differential equations where the parameters come from a constant scheme. We illustrate this in \ref{s:elliptic} on an example of an $S$-differential family $f:X\to Y$ of elliptic curves endowed with a vector field in such a way that $\pi_0(X)=\pi_0(Y)=S$ and the Galois groupoid is a family of elliptic curves over $S$
$$
\Gal[f]=E\doublerightarrow{}{} S.
$$
As the parameter $s$ varies over $S(L)$, the specialisation formula \ref{special-gal} gives that 
$$
\Gal[f]_s=\Gal[f_s]=E_s,
$$
as an algebraic group over $L$, so our Galois groupoid specialises to classical Galois groups of strongly normal extensions associated with $f_s$ calculated by Kolchin in \cite{kolchin-sn}.

\subsection{Application: the Galois groupoid of a differential equation.}
In \ref{s:airy}, we show that our notion of a Picard-Vessiot morphism can be used to study symmetries of linear differential equations through a canonical Galois groupoid, without making a non-canonical choice of a Picard-Vessiot extension as in the classical theory. 

We study a Picard-Vessiot morphism of differential schemes $f:X\to Y$ where $X$ is associated to the full universal solution algebra of the Airy equation
$$
y''=xy,
$$
which yields a Galois groupoid of the form
$$
\Gal[f]=G_1\doublerightarrow{}{} G_0,
$$
where the points of the object of objects $G_0$ correspond to choices of Picard-Vessiot extensions, and the object of morphisms $G_1$ encodes isomorphisms between different choices of Picard-Vession extensions. 

This is a scheme-theoretic generalisation of Deligne's notion of the fundamental groupoid of a Tannakian category from \cite{deligne-tannakien}, where the object of objects consists of fibre functors, and the object of morphisms consists of isomorphisms between fibre functors. Indeed, given the Tannakian category of finite-dimensional vector spaces with a connection over a differential field, fibre functors correspond to Picard-Vessiot extensions, and we obtain a perfect analogy with our groupoid.

 \subsection{Layout of the paper}

A reader interested only in differential Galois theory can start perusing the paper from Section~\ref{s:dif-alg}, and occasionally look up the prerequisites from previous sections.  There, we develop differential algebraic geometry, and discuss the numerous benefits of our approach to differential schemes as precategory actions, including the consideration of the categorical schemes of leaves as connected components of precategories, simplicity of differential schemes through universality of connected components and polarised differential schemes. One of the most important topics we develop here is the theory of descent for differential scheme morphisms.

In the same section, we also view differential schemes as actions of the formal additive group and develop the theory of their geometric quotients. Moreover, we identify effective criteria  for geometric quotients to be universal, providing us with a toolkit for proving simplicity of differential schemes in practice. 

In Section~\ref{s:aff-PV}, we explore the extent to which the classical categorical Galois theory of Janelidze explains the affine Picard-Vessiot Galois correspondence. 

In Section~\ref{s:sat-gal-dif}, we reap the benefits of the flexibility that the indexed version of categorical Galois theory brings to differential algebraic geometry, and formulate a very general template for the Galois theory of differential schemes. 

In Section~\ref{s:applications}, we apply this theory in concrete settings of quasi-projective differential Galois correspondence that works over a base differential field, and polarised quasi-projective differential Galois theory that works over arbitrary differential schemes. We also provide concrete examples that illustrate the scope of our theory. 

On the other hand, we must emphasise that previous sections contain several results that may be of independent interest in category theory, especially in view of connections to recent work related to descent theory 
\cite{le-creurer}, 
\cite{nunes}, 
\cite{nunes-prezado-sousa}  
\cite{prezado-nunes}, 
that we do not fully understand yet. 

In Section~\ref{s:descent}, we discuss precategories and their actions, which, viewed as generalised descent data, give rise to a new form of \emph{precategorical descent,} and find sufficient conditions for effective descent. We develop a whole calculus of pullbacks of descent data, and apply it to obtain a result on descent of quasi-projective morphisms. We also discuss classical descent of precategory actions that gets applied in the differential context later.

In Section~\ref{s:cat-gal}, we recall the foundations of Janelidze's categorical Galois theory, its indexed form from \cite{borceux-janelidze}, and we expand the minute details of the Carboni-Magid-Janelidze categorical Galois correspondence that are implicit in the original paper \cite{carboni}. 

This paper was largely motivated by numerous discussions with George Janelidze, Andy Magid, Akira Masuoka,  Tom Scanlon and Michael Wibmer, so we thank them all for sharing their time and knowledge with us. 

\section{Descent}\label{s:descent}

Throughout this appendix, let $\cC$ denote a category with pullbacks, and let
$$
\cP\to \cC
$$
be a fibred category equipped with a cleavage. Equivalently, we have an indexed category associated to a pseudofunctor
$$
P:\cC^\op\to \Cat
$$
where, for an object $U$ in $\cC$, the fibre of $\cP$ over $U$ is the $U$-indexed component of $P$,
$$
\cP(U)=P(U),
$$
and, for a morphism $U\stackrel{f}{\to}V$ in $\cC$, we have a pullback functor
$$
f^*=P(f):\cP(V)\to\cP(U).
$$

\subsection{Precategories and their actions}

Let $\Delta_2$ be the diagram category
$$
 \begin{tikzpicture} 
\matrix(m)[matrix of math nodes, row sep=0em, column sep=3em, text height=1.5ex, text depth=0.25ex]
 {
|(2)|{\bullet}  & [1em] |(1)|{\bullet}		&[1em] |(0)|{\bullet} \\
 }; 
 \node at (2.90) {\scriptsize{2}};
 \node at (1.90) {\scriptsize{1}};
 \node at (0.90) {\scriptsize{0}};
\path[->,font=\scriptsize,>=to, thin]
([yshift=1em]2.east) edge node[above=-2pt]{$r_0$} ([yshift=1em]1.west) 
(2) edge node[above=-2pt]{$m$} (1)
([yshift=-1em]2.east) edge node[above=-2pt]{$r_1$} ([yshift=-1em]1.west) 
([yshift=1em]1.east) edge node[above=-2pt]{$d_0$} ([yshift=1em]0.west) 
(0)  edge node[above=-2pt]{$n$} (1) 
([yshift=-1em]1.east) edge node[above=-2pt]{$d_1$} ([yshift=-1em]0.west) 
;
\end{tikzpicture}
$$
with
\begin{align*}
d_0 r_1 &=d_1 r_0, 	& d_0 n &= \id_0, \\
d_0 m &= d_0 r_0,	& d_1 n &=\id_0, \\
d_1 m &= d_1 r_1. 	&\\
\end{align*}

A \emph{precategory} in a category $\cC$ is a functor
$$
\C:\Delta_2\to \cC.
$$
Equivalently, it is a diagram 
$$
 \begin{tikzpicture} 
\matrix(m)[matrix of math nodes, row sep=0em, column sep=3em, text height=1.5ex, text depth=0.25ex]
 {
|(2)|{C_2}  & [1em] |(1)|{C_1}		&[1em] |(0)|{C_0} \\
 }; 
\path[->,font=\scriptsize,>=to, thin]
([yshift=1em]2.east) edge node[above=-2pt]{$r_0$} ([yshift=1em]1.west) 
(2) edge node[above=-2pt]{$m$} (1)
([yshift=-1em]2.east) edge node[above=-2pt]{$r_1$} ([yshift=-1em]1.west) 
([yshift=1em]1.east) edge node[above=-2pt]{$d_0$} ([yshift=1em]0.west) 
(0)  edge node[above=-2pt]{$n$} (1) 
([yshift=-1em]1.east) edge node[above=-2pt]{$d_1$} ([yshift=-1em]0.west) 
;
\end{tikzpicture}
$$
in $\cC$, where the morphisms satisfy the relations indicated above. 

The category of precategories in $\cC$ is the functor category
$$
\Precat(\cC)=[\Delta_2,\cC].
$$

\begin{definition}\label{precat-discr}
The discrete precategory associated to an object $U$ of $\cC$ is the diagram
$$
 \begin{tikzpicture} 
\matrix(m)[matrix of math nodes, row sep=0em, column sep=3em, text height=1.5ex, text depth=0.25ex]
 {
|(n)|{\Disc{U}:} &[1em] |(2)|{U}  & [1em] |(1)|{U}		&[1em] |(0)|{U} \\
 }; 
\path[->,font=\scriptsize,>=to, thin]
([yshift=.4em]2.east) edge node[above=-2pt]{} ([yshift=.4em]1.west) 
(2) edge node[above=-2pt]{} (1)
([yshift=-.4em]2.east) edge node[above=-2pt]{} ([yshift=-.4em]1.west) 
([yshift=.4em]1.east) edge node[above=-2pt]{} ([yshift=.4em]0.west) 
(0)  edge node[above=-2pt]{} (1) 
([yshift=-.4em]1.east) edge node[above=-2pt]{} ([yshift=-.4em]0.west) 
;
\end{tikzpicture}
$$
where all the morphisms are identities on $U$. 
\end{definition}

\begin{definition}\label{precat-ker-pair}
The precategory associated to the kernel pair of a morphism $u:U'\to U$ in a category $\cC$ admitting pullbacks is the diagram
$$
 \begin{tikzpicture} 
\matrix(m)[matrix of math nodes, row sep=0em, column sep=3em, text height=1.5ex, text depth=0.25ex]
 {
|(n)|{\bbG_u:} &[1em] |(2)|{U'\times_U U'\times_U U'}  & [1em] |(1)|{U'\times_UU'}		&[1em] |(0)|{U'} \\
 }; 
\path[->,font=\scriptsize,>=to, thin]
([yshift=1em]2.east) edge node[above=-2pt]{$\pi_{01}$} ([yshift=1em]1.west) 
(2) edge node[above=-2pt]{$\pi_{02}$} (1)
([yshift=-1em]2.east) edge node[above=-2pt]{$\pi_{12}$} ([yshift=-1em]1.west) 
([yshift=1em]1.east) edge node[above=-2pt]{$\pi_0$} ([yshift=1em]0.west) 
(0)  edge node[above=-2pt]{$\Delta$} (1) 
([yshift=-1em]1.east) edge node[above=-2pt]{$\pi_1$} ([yshift=-1em]0.west) 
;
\end{tikzpicture}
$$
which happens to be a groupoid without the inversion of morphisms named. Note that, using the notation from \ref{precat-discr},
$$
\Disc{U}=\bbG_{\id_U}.
$$
\end{definition}

\begin{definition}
For $\C\in\Precat(\cC)$, the category of \emph{$\C$-actions in $\cP$} is the bilimit
$$
\cP^\C=\lim(\cP\circ\C)
$$
of the diagram of categories and functors
$$
 \begin{tikzpicture} 
\matrix(m)[matrix of math nodes, row sep=0em, column sep=3em, text height=1.5ex, text depth=0.25ex]
 {
|(2)|{\cP(C_2)}  & [1em] |(1)|{\cP(C_1)}		&[1em] |(0)|{\cP(C_0).} \\
 }; 
\path[->,font=\scriptsize,>=to, thin]
([yshift=1em]1.west) edge node[above=-2pt]{$r_0^*$} ([yshift=1em]2.east) 
(1) edge node[above=-2pt]{$m^*$} (2)
([yshift=-1em]1.west) edge node[above=-2pt]{$r_1^*$} ([yshift=-1em]2.east) 

 ([yshift=1em]0.west) edge node[above=-2pt]{$d_0^*$} ([yshift=1em]1.east) 
(1)  edge node[above=-2pt]{$n^*$} (0) 
([yshift=-1em]0.west) edge node[above=-2pt]{$d_1^*$} ([yshift=-1em]1.east) 
;
\end{tikzpicture}
$$
More explicitly, an action $P\in \cP^\C$ consists of objects
\begin{align*}
P_2&\in \cP(C_2), & P_1&\in\cP(C_1), & P_0&\in \cP(C_0),
\end{align*}
and isomorphisms
\begin{align*}
P_2 &\stackrel{\rho_0}{\to} r_0^* P_1, & P_1&\stackrel{\delta_0}{\to}d_0^*P_0, & P_0\stackrel{\eta}{\to}n^*P_1,\\
P_2 &\stackrel{\mu}{\to} m^* P_1, & P_1&\stackrel{\delta_1}{\to}d_1^*P_0, & \\
P_2 &\stackrel{\rho_1}{\to} r_1^* P_1, & &  & \\
\end{align*}
such that the diagram
$$
 \begin{tikzpicture}
[cross line/.style={preaction={draw=white, -,
line width=4pt}}]
\matrix(m)[matrix of math nodes, row sep=1.5em, column sep=1.7em, text height=1.5ex, text depth=0.25ex]
{	|(i)|{n^*d_0^*P_0} 	&	|(c)|{n^*P_1}		& |(k)|{n^*d_1^*P_0} 		\\  
			& |(j)|{P_0} 	&				\\};
\path[->,font=\scriptsize,>=to, thin]
(c) edge node[above,pos=0.5]{$n^*\delta_0$} (i) edge node[above,pos=0.5]{$n^*\delta_1$} (k) 
(j) edge node[above=-2pt, sloped]{$\sim$} (i) edge node[right,pos=0.4]{$\eta$} (c) edge node[above=-2pt, sloped]{$\sim$} (k) 
;
\end{tikzpicture}
$$ 
commutes in $\cP(C_0)$, and the diagram
$$
 \begin{tikzpicture}
[cross line/.style={preaction={draw=white, -,
line width=4pt}}]
\matrix(m)[matrix of math nodes, row sep=1.2em, column sep=.8em, text height=1.5ex, text depth=0.25ex]
{					&					& |(m0)|{m^*d_0^*P_0}	&[1em] |(mm)|{m^*P_1} 	&[1em] |(m1)|{m^*d_1^*P_0} 	&  			&				\\   [2em]
|(r0d0)|{r_0^*d_0^*P_0}  	&					& 					& |(p2)|{P_2} 		& 					&			&|(r1d1)|{r_1^*d_1^*P_0} 		\\  [.2em]
					&	|(r0)|{r_0^*P_1} 	& 					& 				& 					& |(r1)|{r_1^*P_1} & 		 		\\  [.02em]
					&					& |(r0d1)|{r_0^*d_1^*P_0} 	& 				&  |(r1d0)|{r_1^*d_0^*P_0}	& 			 & 		 		\\};
\path[->,font=\scriptsize,>=to, thin]
(r0) edge node[below left,pos=0.5]{$r_0^*\delta_0$} (r0d0) edge node[below left,pos=0.5]{$r_0^*\delta_1$} (r0d1)
(mm) edge node[above,pos=0.5]{$m^*\delta_0$} (m0) edge node[above, pos=0.5]{$m^*\delta_1$} (m1) 
(r1) edge node[below right,pos=0.5]{$r_1^*\delta_0$} (r1d0) edge node[below right,pos=0.5]{$r_1^*\delta_1$} (r1d1)
(p2) edge node[right]{$\mu$} (mm) edge node[above]{$\rho_0$} (r0) edge node[above]{$\rho_1$} (r1) 
(r0d0) edge node[above=-2pt, sloped]{$\sim$}  (m0) 
(m1) edge node[above=-2pt, sloped]{$\sim$}  (r1d1) 
(r0d1) edge node[above=-2pt, sloped]{$\sim$}  (r1d0) 
;
\end{tikzpicture}
$$ 
commutes in $\cP(C_2)$, where the unnamed arrows are coherence isomorphisms. 
\end{definition}

\begin{definition}\label{actions-self-ind}
The category of \emph{$\C$-actions in $\cC$} is  
$$
\cC^\C=\mathop{\rm Self}(\cC)^\C,
$$
where $\mathop{\rm Self}(\cC)(C)=\cC_{\ov C}$ is the self-indexing of $\cC$ by slicing. 
\end{definition}

\begin{remark}\label{gen-dd}
By considering the composite
$$
 \begin{tikzpicture}
[cross line/.style={preaction={draw=white, -,
line width=4pt}}]
\matrix(m)[matrix of math nodes, row sep=1.5em, column sep=1.7em, text height=1.5ex, text depth=0.25ex]
{	|(i)|{d_0^*P_0} 	&	|(c)|{P_1}		& |(k)|{d_1^*P_0} 		\\};
\path[->,font=\scriptsize,>=to, thin]
(c) edge node[above,pos=0.3]{$\delta_0$} (i) edge node[above,pos=0.3]{$\delta_1$} (k) 
(i) edge[bend left=30] node[above,pos=0.5]{$\varphi$} (k)
;
\end{tikzpicture}
$$ 
we see that an action $P$ is equivalently given by an object $P_0\in \cP(C_0)$ and an isomorphism $\varphi:d_0^*P_0\to d_1^*P_0$ in $\cP(C_1)$ such that the diagram
$$
 \begin{tikzpicture}
[cross line/.style={preaction={draw=white, -,
line width=4pt}}]
\matrix(m)[matrix of math nodes, row sep=1.5em, column sep=1.7em, text height=1.5ex, text depth=0.25ex]
{	|(i)|{n^*d_0^*P_0} 	&			& |(k)|{n^*d_1^*P_0} 		\\  
			& |(j)|{P_0} 	&				\\};
\path[->,font=\scriptsize,>=to, thin]
(i) edge node[above,pos=0.5]{$n^*\varphi$} (k) 
(j) edge node[above=-2pt, sloped]{$\sim$} (i)  edge node[above=-2pt, sloped]{$\sim$} (k) 
;
\end{tikzpicture}
$$ 
commutes in $\cP(C_0)$, and the \emph{cocycle condition} holds, i.e., the diagram
$$
 \begin{tikzpicture}
[cross line/.style={preaction={draw=white, -,
line width=4pt}}]
\matrix(m)[matrix of math nodes, row sep=1.2em, column sep=.8em, text height=1.5ex, text depth=0.25ex]
{					&					& |(m0)|{m^*d_0^*P_0}	&[1em] 	&[1em] |(m1)|{m^*d_1^*P_0} 	&  			&				\\   [2em]
|(r0d0)|{r_0^*d_0^*P_0}  	&					& 					& 				& 					&			&|(r1d1)|{r_1^*d_1^*P_0} 		\\  [.2em]
					&					& 					& 				& 					&  			& 		 		\\  [.02em]
					&					& |(r0d1)|{r_0^*d_1^*P_0} 	& 				&  |(r1d0)|{r_1^*d_0^*P_0}	& 			 & 		 		\\};
\path[->,font=\scriptsize,>=to, thin]
 (r0d0) edge node[below left,pos=0.5]{$r_0^*\varphi$} (r0d1)
 (m0) edge node[above, pos=0.5]{$m^*\varphi$} (m1) 
(r1d0) edge node[below right,pos=0.5]{$r_1^*\varphi$} (r1d1)
(r0d0) edge node[above=-2pt, sloped]{$\sim$}  (m0) 
(m1) edge node[above=-2pt, sloped]{$\sim$}  (r1d1) 
(r0d1) edge node[above=-2pt, sloped]{$\sim$}  (r1d0) 
;
\end{tikzpicture}
$$ 
commutes in $\cP(C_2)$.

A morphism $$(P,\varphi)\to (P',\varphi')$$
between two $\C$-actions considered this way is given by a morphism $\psi: P_0\to P_0'$ in $\cP(C_0)$ such that the diagram
$$
 \begin{tikzpicture}
[cross line/.style={preaction={draw=white, -,
line width=4pt}}]
\matrix(m)[matrix of math nodes, row sep=1.8em, column sep=1.8em, text height=1.5ex, text depth=0.25ex]
{	|(0i)|{d_0^*P}		& |(1i)| {d_0^*P'}	\\   [.02em]
	|(0j)|{d_1^*P}		& |(1j)| {d_1^*P'}	\\};
\path[->,font=\scriptsize,>=to, thin]
(0i) edge node[above,pos=0.5]{$d_0^*\psi$} (1i) edge node[left,pos=0.5]{$\varphi$} (0j) 
(0j) edge node[above,pos=0.5]{$d_1^*\psi$} (1j) 
(1i) edge node[right,pos=0.5]{$\varphi'$} (1j)
;
\end{tikzpicture}
$$ 
commutes in $\cP(C_1)$. 
\end{remark}

\begin{remark}\label{action-fibred}
In terms of the fibration $\cP\to \cC$, an action in $\cP^\C$ is given by a precategory $\bbP\in\Precat(\cP)$ 
$$
 \begin{tikzpicture} 
\matrix(m)[matrix of math nodes, row sep=0em, column sep=3em, text height=1.5ex, text depth=0.25ex]
 {
|(2)|{P_2}  & [1em] |(1)|{P_1}		&[1em] |(0)|{P_0} \\
 }; 
\path[->,font=\scriptsize,>=to, thin]
([yshift=1em]2.east) edge node[above=-2pt]{$r_0$} ([yshift=1em]1.west) 
(2) edge node[above=-2pt]{$m$} (1)
([yshift=-1em]2.east) edge node[above=-2pt]{$r_1$} ([yshift=-1em]1.west) 
([yshift=1em]1.east) edge node[above=-2pt]{$d_0$} ([yshift=1em]0.west) 
(0)  edge node[above=-2pt]{$n$} (1) 
([yshift=-1em]1.east) edge node[above=-2pt]{$d_1$} ([yshift=-1em]0.west) 
;
\end{tikzpicture}
$$
living above $\C$, where the arrows in $\bbP$ are cartesian. 
\end{remark}

\subsection{Connected components of precategories}

\begin{definition}\label{def-conn-comp}
The \emph{object of connected components} of a precategory $\C\in\Precat(\cC)$ is the reflexive coequaliser
$$
\begin{tikzcd}[cramped, column sep=2em, ampersand replacement=\&]
C_1 \ar[yshift=2pt]{r}{d_0} \ar[yshift=-2pt]{r}[swap]{d_1} \& C_0 \ar{r} \& \pi_0(\C),
\end{tikzcd}
$$  
provided it exists in $\cC$. 

If $\cP:\cC^\op\to\Cat$ is a pseudofunctor, the object of connected components of an action $P\in\cP^\C$ is defined as the object
of connected components of the precategory $\bbP\in\Precat(\cP)$ associated to $\cP$ via \ref{action-fibred}, 
$$
\pi_0(P)=\pi_0(\bbP),
$$
provided it exists in the fibred category $\cP$ over  $\cC$.
\end{definition}

\begin{lemma}\label{coeq-above-coeq}
With the notation from \ref{def-conn-comp}, suppose that $\pi_0(\C)$ and $\pi_0(P)$ exist. Then $\pi_0(P)$ projects to $\pi_0(\C)$, i.e., a reflexive coequaliser 
$$
\begin{tikzcd}[cramped, column sep=2em, ampersand replacement=\&]
P_1 \ar[yshift=2pt]{r}{d_0} \ar[yshift=-2pt]{r}[swap]{d_1} \& P_0 \ar{r}{\rho} \& \pi_0(P)
\end{tikzcd}
$$  
in $\cP$
projects to a reflexive coequaliser
$$
\begin{tikzcd}[cramped, column sep=2em, ampersand replacement=\&]
C_1 \ar[yshift=2pt]{r}{d_0} \ar[yshift=-2pt]{r}[swap]{d_1} \& C_0 \ar{r}{c} \& \pi_0(\C)
\end{tikzcd}
$$  
in $\cC$.
\end{lemma}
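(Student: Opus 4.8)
The plan is to reduce the statement to a general fact about fibrations followed by a short diagram chase. By \ref{action-fibred} the action $P$ is the same datum as a precategory $\bbP\in\Precat(\cP)$ lying over $\C$, and $\pi_0(P)=\pi_0(\bbP)$ is, by definition, the reflexive coequaliser of $d_0,d_1\colon P_1\rightrightarrows P_0$ in $\cP$, with colimit map $\rho\colon P_0\to\pi_0(P)$. Applying $p$ to $\rho d_0=\rho d_1$ shows immediately that $p(\rho)\colon C_0\to p(\pi_0(P))$ coequalises $d_0,d_1\colon C_1\rightrightarrows C_0$, so, since $\pi_0(\C)$ is assumed to exist, there is a unique comparison morphism $g\colon\pi_0(\C)\to p(\pi_0(P))$ with $g\circ c=p(\rho)$; the resulting bottom row is reflexive because $p$ carries the section $n\colon P_0\to P_1$ of $\bbP$ to the section $n\colon C_0\to C_1$ of $\C$. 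Thus everything reduces to showing that $g$ is an isomorphism, i.e. that $p$ transports the colimit $\pi_0(P)$ to the colimit $\pi_0(\C)$.

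I would isolate the following general principle and prove it once: \emph{if $p\colon\cP\to\cC$ is a cloven fibration and $D\colon I\to\cP$ is any diagram for which $L:=\colim D$ (with colimit cocone $\lambda_i\colon D(i)\to L$) and $M:=\colim(p\circ D)$ (with colimit cocone $\mu_i$) both exist, then the canonical morphism $g\colon M\to p(L)$ determined by $g\mu_i=p(\lambda_i)$ is an isomorphism.} The Lemma is the instance in which $I$ is the walking reflexive pair and $D$ is $P_1\rightrightarrows P_0$.

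The proof of the principle should produce an explicit inverse to $g$ by pulling the colimit object back along $g$. Using the cleavage, form the cartesian lift $\kappa\colon g^{*}L\to L$ of $g$, so that $g^{*}L$ lies over $M$. Each $\lambda_i$ lies over $p(\lambda_i)=g\mu_i$, hence factors uniquely through the cartesian arrow $\kappa$ as $\lambda_i=\kappa\nu_i$ with $\nu_i\colon D(i)\to g^{*}L$ lying over $\mu_i$. A short verification --- using that $\mu_\bullet$ is a cocone and the uniqueness clause in the universal property of a cartesian arrow --- shows that $\nu_\bullet$ is a cocone over $D$, so the universal property of $L$ gives a unique $\xi\colon L\to g^{*}L$ with $\xi\lambda_i=\nu_i$; put $h:=p(\xi)$. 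Then $\kappa\xi\lambda_i=\kappa\nu_i=\lambda_i$ for all $i$ forces $\kappa\xi=\id_L$, hence $gh=\id_{p(L)}$ after applying $p$; and $h\,p(\lambda_i)=p(\xi\lambda_i)=p(\nu_i)=\mu_i$ for all $i$ forces $hg=\id_M$ (here it already suffices that $c$ is epi). So $g$ is invertible, and the reflexive coequaliser $P_1\rightrightarrows P_0\xrightarrow{\rho}\pi_0(P)$ of $\cP$ projects onto the reflexive coequaliser $C_1\rightrightarrows C_0\xrightarrow{c}\pi_0(\C)$ of $\cC$.

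The one real obstacle is conceptual: the naive approach tries to push $P_0$ forward along $c$ to obtain an object of $\cP$ over $\pi_0(\C)$ carrying a coequalising cocone, which is impossible for a general fibration since $p$ need not be an opfibration. The argument above sidesteps this because no pushforward is needed --- the pullback $g^{*}L$ of the colimit object we already have does the work --- and after that everything is formal from the two universal properties, the only slightly delicate point being the coherence bookkeeping for the cleavage in the cocone check. (Note that cartesianness of the structure maps of $\bbP$ plays no role here beyond the bare fact that $\bbP$ lies over $\C$; it is needed only to make $\bbP$ an action in the first place.)
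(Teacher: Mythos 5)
Your argument is correct and is essentially the paper's own proof: both construct the comparison morphism $g$ (the paper's $t$) from the universal property of $c$, take a cartesian lift of it into $\pi_0(P)$, factor the coequalising cocone through that lift using cartesianness, obtain the candidate inverse from the universal property of $\rho$, and verify the two composites are identities using that $\rho$ and $c$ are epimorphisms. The only cosmetic difference is that you package this as a general statement for arbitrary diagrams in a cloven fibration and invert $g$ downstairs, whereas the paper inverts the cartesian lift $\theta$ upstairs (which gives the same conclusion since $p(\theta)=t$).
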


\begin{proof}
Writing $p:\cP\to \cC$ for the given fibration, and using the fact that $c$ is a coequaliser, there exists a unique morphism $t:S\to p(\pi_0(P))$ in $\cC$ such that $p(\rho)=t c$. Writing $\theta: t^*\pi_0(P)\to \pi_0(P)$ for a cartesian lift of $t$, there is a unique $\gamma:P_0\to t^*\pi_0(P)$ such that $p(\gamma)=c$ and $\theta\gamma=\rho$. 

Since $\theta$ is cartesian, we obtain that $\gamma$ coequalises $d_0^P$ and $d_1^P$, so, since $\rho$ is a coequaliser, there exists a unique morphism $\sigma:\pi_0(P)\to t^*\pi_0(P)$ such that $\sigma\rho=\gamma$. 

From the above, $\theta\sigma\rho=\theta\gamma=\rho$, and, since $\rho$ is an epimorphism, we obtain that
$$
\theta\sigma=\id.
$$
Thus, $\theta\sigma\theta=\theta$, and, since $\theta$ is cartesian, in order to show that
$$
\sigma\theta=\id,
$$
it suffices to verify that $p(\sigma\theta)=\id$. Indeed, $p(\sigma\theta)c=p(\sigma)tc=p(\sigma)p(\rho)=p(\sigma\rho)=p(\gamma)=c$, and the conclusion follows since $c$ is an epimorphism. 

\end{proof}

\begin{proposition}\label{part-adj-pi0}
With notation and assumptions of \ref{coeq-above-coeq}, the pullback functor
$$
\eta_\C^*:\cP(\pi_0(\C))\simeq \cP^{\Disc{\pi_0(\C)}}\to \cP^\C
$$
induced by the unique precategory morphism 
$$
\eta_\C:\C\to \Disc{\pi_0(\C)}
$$
determined by $\eta_0=c$ has a partial left adjoint $\pi_0$ defined at $P$, i.e., we have a bijection
$$
\cP(\pi_0(\C))(\pi_0(P), Q)\simeq \cP^\C(P,\eta_\C^*Q),
$$
natural in $Q\in\cP(\pi_0(\C))$, and $P\in\cP^\C$, whenever $\pi_0(P)$ exists.
\end{proposition}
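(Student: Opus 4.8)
The plan is to unwind both sides of the asserted bijection into hom-sets in the total category $\cP$ and match them. Fix a cleavage on $p\colon\cP\to\cC$, write $\bbP\in\Precat(\cP)$ for the precategory of cartesian arrows over $\C$ attached to $P$ as in \ref{action-fibred}, set $S=\pi_0(\C)$, and let $\rho\colon P_0\to\pi_0(P)$ be the reflexive coequaliser of $d_0^{\bbP},d_1^{\bbP}\colon P_1\to P_0$ in $\cP$ which, by \ref{def-conn-comp}, defines $\pi_0(P)$. First I would apply \ref{coeq-above-coeq}: since $\pi_0(P)$ is assumed to exist, we may identify $p(\pi_0(P))$ with $S$ and $p(\rho)$ with the coequaliser $c\colon C_0\to S$, so that $\pi_0(P)\in\cP(S)$ and $\eta_\C^*Q$ is defined for every $Q\in\cP(S)$.

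For the left-hand side: a morphism $\pi_0(P)\to Q$ in $\cP(S)$ is the same as a morphism of that shape in $\cP$ over $\id_S$. By the universal property of $\rho$, and since $c$ is an epimorphism, a morphism $g\colon P_0\to Q$ in $\cP$ with $g\,d_0^{\bbP}=g\,d_1^{\bbP}$ factors through $\rho$ by a morphism over $\id_S$ exactly when $p(g)=c$; hence $\cP(S)(\pi_0(P),Q)$ is in bijection with $\{\,g\colon P_0\to Q\text{ in }\cP:\ p(g)=c,\ g\,d_0^{\bbP}=g\,d_1^{\bbP}\,\}$.

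For the right-hand side I would pass to the reduced cocycle description of actions from \ref{gen-dd}. The action $\eta_\C^*Q$ has underlying object $c^*Q\in\cP(C_0)$ and descent isomorphism the canonical coherence map $\varphi_Q\colon d_0^*c^*Q\to d_1^*c^*Q$, characterised by $\bar c\circ\lambda_1\circ\varphi_Q=\bar c\circ\lambda_0$, where $\bar c\colon c^*Q\to Q$ and $\lambda_i\colon d_i^*c^*Q\to c^*Q$ are cartesian lifts; a morphism of actions $P\to\eta_\C^*Q$ is a morphism $\psi\colon P_0\to c^*Q$ over $\id_{C_0}$ with $\varphi_Q\circ d_0^*\psi=d_1^*\psi\circ\varphi_P$, where $\varphi_P$ is the structure isomorphism of $P$. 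Post-composition with the cartesian lift $\bar c$ is a bijection between such $\psi$ and morphisms $g:=\bar c\,\psi\colon P_0\to Q$ in $\cP$ over $c$. The crux is that this carries the cocycle square precisely onto the coequaliser equation $g\,d_0^{\bbP}=g\,d_1^{\bbP}$: writing $d_0^{\bbP},d_1^{\bbP}$ in terms of the cartesian lifts $\kappa_i$ of $d_i$ at $P_0$ and of $\varphi_P$ (so $d_0^{\bbP}=\kappa_0$, $d_1^{\bbP}=\kappa_1\varphi_P$), and using the defining relations $\lambda_i\circ d_i^*\psi=\psi\circ\kappa_i$ of the cleavage, one rewrites $g\,d_0^{\bbP}=\bar c\,\lambda_0\,(d_0^*\psi)$ and, via the characterisation of $\varphi_Q$, $g\,d_1^{\bbP}=\bar c\,\lambda_0\,\varphi_Q^{-1}(d_1^*\psi)\varphi_P$; since $\bar c\,\lambda_0$ is a composite of cartesian arrows and the two maps $d_0^*\psi$ and $\varphi_Q^{-1}(d_1^*\psi)\varphi_P$ both lie over $\id_{C_1}$, equality of the composites is equivalent to equality of the maps, which is the cocycle square.

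Composing the three bijections identifies $\cP^\C(P,\eta_\C^*Q)$ with $\cP(S)(\pi_0(P),Q)$, the composite sending $\psi$ to the unique $\bar\psi$ with $\bar\psi\rho=\bar c\,\psi$; naturality in $Q$ follows from functoriality of the cleavage (compatibility of the cartesian lifts $\bar c$ with morphisms in $\cP(S)$), and naturality in $P$ from the functoriality of $\pi_0$ on $\cP^\C$ given by \ref{action-fibred} and the universal property of $\rho$. I expect the only delicate point to be the middle computation matching the cocycle condition with the coequaliser condition: the algebra is short but demands careful bookkeeping of the cleavage, so that every comparison of two morphisms is between morphisms lying over a common base arrow — which is exactly what lets one use cartesianness in place of a monomorphism property. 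The passage to $\cP(S)$ via \ref{coeq-above-coeq} and the epimorphism argument on the left are routine.
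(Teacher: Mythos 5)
Your proposal is correct and rests on the same two ingredients as the paper's proof: the universal property of the reflexive coequaliser defining $\pi_0(P)$ (together with $c$ being an epimorphism) in one direction, and the universal property of cartesian arrows in the other. The only difference is packaging — you route everything through the reduced cocycle description of Remark \ref{gen-dd} and verify the correspondence by one explicit cleavage computation, whereas the paper works with the full three-level diagram and constructs the components $f_0$, $f_1$, $f_2$ one at a time by cartesianness; your middle computation (cocycle square $\Leftrightarrow$ coequalising condition) is checked correctly and is the analogue of the paper's observation that $q_0$ coequalises the source and target morphisms of $\eta_\C^*Q$.
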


\begin{proof}
The assumptions yield the diagram
$$
 \begin{tikzpicture}
 [cross line/.style={preaction={draw=white, -,line width=4pt}}, proj/.style={dotted,-}]
 \def\triple#1#2{
 ([yshift=.4em]#1.east) edge 
 ([yshift=.4em]#2.west) 
(#1) edge 
(#2)
([yshift=-.4em]#1.east) edge 
([yshift=-.4em]#2.west) }
\def\double#1#2{
([yshift=.4em]#1.east) edge 
([yshift=.4em]#2.west) 
(#2)  edge 
(#1) 
([yshift=-.4em]#1.east) edge 
([yshift=-.4em]#2.west) }
\def\tripleov#1#2{
 ([yshift=.4em]#1.east) edge[cross line] 
 ([yshift=.4em]#2.west) 
(#1) edge[cross line] 
(#2)
([yshift=-.4em]#1.east) edge[cross line] 
([yshift=-.4em]#2.west) }
\def\doubleov#1#2{
([yshift=.4em]#1.east) edge[cross line] 
([yshift=.4em]#2.west) 
(#2)  edge[cross line] 
(#1) 
([yshift=-.4em]#1.east) edge[cross line] 
([yshift=-.4em]#2.west) }
\matrix(m)[matrix of math nodes, row sep=1em, column sep=1em,  
text height=1.2ex, text depth=0.25ex]
{
|(P2)|{P_2} 		&		&[0em] |(P1)|{P_1}  	&[0em]		& |(P0)|{P_0}	&			\\[1.2em]
			& |(p2)|{\pi_0(P)}  	& 		&|(p1)|{\pi_0(P)} 	&		& |(p0)|{\pi_0(P)}\\[.6em]    
|(Q2)|{\eta_2^*Q} 		&		&[0em] |(Q1)|{\eta_1^*Q}  	&[0em]		& |(Q0)|{\eta_0^*Q}	&			\\[1.2em]
			& |(q2)|{Q}  	& 		&|(q1)|{Q} 	&		& |(q0)|{Q}\\[.6em]    			      
|(C2)|{C_2}	&		&|(C1)|{C_1}	&		& |(C0)|{C_0} &			\\[1.2em]
			&|(c2)|{\pi_0(\C)}	&		&|(c1)|{\pi_0(\C)} &			& |(c0)|{\pi_0(\C)}   \\};
\path[->,font=\scriptsize,>=to, thin,inner sep=1pt]
\triple{P2}{P1}
\double{P1}{P0}

\triple{Q2}{Q1}
\double{Q1}{Q0}

\triple{C2}{C1}
\double{C1}{C0}

(Q2) edge[proj] (C2)
(Q1) edge[proj] (C1)
(Q0) edge[proj] (C0)

(P2) edge[dashed] node[pos=0.25,left]{$f_2$}(Q2)
(P1) edge[dashed] node[pos=0.25,left]{$f_1$}(Q1)
(P0) edge[dashed] node[pos=0.25,left]{$f_0$}(Q0)
\tripleov{p2}{p1}
\doubleov{p1}{p0}

\tripleov{q2}{q1}
\doubleov{q1}{q0}

\triple{c2}{c1}
\double{c1}{c0}

(Q2) edge node[pos=0.5,below left]{$q_2$} (q2)
(Q1) edge node[pos=0.5,below left]{$q_1$}(q1)
(Q0) edge node[pos=0.5,below left]{$q_0$}(q0)

(P2) edge node[pos=0.6,above right]{$\eta^P_2$} (p2)
(P1) edge node[pos=0.6,above right]{$\eta^P_1$} (p1)
(P0) edge node[pos=0.6,above right]{$\eta^P_0$} (p0)

(C2) edge node[pos=0.5,below left]{$\eta_2$}(c2)
(C1) edge node[pos=0.5,below left]{$\eta_1$}(c1)
(C0) edge node[pos=0.5,below left]{$\eta_0$}(c0)

(q2) edge[cross line, proj] (c2)
(q1) edge[cross line, proj] (c1)
(q0) edge[cross line, proj] (c0)

(p2) edge[cross line, dashed] node[pos=0.25,left]{$f$}(q2)
(p1) edge[cross line, dashed] node[pos=0.25,left]{$f$}(q1)
(p0) edge[cross line, dashed] node[pos=0.25,left]{$f$}(q0)
;
\end{tikzpicture}
$$ 
without the dashed arrows, where all horizontal arrows in $\cP$ except possibly $\eta^P_0$, $\eta^P_1$ and $\eta^P_2$ are cartesian. 

By construction, $q_0$ coequalises the source and the target morphisms of $\eta^*Q$, whence, if we take a morphism $(f_0,f_1,f_2)\in \cP^\C(P,\eta^*Q)$, $q_0f_0$ coequalises $d_0^P$, $d_1^P$, so there exists a unique morphism $f:\pi_0(P)\to Q$ that makes the rightmost vertical square, and ultimately the whole diagram, commutative. 

Conversely, if we start with a morphism $f:\pi_0(P)\to Q$, using that $q_0$ is cartesian, we obtain a unique morphism $f_0:P_0\to \eta_0^*Q$ that makes the rightmost vertical square commutative. Since the composite $\eta_1^*Q\to\eta_0^*Q\to Q$ is cartesian, we obtain a unique morphism $f_1:P_1\to\eta_1^*Q$ which makes the square involving $P_1,\eta_1^*Q, Q, \pi_0(P)$ commutative, and, given that $q_0$ is cartesian, $f_1$ and $f_0$ commute with source and target morphisms. Using that $q_1$ is cartesian, $f_1$ and $f_0$ commute with the identity sections. Continuing in the same fashion, we obtain a unique $f_2:P_2\to\eta_2^*Q$ that makes the whole diagram commutative. 
\end{proof}

\subsection{Precategorical descent}

\begin{definition}\label{def:precat-desc}
Let 
$$
f:\C\to\bbD
$$
be a morphism in $\Precat(\cC)$. We say that
\begin{enumerate}
\item $f$ is a \emph{descent morphism} for $\cP$, if the functor $f^*:\cP^\bbD\to \cP^\C$ is fully faithful, and
\item $f$ is of \emph{effective descent} for $\cP$, if the functor $f^*:\cP^\bbD\to \cP^\C$ is an equivalence of categories. 
\end{enumerate}

\end{definition}

\begin{lemma}\label{univ-ce-desc}
Suppose that the reflexive coequaliser
$$
\begin{tikzcd}[cramped, column sep=2em, ampersand replacement=\&]
C_1 \ar[yshift=2pt]{r}{d_0} \ar[yshift=-2pt]{r}[swap]{d_1} \& C_0 \ar{r} \& \pi_0(\C)
\end{tikzcd}
$$  
exists in $\cC$. 

The morphism of precategories 
$$
\eta:\C\to \Disc{\pi_0(\C)}
$$
is a descent morphism for $\cP$ if and only if the above reflexive coequaliser is universal for $\cP$, i.e., for every $Q\in \cP(\pi_0(\C))$, the diagram
$$
\begin{tikzcd}[cramped, column sep=2em, ampersand replacement=\&]
\eta_1^*Q \ar[yshift=2pt]{r}{} \ar[yshift=-2pt]{r}[swap]{} \& \eta_0^*Q \ar{r} \& Q
\end{tikzcd}
$$  
remains a coequaliser. 
\end{lemma}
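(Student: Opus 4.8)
The plan is to deduce both implications from a single natural identification of hom-sets obtained by unwinding the definition of a morphism of precategory actions. Write $c\colon C_0\to\pi_0(\C)$ for the coequaliser map, so that $\eta_0=c$ and $\eta_1=cd_0=cd_1$, fix a cleavage of $\cP$, and for $Q\in\cP(\pi_0(\C))$ write $\theta_Q\colon c^*Q\to Q$ for the chosen cartesian lift of $c$ and $e_0,e_1\colon\eta_1^*Q\rightrightarrows c^*Q$ for the two structure maps lying over $d_0,d_1$ (each a coherence isomorphism composed with a cartesian lift). Under the equivalence $\cP^{\Disc{\pi_0(\C)}}\simeq\cP(\pi_0(\C))$, the functor $\eta^*$ sends $Q$ to the action whose underlying $C_0$-object is $c^*Q$ and whose descent isomorphism is the canonical identification of $d_0^*c^*Q$ and $d_1^*c^*Q$ with $\eta_1^*Q$.

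First I would establish, using the descent-datum description of actions in \ref{gen-dd}, that a morphism $\eta^*Q\to\eta^*Q'$ is exactly a morphism $\psi\colon c^*Q\to c^*Q'$ in $\cP(C_0)$ that equalises the two maps $\cP(C_0)(c^*Q,c^*Q')\rightrightarrows\cP(C_1)(\eta_1^*Q,\eta_1^*Q')$ given by $d_0^*$ and $d_1^*$ (each followed by the coherence identification $d_i^*c^*(-)\simeq\eta_1^*(-)$); transporting along the cartesian lift $\theta_{Q'}$ this yields a bijection, natural in $Q,Q'\in\cP(\pi_0(\C))$,
$$
\cP^\C(\eta^*Q,\eta^*Q')\ \simeq\ \{\,g\colon c^*Q\to Q'\ \text{over}\ c\ :\ ge_0=ge_1\,\},
$$
under which the full-faithfulness comparison map $\cP(\pi_0(\C))(Q,Q')\to\cP^\C(\eta^*Q,\eta^*Q')$ becomes $f\mapsto f\theta_Q$; here $\theta_Qe_0=\theta_Qe_1$, both being the cartesian lift of $\eta_1$, and $f\theta_Q$ lies over $c$ precisely because $f$ lies over $\id$ and $c$ is epi.

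With this identification in hand the equivalence is almost formal. For the backward implication, if the reflexive coequaliser is universal for $\cP$ then for each $Q$ the diagram $\eta_1^*Q\rightrightarrows c^*Q\xrightarrow{\theta_Q}Q$ is a coequaliser in the total category $\cP$; testing it against an object $Q'$ of $\cP(\pi_0(\C))$ gives $\cP(Q,Q')\simeq\{\,g\colon c^*Q\to Q':ge_0=ge_1\,\}$ via $f\mapsto f\theta_Q$, and restricting on the left to morphisms over $\id_{\pi_0(\C)}$ corresponds, since $c$ is an epimorphism, to restricting on the right to morphisms over $c$, so by the first step the comparison map is a bijection and $\eta^*$ is fully faithful. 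For the forward implication, assume $\eta^*$ fully faithful, fix $Q$, and let $g\colon c^*Q\to R$ in $\cP$ satisfy $ge_0=ge_1$: then $p(g)$ coequalises $d_0,d_1$, hence factors as $\bar hc$ for a unique $\bar h$, and factoring $g$ through the cartesian lift $\bar h^*R\to R$ produces $g'\colon c^*Q\to\bar h^*R$ over $c$ with $g'e_0=g'e_1$ whose target lies in $\cP(\pi_0(\C))$; by the first step and full faithfulness $g'=f'\theta_Q$ for a unique $f'$, and composing with $\bar h^*R\to R$ yields the required factorisation $g=f\theta_Q$, unique by the same cartesian argument run in reverse. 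As $Q$ is arbitrary, the coequaliser is universal for $\cP$.

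I expect the only real obstacle to lie in the first step: one must keep the pseudofunctor coherence isomorphisms (equivalently, the cartesian lifts of the chosen cleavage) straight in order to verify that a morphism of $\C$-actions between $\eta^*Q$ and $\eta^*Q'$ is literally the same datum as a morphism $c^*Q\to Q'$ over $c$ coequalising $e_0,e_1$, and in particular that $\theta_Qe_0=\theta_Qe_1$. Everything downstream is a routine manipulation of the universal property of a coequaliser in the fibred category $\cP$ together with the cartesian factorisation of an arbitrary morphism of $\cP$ through the fibre over $\pi_0(\C)$.
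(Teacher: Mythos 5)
Your proof is correct, and its core is the same identification the paper relies on, but you arrive at it by a different route: the paper's proof is a one-line appeal to the partial adjunction of Proposition~\ref{part-adj-pi0} (``the diagram is a coequaliser iff $\pi_0(\eta^*Q)\simeq Q$, making $\eta^*$ fully faithful''), whereas you re-derive the relevant instance of that adjunction by hand, unwinding the descent-datum description of \ref{gen-dd} to identify $\cP^\C(\eta^*Q,\eta^*Q')$ with morphisms $c^*Q\to Q'$ over $c$ coequalising $e_0,e_1$. What your version buys is that it makes explicit a point the paper's one-liner elides in the ``only if'' direction: full faithfulness of $\eta^*$ a priori only controls maps out of $c^*Q$ into objects of the fibre $\cP(\pi_0(\C))$, while universality of the coequaliser must be tested against arbitrary objects $R$ of the total category $\cP$; your cartesian factorisation of a test map $g\colon c^*Q\to R$ through $\bar h^*R$ is exactly the bridge needed, and it is the same mechanism that makes the partial adjunction of \ref{part-adj-pi0} a statement about coequalisers in $\cP$ rather than in a single fibre. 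The one genuinely delicate step you flag --- that $\theta_Q e_0=\theta_Q e_1$, i.e.\ that the two composites are both \emph{the} chosen cartesian lift of $\eta_1=cd_0=cd_1$ once the cleavage coherences are inserted --- is handled correctly. In short: correct, self-contained, and slightly more careful than the paper's own proof, at the cost of redoing work already available in \ref{part-adj-pi0}.
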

\begin{proof}
The above diagram is a coequaliser if and only if $\pi_0(\eta^*Q)\simeq Q$, making $\eta^*$ fully faithful. 
\end{proof}

\subsection{Descent data}

\begin{definition}\label{def-dd}
Given a family $\cU=\{U_i\stackrel{u_i}{\to}U\,:\,i\in I\}$ of morphisms in $\cC$, the \emph{category of descent data}
$$
\DD(\cU)
$$
consists of tuples
$$
\left( (P_i)_{i\in I}, (\varphi_{ij})_{(i,j)\in I^2}\right), 
$$
where 
\begin{itemize}
\item for every $i\in I$, $P_i\in \cP(U_i)$;
\item for every $i,j\in I$, $\varphi_{ij}:\pi^*_{ij,i}P_i\to\pi^*_{ij,j}P_j$ is  an isomorphism in $\cP(U_{ij})$, 
where $U_{ij}=U_i\times_UU_j$ and the corresponding projections are given by the pullback diagram
$$
 \begin{tikzpicture}
[cross line/.style={preaction={draw=white, -,
line width=4pt}}]
\matrix(m)[matrix of math nodes, row sep=.9em, column sep=.5em, text height=1.5ex, text depth=0.25ex]
{			& |(ij)| {U_{ij}}	&				\\   [.02em]
	|(i)|{U_i} 	&			& |(j)|{U_j} 		\\  [.02em]
			& |(0)|{U} 		&				\\};
\path[->,font=\scriptsize,>=to, thin]
(ij) edge node[left,pos=0.2]{$\pi_{ij,i}$} (i) edge node[right,pos=0.2]{$\pi_{ij,j}$} (j) 
(i) edge node[left,pos=0.6]{$u_i$} (0) 
(j) edge node[right,pos=0.6]{$u_j$} (0)
;
\end{tikzpicture}
$$ 
\end{itemize}
satisfying the \emph{cocycle condition}: for every $i,j,k\in I$, considering the triple pullback $U_{ijk}=U_i\times_UU_j\times_UU_k$ and the corresponding projections
$$
 \begin{tikzpicture}
[cross line/.style={preaction={draw=white, -,
line width=4pt}}]
\matrix(m)[matrix of math nodes, row sep=1.2em, column sep=.8em, text height=1.5ex, text depth=0.25ex]
{				& |(ijk)| {U_{ijk}}	&				\\   [.02em]
	|(ij)|{U_{ij}} 	& |(ik)|{U_{ik}} 	& |(jk)|{U_{jk}} 		\\  [.2em]
	|(i)|{U_{i}} 		& |(j)|{U_{j}} 	& |(k)|{U_{k}} 		\\  [.02em]
				& |(0)|{U} 		&				\\};
\path[->,font=\scriptsize,>=to, thin]
(ijk) edge node[left,pos=0.2]{$$} (ij) edge node[right,pos=0.2]{$$} (ik) edge node[right,pos=0.2]{$$} (jk) 
(ik) edge node[left,pos=0.2]{$$} (i) edge node[right,pos=0.2]{$$} (k)
(ij) edge[cross line] node[left,pos=0.2]{$$} (i) edge[cross line] node[right,pos=0.2]{$$} (j) 
(jk) edge[cross line] node[left,pos=0.2]{$$} (j) edge[cross line] node[right,pos=0.2]{$$} (k)  
(i) edge node[left,pos=0.6]{$u_i$} (0) 
(j) edge node[right,pos=0.4]{$u_j$} (0)
(k) edge node[right,pos=0.6]{$u_k$} (0)
;
\end{tikzpicture}
$$ 
the diagram 
$$
 \begin{tikzpicture}
[cross line/.style={preaction={draw=white, -,
line width=4pt}}]
\matrix(m)[matrix of math nodes, row sep=2em, column sep=1.2em, text height=1.5ex, text depth=0.25ex]
{	|(i)|{\pi^*_{ijk,i}P_i} 	&			& |(k)|{\pi^*_{ijk,k}P_k} 		\\  [.02em]
			& |(j)|{\pi^*_{ijk,j}P_j} 	&				\\};
\path[->,font=\scriptsize,>=to, thin]
(i) edge node[left,pos=0.6]{$\pi^*_{ijk,ij}\varphi_{ij}$} (j) edge node[above,pos=0.5]{$\pi^*_{ijk,ik}\varphi_{ik}$} (k) 
(j) edge node[right,pos=0.4]{$\pi^*_{ijk,jk}\varphi_{jk}$} (k) 
;
\end{tikzpicture}
$$ 
commutes in $\cP(U_{ijk})$.

A $\DD(\cU)$-\emph{morphism} $$\left((P_i)_i, (\varphi_{ij})_{ij}\right)\to \left((P'_i)_i, (\varphi'_{ij})_{ij}\right)$$
consists of a family of morphisms $(P_i\stackrel{\psi_i}{\to}P'_i)_{i\in I}$ in $\cP(U_i)$ such that the diagrams
$$
 \begin{tikzpicture}
[cross line/.style={preaction={draw=white, -,
line width=4pt}}]
\matrix(m)[matrix of math nodes, row sep=1.8em, column sep=1.8em, text height=1.5ex, text depth=0.25ex]
{	|(0i)|{\pi^*_{ij,i}P_i}		& |(1i)| {\pi^*_{ij,i}P'_i}	\\   [.02em]
	|(0j)|{\pi^*_{ij,j}P_j}		& |(1j)| {\pi^*_{ij,i}P'_j}	\\};
\path[->,font=\scriptsize,>=to, thin]
(0i) edge node[above,pos=0.5]{$\pi^*_{ij,i}\psi_i$} (1i) edge node[left,pos=0.5]{$\varphi_{ij}$} (0j) 
(0j) edge node[above,pos=0.5]{$\pi^*_{ij,j}\psi_j$} (1j) 
(1i) edge node[right,pos=0.5]{$\varphi'_{ij}$} (1j)
;
\end{tikzpicture}
$$ 
commute in $\cP(U_{ij})$. 

\end{definition}

\subsection{Pullback of descent data}

\begin{definition}
Let $\cU=\{ U_i\stackrel{u_i}{\to} U \,:\, i\in I\}$ and $\cV=\{ V_j\stackrel{v_j}{\to} V \,:\, j\in J\}$ be two families in $\cC$. 

A \emph{morphism} $\gamma:\cU\to \cV$ is given by a map of indices $\alpha:I\to J$, a morphism $g:U\to V$ in $\cC$ and a family of commutative diagrams 
$$
 \begin{tikzpicture}
[cross line/.style={preaction={draw=white, -,
line width=4pt}}]
\matrix(m)[matrix of math nodes, row sep=1.8em, column sep=1.8em, text height=1.5ex, text depth=0.25ex]
{	|(0i)|{U_i}		& |(1i)| {V_{\alpha(i)}}	\\   [.02em]
	|(0j)|{U}		& |(1j)| {V}	\\};
\path[->,font=\scriptsize,>=to, thin]
(0i) edge node[above,pos=0.5]{$g_i$} (1i) edge node[left,pos=0.5]{$u_i$} (0j) 
(0j) edge node[above,pos=0.5]{$g$} (1j) 
(1i) edge node[right,pos=0.5]{$v_{\alpha(i)}$} (1j)
;
\end{tikzpicture}
$$ 
for $i\in I$.

If $\gamma':(\alpha',g',g'_i)$ is another morphism $\cU\to\cV$ with $g=g'$, we say that morphisms $\gamma$ and $\gamma'$ are \emph{homotopy equivalent} and write
$$
\gamma\sim\gamma'.
$$
\end{definition}

\begin{lemma}[Pullback of descent data, {\cite[8.3.3]{stacks-project}}]\label{pbdesc}
A \emph{morphism} $\gamma:\cU\to \cV$ as above gives a functor
\begin{align*}
\gamma^*: &\  \DD(\cV)\to DD(\cU),  \\
 &  (P_j,\varphi_{jj'})\mapsto (g_i^*P_{\alpha(i)}, (g_i\times g_{i'})^* \varphi_{\alpha(i)\alpha(i')}).
\end{align*}
Moreover,  if $\gamma\sim \gamma'$, their associated pullback functors are canonically isomorphic, 
$$\gamma^*\simeq \gamma'^*.$$
\end{lemma}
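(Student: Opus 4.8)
The plan is to check that the stated assignment on objects and morphisms defines a functor $\gamma^*:\DD(\cV)\to\DD(\cU)$, and then, in the homotopy-equivalent case, to construct the comparison isomorphism $\gamma^*\simeq\gamma'^*$ directly out of the structure maps of a descent datum; throughout, the only subtle point is the role of the coherence isomorphisms of the pseudofunctor $\cP$.

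First I would verify that $\gamma^*$ lands in $\DD(\cU)$. For each $i\in I$ the defining square of $\gamma$ exhibits $g_i:U_i\to V_{\alpha(i)}$ over $g$, so $g_i^*P_{\alpha(i)}\in\cP(U_i)$; the squares for $i$ and $i'$ assemble into a morphism $g_i\times g_{i'}:U_{ii'}\to V_{\alpha(i)}\times_V V_{\alpha(i')}$ commuting with both pairs of projections, so that $(g_i\times g_{i'})^*\varphi_{\alpha(i)\alpha(i')}$ is an isomorphism $\pi^*_{ii',i}g_i^*P_{\alpha(i)}\to\pi^*_{ii',i'}g_{i'}^*P_{\alpha(i')}$ in $\cP(U_{ii'})$, once one inserts the coherence isomorphisms comparing $(g_i\times g_{i'})^*\pi^*$ with $\pi^*g_i^*$ and with $\pi^*g_{i'}^*$. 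The cocycle condition over $U_{ii'i''}$ I would obtain by pulling back the cocycle condition for the triple $(\alpha(i),\alpha(i'),\alpha(i''))$ along $g_i\times g_{i'}\times g_{i''}:U_{ii'i''}\to V_{\alpha(i)}\times_V V_{\alpha(i')}\times_V V_{\alpha(i'')}$, the associativity and unit axioms of $\cP$ ensuring that the auxiliary comparison isomorphisms introduced at the three vertices cancel. For morphisms I would send $(\psi_j)$ to $(g_i^*\psi_{\alpha(i)})$ and check the compatibility squares over $U_{ii'}$ by the same pullback along $g_i\times g_{i'}$; since each $g_i^*$ preserves identities and composition strictly, $\gamma^*$ is a functor.

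For the homotopy invariance, given $\gamma\sim\gamma'$ with common $g$ and a datum $(P_j,\varphi_{jj'})\in\DD(\cV)$, I would observe that $g_i$ and $g'_i$ both lie over $g\circ u_i$, hence determine $h_i=(g_i,g'_i):U_i\to V_{\alpha(i)}\times_V V_{\alpha'(i)}$, and set $\psi_i:=h_i^*\varphi_{\alpha(i)\alpha'(i)}$, transported across the coherence isomorphisms identifying $h_i^*\pi^*$ with $g_i^*$ and with $(g'_i)^*$; this is canonical by construction. That $(\psi_i)$ is a $\DD(\cU)$-morphism $\gamma^*(P,\varphi)\to\gamma'^*(P,\varphi)$ I would get by pulling back, along the evident map $U_{ii'}\to V_{\alpha(i)}\times_V V_{\alpha(i')}\times_V V_{\alpha'(i)}\times_V V_{\alpha'(i')}$, the relation $\varphi_{\alpha'(i)\alpha'(i')}\circ\varphi_{\alpha(i)\alpha'(i)}=\varphi_{\alpha(i')\alpha'(i')}\circ\varphi_{\alpha(i)\alpha(i')}$, which is the instance of the cocycle condition valid over a fourfold fibre product, itself a routine induction from the threefold case; this is exactly the square demanded of a morphism of descent data. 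Naturality of $(\psi_i)$ in $(P,\varphi)$ is then immediate, since every morphism of descent data commutes with the structure isomorphisms by definition.

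I expect the main obstacle to be purely organisational: because $\cP$ is only a pseudofunctor, no identity of the form ``pullback along a composite equals the composite of the pullbacks'' holds on the nose, so every step above is threaded through the structural $2$-cells of $\cP$, and the real work is confirming that these cancel in pairs via the coherence axioms. Beyond that, the only ingredient not already at hand is the fourfold form of the cocycle condition, which is standard (cf.\ \cite[8.3.3]{stacks-project}).
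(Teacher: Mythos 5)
The paper gives no proof of this lemma at all --- it simply cites \cite[8.3.3]{stacks-project} --- so there is nothing internal to compare against; your argument is correct and is essentially the standard one from that reference. In particular, your construction of the comparison isomorphism $\psi_i=(g_i,g_i')^*\varphi_{\alpha(i)\alpha'(i)}$, and the verification that it is a morphism of descent data by pulling back the fourfold consequence $\varphi_{\alpha'(i)\alpha'(i')}\circ\varphi_{\alpha(i)\alpha'(i)}=\varphi_{\alpha(i')\alpha'(i')}\circ\varphi_{\alpha(i)\alpha(i')}$ of the cocycle condition (both sides being $\varphi_{\alpha(i)\alpha'(i')}$ over the fourfold fibre product), is exactly the expected argument, and your bookkeeping of the coherence $2$-cells of the pseudofunctor is the only genuine content.
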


\begin{definition}\label{def-effdesc}
With notation from \ref{pbdesc}, we say that 
\begin{enumerate}
\item $\gamma:\cU\to\cV$ is a \emph{descent morphism}, if the functor $\gamma^*:\DD(\cV)\to \DD(\cU)$ is fully faithful, and
\item $\gamma:\cU\to\cV$ is of \emph{effective descent}, if $\gamma^*:\DD(\cV)\to \DD(\cU)$ establishes an equivalence of categories. 
\end{enumerate}

\end{definition}

\begin{remark}
Given a morphism $f:U'\to U$ in $\cC$, the above definition applied to the morphism of families $f_\Box$ given by the diagram
 $$
 \begin{tikzpicture}
[cross line/.style={preaction={draw=white, -,
line width=4pt}}]
\matrix(m)[matrix of math nodes, row sep=1.8em, column sep=1.8em, text height=1.5ex, text depth=0.25ex]
{	 |(1i)| {U} 	&	|(0i)|{U'}	\\   [.02em]
	|(1j)| {U}	 		&  |(0j)|{U} \\};
\path[->,font=\scriptsize,>=to, thin]
(0i) edge node[above,pos=0.5]{$f$} (1i) edge node[right,pos=0.5]{$f$} (0j) 
(0j) edge node[above,pos=0.5]{$\id$} (1j) 
(1i) edge node[left,pos=0.5]{$\id$} (1j)
(1i) edge[draw=none] node [above=-.3em]{$f_\Box$} (0j)
;
\end{tikzpicture}
$$ 
tells us that $f_\Box$ is a morphism of descent (resp., effective descent) if the functor
$$
f^*_\Box :\cP(U)=\DD(\id_U)\to \DD(f)
$$
is fully faithful (resp., an equivalence). 

Hence, $f$ is a morphism of descent/effective descent in the classical sense whenever $f_\Box$ is descent/effective descent in the sense of \ref{def-effdesc}.
\end{remark}

\begin{definition}[Horizontal composition of boxes]\label{def-horbox} 
The horizontal composition of boxes $\varphi$ and $\psi$ in $\cC$
$$
 \begin{tikzpicture}
[cross line/.style={preaction={draw=white, -,
line width=4pt}}]
\matrix(m)[matrix of math nodes, row sep=1.8em, column sep=1.8em, text height=1.5ex, text depth=0.25ex]
{	 |(1i)| {U} 	&	|(2i)|{V}	& 	|(3i)|{W} \\   [.02em]
	 |(1j)| {U'} 	&	|(2j)|{V'}   & 	|(3j)|{W'} \\};
\path[->,font=\scriptsize,>=to, thin]
(2i) edge node[above,pos=0.5]{$f$} (1i) edge node[right,pos=0.4]{$v$} (2j) 
(2j) edge node[above,pos=0.5]{$f'$} (1j) 
(1i) edge node[left,pos=0.4]{$u$} (1j)
(1i) edge[draw=none] node [above=-.2em]{$\varphi$} (2j)
(3i) edge node[above,pos=0.5]{$g$} (2i) edge node[right,pos=0.4]{$w$} (3j) 
(3j) edge node[above,pos=0.5]{$g'$} (2j) 
(2i) edge[draw=none] node [above=-.2em]{$\psi$} (3j)
;
\end{tikzpicture}
$$ 
is the box $\varphi\circ\psi$ given by the diagram
$$
 \begin{tikzpicture}
[cross line/.style={preaction={draw=white, -,
line width=4pt}}]
\matrix(m)[matrix of math nodes, row sep=1.8em, column sep=1.8em, text height=1.5ex, text depth=0.25ex]
{	 |(1i)| {U} 	&	|(2i)|{W}	\\   [.02em]
	 |(1j)| {U'} 	&	|(2j)|{W'}   \\};
\path[->,font=\scriptsize,>=to, thin]
(2i) edge node[above,pos=0.5]{$f\circ g$} (1i) edge node[right,pos=0.4]{$w$} (2j) 
(2j) edge node[above,pos=0.5]{$f'\circ g'$} (1j) 
(1i) edge node[left,pos=0.4]{$u$} (1j)
(1i) edge[draw=none] node [above=-.2em]{$\varphi\circ\phi$} (2j)
;
\end{tikzpicture}
$$ 
\end{definition}

\begin{remark}\label{rem-horbox}
With notation of \ref{def-horbox}, as an immediate consequence of the definition of pullback functors from \ref{pbdesc}, we obtain an isomorphism of functors
$$
(\varphi\circ\psi)^*\simeq \psi^*\circ\varphi^*.
$$

\end{remark}

\begin{definition}[Vertical composition of boxes]\label{def-vertbox}
The vertical composition of boxes $\varphi$ and $\varphi'$ in $\cC$ 
$$
 \begin{tikzpicture}
[cross line/.style={preaction={draw=white, -,
line width=4pt}}]
\matrix(m)[matrix of math nodes, row sep=1.8em, column sep=1.8em, text height=1.5ex, text depth=0.25ex]
{	 |(1i)| {U} 	&	|(2i)|{V}	\\   [.02em]
	 |(1j)| {U'} 	&	|(2j)|{V'}	\\   [.02em]
	|(1k)| {U''}	 &     |(2k)|{V''} \\};
\path[->,font=\scriptsize,>=to, thin]
(2i) edge node[above,pos=0.5]{$f$} (1i) edge node[right,pos=0.5]{$v$} (2j) 
(2j) edge node[above,pos=0.5]{$f'$} (1j) edge node[right,pos=0.4]{$v'$} (2k)
(1i) edge node[left,pos=0.5]{$u$} (1j)
(1j) edge node[left,pos=0.5]{$u'$} (1k)
(2k) edge node[above,pos=0.5]{$f''$} (1k)
(1i) edge[draw=none] node [above=-.2em]{$\varphi$} (2j)
(1j) edge[draw=none] node [above=-.2em]{$\varphi'$} (2k)
;
\end{tikzpicture}
$$ 
is the box $\varphi'*\varphi$ given by the diagram
$$
 \begin{tikzpicture}
[cross line/.style={preaction={draw=white, -,
line width=4pt}}]
\matrix(m)[matrix of math nodes, row sep=1.8em, column sep=1.8em, text height=1.5ex, text depth=0.25ex]
{	 |(1i)| {U} 	&	|(2i)|{V}	\\   [.02em]
	 |(1j)| {U''} 	&	|(2j)|{V''}   \\};
\path[->,font=\scriptsize,>=to, thin]
(2i) edge node[above,pos=0.5]{$f$} (1i) edge node[right,pos=0.4]{$v'\circ v$} (2j) 
(2j) edge node[above,pos=0.5]{$f''$} (1j) 
(1i) edge node[left,pos=0.4]{$u'\circ u$} (1j)
(1i) edge[draw=none] node [above=-.2em]{$\varphi'*\varphi$} (2j)
;
\end{tikzpicture}
$$ 

\begin{lemma}\label{lem-vertbox}
With the notation of \ref{def-vertbox}, if $\varphi$ is a descent morphism, and $$(f\times f)^*:\cP(U\times_{U''}U)\to \cP(V\times_{V''}V)$$ is faithful, then $\varphi'*\varphi$ is a descent morphism.
\end{lemma}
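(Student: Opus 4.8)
The plan is to reduce the statement to the composition law for pullback functors along vertically composed boxes. First I would prove $(\varphi'*\varphi)^*\simeq\varphi^*\circ\varphi'^*$, the vertical analogue of \ref{rem-horbox}. This drops out of the explicit formula in \ref{pbdesc}: on the underlying object of a descent datum it compares pullback along $v'\circ v$ with successive pullback along $v'$ and then $v$, and on the cocycle it compares pullback along the induced map $V\times_UV\to V''\times_{U''}V''$ with pullback along the composite $V\times_UV\to V'\times_{U'}V'\to V''\times_{U''}V''$; the two agree by pseudofunctoriality of $\cP$ and the universal property of the fibre products.

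Given this, and since $\varphi^*$ is fully faithful by hypothesis, the task is to show $\varphi^*\circ\varphi'^*$ is fully faithful, which I would do by running the two checks of \ref{def-effdesc} directly on $(\varphi'*\varphi)^*$ rather than trying to isolate $\varphi'^*$. For faithfulness, let $\psi,\psi'$ be parallel $\DD(f'')$-morphisms with $(v'v)^*\psi=(v'v)^*\psi'$; since $\varphi'^*$ is a functor, $(v')^*\psi$ and $(v')^*\psi'$ are $\DD(f')$-morphisms with equal $\varphi^*$-image, hence $(v')^*\psi=(v')^*\psi'$ by faithfulness of $\varphi^*$; one then promotes this equality, obtained after pulling back to $V'$, to an equality after pulling back to $V''$ by transporting the compatibility squares of the objects $(P,\chi),(P',\chi')\in\DD(f'')$ along an appropriate comparison map into $V''\times_{U''}V''$, at which point faithfulness of $(f\times f)^*:\cP(U\times_{U''}U)\to\cP(V\times_{V''}V)$ forces $\psi=\psi'$. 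For fullness, a $\DD(f)$-morphism $\theta$ between the images of $A,B\in\DD(f'')$ is, by fullness of $\varphi^*$, of the form $\varphi^*\omega$ for a unique $\DD(f')$-morphism $\omega$ between $\varphi'^*A$ and $\varphi'^*B$, and the same cocycle transport realises $\omega$ as $\varphi'^*\psi$ for a $\DD(f'')$-morphism $\psi$, which then satisfies $(\varphi'*\varphi)^*\psi=\theta$.

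The real obstacle is the cocycle transport in the middle step: one must make precise how the isomorphisms $\chi,\chi'$ attached to an object of $\DD(f'')$ let one move a morphism between the objects $V,V',V''$ and the relevant fibre products over $U''$, and — crucially — pin down the fibre product over which the decisive comparison lives so that exactly the faithfulness hypothesis on $(f\times f)^*$ applies. For this I would lean on the full box calculus: horizontal composition together with \ref{rem-horbox}, the homotopy-invariance of $\gamma^*$ from \ref{pbdesc}, and rewriting the pullbacks occurring in the compatibility squares until they are visibly pullbacks along $f\times f$.
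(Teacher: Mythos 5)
There is a genuine gap, and it sits at the very first step. The claimed ``vertical analogue of \ref{rem-horbox}'', namely $(\varphi'*\varphi)^*\simeq\varphi^*\circ\varphi'^*$, is not even well-typed: $\varphi'^*$ is a functor $\DD(u')\to\DD(v')$ and $\varphi^*$ is a functor $\DD(u)\to\DD(v)$, so the composite $\varphi^*\circ\varphi'^*$ does not exist. Unlike horizontal composition, vertical composition changes which morphism indexes the descent data (from $u$ and $u'$ separately to $u'\circ u$), and the three categories $\DD(u)$, $\DD(u')$, $\DD(u'\circ u)$ are genuinely distinct; there is no composition law of the proposed shape. The same confusion runs through the rest of your argument: the functor $(\varphi'*\varphi)^*$ goes from $\DD(u'\circ u)$ to $\DD(v'\circ v)$ (descent data are indexed by the \emph{vertical} arrows of the boxes, and the pullback is along the \emph{horizontal} arrows $f$, $f\times f$), so the objects you call ``$\DD(f'')$-morphisms'' pulled back ``along $v'v$'' do not match the actual structure, and there is no functor from $\DD(u'\circ u)$ into $\DD(u')$ or $\DD(v')$ through which to route the faithfulness and fullness checks.

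The correct intermediate, which you are missing, is the pair of \emph{restriction} functors $\rho_U^*:\DD(u'\circ u)\to\DD(u)$ and $\rho_V^*:\DD(v'\circ v)\to\DD(v)$ obtained by restricting the cocycle along $U\times_{U'}U\to U\times_{U''}U$ (resp.\ $V\times_{V'}V\to V\times_{V''}V$); these are faithful because they are the identity on underlying morphisms, and they fit into a commuting square $\rho_V^*\circ(\varphi'*\varphi)^*=\varphi^*\circ\rho_U^*$. Faithfulness of $(\varphi'*\varphi)^*$ falls out of this square immediately, and for fullness one lifts a morphism $q$ first to a morphism $p$ in $\cP(U)$ using full faithfulness of $\varphi^*$ on the restricted data, and only then checks that $p$ respects the cocycles over $U\times_{U''}U$. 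That last verification is the one place where your proposal is on target: applying the faithful functor $(f\times f)^*$ to the cocycle square for $p$ and recognising the (commutative) square for $q$ is exactly how the hypothesis on $(f\times f)^*$ is used. So the closing move of your argument is right, but the factorisation that is supposed to deliver the candidate morphism $p$ needs to be replaced by the restriction functors above.
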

\begin{proof}
The boxes
$$
\begin{tikzpicture}
[cross line/.style={preaction={draw=white, -,
line width=4pt}}]
\matrix(m)[matrix of math nodes, row sep=1.8em, column sep=1.8em, text height=1.5ex, text depth=0.25ex]
{	 |(1i)| {U} 	&	|(0i)|{U}	\\   [.02em]
	|(1j)| {U''}	 		&  |(0j)|{U'} \\};
\path[->,font=\scriptsize,>=to, thin]
(0i) edge node[above,pos=0.5]{$\id$} (1i) edge node[right,pos=0.4]{$u$} (0j) 
(0j) edge node[above,pos=0.5]{$u'$} (1j) 
(1i) edge node[left,pos=0.5]{$u'\circ u$} (1j)
(1i) edge[draw=none] node [above=-.2em]{$\rho_U$} (0j)
;
\end{tikzpicture}
\hspace{3em}
\begin{tikzpicture}
[cross line/.style={preaction={draw=white, -,
line width=4pt}}]
\matrix(m)[matrix of math nodes, row sep=1.8em, column sep=1.8em, text height=1.5ex, text depth=0.25ex]
{	 |(1i)| {V} 	&	|(0i)|{V}	\\   [.02em]
	|(1j)| {V''}	 		&  |(0j)|{V'} \\};
\path[->,font=\scriptsize,>=to, thin]
(0i) edge node[above,pos=0.5]{$\id$} (1i) edge node[right,pos=0.4]{$v$} (0j) 
(0j) edge node[above,pos=0.5]{$v'$} (1j) 
(1i) edge node[left,pos=0.5]{$v'\circ v$} (1j)
(1i) edge[draw=none] node [above=-.2em]{$\rho_V$} (0j)
;
\end{tikzpicture}
$$
satisfy 
$$
(\varphi'*\varphi)\circ\rho_V=\rho_U\circ\varphi,
$$
so we obtain a diagram of categories
$$
 \begin{tikzpicture}
[cross line/.style={preaction={draw=white, -,
line width=4pt}}]
\matrix(m)[matrix of math nodes, row sep=1.8em, column sep=2.2em, text height=1.5ex, text depth=0.25ex]
{	|(0i)|{\DD(u'\circ u)}		& |(1i)| {\DD(v'\circ v)}	\\   [.02em]
	|(0j)|{\DD(u)}		& |(1j)| {\DD(v)}	\\};
\path[->,font=\scriptsize,>=to, thin]
(0i) edge node[above,pos=0.5]{$(\varphi'*\varphi)^*$} (1i) edge node[left,pos=0.5]{$\rho^*_U$} (0j) 
(0j) edge node[above,pos=0.5]{$\varphi^*$} (1j) 
(1i) edge node[right,pos=0.5]{$\rho^*_V$} (1j)
;
\end{tikzpicture}
$$ 
where the vertical arrows are faithful. 

Indeed, $\rho_U^*$ takes objects $(P,\alpha)\in\DD(u'\circ u)$ to $(P,(\id\times\id)^*\alpha)=(P,(U\times_{U'}U\to U\times_{U''}U)^*\alpha)$, and it acts on morphisms $p:(P,\alpha)\to (P',\alpha')$ as identity, hence it is faithful. A similar argument applies to $\rho_V^*$.

By assumption, the bottom arrow $\varphi^*$ is also faithful, and it follows that the top arrow $(\varphi'*\varphi)^*$ is too. 

It suffices to verify that $(\varphi'*\varphi)^*$ is full. Let $(P,\alpha), (P',\alpha')\in \DD(u'\circ u)$, let 
$(Q,\beta)=(\varphi'*\varphi)^*(P,\alpha)$, $(Q',\beta')=(\varphi'*\varphi)^*(P',\alpha')$, and let
$q:(Q,\beta)\to(Q',\beta')$ be a morphism in $\DD(v'\circ v)$, given by a morphism $q:Q\to Q'$ in $\cP(V)$ such that the diagram
$$
 \begin{tikzpicture}
[cross line/.style={preaction={draw=white, -,
line width=4pt}}]
\matrix(m)[matrix of math nodes, row sep=1.8em, column sep=1.8em, text height=1.5ex, text depth=0.25ex]
{	|(0i)|{\pi^*_1Q}		& |(1i)| {\pi^*_1Q'}	\\   [.02em]
	|(0j)|{\pi^*_2Q}		& |(1j)| {\pi^*_2Q'}	\\};
\path[->,font=\scriptsize,>=to, thin]
(0i) edge node[above,pos=0.5]{$\pi^*_1 q$} (1i) edge node[left,pos=0.5]{$\beta$} (0j) 
(0j) edge node[above,pos=0.5]{$\pi^*_2 q$} (1j) 
(1i) edge node[right,pos=0.5]{$\beta'$} (1j)
;
\end{tikzpicture}
$$ 
commutes in $\cP(V\times_{V''}V)$.

Since $\varphi^*$ is fully faithful, there exists a unique morphism $p:\rho_U^*(P,\alpha)\to \rho_U^*(P',\alpha')$ such that 
$\varphi^*(p)=\rho_V^*(q)$, i.e., a morphism $p:P\to P'$ in $\cP(U)$ with $f^*(p)=q$. 

We claim that $p$ is a morphism $(P,\alpha)\to (P',\alpha')$ in $\DD(u'\circ u)$, i.e., that the diagram
$$
 \begin{tikzpicture}
[cross line/.style={preaction={draw=white, -,
line width=4pt}}]
\matrix(m)[matrix of math nodes, row sep=1.8em, column sep=1.8em, text height=1.5ex, text depth=0.25ex]
{	|(0i)|{\pi^*_1P}		& |(1i)| {\pi^*_1P'}	\\   [.02em]
	|(0j)|{\pi^*_2P}		& |(1j)| {\pi^*_2P'}	\\};
\path[->,font=\scriptsize,>=to, thin]
(0i) edge node[above,pos=0.5]{$\pi^*_1 p$} (1i) edge node[left,pos=0.5]{$\alpha$} (0j) 
(0j) edge node[above,pos=0.5]{$\pi^*_2 p$} (1j) 
(1i) edge node[right,pos=0.5]{$\alpha'$} (1j)
;
\end{tikzpicture}
$$ 
commutes in $\cP(V\times_{V''}V)$. This is indeed the case, since pulling the diagram back to $\cP(V\times_{V''}V)$ via the faithful functor $(f\times f)^*$ gives the above diagram for $q$, which is commutative. 

\end{proof}

\end{definition}

\begin{lemma}[A morphism admitting a section is of effective descent]\label{secteff}
Suppose we have a morphism $f:U'\to U$ admitting a section $s:U\to U'$, so that $f\circ s=\id_U$. 
Then $f$ is of effective descent. More explicitly, the functor $$f^*=f^*_\Box:\cP(U)\to\DD(f)$$ has a quasi-inverse $\sigma^*$ associated to the box
$$
 \begin{tikzpicture}
[cross line/.style={preaction={draw=white, -,
line width=4pt}}]
\matrix(m)[matrix of math nodes, row sep=1.8em, column sep=1.8em, text height=1.5ex, text depth=0.25ex]
{	 |(1i)| {U'} 	&	|(0i)|{U}	\\   [.02em]
	|(1j)| {U}	 		&  |(0j)|{U} \\};
\path[->,font=\scriptsize,>=to, thin]
(0i) edge node[above,pos=0.5]{$s$} (1i) edge node[right,pos=0.4]{$id$} (0j) 
(0j) edge node[above,pos=0.5]{$\id$} (1j) 
(1i) edge node[left,pos=0.5]{$f$} (1j)
(1i) edge[draw=none] node [above=-.2em]{$\sigma$} (0j)
;
\end{tikzpicture}
$$
\end{lemma}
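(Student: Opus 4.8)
The plan is to show that the box $\sigma$ displayed in the statement induces, through the pullback construction of \ref{pbdesc}, a functor
$$
\sigma^{*}\colon\DD(f)\longrightarrow\cP(U)
$$
which is a two-sided quasi-inverse of $f^{*}=f^{*}_{\Box}$, by recognising each of the two composites $\sigma^{*}f^{*}$ and $f^{*}\sigma^{*}$ as the pullback functor of a horizontal composite of boxes that is homotopy equivalent to an identity box.

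First I would spell out the two functors. Unwinding \ref{def-dd} and \ref{pbdesc}, $f^{*}$ sends $P\in\cP(U)$ to $(f^{*}P,\psi_{P})$, where $\psi_{P}\colon\pi_{1}^{*}f^{*}P\to\pi_{2}^{*}f^{*}P$ is the canonical coherence isomorphism arising from the equality $f\pi_{1}=f\pi_{2}\colon U'\times_{U}U'\to U$; and $\sigma^{*}$ sends $(P',\varphi')\in\DD(f)$ to $s^{*}P'\in\cP(U)=\DD(\id_{U})$, the a priori descent datum it carries over $U\times_{U}U\cong U$ being forced to be trivial by the unit axiom for descent data. Both assignments are functorial by functoriality of pullback along $f$, resp.\ $s$.

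Next, by \ref{rem-horbox} the composite $\sigma^{*}f^{*}$ is the pullback functor of the horizontal composite (in the sense of \ref{def-horbox}) of the boxes $f_{\Box}$ and $\sigma$; this composite box runs between the trivial one-element family $\{\id_{U}\}$ and itself, and its base component is $f\circ s=\id_{U}$. It therefore has the same base morphism as the identity box on $\{\id_{U}\}$, so by the homotopy-invariance clause of \ref{pbdesc} the two induce canonically isomorphic pullback functors; since the identity box induces the identity functor on $\cP(U)=\DD(\id_{U})$, we get $\sigma^{*}f^{*}\simeq\id$ — concretely this is just $s^{*}f^{*}P=(fs)^{*}P=P$. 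Symmetrically, $f^{*}\sigma^{*}$ is the pullback functor of the horizontal composite of $\sigma$ and $f_{\Box}$, a box between the family $\{f\colon U'\to U\}$ and itself whose base component is again $f\circ s=\id_{U}$, hence homotopy equivalent to the identity box on $\{f\}$, and we obtain $f^{*}\sigma^{*}\simeq\id_{\DD(f)}$.

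I expect the only genuine difficulty to be bookkeeping: one must line up the orientation conventions for boxes against those for morphisms of families so that the two horizontal composites are identified correctly and the hypothesis ``$g=g'$'' of \ref{pbdesc} is literally met, while tracking the pseudofunctor coherence isomorphisms. For a reader wary of the box calculus, the substantive isomorphism $f^{*}\sigma^{*}(P',\varphi')\simeq(P',\varphi')$ can instead be built by hand from $\theta:=j^{*}\varphi'\colon f^{*}s^{*}P'\to P'$, where $j=(sf,\id_{U'})\colon U'\to U'\times_{U}U'$ (well defined since $fsf=f$); that $\theta$ intertwines the canonical descent datum on $f^{*}s^{*}P'$ with $\varphi'$ follows by pulling the cocycle identity for $\varphi'$ back along $(sf\pi_{1},\pi_{1},\pi_{2})\colon U'\times_{U}U'\to U'\times_{U}U'\times_{U}U'$ and simplifying with $fs=\id_{U}$. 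That pullback-of-the-cocycle computation, together with the coherence bookkeeping, is where the content sits; everything else is formal.
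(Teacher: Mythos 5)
Your proposal is correct and follows essentially the same route as the paper: identify $\sigma^{*}f^{*}_{\Box}$ with $(f_{\Box}\circ\sigma)^{*}$, which is literally the identity box since $f\circ s=\id_U$, and identify $f^{*}_{\Box}\sigma^{*}$ with $(\sigma\circ f_{\Box})^{*}$, whose top arrow is $s\circ f$ but whose base morphism is $\id_U$, so the homotopy-invariance clause of \ref{pbdesc} makes it isomorphic to $\id_{\DD(f)}$. The only slip is calling $f\circ s$ the ``base component'' of $\sigma\circ f_{\Box}$ (its top component is $s\circ f$ and its base is $\id_U\circ\id_U$), but since you correctly invoke homotopy equivalence rather than equality there, the argument matches the paper's proof of \ref{secteff}.
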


\begin{proof}
The composite $\sigma\circ f_\Box$ represents the outer box of the diagram
$$
 \begin{tikzpicture}
[cross line/.style={preaction={draw=white, -,
line width=4pt}}]
\matrix(m)[matrix of math nodes, row sep=1.8em, column sep=1.8em, text height=1.5ex, text depth=0.25ex]
{	 |(1i)| {U'} 	&	|(0i)|{U'}	\\   [.02em]
	|(1j)| {U}	 		&  |(0j)|{U} \\};
\path[->,font=\scriptsize,>=to, thin]
(0i.190) edge node[below,pos=0.5]{$\id$} (1i.-10) 
(0i.170) edge node[above,pos=0.5]{$s\circ f$} (1i.10) 
(0i) edge node[right,pos=0.4]{$f$} (0j) 
(0j) edge node[above,pos=0.5]{$\id$} (1j) 
(1i) edge node[left,pos=0.5]{$f$} (1j)
;
\end{tikzpicture}
$$
and let us denote the inner box by $\id_f$. These boxes are homotopy equivalent in the sense of the `moreover' clause of \ref{pbdesc}, whence 
$$
f^*_\Box \sigma^*\simeq (\sigma\circ f_\Box)^*\simeq \id_f^*=\id: \DD(f)\to \DD(f).
$$
Conversely, using the fact that $s$ is a section of $f$,  $$\sigma^*f^*_\Box\simeq (f_\Box\circ\sigma)^*=\id:\DD(\id_U)\to\DD(\id_U).$$

\end{proof}

\subsection{Classical and precategorical descent}

Let $u:U'\to U$ be a morphism in $\cC$, and consider the groupoid 
$$
\bbG_u
$$
associated to the kernel pair of $u$, as in \ref{precat-ker-pair}

Comparing \ref{gen-dd} and \ref{def-dd}, we see that we have an isomorphism
$$
\DD_{\cP}(u)=\cP^{\bbG_u}.
$$
Moreover, a box
$$
 \begin{tikzpicture}
[cross line/.style={preaction={draw=white, -,
line width=4pt}}]
\matrix(m)[matrix of math nodes, row sep=1.8em, column sep=1.8em, text height=1.5ex, text depth=0.25ex]
{	|(0i)|{U'}		& |(1i)| {V'}	\\   [.02em]
	|(0j)|{U}		& |(1j)| {V}	\\};
\path[->,font=\scriptsize,>=to, thin]
(0i) edge node[above,pos=0.5]{$f'$} (1i) edge node[left,pos=0.5]{$u$} (0j) 
(0j) edge node[above,pos=0.5]{$f$} (1j) 
(1i) edge node[right,pos=0.5]{$v$} (1j)
(1i) edge[draw=none] node [above=-.2em]{$\varphi$} (0j)
;
\end{tikzpicture}
$$ 
induces a morphism 
$$F=(f'\times f'\times f', f'\times f', f'):\bbG_v\to \bbG_u,$$
and the functors
$$
\varphi^*:\DD(u)\to\DD(v), \ \ \ \text{ and } \ \ \ \ F^*:\cP^{\bbG_u}\to \cP^{\bbG_v}
$$
are isomorphic. 

Hence, $\varphi$ is a morphism of descent/effective descent in the classical sense of \ref{def-effdesc} if and only if $F$ is a morphism of descent/effective precategorical descent in the sense of \ref{def:precat-desc}.

In particular, a morphism $u:U'\to U$ is a morphism of descent/effective descent in the classical sense if and only if $u_\Box$ is of descent/effective descent, if and only if the precategory morphism $\bbG_u\to \Disc{U}=\bbG_{\id_U}$ is a morphism of descent/effective precategorical descent. 

\subsection{Descent of precategory actions}

\begin{proposition}\label{prop:desc-precat}
Let $\cP:\cC^\op\to\Cat$ be a pseudofunctor, and consider the pseudofunctor
$$
\tilde{\cP}:\Precat(\cC)^\op\to\Cat, \ \ \ \bbX\mapsto \cP^{\bbX}.
$$

Let $f:\C\to\bbD\in\Precat(\cC)$ be a morphism of precategories in $\cC$ such that
\begin{enumerate}
\item $f_0$ is of effective descent for $\cP$; 
\item $f_1$ is descent morphism for $\cP$;
\item $f_2^*$ is faithful. 
\end{enumerate}
Then $f$ is of effective descent for $\tilde{\cP}$. 
\end{proposition}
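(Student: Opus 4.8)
The plan is to reduce the statement about the single morphism $f$ in $\Precat(\cC)$ to a levelwise statement about $f_0,f_1,f_2$ in $\cC$ by an exchange of two bilimits, and then to prove an abstract comparison lemma for bilimits over $\Delta_2$.

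First I would identify the category of descent data for $\tilde{\cP}$ along $f$. Since pullbacks in $\Precat(\cC)$ are computed levelwise, the kernel-pair groupoid $\bbG_f$ of $f$ in $\Precat(\cC)$, viewed as an object of $\Precat(\Precat(\cC))$, has at simplicial level $j$ exactly the kernel-pair groupoid $\bbG_{f_j}$ of $f_j:C_j\to D_j$, and its transition functors in the $j$-direction are those induced, via the identities $d_i f_\bullet=f_\bullet d_i$, $n f_\bullet=f_\bullet n$, $r_i f_\bullet=f_\bullet r_i$, $m f_\bullet=f_\bullet m$, from the structure maps of $\C$ and $\bbD$ on kernel pairs. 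Using $\tilde{\cP}(\bbX)=\cP^\bbX=\lim(\cP\circ\bbX)$ together with the fact that iterated bilimits may be computed in either order, I obtain an equivalence
$$\DD_{\tilde{\cP}}(f)\;\simeq\;\lim_{j}\DD_{\cP}(f_j)$$
(bilimit over $\Delta_2$) under which the comparison functor $\tilde{\cP}(\bbD)=\cP^\bbD=\lim_j\cP(D_j)\to\DD_{\tilde{\cP}}(f)$ becomes the functor induced on bilimits by the levelwise comparison functors $f_j^*:\cP(D_j)\to\DD_{\cP}(f_j)$. The hypotheses then say: (1) $f_0^*$ is an equivalence; (2) $f_1^*$ is fully faithful; and, since $f_2^*:\cP(D_2)\to\cP(C_2)$ factors as $\cP(D_2)\xrightarrow{f_2^*}\DD_{\cP}(f_2)\to\cP(C_2)$ with the second (forgetful) functor faithful, (3) forces $f_2^*:\cP(D_2)\to\DD_{\cP}(f_2)$ to be faithful.

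It then remains to prove the lemma: if $\tau:\bF\Rightarrow\bG$ is a pseudo-natural transformation of pseudofunctors $\Delta_2\to\Cat$ with $\tau_0$ an equivalence, $\tau_1$ fully faithful and $\tau_2$ faithful, then $\lim\tau:\lim\bF\to\lim\bG$ is an equivalence. Using the reduced description of $\Delta_2$-bilimits from \ref{gen-dd} — an object is a pair $(P_0,\varphi)$ with $P_0$ in the level-$0$ fibre and $\varphi$ an isomorphism in the level-$1$ fibre, subject to a unit identity verified in level $0$ and a cocycle identity verified in level $2$ — this is a diagram chase: for essential surjectivity one uses $\tau_0$ essentially surjective to produce $P_0$, then $\tau_1$ fully faithful to descend the structure isomorphism $\varphi$ (full faithfulness also reflecting invertibility), then $\tau_0$ faithful to verify the unit identity and $\tau_2$ faithful to verify the cocycle identity; for full faithfulness of $\lim\tau$ one uses only that $\tau_0$ is fully faithful, to descend a morphism, and that $\tau_1$ is faithful, to check the compatibility square. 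Applying the lemma to $\tau=(f_j^*)_j$ shows the comparison functor for $f$ is an equivalence, i.e.\ $f$ is of effective descent for $\tilde{\cP}$.

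The main obstacle is bookkeeping rather than conceptual: carrying out the exchange of the two bilimits with all coherence isomorphisms matched up, and threading the pseudo-naturality constraints of $\tau$ (the canonical identifications $\tau_1 d_i^*(-)\cong d_i^*\tau_0(-)$ and their analogues for $n$, $r_i$, $m$) through the diagram chase; once the two presentations of $\DD_{\tilde{\cP}}(f)$ are aligned, the comparison lemma itself is routine.
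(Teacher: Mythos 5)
Your proposal is correct and follows essentially the same route as the paper: the paper likewise first identifies $\DD_{\tilde{\cP}}(f)$ with the $\Delta_2$-bilimit of the levelwise descent-data categories $\DD_{\cP}(f_j)$ (via the $3\times 3$ diagram and the pseudofunctor $\DD_{\cP}$ on the arrow category), and then runs exactly your diagram chase — effective descent at level $0$ to produce the object, full faithfulness at level $1$ to descend the structure isomorphism, and faithfulness at level $2$ to verify the cocycle condition. The only difference is presentational: you package the second half as a reusable comparison lemma for pseudonatural transformations over $\Delta_2$ (and you are slightly more explicit about full faithfulness of the comparison functor and the unit identity than the paper is), but the underlying argument is identical.
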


\begin{proof}
We must show that the canonical morphism $$\cP^\bbD=\tilde{\cP}(\bbD)\to \tilde{\cP}^{\bbG_f}=\DD_{\tilde{\cP}}(f)$$ is an equivalence of categories, where 
$$
\bbG_f\in\Precat(\Precat(\cC))
$$ 
is given by
$$
\begin{tikzpicture} 
\matrix(m)[matrix of math nodes, row sep=0em, column sep=2em, text height=1.5ex, text depth=0.25ex]
 {
|(2)|{\bbG_2=\C\times_{\bbD} \C\times_{\bbD}\C}  & [1em] |(1)|{\bbG_1=\C\times_{\bbD}\C}		&[1em] |(0)|{\bbG_0=\C} \\
 }; 
\path[->,font=\scriptsize,>=to, thin]
([yshift=.4em]2.east) edge 
([yshift=.4em]1.west) 
(2) edge 
(1)
([yshift=-.4em]2.east) edge 
([yshift=-.4em]1.west) 
([yshift=.4em]1.east) edge 
 ([yshift=.4em]0.west) 
(0)  edge 
(1) 
([yshift=-.4em]1.east) edge 
([yshift=-.4em]0.west) 
;
\end{tikzpicture}
$$
with $\bbG_0, \bbG_1, \bbG_2\in \Precat(\cA)_{\ov\bbD}$. Expanding the components of these precategories as columns, we obtain a diagram
$$
 \begin{tikzpicture}
 [cross line/.style={preaction={draw=white, -,line width=4pt}}, proj/.style={dotted,-}]
 \def\triple#1#2{
 ([yshift=.4em]#1.east) edge 
 ([yshift=.4em]#2.west) 
(#1) edge 
(#2)
([yshift=-.4em]#1.east) edge 
([yshift=-.4em]#2.west) }
\def\double#1#2{
([yshift=.4em]#1.east) edge 
([yshift=.4em]#2.west) 
(#2)  edge 
(#1) 
([yshift=-.4em]#1.east) edge 
([yshift=-.4em]#2.west) }
\def\vtriple#1#2{
 ([xshift=.4em]#1.south) edge 
 ([xshift=.4em]#2.north) 
(#1) edge 
(#2)
([xshift=-.4em]#1.south) edge 
([xshift=-.4em]#2.north) }
\def\vdouble#1#2{
([xshift=.4em]#1.south) edge
([xshift=.4em]#2.north) 
(#2)  edge 
(#1) 
([xshift=-.4em]#1.south) edge[cross line] 
([xshift=-.4em]#2.north) }
\matrix(m)[matrix of math nodes, row sep=2em, column sep=2em,  
text height=1.2ex, text depth=0.25ex]
{
|(P22)|{C_2\times_{D_2}C_2\times_{D_2}C_2} 	&[0em] |(P12)|{C_2\times_{D_2}C_2}  	&[0em]	 |(P02)|{C_2}		\\[0em]
|(P21)|{C_1\times_{D_1}C_1\times_{D_1}C_1} 	&[0em] |(P11)|{C_1\times_{D_1}C_1}  	&[0em]	 |(P01)|{C_1}		\\[0em]
|(P20)|{C_0\times_{D_0}C_0\times_{D_0}C_0} 	&[0em] |(P10)|{C_0\times_{D_0}C_0}  	&[0em]	 |(P00)|{C_0}		\\};
\path[->,font=\scriptsize,>=to, thin,inner sep=1pt]
\triple{P22}{P12}
\double{P12}{P02}

\triple{P21}{P11}
\double{P11}{P01}

\triple{P20}{P10}
\double{P10}{P00}

\vtriple{P22}{P21}
\vdouble{P21}{P20}

\vtriple{P12}{P11}
\vdouble{P11}{P10}

\vtriple{P02}{P01}
\vdouble{P01}{P00}

;
\end{tikzpicture}
$$ 
where the rows are the groupoids $\bbG_{f_2}$, $\bbG_{f_1}$, $\bbG_{f_0}$ associated to kernel pairs of morphisms $f_2$, $f_1$, $f_0$ that constitute $f$.

An action $P\in \tilde{P}^{\bbG_f}$ consists of a diagram 
$$
\begin{tikzpicture} 
\matrix(m)[matrix of math nodes, row sep=0em, column sep=2em, text height=1.5ex, text depth=0.25ex]
 {
|(2)|{P_2}  & [1em] |(1)|{P_1}		&[1em] |(0)|{P_0} \\
 }; 
\path[->,font=\scriptsize,>=to, thin]
([yshift=.4em]2.east) edge 
([yshift=.4em]1.west) 
(2) edge 
(1)
([yshift=-.4em]2.east) edge 
([yshift=-.4em]1.west) 
([yshift=.4em]1.east) edge 
 ([yshift=.4em]0.west) 
(0)  edge 
(1) 
([yshift=-.4em]1.east) edge 
([yshift=-.4em]0.west) 
;
\end{tikzpicture}
$$
consisting of $P_2\in \tilde{\cP}(\bbG_2)=\cP^{\bbG_2}$,  $P_1\in \tilde{\cP}(\bbG_1)=\cP^{\bbG_1}$, $P_0\in \tilde{\cP}(\bbG_0)=\cP^{\bbG_0}$ and cartesian arrows in the fibration associated to $\tilde{\cP}$. Expanding the components of $P_2$, $P_1$ and $P_0$ as columns, we obtain a diagram
$$
 \begin{tikzpicture}
 [cross line/.style={preaction={draw=white, -,line width=4pt}}, proj/.style={dotted,-}]
 \def\triple#1#2{
 ([yshift=.4em]#1.east) edge 
 ([yshift=.4em]#2.west) 
(#1) edge 
(#2)
([yshift=-.4em]#1.east) edge 
([yshift=-.4em]#2.west) }
\def\double#1#2{
([yshift=.4em]#1.east) edge 
([yshift=.4em]#2.west) 
(#2)  edge 
(#1) 
([yshift=-.4em]#1.east) edge 
([yshift=-.4em]#2.west) }
\def\vtriple#1#2{
 ([xshift=.4em]#1.south) edge 
 ([xshift=.4em]#2.north) 
(#1) edge 
(#2)
([xshift=-.4em]#1.south) edge 
([xshift=-.4em]#2.north) }
\def\vdouble#1#2{
([xshift=.4em]#1.south) edge
([xshift=.4em]#2.north) 
(#2)  edge 
(#1) 
([xshift=-.4em]#1.south) edge[cross line] 
([xshift=-.4em]#2.north) }
\matrix(m)[matrix of math nodes, row sep=2em, column sep=2em,  
text height=1.2ex, text depth=0.25ex]
{
|(P22)|{P_{2,2}} 	&[1em] |(P12)|{P_{1,2}}  	&[1em]	 |(P02)|{P_{0,2}}		\\[0em]
|(P21)|{P_{2,1}} 	&[1em] |(P11)|{P_{1,1}}  	&[1em]	 |(P01)|{P_{0,1}}		\\[0em]
|(P20)|{P_{2,0}} 	&[1em] |(P10)|{P_{1,0}}  	&[1em]	 |(P00)|{P_{0,0}}		\\};
\path[->,font=\scriptsize,>=to, thin,inner sep=1pt]
\triple{P22}{P12}
\double{P12}{P02}

\triple{P21}{P11}
\double{P11}{P01}

\triple{P20}{P10}
\double{P10}{P00}

\vtriple{P22}{P21}
\vdouble{P21}{P20}

\vtriple{P12}{P11}
\vdouble{P11}{P10}

\vtriple{P02}{P01}
\vdouble{P01}{P00}

;
\end{tikzpicture}
$$ 
in the fibred category associated to $\cP$, with all morphisms cartesian. 

Hence, the rows yield actions 
$\bar{P}_2\in \cP^{\bbG_{f_2}}\simeq\DD_{\cP}(f_2)$, $\bar{P}_1\in \cP^{\bbG_{f_1}}\simeq\DD_{\cP}(f_1)$, $\bar{P}_0\in \cP^{\bbG_{f_0}}\simeq\DD_{cP}(f_0)$. Considering descent data as a pseudofunctor on the arrow category
$$
\DD_{\cP}:\Ar(\cC)^\op\to\Cat,
$$
the diagram
$$
\begin{tikzpicture} 
\matrix(m)[matrix of math nodes, row sep=0em, column sep=2em, text height=1.5ex, text depth=0.25ex]
 {
|(2)|{\bar{P}_2}  & [1em] |(1)|{\bar{P}_1}		&[1em] |(0)|{\bar{P}_0} \\
 }; 
\path[->,font=\scriptsize,>=to, thin]
([yshift=.4em]2.east) edge 
([yshift=.4em]1.west) 
(2) edge 
(1)
([yshift=-.4em]2.east) edge 
([yshift=-.4em]1.west) 
([yshift=.4em]1.east) edge 
 ([yshift=.4em]0.west) 
(0)  edge 
(1) 
([yshift=-.4em]1.east) edge 
([yshift=-.4em]0.west) 
;
\end{tikzpicture}
$$
determines an action 
$$
\bar{P}\in\DD_{\cP}^f
$$
where $f=(f_2,f_1,f_0)\in\Precat(\Ar(\cC))$. 

Hence, we have shown that
$$
\DD_{\tilde{\cP}}(f)\simeq \DD_{\cP}^f.
$$
Since $f_0$ is effective descent, there is an object $Q_0\in \cP(D_0)$ such that, writing $\bar{Q}_0=(Q_0,\id)$ for the trivial descent datum, we have $\bar{P}_0\simeq f_{0\Box}^*\bar{Q}_0$. The action isomorphism $d_{0,f}^*\bar{P}_0\stackrel{\alpha}{\to} d_{1,f}^*\bar{P}_0$ yields an isomorphism
$$
f_{1\Box}^*\bar{d}_0^*\bar{Q}_0\simeq d_{0,f}^*f_{0\Box}^*\bar{Q}_0\simeq d_{0,f}^*\bar{P}_0\stackrel{\alpha}{\to} d_{1,f}^*\bar{P}_0\simeq d_{1,f}^*f_{0\Box}^*\bar{Q}_0\simeq f_{1\Box}^*\bar{d}_1^*\bar{Q}_0,
$$
where we wrote $\bar{d}_0$ and $\bar{d}_1$ for the obvious boxes/morphisms $\id_{D_1}\to \id_{D_0}$ in $\Ar(\cC)$.  
Given that $f_1$ is descent, we obtain a unique action morphism 
$$
\bar{d}_0^*\bar{Q}_0\stackrel{\bar{\beta}}{\to}\bar{d}_1^*\bar{Q}_0
$$
such that $f_{1\Box}^*\bar{\beta}\simeq \alpha$. Note that $\bar{\beta}$ is uniquely determined by an isomorphism
$$
d_0^*Q_0\stackrel{\beta}{\to}d_1^*Q_0,
$$
and it remains to verify that $\beta$ satisfies the cocycle condition
$$
r_1^*\beta\circ r_0^*\beta\simeq m^*\beta
$$
in $\cP(D_2)$, or, equivalently, that $\bar{r}_1^*\bar{\beta}\circ\bar{r}_0^*\bar{\beta}$ and $\bar{m}^*\bar{\beta}$ agree up to coherence. Applying the functor $f_{2\Box}^*$ to both yields
$$
f_{2\Box}^*\bar{r}_1^*\bar{\beta}\circ f_{2\Box}^*\bar{r}_0^*\bar{\beta}\simeq r_{1,f}^*f_{1\Box}^*\bar{\beta}\circ\ r_{0,f}^*f_{1\Box}^*\bar{\beta}\simeq r_{1,f}^*\alpha\circ \circ\ r_{0,f}^*\alpha
$$
and
$$
f_{2\Box}^*\bar{m}^*\bar{\beta}\simeq m_{f}^*f_{1\Box}^*\bar{\beta}\simeq m_f^*\alpha,
$$
which agree up to coherence by the cocycle condition for $\alpha$. By faithfulness of $f_{2\Box}^*$, we obtain the cocycle condition for $\beta$, and we have constructed a unique action $(Q_0,\beta)\in\cP^{\bbD}$ (up to isomorphism) that lifts to $P$, as desired. 
\end{proof}

\subsection{Descent for quasi-projective morphisms}

\begin{proposition}\label{efdesc-qp}
A scheme morphism $f:X\to Y$ whose codomain $Y$ is the spectrum of a field $k$ is of effective descent for quasi-projective morphisms. 
\end{proposition}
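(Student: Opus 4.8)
The plan is to reduce to the case of a field extension and then run faithfully flat descent, the crucial input being that over a field every quasi-projective scheme is very amply embedded in a \emph{trivial} projective bundle.

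I would first reduce to $X=\spec K$ with $K/k$ a field extension. We may assume $X\neq\emptyset$; pick a point $x\in X$, set $K=\kappa(x)$, and let $h\colon\spec K\to X$ be the canonical morphism over $Y=\spec k$. Since effective descent for the fibration $\cP$ of quasi-projective morphisms is preserved under the cancellation ``$f\circ h$ of effective descent $\Rightarrow$ $f$ of effective descent'' --- equivalently, under refining the singleton cover $\{X\to Y\}$ by $\{\spec K\to Y\}$ --- it suffices to treat $g\colon\spec K\to\spec k$. This $g$ is affine and faithfully flat, hence fpqc, and the comparison functor $\cP(\spec k)\to\DD_{\cP}(g)$ is fully faithful by ordinary faithfully flat descent of morphisms of $k$-schemes (a morphism, or an isomorphism, of $k$-schemes may be tested and glued after the faithfully flat base change $(-)\otimes_kK$). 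Everything therefore reduces to essential surjectivity.

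So let $(W,\varphi)$ be a descent datum along $g$, with $W\to\spec K$ quasi-projective. Because $K$ is a field, $W$ is a quasi-compact locally closed subscheme of a trivial projective bundle $\mathbb{P}^n_K=\mathbb{P}^n_k\times_kK$, and $\mathbb{P}^n_k$ carries its canonical (effective) descent datum $\varphi^{\mathrm{can}}$ along $g$. If the immersion $W\hookrightarrow\mathbb{P}^n_K$ can be chosen compatibly with $\varphi$ and $\varphi^{\mathrm{can}}$, then --- since closed immersions (via their quasi-coherent ideals) and open subschemes descend along the fpqc morphism $g$, the box calculus of~\ref{pbdesc}--\ref{lem-vertbox} being the bookkeeping device for these compatibilities --- one descends $W$ to a quasi-projective $Z\hookrightarrow\mathbb{P}^n_k$ over $\spec k$ with $Z\otimes_kK\simeq W$ compatibly with $\varphi$, which is the object sought. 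Equivalently, one upgrades $(W,\varphi)$ to a descent datum for \emph{polarised} quasi-projective morphisms and invokes the effectiveness of fpqc descent for the polarised fibration.

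The main obstacle is exactly this upgrade, i.e.\ equipping $W$ with a $\varphi$-compatible relatively very ample invertible sheaf. Starting from any relatively very ample $L$ on $W$, the pullbacks $p_0^{*}L$ and $\varphi^{*}p_1^{*}L$ over $\spec(K\otimes_kK)$ are both relatively very ample but need not coincide; the discrepancy lies in the kernel condition for the homomorphism $\Pic(W)\to\Pic(W\times_{\spec k}\spec K)$, $M\mapsto\varphi^{*}p_1^{*}M\otimes(p_0^{*}M)^{-1}$, and one must produce a relatively very ample class in that kernel, together with a cocycle-coherent trivialisation. For a general base this step genuinely fails --- which is why only the polarised fibration descends along arbitrary fpqc morphisms --- and it is here that the hypothesis ``$Y$ is the spectrum of a field'' is used decisively: $W$ then has enough invertible sheaves, and a canonical geometrically rigid ambient space $\mathbb{P}^n_k$ to be re-embedded into, to make the correction possible.
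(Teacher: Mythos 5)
Your reduction to $g\colon\spec K\to\spec k$ is workable in spirit, though the ``cancellation'' you invoke is not the general fact that $f\circ h$ of effective descent implies $f$ of effective descent --- that implication requires $h$ to be a descent morphism, which the inclusion of a single point $\spec\kappa(x)\to X$ is not; one has to run the refinement argument properly, using that the \emph{other} projection $X\times_k\spec K\to X$ is fpqc to descend the comparison isomorphism. The genuine gap, however, is in the second half. Having reduced to a descent datum $(W,\varphi)$ along $\spec K\to\spec k$, you correctly identify that everything hinges on producing a relatively ample invertible sheaf $L$ on $W$ together with a cocycle-coherent isomorphism $p_0^*L\simeq\varphi^*p_1^*L$ --- and then you assert that this is possible ``because $Y$ is the spectrum of a field'' without giving any argument. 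That assertion is essentially equivalent to the effectivity you are trying to prove: if $(W,\varphi)$ descends to $Z$, an ample sheaf on $Z$ pulls back to such an $L$, and conversely such an $L$ upgrades the datum to a polarised one. So your final paragraph restates the problem rather than solving it. Worse, your reduction lands you at an \emph{arbitrary} field extension $K=\kappa(x)/k$, possibly infinitely generated and non-normal, for which no off-the-shelf effectivity result for unpolarised quasi-projective descent is available.

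The paper avoids this difficulty by a different decomposition: it chooses a \emph{finite} extension $k'/k$ with $X(k')\neq\emptyset$, so that $\spec k'\to\spec k$ is finite locally free surjective and hence of effective descent for quasi-projective morphisms by \cite[VIII, 7.7]{sga1} (this is where the line-bundle obstruction is genuinely dealt with, via the norm construction in SGA~1), while the base change $X'\to\spec k'$ acquires a section from the $k'$-point and is therefore of effective descent for trivial reasons (\ref{secteff}); the box calculus of \ref{pbdesc}--\ref{lem-vertbox} then splices the two along $g_\Box\circ\gamma=f_\Box\circ\alpha$. The section trick is the idea your proposal is missing: it is what allows one never to confront descent along a large field extension at all. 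To salvage your route you would need either to prove the equivariant-ample-sheaf claim directly (in effect redoing and extending the SGA~1 argument beyond the finite locally free case), or to reroute through a finite extension with a rational point as the paper does.
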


\begin{proof}
Let $k'$ be a finite extension of $k$ with $X(k')\neq \emptyset$. Writing $Y'=\spec(k')$, we have a pullback diagram
$$
 \begin{tikzpicture}
[cross line/.style={preaction={draw=white, -,
line width=4pt}}]
\matrix(m)[matrix of math nodes, row sep=1.8em, column sep=1.8em, text height=1.5ex, text depth=0.25ex]
{	 |(1i)| {X} 	&	|(0i)|{X'}	\\   [.02em]
	|(1j)| {Y}	 		&  |(0j)|{Y'} \\};
\path[->,font=\scriptsize,>=to, thin]
(0i) edge node[above,pos=0.5]{$g'$} (1i) edge node[right,pos=0.4]{$f'$} (0j) 
(0j) edge node[above,pos=0.5]{$g$} (1j) 
(1i) edge node[left,pos=0.5]{$f$} (1j)
;
\end{tikzpicture}
$$
where $f'$ admits a section $s$ afforded by a $k'$-point of $X'$. The morphism $g$ is finite locally free surjective, hence of effective descent for quasi-projective morphisms by \cite[VIII, 7.7]{sga1}, and so is $g'$ as a base-change of $g$. 
Consider the diagram

 $$
 \begin{tikzpicture}
[cross line/.style={preaction={draw=white, -,
line width=4pt}}]
\matrix(m)[matrix of math nodes, row sep=2em, column sep=2em, text height=1.5ex, text depth=0.25ex]
{	 |(1i)| {X} 	&	|(2i)|{X'}	& 	|(3i)|{Y'}	& |(4i)|{X'} \\   [.02em]
	 |(1j)| {Y} 	&	|(2j)|{Y}   & 	|(3j)|{Y} 	& |(4j)|{Y} \\};
\path[->,font=\scriptsize,>=to, thin]
(2i) edge node[above,pos=0.5]{$g'$} (1i) edge node[right=-1.8em,pos=0.4]{$fg'{=}gf'$} (2j) 
(2j) edge node[above,pos=0.5]{$\id$} (1j) 
(1i) edge node[left,pos=0.4]{$f$} (1j)
(1i) edge[draw=none] node [above=-.2em]{$\alpha$} (2j)
(3i) edge node[above,pos=0.5]{$s$} (2i) edge node[right,pos=0.4]{$g$} (3j) 
(3j) edge node[above,pos=0.5]{$\id$} (2j) 
(2i) edge[draw=none] node [above=-.2em]{$\beta$} (3j)
(4i) edge node[above,pos=0.5]{$f'$} (3i) edge node[right,pos=0.4]{$gf'$} (4j) 
(4j) edge node[above,pos=0.5]{$\id$} (3j) 
(3i) edge[draw=none] node [above=-.2em]{$\gamma$} (4j)
;
\end{tikzpicture}
$$ 
defining boxes $\alpha$, $\beta$ and $\gamma$ in $\cC$. 

Writing $\sigma$ for the box
$$
 \begin{tikzpicture}
[cross line/.style={preaction={draw=white, -,
line width=4pt}}]
\matrix(m)[matrix of math nodes, row sep=1.8em, column sep=1.8em, text height=1.5ex, text depth=0.25ex]
{	 |(1i)| {X} 	&	|(0i)|{Y'}	\\   [.02em]
	|(1j)| {Y'}	 		&  |(0j)|{Y'} \\};
\path[->,font=\scriptsize,>=to, thin]
(0i) edge node[above,pos=0.5]{$s$} (1i) edge node[right,pos=0.4]{$id$} (0j) 
(0j) edge node[above,pos=0.5]{$\id$} (1j) 
(1i) edge node[left,pos=0.5]{$f'$} (1j)
(1i) edge[draw=none] node [above=-.2em]{$\sigma$} (0j)
;
\end{tikzpicture}
$$
we directly verify that $\beta=\id_g*\sigma$ and $\gamma=\id_g*f'_\Box$. 

As in the proof of \ref{secteff}, we have that $\sigma\circ f'_\Box\sim \id_{f'}$, so 
$$\beta\circ\gamma=(\id_g*\sigma)\circ(\id_g*f'_\Box)=\id_g*(\sigma\circ f'_\Box)\sim  \id_g*\id_{f'}=\id_{g\circ f'},$$ whence
$$\gamma^*\beta^*\simeq\id.$$ Conversely, $\gamma\circ\beta$ yields identity on the nose, so we conclude that $\gamma^*$ is an equivalence of categories. 

Using the fact that $g'$ is of effective descent and that $g'\times g'$ is finite faithfully flat, Lemma~\ref{lem-vertbox} gives that the functor $\alpha^*$ associated to $\alpha=\id_f*g'_\Box$ is fully faithful. 

We directly verify that $g_\Box\circ\gamma=f_\Box\circ\alpha$, whence we obtain a diagram of categories

$$
 \begin{tikzpicture}
[cross line/.style={preaction={draw=white, -,
line width=4pt}}]
\matrix(m)[matrix of math nodes, row sep=.9em, column sep=.5em, text height=1.5ex, text depth=0.25ex]
{			& |(ij)| {\DD(f)}	&				\\   [.02em]
	|(i)|{\cP(Y)} 	&			& |(j)|{\DD(fg')} 		\\  [.02em]
			& |(0)|{\DD(g)} 		&				\\};
\path[->,font=\scriptsize,>=to, thin]
(i) edge node[above left,pos=0.5]{$f^*_\Box$} (ij) edge node[below left,pos=0.5]{$g^*_\Box$} (0) 
(0) edge node[below right,pos=0.5]{$\gamma^*$} (j) 
(ij) edge node[above right,pos=0.5]{$\alpha^*$} (j)
;
\end{tikzpicture}
$$ 
where we know that $\alpha^* f^*_\Box=\gamma^* g^*_\Box$ is an equivalence of categories, and $\alpha^*$ is fully faithful, so we deduce that $f^*_\Box$ is also an equivalence. 
\end{proof}

\section{Categorical Galois theory}\label{s:cat-gal}

\subsection{Classical Janelidze's categorical Galois theory}\label{ss:janelidze-gal}

Consider an adjoint pair of functors
\begin{center}
 \begin{tikzpicture} 
 [cross line/.style={preaction={draw=white, -,
line width=3pt}}]
\matrix(m)[matrix of math nodes, minimum size=1.7em,
inner sep=0pt, 
row sep=3.3em, column sep=1em, text height=1.5ex, text depth=0.25ex]
 { 
  |(dc)|{\cA}	\\
 |(c)|{\cX} 	      \\ };
\path[->,font=\scriptsize,>=to, thin]
%
(dc) edge [bend right=30] node (ss) [left]{$S$} (c)
(c) edge [bend right=30] node (ps) [right]{$C$} (dc)
(ss) edge[draw=none] node{$\dashv$} (ps)
;
\end{tikzpicture}
\end{center}
with unit $\eta:\id\to CS$ and counit $\epsilon:SC\to \id$.
If $\cA$ admits pullbacks, for any $X\in\cA$ we obtain an adjunction
\begin{center}
 \begin{tikzpicture} 
 [cross line/.style={preaction={draw=white, -,
line width=3pt}}]
\matrix(m)[matrix of math nodes, minimum size=1.7em,
inner sep=0pt, 
row sep=3.3em, column sep=1em, text height=1.5ex, text depth=0.25ex]
 { 
  |(dc)|{\cA_{\ov X}}	\\
 |(c)|{\cX_{\ov S(X)}} 	      \\ };
\path[->,font=\scriptsize,>=to, thin]
%
(dc) edge [bend right=30] node (ss) [left]{$S_X$} (c)
(c) edge [bend right=30] node (ps) [right]{$C_X$} (dc)
(ss) edge[draw=none] node{$\dashv$} (ps)
;
\end{tikzpicture}
\end{center}
where  
$$
S_X(A\xrightarrow{a}X)=S(A)\xrightarrow{S(a)}S(X),
$$
and  $C_X(E\xrightarrow{e}S(X))$ is obtained by forming the pullback
\begin{center}
 \begin{tikzpicture} 
\matrix(m)[matrix of math nodes, row sep=2em, column sep=2em, text height=1.9ex, text depth=0.25ex]
 {
 |(1)|{C_X(e)}		& |(2)|{C(E)} 	\\
 |(l1)|{X}		& |(l2)|{CS(X)} 	\\
 }; 
\path[->,font=\scriptsize,>=to, thin]
(1) edge  (2) edge   (l1)
(2) edge node[right]{$C(e)$} (l2) 
(l1) edge node[above]{$\eta_X$}  (l2);
\end{tikzpicture}
\end{center}
in $\cA$.

A morphism $X\xrightarrow{f}Y$ {in $\cA$} gives rise to the pullback/base change functor
$$
f^*:\cA_{\ov Y}\to\cA_{\ov X},
$$
which admits a left adjoint
$$
f_!:\cA_{\ov X}\to\cA_{\ov Y},  \ \ \  a\mapsto f\circ a.
$$
{Following \cite[Def. 5.1.7]{borceux-janelidze}} an object $A\xrightarrow{a}Y\in\cA_{\ov Y}$ is \emph{split} by $X\xrightarrow{f}Y\in\cA$ when the unit $\eta^X:\id\to C_XS_X$ of adjunction $S_X\dashv C_X$ gives an isomorphism
$$
\eta^X_{f^*a}:f^*a\to C_XS_X(f^*a).
$$
If $C_X$ is fully faithful, $a$ is split by $f$ {(\cite[Cor. 5.1.13]{borceux-janelidze})}, if and only if there exists an object $E\xrightarrow{e}S(X)$ such that 
$$
f^*a\simeq C_X(e).
$$
We write
$$
\Split_Y(f)
$$
for the full subcategory of $\cA_{\ov Y}$ of objects split by $f$.

The morphism $f$ is of \emph{relative Galois descent} if
\begin{enumerate}
\item $f^*$ is monadic;
\item the counit $\epsilon^X:S_XC_X\to \id$ of adjunction $S_X\dashv C_X$ is an isomorphism;
\item for every $E\xrightarrow{e}S(X)$ in $\cX_{\ov S(X)}$, the object $f_!\, C_X(e)\in\cA_{\ov Y}$ is split by $f$.
\end{enumerate} 
 If $X\xrightarrow{f}Y$ is of relative Galois descent, the Galois precategory
$$
\Gal[f]=S(\bbG_f)
$$
is actually an internal groupoid in $\cX$ given by the data

\begin{center}
 \begin{tikzpicture} 
\matrix(m)[matrix of math nodes, row sep=0em, column sep=3em, text height=1.5ex, text depth=0.25ex]
 {
|(0)|{S(X\times_YX)\times_{S(X)}S(X\times_YX)}  &[3em]  |(1)|{S(X\times_YX)}		&[1em] |(2)|{S(X)} \\
 }; 
\path[->,font=\scriptsize,>=to, thin]
(0) edge node[above]{$(S(\pi_1),S(\pi_4))$} (1)
([yshift=1em]1.east) edge node[above=-2pt]{$S(\pi_1)$} ([yshift=1em]2.west) 
(2)  edge node[above=-2pt]{$S(\Delta)$} (1) 
([yshift=-1em]1.east) edge node[above=-2pt]{$S(\pi_2)$} ([yshift=-1em]2.west) 
 (1) edge [loop below] node {$S(\tau)$} (1)
;
\end{tikzpicture}
\end{center}
where $\tau$ is the morphism interchanging the copies of $X$, and $\Delta$ is the diagonal.

Janelidze's \emph{Galois theorem} {(\cite[Thm. 5.1.24]{borceux-janelidze})} gives  an equivalence of categories
$$
\Split_Y(f)\simeq \cX^{\Gal[f]}.
$$
of $f$-split objects and the actions of the internal groupoid $\Gal[f]$ in $\cX$, as in \ref{actions-self-ind}.

The proof consists in verifying that the monad $\mathbb{T}$ associated to the adjunction 
\begin{center}
 \begin{tikzpicture} 
 [cross line/.style={preaction={draw=white, -,
line width=3pt}}]
\matrix(m)[matrix of math nodes, minimum size=1.7em,
inner sep=0pt, 
row sep=3.3em, column sep=1em, text height=1.5ex, text depth=0.25ex]
 { 
  |(dc)|{\Split_Y(f)}	\\
 |(c)|{\cX_{\ov S(X)}} 	      \\ };
\path[->,font=\scriptsize,>=to, thin]
%
(dc) edge [bend left=30] node (ss) [right]{$U=S_X\,f^*$} (c)
(c) edge [bend left=30] node (ps) [left]{$F=f_!\,C_X$} (dc)
(ss) edge[draw=none] node{$\dashv$} (ps)
;
\end{tikzpicture}
\end{center}
of the monadic functor $U$ {(\cite[Cor. 5.1.21]{borceux-janelidze})}, with functorial part $T=UF$, is isomorphic to the monad $\mathbb{T'}$ on $\cX_{\ov S(X)}$ associated to the adjunction 
\begin{center}
 \begin{tikzpicture} 
 [cross line/.style={preaction={draw=white, -,
line width=3pt}}]
\matrix(m)[matrix of math nodes, minimum size=1.7em,
inner sep=0pt, 
row sep=3.3em, column sep=1em, text height=1.5ex, text depth=0.25ex]
 { 
  |(dc)|{\cX^{\Gal[f]}}	\\
 |(c)|{\cX_{\ov S(X)}} 	      \\ };
\path[->,font=\scriptsize,>=to, thin]
%
(dc) edge [bend left=30] node (ss) [right]{$U'$} (c)
(c) edge [bend left=30] node (ps) [left]{$F'$} (dc)
(ss) edge[draw=none] node{$\dashv$} (ps)
;
\end{tikzpicture}
\end{center}
where $U'$ is the forgetful functor omitting the groupoid action, and $F'$ is the `representable internal diagram' functor, whose functorial part is $T'=d_{1!}d_0^*$ and the category of algebras is the category $\cX^{\Gal[f]}$. Hence, we obtain equivalences
$$
\Split_Y[f]\stackrel{K^\mathbb{T}}{\longrightarrow} (\cX_{\ov S(X)})^\mathbb{T}\simeq (\cX_{\ov S(X)})^{\mathbb{T}'}\stackrel{K^{\mathbb{T}'}}{\longleftarrow} \cX^{\Gal[f]},
$$
where we wrote $K^\mathbb{T}$ and $K^{\mathbb{T}'}$ for the comparison functors of the respective monads. 

In this case, modulo the identification of the category of $\mathbb{T}$-algebras $(\cX_{\ov S(X)})^{\mathbb{T}}$ with  $\cX^{\Gal[f]}$, 
the functors realising the sought-after equivalence are the comparison functor 
$$
\Phi=K^\mathbb{T}:\Split_Y(f)\xrightarrow{\sim} (\cX_{\ov S(X)})^{\mathbb{T}},\ \ \   p\mapsto (U(p),U(\varepsilon_p)),
$$
and its left adjoint
$$
\Psi: (q,\nu)\mapsto
\begin{tikzcd}[cramped, column sep=normal, ampersand replacement=\&]
{\Coeq\left(FUF(q) \right.}\ar[yshift=2pt]{r}{F(\nu)} \ar[yshift=-2pt]{r}[swap]{\varepsilon_{Fq}} \&{\left.F(q)\right)}
\end{tikzcd},
$$
where $(Q\stackrel{q}{\to}X_0,\nu)$ is a $\mathbb{T}$-algebra and $\varepsilon:FU\to \id$ is the counit of the adjunction $F\dashv U$, and the coequaliser exists by the proof of Beck's monadicity criterion as in \cite[3.14]{barr-wells}.

By identifying $\mathbb{T}$ and $\mathbb{T}'$, and writing $\Gal[f]=(S(X\times_YX),S(X))=(G_1,X_0)$, the top arrow appearing in the coequaliser is obtained by applying $F$ to the action $G_1\times_{X_0} Q\stackrel{\nu}{\to} Q$, which, modulo the identification $X\times_{X_0}(G_1\times_{X_0}Q)\simeq X\times_{X_0}G_1\times_{X_0}Q$ gives the morphism
$$
\id\times \nu: X\times_{X_0}G_1\times_{X_0}Q\to X\times_{X_0}Q.
$$
If $p\in \Split_Y[f]$ then $f^*(p)\simeq C_X(q)$ for some $q$, whence the counit $\varepsilon_p$ is
$$
FUp=f_!C_XS_Xf^*p\simeq f_!C_XS_XC_Xq\simeq f_!C_Xq\simeq f_!f^*p\to p,
$$
so the bottom arrow $\varepsilon_{Fq}$ identifies with $f_!f^*Fq\to Fq$, which, modulo the isomorphism
$$
X\times_Y(X\times_{X_0}Q)\simeq (X\times_YX)\times_X(X\times_{X_0}Q)\simeq (X\times_{X_0}G_1)\times_X(X\times_{X_0}Q)\simeq X\times_{X_0}G_1\times_{X_0}Q,
$$
identifies with 
$$
\mu\times \id: X\times_{X_0}G_1\times_{X_0}Q\to X\times_{X_0}Q,
$$
where $\mu$ denotes the $\Gal[f]$-action on $X$. Thus, we may symbolically write the above coequaliser as the quotient
$$
\Psi(q,\nu)=\Coeq(\id\times\nu,\mu\times\id)=F(q)/\Gal[f]=(X\times_{X_0}Q)/\Gal[f],
$$
by the twisted-diagonal action of the Galois groupoid on $X\times_{X_0}Q$.


\subsection{Carboni-Magid-Janelidze Galois correspondence}

\begin{fact}[{\cite{carboni}}]
Let $\cC$ be a category with pullbacks and coequalisers of equivalence relations. In the presence of pullbacks, regular epimorphisms coincide with effective epimorphisms, and we call them \emph{quotients} for short. 

Let $G=(G_1, G_0,d_0,d_1,e,m)$ be an internal groupoid in $\cC$. Let us write $\tilde{G}\in\cC^G$ for the canonical action of $G$ on itself. There is a bijection
$$
\Sub(G)\simeq \Equiv(\tilde{G})
$$
between the set of subgroupoids of $G$ with the same object of objects and the set of equivalence relations on $\tilde{G}$ in $\cC^G$ as follows.

Given a subgroupoid $G'=(G_1',G_0)$ with $\iota:G_1'\hookrightarrow G_1$, the corresponding equivalence relation on $\tilde{G}$ is
$$
\begin{tikzcd}[cramped, column sep=4em, ampersand replacement=\&]
R_{G'}=G_1\times_{G_0}G_1'  \ar[yshift=2pt]{r}{\pi_1} \ar[yshift=-2pt]{r}[swap]{m(\id_{G_1}\times\iota)} \&G_1
\end{tikzcd}
$$  

Conversely, if $R\hookrightarrow \tilde{G}\times\tilde{G}$ is an equivalence relation on $\tilde{G}$ in $\cC^G$, we define the corresponding subgroupoid by the pullback
$$
 \begin{tikzpicture}
[cross line/.style={preaction={draw=white, -,
line width=4pt}}]
\matrix(m)[matrix of math nodes, row sep=1.8em, column sep=3.2em, text height=1.5ex, text depth=0.25ex]
{	|(0i)|{G_1'}		& |(1i)| {R}	\\   [.02em]
	|(0j)|{G_1}		& |(1j)| {G_1\times_{G_0}G_1}	\\};
\path[->,font=\scriptsize,>=to, thin]
(0i) edge node[above,pos=0.5]{} (1i) edge[right hook->] node[left,pos=0.5]{} (0j) 
(0j) edge node[above,pos=0.5]{$(\id_{G_1}, ed_1)$} (1j) 
(1i) edge[right hook->]  node[right,pos=0.5]{} (1j)
;
\end{tikzpicture}
$$ 
where $G_1\times_{G_0}G_1$ is the kernel pair of $d_1$.

A subgroupoid is called \emph{effective} is the associated equivalence relation is effective in the sense that it is the kernel pair of its coequaliser. 
\end{fact}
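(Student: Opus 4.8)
The plan is to establish the two assignments explicitly and to check that they are mutually inverse; both directions reduce to internal diagram chases using the groupoid structure maps, the inversion of $G$, and the relevant universal properties. First, for the assignment $G'\mapsto R_{G'}$: by construction $R_{G'}=G_1\times_{G_0}G_1'$ with legs $\pi_1$ and $m(\id_{G_1}\times\iota)$ is the internal relation ``$g$ is related to $g$ multiplied by an arrow of $G'$''. I would begin by checking that $(\pi_1,m(\id_{G_1}\times\iota))\colon R_{G'}\to\tilde{G}\times\tilde{G}$ is a monomorphism, which holds because $(g,h')\mapsto(g,m(g,h'))$ has an inverse on its image built from the inversion of $G$, so $R_{G'}$ is genuinely a subobject of $\tilde{G}\times\tilde{G}$. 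Reflexivity, symmetry and transitivity of this internal relation are then witnessed, respectively, by the unit $e$, the inversion, and the composition of the subgroupoid $G'$, each producing the required factorisation through $R_{G'}$. That $R_{G'}$ is a subobject in $\cC^G$, not merely in $\cC$, follows because the canonical $G$-action on $\tilde{G}$ is multiplication on the side opposite to that used by $G'$, so by internal associativity of $m$ the diagonal action on $\tilde{G}\times\tilde{G}$ restricts to $R_{G'}$.

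Next, for the assignment $R\mapsto G'_R$: pulling back the monomorphism $R\hookrightarrow\tilde{G}\times\tilde{G}$ along $(\id_{G_1},ed_1)$ yields a monomorphism $G_1'\hookrightarrow G_1$, which informally consists of the arrows that are $R$-related to the identity at the relevant object. I would verify the three subgroupoid axioms in turn. Containment of identities, i.e. that $e\colon G_0\to G_1$ factors through $G_1'$, follows from reflexivity of $R$, since $(\id_{G_1},ed_1)\circ e=(e,e)$ (using $d_1e=\id_{G_0}$), which factors through $R$. Closure under inversion uses symmetry of $R$ together with its $G$-equivariance: symmetry turns ``$g$ related to the identity'' into ``the identity related to $g$'', and acting by $g^{-1}$ (the anchors match, so the action applies) transports this to ``$g^{-1}$ related to the identity''. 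Closure under composition combines $G$-equivariance with transitivity: acting by $g$ on ``$g'$ related to the identity'' gives ``$g'g$ related to $g$'', which together with ``$g$ related to the identity'' yields ``$g'g$ related to the identity'' by transitivity. Each element-level statement is promoted to a morphism in $\cC$ by assembling the appropriate map into the domain of the restricted action morphism $R\times_{G_0}G_1\to R$ and the transitivity morphism, then invoking the universal property of the pullback defining $G_1'$ to obtain the factorisations $\iota'\colon G_1'\to G_1'$ and $m'\colon G_1'\times_{G_0}G_1'\to G_1'$.

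Finally, I would check that the two constructions are mutually inverse. Starting from $G'$, the pullback computing $(R_{G'})'$ selects the $g$ with $m(g,h')$ equal to an identity, equivalently with $h'=g^{-1}\in G_1'$; closure of $G_1'$ under inversion then identifies $(R_{G'})'$ with $G_1'$ as subobjects of $G_1$. Starting from $R$, the relation $R_{G'_R}$ relates $g$ to $m(g,h')$ with $h'$ in $G'_R$, i.e. with ``$h'$ related to the identity'' in $R$; using $G$-equivariance of $R$ to transport this to ``$m(g,h')$ related to $g$'', and conversely recovering the unique ``difference'' $h'$ from a pair in $R$ via the $G$-action, one gets that $R_{G'_R}$ and $R$ define the same subobject of $\tilde{G}\times\tilde{G}$. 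Compatibility with morphisms of subgroupoids and of equivalence relations, as well as the uniqueness needed for these identifications, comes from the uniqueness clauses in the pullbacks involved.

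I expect the main obstacle to be the second step, and within it the closure of $G_1'$ under inversion and composition: although the arguments are transparent on elements, realising ``act by $g^{-1}$'' and ``act by $g$'' as honest morphisms in $\cC$ requires carefully constructing the maps into the domains of the restricted action and transitivity morphisms of $R$, verifying the anchor-compatibility pullback conditions, and then chasing the resulting diagram to see that the factorisation through $G_1'$ furnished by its universal property satisfies the groupoid identities — this bookkeeping, rather than any conceptual difficulty, is the bulk of the proof.
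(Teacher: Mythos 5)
The paper does not prove this statement at all: it is recorded as a \emph{Fact} imported from the cited work of Carboni--Janelidze--Magid, so there is no internal proof to compare yours against. Judged on its own terms, your sketch is the standard argument and its strategy is sound: the translation ``$g\sim gh'$ for $h'\in G_1'$'' turns the unit, inversion and composition of the subgroupoid into reflexivity, symmetry and transitivity, equivariance comes from acting on the opposite side, the converse recovers the subgroupoid as the arrows related to the identity at their anchor, and the two round-trips reduce to $h'=g^{-1}$ (using closure under inversion) and to recovering the ``difference'' $a^{-1}b$ of an $R$-related pair. You correctly identify the one point that genuinely needs care, namely that the product $\tilde G\times\tilde G$ is taken in $\cC^G$ (hence is the kernel pair $G_1\times_{G_0}G_1$ of the anchor $d_1$), so all the element-level manipulations must respect anchor compatibility when promoted to morphisms via the universal properties of the pullbacks; this is bookkeeping rather than a gap. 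I see no missing idea, only the acknowledged diagram-chasing left to be written out.
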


\begin{theorem}\label{th-cmj-corr}
With notation from \ref{ss:janelidze-gal}, suppose that $f:X\to Y$ is of relative Galois descent with Galois groupoid $G=\Gal[f]$ internal in $\cX$. There is an anti-isomorphism
$$
\text{\rm SplitQuo}[f]\simeq \text{\rm EffSub}(\Gal[f])
$$
that assigns
$$
 \begin{tikzpicture}
[cross line/.style={preaction={draw=white, -,
line width=4pt}}]
\matrix(m)[matrix of math nodes, row sep=.9em, column sep=.5em, text height=1.5ex, text depth=0.25ex]
{			& |(ij)| {X}		&				& 			\\   [.02em]
	|(i)|{P} 	&			& 	\mapsto		& \Gal[X\to P]	\\  [.02em]
			& |(0)|{Y} 		&				&			\\ [1em]
			& X/G'				&	\mapsfrom	&	G'		\\};			
\path[->,font=\scriptsize,>=to, thin]
(ij) edge node[left,pos=0.2]{$$} (i) edge node[right,pos=0.5]{$f$} (0) 
(i) edge node[below left,pos=0.5]{$p$} (0) 
;
\end{tikzpicture}
$$ 
between the ordered set of quotients of $X$ over $Y$ in $\Split[f]$ and the ordered set of effective subgroupoids of $\Gal[f]$.
\end{theorem}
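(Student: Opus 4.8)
The plan is to deduce the correspondence by transporting it across Janelidze's Galois equivalence and then invoking the Carboni bijection recalled in the Fact above. First I would recall from \ref{ss:janelidze-gal} that relative Galois descent of $f$ gives an equivalence $\Phi\colon\Split_Y(f)\xrightarrow{\sim}\cX^{\Gal[f]}$, and observe that under $\Phi$ the object $f\colon X\to Y$ of $\Split_Y(f)$ corresponds to the canonical self-action $\tilde G$ of $G=\Gal[f]$: since $f^*f=\pi_1\colon X\times_YX\to X$, we get $\Phi(f)=S_X f^* f=(G_1\rightrightarrows G_0)$ equipped with its groupoid structure, which is exactly $\tilde G\in\cX^G$. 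Because an equivalence preserves and reflects pullbacks and coequalisers of equivalence relations, $\Split_Y(f)$ inherits from $\cX^{\Gal[f]}$ — hence from $\cX$, which we assume as in the Fact to have pullbacks and coequalisers of equivalence relations — these properties, and $\Phi$ restricts to an isomorphism between the poset of quotients of $X$ over $Y$ computed in $\Split[f]$ and the poset of quotients of $\tilde G$ in $\cX^G$.

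Next I would use the elementary fact that, in any category with pullbacks and coequalisers of equivalence relations, the kernel-pair construction furnishes an anti-isomorphism between the poset of quotients of a fixed object $A$ and the poset of effective equivalence relations on $A$ (ordered by inclusion), with inverse given by passing to coequalisers. Applying this to $A=\tilde G$ in $\cX^G$ and composing with the Carboni bijection $\Sub(G)\simeq\Equiv(\tilde G)$ from the Fact — which is monotone, since $R_{G'}=G_1\times_{G_0}G_1'$ is monotone in $G_1'$, and which matches effective subgroupoids with effective equivalence relations — yields an anti-isomorphism $\text{\rm SplitQuo}[f]\simeq\text{\rm EffSub}(\Gal[f])$, the net order reversal coming entirely from the kernel-pair step.

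It then remains to identify the two maps explicitly. For the forward direction, given a quotient $p\colon X\to P$ in $\Split[f]$ I would first verify that $X\to P$ is again of relative Galois descent — this localises the descent hypotheses on $f$ over $P$, using that $P$ is split by $f$ — so that $\Gal[X\to P]=S(\bbG_{X\to P})$ is a genuine groupoid with $\Gal[X\to P]_1=S(X\times_PX)$; one then identifies this with the subobject of $G_1=S(X\times_YX)$ that Carboni's recipe produces from the kernel pair of $\Phi(p)$, using that $\Phi$ carries the kernel pair of $p$ to the kernel pair of $\Phi(p)$ and unwinding the explicit formula for $\Phi$. For the reverse direction, an effective subgroupoid $G'$ gives $R_{G'}$, whose coequaliser in $\cX^G$ transports under $\Psi=\Phi^{-1}$ — described in \ref{ss:janelidze-gal} as the twisted-diagonal quotient $(X\times_{X_0}Q)/\Gal[f]$ — to the object represented by $X\to X/G'$, where $X/G'$ is the coequaliser of the action groupoid $X\times_{G_0}G_1'\rightrightarrows X$. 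The hard part will be precisely this last matching: pushing the abstract composite bijection through the explicit formulas for $\Phi$, $\Psi$ and Carboni's maps to confirm that it genuinely sends $p$ to $\Gal[X\to P]$ and $G'$ to $X/G'$, and in particular establishing that $X\to P$ inherits relative Galois descent so that $\Gal[X\to P]$ is defined and embeds as a subgroupoid of $\Gal[f]$; the order-theoretic and equivalence-theoretic steps of the first two paragraphs are routine by comparison.
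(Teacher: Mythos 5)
Your overall strategy coincides with the paper's: transport the problem across the equivalence $\Phi$ (with quasi-inverse $\Psi$) of \ref{ss:janelidze-gal}, note that $\Phi(f)=\tilde G$, pass from split quotients of $X$ over $Y$ to effective quotients of $\tilde G$ in $\cX^G$, then to effective equivalence relations via kernel pairs, and finally to effective subgroupoids via the Carboni bijection; the order-theoretic bookkeeping is as you describe. The paper's proof is exactly this skeleton, followed by a direct verification that the composite really sends $p$ to $\Gal[X\to P]$ and $G'$ to $X/G'$.

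The genuine gap is the step you yourself flag as ``the hard part'': the identification $G_{P,1}\simeq S(X\times_PX)$, i.e.\ that the Carboni subgroupoid attached to the kernel pair of $\Phi(p)$ is $\Gal[X\to P]$. Your proposed route --- $\Phi$ preserves kernel pairs plus ``unwinding the explicit formula for $\Phi$'' --- only yields $S(X\times_YX\times_PX)\simeq G_1\times_QG_1$, the kernel pair of $\tilde G\to Q$; the subgroupoid $G_{P,1}$ is the \emph{further} pullback of this along $(\id_{G_1},ed_1)$, and since $S_X$ is a left adjoint it does not commute with that pullback, so you cannot simply read off $S(X\times_PX)$. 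The paper's trick goes the other way: apply the right adjoint $C_X$ (which preserves pullbacks, and is fully faithful by condition (2) of relative Galois descent) to the pullback square defining $G_{P,1}$, use the splitting isomorphisms $C_X(G_1)\simeq f^*X$ and $C_X(Q)\simeq f^*P$ to rewrite the square in $\cA_{\ov X}$, simplify to $C_X(G_{P,1})\simeq X\times_PX$, and conclude $G_{P,1}\simeq S_XC_X(G_{P,1})\simeq S(X\times_PX)$. Relatedly, your detour through showing that $X\to P$ is itself of relative Galois descent is unnecessary (and not obviously available from the hypotheses): $\Gal[X\to P]=S(\bbG_{X\to P})$ makes sense as a precategory regardless, and the identification with the effective subgroupoid $G_P$ is what shows \emph{a posteriori} that it is a groupoid. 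The verification in the other direction, $G_{X/G'}\simeq G'$, which the paper obtains from effectivity of the equivalence relation $R_{G'}$, is also left implicit in your sketch but is comparatively routine.
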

\begin{proof}
We use the equivalence established by functors $\Phi$ and $\Psi$ from \ref{ss:janelidze-gal} and the fact that, in the presence of pullbacks, quotients (regular epimorphisms) agree with effective epimorphisms.

If the quotient $p$ of $X$ from the above diagram is $f$-split by $Q\stackrel{q}{\to}X_0=S(X)$, i.e., $f^*p\simeq C_X(q)$, applying the comparison functor $\Phi$ 
gives an effective quotient
 $$
 \begin{tikzpicture}
[cross line/.style={preaction={draw=white, -,
line width=4pt}}]
\matrix(m)[matrix of math nodes, row sep=.9em, column sep=.5em, text height=1.5ex, text depth=0.25ex]
{			& |(ij)| {\tilde{G}}			\\   [.02em]
	|(i)|{Q} 	&						\\  [.02em]
			& |(0)|{X_0} 				\\};
\path[->,font=\scriptsize,>=to, thin]
(ij) edge node[left,pos=0.2]{$$} (i) edge node[right,pos=0.5]{$$} (0) 
(i) edge node[below left,pos=0.5]{$q$} (0) 
;
\end{tikzpicture}
$$ 
and an effective equivalence relation $\tilde{G}\times_Q\tilde{G}\hookrightarrow\tilde{G}\times_{X_0}\tilde{G}$ on $\tilde{G}$. The corresponding  effective subgroupoid $G_P$ of $G$ is given by
$$
 \begin{tikzpicture}
[cross line/.style={preaction={draw=white, -,line width=4pt}}]
\matrix(m)[matrix of math nodes, row sep=1.8em, column sep=3.2em, text height=1.5ex, text depth=0.25ex]
{	|(0i)|{G_{P,1}}		& |(1i)| {G_1\times_Q G_1}	\\   [.02em]
	|(0j)|{G_1}		& |(1j)| {G_1\times_{G_0}G_1}	\\};
\path[->,font=\scriptsize,>=to, thin]
(0i) edge node[above,pos=0.5]{} (1i) edge[right hook->] node[left,pos=0.5]{} (0j) 
(0j) edge node[above,pos=0.5]{$(\id_{G_1}, ed_1)$} (1j) 
(1i) edge[right hook->]  node[right,pos=0.5]{} (1j)
;
\end{tikzpicture}
$$

Conversely,  an effective subgroupoid $G'$ of $G$ is associated with the action $$\tilde{G}/G'=\Coeq(R_{G'}\doublerightarrow{}{}\tilde{G})$$ 
in $\cX^G$ given as the coequaliser of its associated effective equivalence relation, and then taken to the split quotient
\begin{multline*}
\Psi(\tilde{G}/G')\simeq (X\times \tilde{G}/G')/G\simeq X/{G'}=f/G'
\\
=
\begin{tikzcd}[cramped, column sep=normal, ampersand replacement=\&]
{\Coeq\left(F(G_1'\to X_0)\to F(G_1\to X_0) \right.}\ar[yshift=2pt]{r}{\mu} \ar[yshift=-2pt]{r}[swap]{\text{\rm proj}} \&{\left.f\right)}
\end{tikzcd},
\end{multline*}
where $\mu:X\times_{X_0}G_1\to X$ denotes the action of $G$ on $X$.  

The assignments given above clearly establish an equivalence because they are constructed as restrictions of $\Phi$ and $\Psi$ to the appropriate full subcategories, but we find it useful to provide a direct proof of the correspondence.

By construction, the groupoid associated to $X/{G'}$ is
$$
G_{X/G'}=G\times_{G\times_{X_0}G}(G\times_{G/G'}G)\simeq G_{G\times_{X_0}G}(G\times_{X_0}G')\simeq G',
$$
where the isomorphism holds by effectivity. 

Conversely, if $P$ is $f$-split by $Q$, then 
$$
P\simeq \Psi(Q)=\Psi(G/G_P)= (X\times_{X_0} G/G_P)/G\simeq X/G_P.
$$

It remains to show that $G_{P,1}\simeq S(X\times_PX)$, i.e., 
$$
G_P\simeq \Gal[X\to P].
$$
By applying $C_X$ to the pullback diagram defining $G_{P,1}$ above, and using the fact that the right adjoint $C_X$ commutes with pullbacks, as well as the relations witnessing splitting of the objects involved, we obtain a pullback diagram
$$
 \begin{tikzpicture}
[cross line/.style={preaction={draw=white, -,line width=4pt}}]
\matrix(m)[matrix of math nodes, row sep=1.8em, column sep=3.2em, text height=1.5ex, text depth=0.25ex]
{	|(0i)|{C_X(G_{P,1})}		& |(1i)| {f^*(X)\times_{f^*(P)} f^*(X)}	\\   [.02em]
	|(0j)|{f^*(X)}		& |(1j)| {f^*(X)\times_{X}f^*(X)}	\\};
\path[->,font=\scriptsize,>=to, thin]
(0i) edge node[above,pos=0.5]{} (1i) edge[right hook->] node[left,pos=0.5]{} (0j) 
(0j) edge node[above,pos=0.5]{$(\id, \Delta\circ\text{proj})$} (1j) 
(1i) edge  node[right,pos=0.5]{} (1j)
;
\end{tikzpicture}
$$ 
which, using $f^*X\simeq X\times_YX$ and the groupoid structure of  $\bbG_f$, simplifies to $C_X(G_{P,1})\simeq X\times_PX$, whence $G_{P,1}\simeq S_X(X\times_PX)$. 
\end{proof}

\subsection{Indexed categorical Galois theory}

\begin{theorem}[{\cite[7.5.3, discussion after 7.6.2]{borceux-janelidze}}]\label{indexed-gal-th}

Suppose we are given
\begin{enumerate}
\item a functor $S:\cA\to \cX$;
\item pseudo-functors $K:\cX^\op\to\Cat$ and $L:\cA^\op\to\Cat$;
\item a pseudo-natural transformation $\alpha: K\circ S\Rightarrow L$; 
\item a `precategorical decomposition' of a morphism $X\stackrel{f}{\to}Y$ in $\cA$, i.e., a commutative diagram of morphisms of precategories
$$
 \begin{tikzpicture}
[cross line/.style={preaction={draw=white, -,
line width=4pt}}]
\matrix(m)[matrix of math nodes, row sep=1.2em, column sep=1.2em, text height=1.5ex, text depth=0.25ex]
{	|(i)|{\Disc{X}} 	&			& |(k)|{\Disc{Y}} 		\\  [.02em]
			& |(j)|{\C} 	&				\\};
\path[->,font=\scriptsize,>=to, thin]
(i) edge node[left,pos=0.6]{$i$} (j) edge node[above,pos=0.5]{$\Disc{f}$} (k) 
(j) edge node[right,pos=0.4]{$\pi$} (k) 
;
\end{tikzpicture}
$$ 
with $i_0=\id_X$, and
such that the components $\alpha_{C_0}$, $\alpha_{C_1}$ and $\alpha_{C_2}$ are full and faithful.

\end{enumerate}

If $(f,(i,\C,\pi))$ is of effective descent with respect to $L$, we have an equivalence of categories
$$
\Split_\alpha(f)\simeq K^{S\circ\C}.
$$

\end{theorem}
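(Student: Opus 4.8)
The plan is to follow the descent-theoretic route underlying the Borceux--Janelidze argument, realizing the asserted equivalence as a composite of two equivalences bridged through the category $L^\C$ of $\C$-actions in $L$. First I would unwind the descent hypothesis: by \ref{def:precat-desc}, saying that $(f,(i,\C,\pi))$ is of effective descent with respect to $L$ means precisely that the pullback functor along the precategory morphism $\pi\colon\C\to\Disc{Y}$,
$$
\pi^*\colon L(Y)=L^{\Disc{Y}}\longrightarrow L^\C,
$$
is an equivalence of categories. Commutativity of the decomposition gives $\pi\circ i=\Disc{f}$, and $i_0=\id_X$ forces $C_0=X$ and $\pi_0=f$; hence for $P\in L(Y)$ the underlying object of $\pi^*P$ over $C_0=X$ is $f^*P$. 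Recalling that $P\in\Split_\alpha(f)$ means $f^*P\simeq\alpha_{C_0}(Q)$ for some $Q\in K(S(X))$, we conclude that $\pi^*$ restricts to an equivalence between $\Split_\alpha(f)$ and the full subcategory $(L^\C)_{\mathrm{sp}}\subseteq L^\C$ of those actions $P$ whose underlying object $P_0\in L(X)$ lies in the essential image of $\alpha_{C_0}\colon K(S(X))\to L(X)$.

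Next I would construct a comparison functor $\Lambda\colon K^{S\circ\C}\to L^\C$ by applying the components $\alpha_{C_0},\alpha_{C_1},\alpha_{C_2}$ componentwise and transporting the structure isomorphisms of an action along the pseudo-naturality isomorphisms of $\alpha$ (so that, for instance, $d_0^*\alpha_{C_0}(R_0)\simeq\alpha_{C_1}(S(d_0)^*R_0)$); coherence of these isomorphisms makes the identity condition over $C_0$ and the cocycle condition over $C_2$ recorded in \ref{gen-dd} transport correctly, so $\Lambda$ is well defined and visibly lands in $(L^\C)_{\mathrm{sp}}$. I would then verify that $\Lambda\colon K^{S\circ\C}\to(L^\C)_{\mathrm{sp}}$ is an equivalence, which is where the three full-faithfulness hypotheses get consumed. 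Faithfulness and fullness are routine: a morphism of $(S\circ\C)$-actions, respectively of $\C$-actions, is a single morphism of underlying objects over $S(C_0)$, respectively over $C_0$, subject to one compatibility square over $S(C_1)$, respectively over $C_1$, and the bijection on morphisms follows from $\alpha_{C_0}$ being fully faithful and $\alpha_{C_1}$ being faithful. For essential surjectivity onto $(L^\C)_{\mathrm{sp}}$, take $P$ with $P_0\simeq\alpha_{C_0}(Q_0)$, $Q_0\in K(S(X))$; by \ref{gen-dd} the action structure on $P$ is an isomorphism $\varphi\colon d_0^*P_0\to d_1^*P_0$ over $C_1$ subject to the identity and cocycle conditions. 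Transporting $\varphi$ through the isomorphism $P_0\simeq\alpha_{C_0}(Q_0)$ and the pseudo-naturality isomorphisms produces an isomorphism $\alpha_{C_1}(S(d_0)^*Q_0)\to\alpha_{C_1}(S(d_1)^*Q_0)$, which by full faithfulness of $\alpha_{C_1}$ is of the form $\alpha_{C_1}(\psi)$ for a unique isomorphism $\psi\colon S(d_0)^*Q_0\to S(d_1)^*Q_0$ over $S(C_1)$; the identity and cocycle conditions for $\psi$ then follow from those for $\varphi$ by faithfulness of $\alpha_{C_0}$ and of $\alpha_{C_2}$ respectively, so $(Q_0,\psi)$ defines an object $R\in K^{S\circ\C}$ with $\Lambda(R)\simeq P$. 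Combining with the first step gives $\Split_\alpha(f)\simeq(L^\C)_{\mathrm{sp}}\simeq K^{S\circ\C}$.

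The part I expect to be the main obstacle is the essential-surjectivity step just described: one has to track the coherence isomorphisms of the pseudo-natural transformation $\alpha$ carefully enough to see simultaneously that the transported structure isomorphism genuinely lies in the essential image of the fully faithful functor $\alpha_{C_1}$ and that the cocycle and identity identities transfer in both directions --- this is exactly where all three full-faithfulness hypotheses on $\alpha_{C_0},\alpha_{C_1},\alpha_{C_2}$ are needed, and it is the categorical form of the statement that a descent datum is recognised downstairs once its underlying object is. A lighter but still genuine point is to make sure the effective-descent hypothesis is correctly read as the equivalence $\pi^*\colon L(Y)\to L^\C$ of the first step; this is where the packaging of the decomposition $(i,\C,\pi)$ and \ref{def:precat-desc} are actually used, and it is what distinguishes the \emph{indexed} formulation from the classical Galois theorem of \ref{ss:janelidze-gal}.
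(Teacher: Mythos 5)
The paper does not prove this theorem --- it is imported by citation from Borceux--Janelidze --- so there is no in-paper proof to compare against; your argument is a correct reconstruction of the standard one, factoring the equivalence as $\Split_\alpha(f)\simeq (L^{\C})_{\mathrm{sp}}\simeq K^{S\circ\C}$ via the descent equivalence $\pi^*$ and the comparison functor built from the components of $\alpha$, which is exactly the mechanism the paper relies on when it applies the theorem in \ref{scheme-dif-Galois}. Your reading of the effective-descent hypothesis as $\pi^*\colon L(Y)\to L^\C$ being an equivalence, and your allocation of the three full-faithfulness hypotheses (fully faithful $\alpha_{C_0},\alpha_{C_1}$ for the comparison, faithful $\alpha_{C_0},\alpha_{C_2}$ for transferring the unit and cocycle conditions), both match how the paper uses these hypotheses elsewhere (cf.\ \ref{univ-ce-desc}, \ref{CX-P}, \ref{prop:desc-precat}).
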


\section{Differential algebraic geometry}\label{s:dif-alg}

\subsection{Differential schemes}

A \emph{differential scheme} $$(X, (\cO_X,\delta_X))$$ is a differentially ringed space where $(X,\cO_X)$ is a scheme, and $\delta_X\in\Der(\cO_X,\cO_X)$.

A \emph{morphism of differential schemes} 
$$
(f,\varphi):(X,(\cO_X,\delta_X))\to (Y,(\cO_Y,\delta_Y))
$$
is a morphism of differentially ringed spaces which is also a scheme morphism, i.e., it is a scheme morphism $(f,\varphi):(X,\cO_X)\to (Y,\cO_Y)$ whose structure homomorphism is a morphism of differential rings $\varphi:(\cO_Y,\delta_Y)\to f_*(\cO_X,\delta_X)$, or, equivalently, its mate is a morphism of differential rings $\varphi^\sharp:f^*(\cO_Y,\delta_Y)\to (\cO_X,\delta_X)$. 

They constitute the \emph{category of differential schemes} denoted
$$
\DSch.
$$
We have an obvious functor 
$$
C:\Sch\to \DSch, \ \ \ (X,\cO_X)\mapsto (X,(\cO_X,0))
$$
that turns a scheme into a differential scheme with the trivial derivation $0$. 

Given a scheme morphism $(f,\varphi):(X,\cO_X)\to (S,\cO_S)$ and an $\cO_X$-module $\cF$, we say that an additive morphism $D:\cO_X\to\cF$ is an \emph{$S$-derivation} of $\cO_X$ to $\cF$ if it is an $f^*\cO_S$-derivation via $\varphi^\sharp:f^*\cO_S\to\cO_X$, or, equivalently, if $D_x:\cO_{X,x}\to \cF_x$ is an $\cO_{S,f(x)}$-derivation via $\varphi^\sharp_x:\cO_{S,f(x)}\to\cO_{X,x}$ for every $x\in X$. The collection of all $S$-derivations of $\cO_X$ to $\cF$ is denoted
$$
\Der_S(\cO_X,\cF).
$$

A differential scheme $(X,(\cO_X,\delta_X))$ equipped with a scheme morphism $(f,\varphi):(X,\cO_X)\to (S,\cO_S)$ is called an \emph{$S$-differential scheme} provided $\delta_X\in \Der_S(\cO_X,\cO_X)$. 

Clearly, an $S$-differential scheme $(X,\delta_X)$ is a morphism of differential schemes $(X,\delta_X)\to C(S)=(S,0)$. Thus, the category of $S$-differential schemes is the slice category
$$
\DSch_S=\DSch_{\ov C(S)}.
$$

\subsection{Differential schemes and vector fields}

Let $(f,\varphi):X\to S$ be a scheme morphism. By \cite[16.5.3]{EGAIV4}, the universal differential
$$
d_{X/S}:\cO_X\to \Omega^1_{X/S}
$$
is an $S$-derivation, and composing with $d_{X/S}$ induces an isomorphism of $\Gamma(X,\cO_X)$-modules
$$
\Hom_{\cO_X}(\Omega^1_{X/S},\cF)\stackrel{\sim}{\to}\Der_S(\cO_X,\cF)
$$
for any $\cO_X$-module $\cF$. 

Let $$T_{X/S}=\mathbf{V}(\Omega^1_{X/S})$$ be the tangent bundle of $X$ relative to $S$, defined as the vector bundle associated to the quasi-coherent $\cO_X$-module $\Omega^1_{X/S}$ (i.e., the spectrum of the quasi-coherent $\cO_X$-algebra $\mathrm{Sym}(\Omega^1_{X/S})$) see \cite[16.5.12]{EGAIV4}. 

For a point $x\in X$, the \emph{tangent space of $X$ at $x$ relative to $S$} is defined \cite[16.5.13]{EGAIV4} as
$$
T_{X/S}(x)=\left(T_{X/S}\times_X\spec(\kappa(x))\right)(\kappa(x))\simeq \Hom_{\kappa(x)}(\Omega^1_{X/S}\otimes_{\cO_X}\kappa(x),\kappa(x)).
$$

Choosing an $S$-derivation $\delta_X\in\Der_S(\cO_X,\cO_X)$, we therefore obtain a morphism of $\cO_X$-modules
$$
\Omega^1_{X/S}\to \cO_X,
$$
which yields a section $X\to T_{X/S}$ of the relative tangent bundle, that we think of as a \emph{vector field} on $X$ relative to $S$.

Pointwise, for every $x\in X$, pulling back the $\cO_X$-modules to $\kappa(x)$-modules via the morphism $\spec(\kappa(x))\to X$ yields a morphism
$$
\Omega^1_{X/S}\otimes_{\cO_X}\kappa(x)\to \kappa(x),
$$
an element of $T_{X/S}(x)$. This construction is in line with the classical notion of vector field in differential geometry as a map that sends a point to a vector in the corresponding tangent space. 

A point $x\in X$ is a \emph{leaf} for the vector field given by $\delta_X$ if the corresponding tangent vector at $x$ is $0$. 

The \emph{space of leaves} of a differential scheme $X$ is a ringed space 
$$
(X^\delta,\cO_{X^\delta}),
$$
where the set of leaves $X^\delta$ is endowed with the topology induced via the inclusion $i:X^\delta\to X$, and the structure sheaf is 
$$
\cO_{X^\delta}=i^{-1}\Const(\cO_X).
$$
A morphism $f:X\to Y$ of differential schemes induces a morphism of ringed spaces
$$
f^\delta:X^\delta\to Y^\delta.
$$

\subsection{Differential spectra and affine differential schemes}

The spectrum 
of the underlying ring of a differential ring $(A,\delta_A)$ carries a natural structure of a differential scheme
$$
\spec(A,\delta_A)=(\spec(A), (\cO_{\spec(A)},\delta_{\spec(A)})).
$$
Indeed, $$\delta_{\spec(A)}:\cO_{\spec(A)}\to \cO_{\spec(A)}$$ is determined on basic opens $D(f)$ in $\spec(A)$, for $f\in A$ by setting
$$
\delta_{\spec(A), D(f)}: \cO_{\spec(A)}(D(f))=A_f\to A_f=\cO_{\spec(A)}(D(f)), \ \ \ \frac{a}{f}\mapsto \frac{\delta_A(a)f-a\delta_A(f)}{f^2}.
$$
This construction extends to a functor 
$$
\spec:\DRng^\op\to \DSch,
$$
right adjoint to the global sections functor 
$$
\Gamma:(X,(\cO_X,\delta_X))\mapsto (\cO_X(X),\delta_{X,X})
$$
as the diagram
$$
 \begin{tikzpicture} 
 [cross line/.style={preaction={draw=white, -,
line width=3pt}}]
\matrix(m)[matrix of math nodes, minimum size=1.7em,
inner sep=0pt, 
row sep=3.3em, column sep=1em, text height=1.5ex, text depth=0.25ex]
 { 
  |(dc)|{\DSch}	\\
 |(c)|{\DRng^\op} 	      \\ };
\path[->,font=\scriptsize,>=to, thin]
%
(dc) edge [bend right=30] node (ss) [left]{$\Gamma$} (c)
(c) edge [bend right=30] node (ps) [right]{$\spec$} (dc)
(ss) edge[draw=none] node{$\dashv$} (ps)
;
\end{tikzpicture}
$$
depicts. 

A differential scheme is \emph{affine}, if it is isomorphic to a spectrum of a differential ring. Clearly, the category of affine differential schemes is anti-equivalent to the category of differential rings, i.e., 
$$
\DAff\simeq \DRng^\op.
$$
The set of leaves 
$$
\spec(A,\delta_A)^\delta
$$
is in bijection with the set of \emph{differential primes} of $A$.

\subsection{Differential schemes as precategory actions}\label{s:diffschprect}

Given a scheme $S$, let us consider the diagram of quasi-coherent $\cO_S$-algebras
$$
 \begin{tikzpicture} 
\matrix(m)[matrix of math nodes, row sep=0em, column sep=3em, text height=1.5ex, text depth=0.25ex]
 {
|(2)|{\cO_S[\epsilon_0,\epsilon_1]/(\epsilon_0^2,\epsilon_0\epsilon_1,\epsilon_1^2)}  & [1em] |(1)|{\cO_S[\epsilon]/(\epsilon^2)}		&[1em] |(0)|{\cO_S} \\
 }; 
\path[->,font=\scriptsize,>=to, thin]
([yshift=1em]1.west) edge node[above=-2pt]{$\epsilon_0\mapsfrom\epsilon$} ([yshift=1em]2.east) 
(1) edge node[above=-2pt]{$\epsilon_0+\epsilon_1\mapsfrom\epsilon$} (2)
([yshift=-1em]1.west) edge node[above=-2pt]{$\epsilon_1\mapsfrom\epsilon$} ([yshift=-1em]2.east) 
([yshift=1em]0.west) edge node[above=-2pt]{$\id+0 $} ([yshift=1em]1.east) 
(1)  edge node[above=-2pt]{$\epsilon\mapsto 0$} (0) 
([yshift=-1em]0.west) edge node[above=-2pt]{$\id+0$} ([yshift=-1em]1.east) 
;
\end{tikzpicture}
$$
Applying the spectrum of quasi-coherent $\cO_S$-algebras functor \cite[1.3]{EGAII}, we obtain a precategory $\bbD(S)$
$$
 \begin{tikzpicture} 
\matrix(m)[matrix of math nodes, row sep=0em, column sep=3em, text height=1.5ex, text depth=0.25ex]
 {
|(2)|{S_2}  & [1em] |(1)|{S_1}		&[1em] |(0)|{S_0} \\
 }; 
\path[->,font=\scriptsize,>=to, thin]
([yshift=1em]2.east) edge node[above=-2pt]{$r_0$} ([yshift=1em]1.west) 
(2) edge node[above=-2pt]{$m$} (1)
([yshift=-1em]2.east) edge node[above=-2pt]{$r_1$} ([yshift=-1em]1.west) 
([yshift=1em]1.east) edge node[above=-2pt]{$d_0$} ([yshift=1em]0.west) 
(0)  edge node[above=-2pt]{$n$} (1) 
([yshift=-1em]1.east) edge node[above=-2pt]{$d_1$} ([yshift=-1em]0.west) 
;
\end{tikzpicture}
$$
in $\Sch_{\ov S}$ consisting of schemes affine over $S_0=S$, and the underlying morphisms of topological spaces are all identities. 

Note, if we write 
$$
\bbD(\Z)=\bbD(\spec(\Z)), 
$$
then 
$$
\bbD(S)=\bbD(\Z)\times\Disc{S}=(S\times\spec(\Z[\epsilon_0,\epsilon_1]/(\epsilon_0^2,\epsilon_0\epsilon_1,\epsilon_1^2)),S\times\spec(\Z[\epsilon]/(\epsilon^2)), S).
$$

\begin{proposition}\label{diff-sch-precats}
The category of $S$-differential schemes is equivalent to the category of $\bbD(S)$-actions in $\Sch_{\ov S}$ (cf.~\ref{actions-self-ind}), 
$$
\DSch_S\simeq (\Sch_{\ov S})^{\bbD(S)}.
$$
\end{proposition}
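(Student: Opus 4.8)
The plan is to pass through the description of precategory actions in Remark~\ref{gen-dd} and then recognize the infinitesimal data that comes out as exactly a derivation. First I would unpack $\bbD(S)$: its bottom object $S_0$ is $S$, while $S_1$ and $S_2$ are the relative spectra over $S$ of $\cO_S[\epsilon]/(\epsilon^2)$ and $\cO_S[\epsilon_0,\epsilon_1]/(\epsilon_0^2,\epsilon_0\epsilon_1,\epsilon_1^2)$, all three affine over $S$ with underlying space canonically $|S|$. Dualizing the algebra diagram, the source and target $d_0,d_1\colon S_1\to S_0$ are \emph{both} the projection $p$ induced by the inclusion $\cO_S\hookrightarrow\cO_S[\epsilon]/(\epsilon^2)$, the degeneracy $n\colon S_0\to S_1$ is the closed immersion cut out by $(\epsilon)$, and $r_0,m,r_1\colon S_2\to S_1$ come from $\epsilon\mapsto\epsilon_0$, $\epsilon\mapsto\epsilon_0+\epsilon_1$, $\epsilon\mapsto\epsilon_1$. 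Because $d_0=d_1=p$, for any $S$-scheme $P_0$ the pullbacks $d_0^*P_0$ and $d_1^*P_0$ literally coincide with $p^*P_0=P_0\times_SS_1$, which, $p$ being affine, is the relative spectrum over $P_0$ of $\cO_{P_0}[\epsilon]/(\epsilon^2)$ and has underlying space $|P_0|$. Applying Remark~\ref{gen-dd} (to the self-indexing pseudofunctor of $\Sch_{\ov S}$), a $\bbD(S)$-action in $\Sch_{\ov S}$ is then the same thing as an $S$-scheme $P_0$ together with an automorphism $\varphi$ of $p^*P_0$ over $S_1$ satisfying $n^*\varphi=\id_{P_0}$ (the degeneracy triangle) and the cocycle identity $r_1^*\varphi\circ r_0^*\varphi=m^*\varphi$ over $S_2$.

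Next I would identify such a pair $(P_0,\varphi)$ with an $S$-differential scheme via the ``infinitesimal automorphism equals derivation'' principle. Since $n$ is a homeomorphism on underlying spaces, so is the closed immersion $c\colon P_0=n^*p^*P_0\hookrightarrow p^*P_0$, and the fact that $\varphi$ lies over $S_1$ gives $\varphi\circ c=c\circ n^*\varphi=c$, forcing $\varphi$ to be the identity on the space $|P_0|$. Hence $\varphi$ is precisely an automorphism $\varphi^\sharp$ of the sheaf of $\cO_S[\epsilon]/(\epsilon^2)$-algebras $\cO_{P_0}[\epsilon]/(\epsilon^2)$ on $|P_0|$, and $n^*\varphi=\id$ translates into $\varphi^\sharp\equiv\id\pmod\epsilon$. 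Writing $\varphi^\sharp(a)=a+\epsilon\,\delta(a)$ for a local section $a$ of $\cO_{P_0}$ (with $\varphi^\sharp(\epsilon)=\epsilon$), multiplicativity of $\varphi^\sharp$ is equivalent to the Leibniz rule for $\delta$ and $\cO_S$-linearity of $\varphi^\sharp$ to $\cO_S$-linearity of $\delta$, so $\varphi\mapsto\delta$ is a bijection onto $\Der_S(\cO_{P_0},\cO_{P_0})$, i.e. onto the possible derivation structures on $P_0$.

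Finally I would check that the cocycle condition over $S_2$ is \emph{automatic} --- this is the one place where the precise presentation of $S_2$ is used. Base-changing $\varphi$ along $r_0,m,r_1$ makes $r_0^*\varphi,m^*\varphi,r_1^*\varphi$ act on $\cO_{P_0}[\epsilon_0,\epsilon_1]/(\epsilon_0^2,\epsilon_0\epsilon_1,\epsilon_1^2)$ by $a\mapsto a+\epsilon_0\delta(a)$, $a\mapsto a+(\epsilon_0+\epsilon_1)\delta(a)$, $a\mapsto a+\epsilon_1\delta(a)$, all fixing $\epsilon_0,\epsilon_1$, and composing the first with the third gives $a\mapsto a+\epsilon_1\delta(a)+\epsilon_0\delta(a)+\epsilon_0\epsilon_1\delta^2(a)=a+(\epsilon_0+\epsilon_1)\delta(a)$ because $\epsilon_0\epsilon_1=0$ --- exactly $m^*\varphi$. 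So the pairs $(P_0,\varphi)$ arising from a derivation are automatically $\bbD(S)$-actions, and conversely every action is of this form. For the morphisms, Remark~\ref{gen-dd} says a map $(P_0,\varphi)\to(P_0',\varphi')$ is an $S$-morphism $\psi\colon P_0\to P_0'$ with $\varphi'\circ d_0^*\psi=d_1^*\psi\circ\varphi$; since $d_0=d_1=p$ both sides are morphisms $p^*P_0\to p^*P_0'$ over $S_1$, and unwinding on structure sheaves this says precisely that $\psi^\sharp$ intertwines $\delta'$ and $\delta$, i.e. that $\psi$ is a morphism of differential schemes over $C(S)$. Therefore $(X,\delta_X)\mapsto\bigl(X,\text{``}\id+\epsilon\,\delta_X\text{''}\bigr)$ is bijective on objects and fully faithful, yielding the equivalence $\DSch_S\simeq(\Sch_{\ov S})^{\bbD(S)}$. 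I expect the only genuine difficulty to be the globalization of the infinitesimal-automorphism dictionary --- using the degeneracy condition to force $\varphi$ to be topologically trivial, hence genuinely quasi-coherent-algebra data rather than a bare scheme automorphism --- together with the observation that $S_2$ has been cut by $\epsilon_0\epsilon_1$ precisely in order to make the cocycle condition vacuous; everything else is bookkeeping on top of Remark~\ref{gen-dd}.
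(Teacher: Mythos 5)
Your proposal is correct and follows essentially the same route as the paper: unwind Remark~\ref{gen-dd} to reduce a $\bbD(S)$-action to an $S_1$-automorphism $\varphi$ of $X\times_{S_0}S_1$ with $n^*\varphi=\id$, identify $\varphi=\id+\epsilon\delta$ with a derivation $\delta\in\Der_S(\cO_X,\cO_X)$, and observe that the cocycle condition over $S_2$ is automatic because $\epsilon_0\epsilon_1=0$. Your explicit use of the degeneracy condition to force $\varphi$ to be the identity on the underlying topological space is a detail the paper leaves implicit, but the argument is the same.
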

\begin{proof}
Using \ref{gen-dd}, an action is determined by a scheme morphism $X_0=X\to S=S_0$ and an $S_1$-automorphism $\alpha:X_1\to X_1$, where $X_1=X\times_{S_0}{S_1}$ satisfying $n^* \alpha=\id$ and the cocycle condition. Equivalently, it is given by an $\cO_S[\epsilon]/(\epsilon^2)$-automorphism of $\cO_{X_1}=\cO_{X}[\epsilon]/(\epsilon^2)$ which, tensored by the augmentation morphism $\eta_S:\cO_S[\epsilon]/(\epsilon^2)\to\cO_S$ gives $\id_{\cO_{X}}$, and it follows that it must be of the form 
$$
\id_{\cO_{X}[\epsilon]/(\epsilon^2)}+\epsilon\delta_\alpha\circ\eta_{X},
$$
where $\delta_\alpha\in \Der_S(\cO_{X},\cO_X)$ and $\eta_{X}:\cO_X[\epsilon]/(\epsilon^2)\to\cO_X$ is the augmentation homomorphism. Note that such a morphism always has an inverse $id_{\cO_{X}[\epsilon]/(\epsilon^2)}-\epsilon\delta_\alpha\circ\eta_{X}$.

The cocycle condition is trivially satisfied/superfluous, since $r_i^*\alpha$ is determined by the automorphism of  
$\cO_X[\epsilon_0,\epsilon_1]/(\epsilon_0^2,\epsilon_0\epsilon_1,\epsilon_1^2)$ given by $\id+\epsilon_i \delta_\alpha\circ\eta_{X,2}$ for $i=0,1$, and $m^*\alpha$ is given by $\id+(\epsilon_0+\epsilon_1)\delta_\alpha\circ\eta_{X,2}$, where we wrote $\eta_{X,2}$ for the augmentation homomorphism $\cO_X[\epsilon_0,\epsilon_1]/(\epsilon_0^2,\epsilon_0\epsilon_1,\epsilon_1^2)\to\cO_X$, and these morphisms compose in the way prescribed by the cocycle condition. 

Thus, an action uniquely determines a derivation, and we obtain an $S$-differential scheme $(X,(\cO_X,\delta_\alpha))$. 

Conversely, given an $S$-differential scheme $(X,(\cO_X,\delta_X))$, we can create a $\bbD(S)$-action from the diagram
$$
 \begin{tikzpicture} 
\matrix(m)[matrix of math nodes, row sep=0em, column sep=3em, text height=1.5ex, text depth=0.25ex]
 {
|(2)|{\cO_X[\epsilon_0,\epsilon_1]/(\epsilon_0^2,\epsilon_0\epsilon_1,\epsilon_1^2)}  & [1em] |(1)|{\cO_X[\epsilon]/(\epsilon^2)}		&[1em] |(0)|{\cO_X} \\
 }; 
\path[->,font=\scriptsize,>=to, thin]
([yshift=1em]1.west) edge node[above=-2pt]{$\epsilon_0\mapsfrom\epsilon$} ([yshift=1em]2.east) 
(1) edge node[above=-2pt]{$\epsilon_0+\epsilon_1\mapsfrom\epsilon$} (2)
([yshift=-1em]1.west) edge node[above=-2pt]{$\epsilon_1\mapsfrom\epsilon$} ([yshift=-1em]2.east) 
([yshift=1em]0.west) edge node[above=-2pt]{$\id+0$} ([yshift=1em]1.east) 
(1)  edge node[above=-2pt]{$\epsilon\mapsto 0$} (0) 
([yshift=-1em]0.west) edge node[above=-2pt]{$\id+\epsilon \delta_X$} ([yshift=-1em]1.east) 
;
\end{tikzpicture}
$$
by applying the spectrum of quasi-coherent $\cO_P$-algebras functor. All the resulting morphisms are cartesian, so we obtain an action by \ref{action-fibred}.
\end{proof}

\begin{remark}\label{C-for-precat}
The scheme of connected components is
$$
\pi_0(\bbD(S))=S
$$
because it is calculated as the coequaliser of morphisms $d_0, d_1$ in $\bbD(S)$, which agree. Hence, writing 
$$
\eta:\bbD(S)\to \Disc{S}
$$
for the associated morphism of precategories,  the functor 
$$
C:\Sch_{\ov S}\to \DSch_S, \ \ \  T\mapsto (T,0)
$$
corresponds, through the equivalence \ref{diff-sch-precats}, to the functor
$$
\eta^*: \Sch_{\ov S}\to (\Sch_{\ov S})^{\bbD(S)}.
$$
\end{remark}

\begin{definition}\label{delta-P}
Let 
$$
\cP:\Sch_{\ov S}^\op\to \Cat
$$
be a pseudofunctor on the category of $S$-schemes. 

We define a pseudofunctor
$$
\delta\da\cP:\DSch_S^\op\to \Cat, \ \ \ \delta\da\cP(X,\delta_X)=\cP^{\bbX},
$$
where $\bbX\in\Precat(\Sch_{\ov S})$ is the $\bbD(S)$-action corresponding to $X$, considered as a precategory by \ref{action-fibred}.
\end{definition}

\begin{remark}\label{diff-self}
Assume that the indexed data $\cP:\Sch_{\ov S}^\op\to \Cat$ is a full sub-pseudofunctor of the self-indexing of $\Sch_{\ov S}$, 
i.e., for $Y\in \Sch_{\ov S}$, $\cP(Y)$ is a full subcategory of $\Sch_{\ov Y}$. We can think of $\cP$ as being associated to a class of morphisms in $\Sch_{\ov S}$ that is stable under pullback, 
Then, the indexed data $\delta\da\cP$ is a natural differential analogue of that class in the sense that, for $(X,\delta_X)\in\DSch_S$,
$$
\delta\da\cP(X,\delta_X)=\{(P,\delta_P)\to (X,\delta_X)\in \DSch_S: P\to X \in \cP(X)\},
$$
i.e., it consists of those morphisms of differential schemes with target $(X,\delta_X)$ whose underlying morphism of schemes belongs to the class $\cP$. 
\end{remark}

\begin{example}
If $\cP(Y)$ is the category of quasi-projective morphisms $Q\to Y$, then, given an $S$-differential scheme $(X,\delta_S)$, the fibre $\delta\da\cP(X,\delta_X)$ is the category of $S$-differential scheme morphisms $(P,\delta_P)\to (X,\delta_X)$ such that $P\to X$ is quasi-projective.
\end{example}

\subsection{Differential schemes as formal group actions}

The \emph{additive formal group scheme} is given as the formal scheme
$$
\Ga=\spf(\Z[[t]]),
$$
where the group operation is deduced from comultiplication
$$
\Z[[t]]\to \Z[[t]]\widehat{\otimes}\Z[[u]]\simeq\Z[[t,u]], \ \ t\mapsto t+u,
$$
and the identity section from the counit map
$$
\Z[[t]]\to \Z, \ \ t\mapsto 0.
$$
The associated functor of points sends a ring $R$ to the additive group of its nilpotents
$$
\Ga(R)=\mathop{\rm Nil}(R).
$$

\begin{fact}[{Bardavid, \cite[8.3.1]{bardavid}}]\label{diffsch-Ga-action}
There is bijective correspondence between formal group actions of $\Ga$ on a scheme $X$ and systems of Hasse-Schmidt derivations on its structure sheaf $\cO_X$. 

In particular, differential schemes in characteristic $0$ are precisely $\Ga$-actions, 
$$
\DSch_{\Q}= (\Sch_{\ov\Q})^{\Ga}.
$$
\end{fact}
Indeed, an action is a formal scheme morphism
$$
\rho:\Ga\times X\to X
$$
satisfying the usual axioms. It is `infinitesimal' in the sense that it does not affect the underlying topological space of $X$, and,
at the level of structure sheaves,  for $U$ open in $X$, it is given by a `Taylor expansion' map
$$
\cO_X(U)\to \cO(U)[[t]], \ \ f\mapsto \sum_i d_i(f) t^i,
$$
where $(d_i)_{i\in\N}$ is a system of Hasse-Schmidt derivations.  

In characteristic $0$, we have that $\delta_i=\frac{\delta^i}{i!}$ for a derivation $\delta$ on $\cO_X$. In terms of functors of points, the action is given by the expression
\begin{alignat}{3}
\mathop{\rm Nil}(R)&\times \Hom(\spec(R), X)  &\longrightarrow  &\Hom(\spec(R),X),\\
(\epsilon &,  (\varphi,\varphi^\sharp)) & \longmapsto & (\varphi, \sum_i \frac{\varphi^\sharp\circ\delta^i}{i!}\epsilon^i),
\end{alignat}
where $\varphi^\sharp:\cO_X\to \varphi_*\cO_{\spec(R)}$, and we consider $\epsilon\in \mathop{\rm Nil}(R)$ as a global section of  $\cO_{\spec(R)}$, and the sum is finite because $\epsilon$ is nilpotent. 

\subsection{Trajectories and leaves}

Let $(X,\delta_X)$ be an $S$-differential scheme, where $S$ is a $\Q$-scheme. Let us write $G=\Ga\times S$ for the formal additive group scheme considered over $S$, let $\rho: G\times_S X\to X$ be the corresponding infinitesimal action by $\delta_X$, and write $\psi=(\rho,p_2): G\times_S X\to X\times_S X$. 

Given a generalised point $x:T\to X$, the corresponding \emph{action map} $a_x$ is obtained as the pullback
$$
 \begin{tikzpicture} 
\matrix(m)[matrix of math nodes, row sep=2em, column sep=2em, text height=1.9ex, text depth=0.25ex]
 {
 |(1)|{G\times_S T}		& |(2)|{G\times_S X} 	\\
 |(l1)|{X\times_S T}		& |(l2)|{X\times_S X} 	\\
 }; 
\path[->,font=\scriptsize,>=to, thin]
(1) edge node[above]{} (2) edge node[left]{$a_x$}  (l1)
(2) edge node[right]{$\psi$} (l2) 
(l1) edge node[above]{$\id\times x$}  (l2);
\end{tikzpicture}
$$
and it is given more explicitly by $a_x=(\rho\circ \id_G\times x,p_2)$. 

\begin{definition}\label{orbit}
With above notation, the \emph{orbit of $x$} is the scheme-theoretic image of the action map $a_x$,
$$
O(x)=\Im(a_x),
$$
and the \emph{reduced orbit of $x$} is the underlying reduced closed subscheme 
$$
|O|(x)=|O(x)|.
$$
\end{definition}

\begin{lemma}\label{lemma-traj}
Let $x\in X$ be a scheme-theoretic point, and consider the corresponding morphism $\bar{x}:\spec(\kappa(x))\to X$. The scheme-theoretic image of the composite morphism
$$
t_x: G\times_S\spec(\kappa(x))\stackrel{a_x}{\longrightarrow}X\times_S\\spec(\kappa(x))\stackrel{\pi_1}{\longrightarrow}X
$$
is integral, and its generic point 
is a leaf.
\end{lemma}
\begin{proof}
The statement is local on $X$, so we may assume that $X=\spec(A,\delta)$ is affine. The morphism $a_x$ corresponds to the map
$$
A\to \kappa(x)[[t]], \ \ \ f\mapsto \sum_i \frac{\alpha(\delta^i f)}{i!}t^i,
$$
where $\alpha: A\to \kappa(x)$ is associated with $\bar{x}$ and the point $x$ corresponds to $\p=\ker(\alpha)$.  The kernel of the above map is 
$$
\p_{\sharp}=\{f\in A: \delta^n(f)\in \p\text{ for all }n\},
$$
and Keigher has shown that $\p_{\sharp}$ is prime in \cite[1.5]{keigher-prime-diff-idl}.
\end{proof}

\begin{definition}\label{trajectory}
If $x\in X$ is a scheme-theoretic point, we define its \emph{trajectory} as the unique leaf $\Traj(x)$ satisfying
$$
\Im(t_x)=\overline{\{\Traj(x)\}},
$$
and \ref{lemma-traj} shows that the definition is meaningful and agrees with the notion from \cite{bardavid}. We obtain a morphism of ringed spaces
$$
\Traj: X\to X^\delta,
$$
which is shown by Bardavid \cite[8.5]{bardavid} to be submersive with
$$
\cO_{X^\delta}=\Const(\Traj_*\cO_X).
$$
\end{definition}

\begin{lemma}\label{O-vs-Traj}
With the above notation, for $x\in X$, $\Traj(x)$ is the generic point of the scheme-theoretic image of $O(x)$ via $\pi_1$, i.e.
$$
\Im(t_x)=\pi_1(O(x)).
$$
\end{lemma}

\begin{lemma}\label{O-under-maps}
Let $f:X\to Y$ be a morphism of S-differential schemes. 
If $x:T\to X$ is a generalised point, and $y=f\circ x$ its image in $Y$, then $O(y)$ is the scheme-theoretic image of $O(x)$ under the morphism $f\times \id_T$,
$$
O(y)=(f\times\id_T)(O(x)).
$$
Consequently, if $x\in X$ is a scheme theoretic point, then
$$
\Traj(f(x))=f(\Traj(x)).
$$
\end{lemma}
Both lemmas are immediate using the familiar behaviour of scheme-theoretic images of the composite morphism.

\begin{lemma}\label{O-basech}
With the above notation, let $\eta':X\to S'$ be the base change of $\eta$ by a morphism $S'\to S$. Let $x':T\to X'$ be a generalised point of $X'$, and let $x:T\to X$ be its projection. Then
$$
O(x')\simeq O(x).
$$
\end{lemma}
\begin{proof}
By using the definition of orbits maps, we obtain a cartesian diagram
$$
 \begin{tikzpicture} 
\matrix(m)[matrix of math nodes, row sep=2em, column sep=2em, text height=1.9ex, text depth=0.25ex]
 {
 |(1)|{G'\times_{S'}T}		& |(2)|{G\times_ST} 	\\
 |(l1)|{X'\times_{S'}T}		& |(l2)|{X\times_ST} 	\\
 }; 
\path[->,font=\scriptsize,>=to, thin]
(1) edge node[above]{} (2) edge node[left]{$a_{x'}$}  (l1)
(2) edge node[right]{$a_x$} (l2) 
(l1) edge node[above]{}  (l2);
\end{tikzpicture}
$$
where the horizontal arrows are isomorphisms, so we conclude that scheme-theoretic images of the vertical arrows are isomorphic. 
\end{proof}

\begin{lemma}\label{O-precomp}
Consider the composite $x':T'\stackrel{t}{\longrightarrow}T\stackrel{x}{\longrightarrow}X$, where $t$ is a free affine morphism. Then
$$
O(x')\simeq O(x)\times_{X\times_ST}X\times_ST'.
$$
In particular, if $x\in X$ is a scheme-theoretic point, and $t:\spec(L)\to\spec(\kappa(x))$ for some field $L$ containing $\kappa(x)$, then
$$
\pi_1(|O|(x'))=\Traj(x). 
$$
\end{lemma}
\begin{proof}
Both squares of the diagram
$$
 \begin{tikzpicture} 
\matrix(m)[matrix of math nodes, row sep=2em, column sep=2em, text height=1.5ex, text depth=0.25ex]
 {
|(u2)|{G\times_S T'}  & [0em] |(u1)|{G\times_S T}		&[0em] |(u0)|{G\times_SX} \\
|(2)|{X\times_ST'}  & [0em] |(1)|{X\times_ST}		&[0em] |(0)|{X\times_SX} \\
 }; 
\path[->,font=\scriptsize,>=to, thin]
(2) edge node[above=-2pt]{$\id\times t$} (1)

(1) edge node[above=-2pt]{$\id\times x$} (0) 

(u2) edge (u1)

(u1) edge node[above=-2pt]{} (u0) 
(u2) edge node[right]{$a_{x'}$} (2)
(u1) edge node[right]{$a_{x}$} (1)
(u0) edge node[right]{$\psi$} (0)
;
\end{tikzpicture}
$$
are cartesian, which means that $a_{x'}$ is the base change of $a_x$ by the affine free morphism $\id\times T$. The free base change formula for scheme-theoretic images holds even in the case where morphisms are not quasi-compact by 
\cite[Prop.~4]{herrero}.

The second claim follows by taking scheme-theoretic images of the closed subschemes 
$$
 \begin{tikzpicture} 
\matrix(m)[matrix of math nodes, row sep=2em, column sep=2em, text height=1.9ex, text depth=0.25ex]
 {
 |(1)|{O(x')}		& |(2)|{O(x)} 	\\
 |(l1)|{X\times_{S}T'}		& |(l2)|{X\times_ST} 	\\
 }; 
\path[->,font=\scriptsize,>=to, thin]
(1) edge node[above]{} (2) edge node[left]{}  (l1)
(2) edge node[right]{} (l2) 
(l1) edge node[above]{}  (l2);
\end{tikzpicture}
$$
via first projections into $X$.
\end{proof}

\subsection{Categorical scheme of leaves}

The following definition is inspired by the notion of `espace des feuilles grossier' that appears in Bardavid's thesis \cite[4.2]{bardavid} and that of `quotient discret cat\'egorique' from Ayoub's paper \cite[3.2.4]{ayoub}, which ultimately stems from the notion of \emph{categorial quotient} familiar in algebraic geometry, see \cite[0.5]{mumford-git}, for example. 

\begin{definition}\label{def-cat-schl}
Let $(X,\delta_X)$ be an $S$-differential scheme. A scheme $T\in \Sch_{\ov S}$, together with a morphism of $S$-differential schemes $\eta:(X,\delta_X)\to C(T)=(T,0)$ is a \emph{categorical $S$-scheme of leaves}, if it is a universal morphism from $(X,\delta_X)$ to the functor $C$ in the sense that any other morphism of $S$-differential schemes $\eta':(X,\delta_X)\to C(T')=(T',0)$ factors through $\eta$,  i.e., there is a unique $S$-morphism $f:T\to T'$ such that $\eta'=C(f)\circ \eta$. In other words, the solid part of the diagram
$$
 \begin{tikzpicture}
[cross line/.style={preaction={draw=white, -,
line width=4pt}}]
\matrix(m)[matrix of math nodes, row sep=1.2em, column sep=1.2em, text height=1.5ex, text depth=0.25ex]
{	|(i)|{(X,\delta_X)} 	&			& |(k)|{C(T)} 		\\  [.02em]
			& |(j)|{C(T')} 	&				\\};
\path[->,font=\scriptsize,>=to, thin]
(i) edge node[below left,pos=0.5]{$\eta'$} (j) edge node[above,pos=0.5]{$\eta$} (k) 
(k) edge[style=dashed] node[below right,pos=0.5]{$\exists! C(f)$} (j) 
;
\end{tikzpicture}
$$ 
can be uniquely completed by a dashed arrow to a commutative diagram.
\end{definition}

\begin{remark}\label{no-pi0-affine}
The categorical scheme of leaves need not exist for an arbitrary differential scheme $(X,\delta_X)$, but when it does, it is unique up to unique isomorphism, and we denote it
$$
\pi_0(X).
$$
\end{remark}

\begin{remark}
Given a differential ring $(A,\delta_A)$, it is difficult to speculate whether
$
\pi_0(\spec(A,\delta_A))
$
exists. Note that the scheme $\spec(\Const(A,\delta_A))$ only satisfies the universal property required of a categorical scheme of leaves in the category of \emph{affine} differential schemes, but not necessarily in the category of all differential schemes. 
\end{remark}

\begin{lemma}\label{cat-schl-pi0}
Let $\bbX\in \Precat(\Sch_{\ov S})$ be the $\bbD(S)$-action associated to an $S$-differential scheme $(X,\delta)$, considered as a precategory. Then, the categorical scheme of leaves of $(X,\delta_X)$ is isomorphic to the scheme of connected components
$$
\begin{tikzcd}[cramped, column sep=normal, ampersand replacement=\&]
{\pi_0(\bbX)=\Coeq\left(X_1 \right.}\ar[yshift=2pt]{r}{d_0} \ar[yshift=-2pt]{r}[swap]{d_1} \&{\left.X_0\right)}
\end{tikzcd}
$$
of $\bbX$ in the category $\Sch_{\ov S}$, 
whenever either of the objects exist. In other words, 
$$
\pi_0(\bbX)\simeq\pi_0(X).
$$
\end{lemma}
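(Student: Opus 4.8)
The plan is to deduce this from Proposition~\ref{part-adj-pi0} after rephrasing everything in the language of precategory actions. Under the equivalence $\DSch_S\simeq(\Sch_{\ov S})^{\bbD(S)}$ of \ref{diff-sch-precats} the differential scheme $(X,\delta_X)$ corresponds to the $\bbD(S)$-action $\bbX$, and by \ref{C-for-precat} the functor $C\colon\Sch_{\ov S}\to\DSch_S$ corresponds to $\eta_{\bbD(S)}^*\colon\Sch_{\ov S}\to(\Sch_{\ov S})^{\bbD(S)}$, where $\eta_{\bbD(S)}\colon\bbD(S)\to\Disc{\pi_0(\bbD(S))}=\Disc{S}$ is the canonical morphism of precategories. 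By Definition~\ref{def-cat-schl} a categorical $S$-scheme of leaves of $(X,\delta_X)$ is exactly a value at $(X,\delta_X)$ of a partial left adjoint to $C$, so, transported along the equivalence, it is a value at $\bbX$ of a partial left adjoint to $\eta_{\bbD(S)}^*$. The content of the lemma is thus that this value, when it exists, is the scheme of connected components $\pi_0(\bbX)$, i.e., the coequaliser of the source and target morphisms $d_0,d_1\colon X_1\to X_0$ of $\bbX$ in $\Sch_{\ov S}$.

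First I would apply Proposition~\ref{part-adj-pi0} with $\cC=\Sch_{\ov S}$, with $\cP=\mathop{\rm Self}(\Sch_{\ov S})$ the self-indexing (so that $\cP^{\bbD(S)}=(\Sch_{\ov S})^{\bbD(S)}$ by \ref{actions-self-ind}), and with $\C=\bbD(S)$; this is legitimate since $\pi_0(\bbD(S))=S$ exists. As $\cP$ is then the codomain fibration, the action $\bbX$, viewed via \ref{action-fibred} as a precategory of arrows lying over $\bbD(S)$, has its object of connected components computed componentwise, and by \ref{coeq-above-coeq} it is the arrow $\pi_0(\bbX)\to\pi_0(\bbD(S))=S$; it exists exactly when the coequaliser $\pi_0(\bbX)$ of $d_0,d_1$ exists in $\Sch_{\ov S}$. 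In that case Proposition~\ref{part-adj-pi0} supplies a bijection, natural in $T\in\Sch_{\ov S}$,
$$
\Sch_{\ov S}\bigl(\pi_0(\bbX),T\bigr)\;\simeq\;(\Sch_{\ov S})^{\bbD(S)}\bigl(\bbX,\eta_{\bbD(S)}^*T\bigr)\;\simeq\;\DSch_S\bigl((X,\delta_X),C(T)\bigr),
$$
which is precisely the universal property of Definition~\ref{def-cat-schl}. Hence if $\pi_0(\bbX)$ exists then so does $\pi_0(X)$, and $\pi_0(X)\simeq\pi_0(\bbX)$.

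For the reverse implication I would check that the two objects corepresent the same functor, so that existence of one forces existence of the other. Unwinding morphisms out of $\bbX$ by means of Remark~\ref{gen-dd}, a morphism $\bbX\to\eta_{\bbD(S)}^*T$ in $(\Sch_{\ov S})^{\bbD(S)}$ amounts to a morphism $X_0\to T$ over $S$ compatible with the gluing data of the two actions; since $\eta_{\bbD(S)}^*T$ carries the trivial cocycle, this compatibility together with the cocycle coherence collapses to the single requirement that $X_0\to T$ coequalise $d_0$ and $d_1$. Therefore $T\mapsto\DSch_S((X,\delta_X),C(T))$ is naturally isomorphic to the covariant functor corepresented by $\pi_0(\bbX)$; the former is corepresentable precisely when this coequaliser exists, and then the universal morphism $(X,\delta_X)\to C(\pi_0(X))$ of Definition~\ref{def-cat-schl} is the one induced by the coequaliser projection. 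Combining the two directions gives $\pi_0(X)\simeq\pi_0(\bbX)$ whenever either object exists.

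The main obstacle is the bookkeeping concealed in the previous paragraph: one has to follow the chain of identifications in \ref{diff-sch-precats}, \ref{C-for-precat} and \ref{gen-dd} carefully enough to see that an action morphism into the trivial-cocycle object $\eta_{\bbD(S)}^*T$ is \emph{literally the same datum} as a scheme morphism coequalising $d_0$ and $d_1$, so that the universal property of the categorical scheme of leaves and that of the precategorical $\pi_0$ agree on the nose, not merely up to the equivalence. Everything else is a direct application of the quoted results; in particular Proposition~\ref{part-adj-pi0}, together with \ref{coeq-above-coeq}, already provides the partial adjunction and the componentwise description of $\pi_0$.
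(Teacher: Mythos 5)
Your proof is correct and follows essentially the same route as the paper, which likewise deduces the statement from Proposition~\ref{part-adj-pi0} via the chain of bijections $\DSch_S((X,\delta_X),C(T))\simeq (\Sch_{\ov S})^{\bbD(S)}(\bbX,\eta^*T)\simeq \Sch_{\ov S}(\pi_0(\bbX),T)$. The one difference is that the paper only records the direction ``$\pi_0(\bbX)$ exists $\Rightarrow$ $\pi_0(X)$ exists,'' whereas your third paragraph also makes explicit the converse needed for the ``whenever either exists'' clause, which is a worthwhile addition rather than a deviation.
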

\begin{proof}
Using \ref{part-adj-pi0}, we obtain that
$$
\DSch_S((X,\delta_X),C(T'))\simeq (\Sch_{\ov S})^{\bbD(S)}(\bbX,\eta^*T')\simeq \Sch_{\ov S}(\pi_0(\bbX), T'),
$$
whenever $\pi_0(\bbX)$ exists, hence $\pi_0(\bbX)$ satisfies the universal property of the categorical scheme of leaves. 
\end{proof}

\begin{definition}[{\cite[4.1]{bardavid}}]\label{class-simple}
Let $S$ be a spectrum of a field. A differential $S$-scheme $(X,\delta)$ is \emph{classically simple} if $X$ is integral whose only leaf is its generic point, and
$$
\pi_0(X)=S.
$$
\end{definition}

\begin{fact}\label{bardavid-simple}
Bardavid shows in \cite[4.1]{bardavid} that an integral differential scheme $(X,\delta)$ whose only leaf is its generic point has a categorical scheme of leaves 
$$
\pi_0(X)=\spec(\Const(H^0(X,\cO_X))),
$$  
and  that $\pi_0(X)$ remains the categorical scheme of leaves for any open subscheme of $X$. 

In particular, for a \emph{simple} differential ring $(A,\delta)$, 
$$
\pi_0(\spec(A,\delta))=\spec(\Const(A,\delta)). 
$$
\end{fact}

\subsection{Geometric scheme of leaves}

\begin{definition}\label{geom-quot}
A morphism of differential schemes $$\eta:(X,\delta)\to C(Q)=(Q,0)$$ is called a \emph{geometric quotient}, making $Q$ a \emph{geometric scheme of leaves}, if it satifies
\begin{enumerate}
\item the \emph{topological condition}: $\eta$ is surjective and submersive;
\item the \emph{orbit condition}: for any algebraically closed field $L$ and geometric points $\bar{x},\bar{x}'\in X(L)$,
$$
\eta(\bar{x})=\eta(\bar{x}')\ \ \  \text{ implies } \ \ \ |O|(\bar{x})=|O|(\bar{x}').
$$
\item the \emph{sheaf condition}: the sequence of quasicoherent $\cO_Q$-modules
$$
0\to \cO_Q\stackrel{\eta^\sharp}{\longrightarrow}\eta_*\cO_X\stackrel{\eta_*\delta_X}{\longrightarrow}\eta_*\cO_X
$$
is exact, i.e., $\cO_Q\simeq\Const(\eta_*\cO_X)$.
\end{enumerate}
A morphism $\eta$ as above is a \emph{Bardavid quotient} (\cite[8.4.2]{bardavid}), when we replace the orbit condition by 
\begin{itemize}
\item[(2')] the \emph{trajectory condition}: for $x,x'\in X$, 
$$
\eta(x)=\eta(x') \text{ implies } \Traj(x)=\Traj(x').
$$
\end{itemize}
\end{definition}

\begin{lemma}\label{class-simple-fibrewise}
Let $f:(X,\delta_X)\to (Y,\delta_Y)$ be a morphism of differential schemes such that $f^\delta:X^\delta\to Y^\delta$ is injective, or, equivalently, for every leaf $y\in Y^\delta$, the fibre $X_y$ is classically simple over $\spec(\kappa(y))$. Then, for every $x, x'\in X$, $f(x)=f(x')$ implies $\Traj(x)=\Traj(x')$. 
\end{lemma}
\begin{proof}
Note that $f^{-1}(y)$ is the underlying space of $X_y=X\times_Y\spec(\kappa(y))$, so the conditions are indeed equivalent. 
Assuming $f(x)=f(x')$ and using \ref{O-under-maps}, we have that $f(\Traj(x))=\Traj(f(x))=\Traj(f(x'))=f(\Traj(x')$. Since $f$ is injective on leaves, we deduce that $\Traj(x)=\Traj(x')$.
\end{proof}

\begin{lemma}\label{char-orbit-cond}
Let $\eta:(X,\delta)\to (Q,0)$ be a morphism of $S$-differential schemes. The following conditions are equivalent:
\begin{enumerate}
\item the orbit condition;
\item for $x,x'\in X$, $\eta(x)=\eta(x')$  implies that there exists a field $L$ extending $\kappa(x),\kappa(x')$,  with
$$
|O|(x_L)=|O|(x'_L),
$$
where we write $x_L$ for the composite $\spec(L)\to\spec(\kappa(x))\stackrel{x}{\longrightarrow}X$;
\item For any 
geometric point $\bar{s}\in S(L)$, the base change $\eta_{\bar{s}}:X_{\bar{s}}\to Q_{\bar{s}}$ satisfies the trajectory condition, i.e., for any $x, x'\in X_{\bar{s}}$, 
$$
\eta_{\bar{s}}(x)=\eta_{\bar{s}}(x')\ \ \ \text{ implies }\ \ \ \Traj(x)=\Traj(x'). 
$$
\end{enumerate}
\end{lemma}
\begin{proof}
To see that (1) implies (2), consider $x, x'\in X$ such that $\eta(x)=\eta(x')$, and choose any algebraically closed field $L$ that extends both $\kappa(x)$ and $\kappa(x')$. Then $x_L, x'_L\in X(L)$ are geometric points with $\eta(x_L)=\eta(x'_L)$, so by (1) we obtain $|O|(x_L)=|O|(x'_L)$.

To see that (2) implies (3), take $x, x'\in X_{\bar{s}}$ such that $\eta_{\bar{s}}(x)=\eta_{\bar{s}}(x')$. If $x_0, x_0'\in X$ are the images of $x, x'$ in $X$, we still have that $\eta(x_0)=\eta(x'_0)$. Hence, condition (2) implies that there exists a field $K$ extending both $\kappa(x_0)$ and $\kappa(x'_0)$ such that $|O|(x_{0,K})=|O|(x'_{0,K})$. Let $M$ be an algebraically closed field extending $K$, $\kappa(x)$ and $\kappa(x')$. Then, using \ref{O-basech} and \ref{O-precomp}, we obtain that
\begin{align*}
|O|(x_M)=|O|(x_{0,K,M}) & =|O|(x_{0,K})\times_{X_K}{X_M}  \\ 
& =|O|(x'_{0,K})\times_{X_K}{X_M}=|O|(x'_{0,K,M})=|O|(x'_M).
\end{align*}
Taking the scheme-theoretic image along the first projection and using  the second part of \ref{O-precomp}, we obtain that
$\Traj(x)=\Traj(x')$, as required. 

Finally, to verify that (3) implies (1), take geometric points $\bar{x}, \bar{x}'\in X(L)$ such that $\eta(\bar{x})=\eta(\bar{x}')$. They factor through unique scheme-theoretic points $x, x'\in X_L$ that satisfy $\eta_L(x)=\eta_L(x')$. From the definitions, we verify that the action map $a_{\bar{x}}$ and the trajectory map $t_x$ agree up to isomorphism of their domains, and similarly for $\bar{x}'$, so, assuming (3), we obtain that
$$
|O|(\bar{x})=\overline{\Traj(x)}=\overline{\Traj(x')}=|O|(\bar{x}').
$$
\end{proof}

\begin{corollary}\label{pointw-orbit}
Let $\eta:(X,\delta)\to (Q,0)$ be a morphism of $S$-differential schemes such that for every geometric point $\bar{q}\in Q(L)$, the fibre $X_{\bar{q}}$ is classically simple over $\spec(L)$. Then $\eta$ satisfies the orbit condition. 
\end{corollary}
\begin{proof}
By the assumption and \ref{class-simple-fibrewise}, we obtain the statement (3) of \ref{char-orbit-cond}.
\end{proof}

\begin{fact}[{\cite[8.4.3]{bardavid}}]\label{bardq-catq}
A Bardavid quotient is a categorical quotient. 
\end{fact}

\begin{proposition}\label{geom-categ}
If $\eta:(X,\delta)\to (Q,0)$ is a geometric quotient, it is a Bardavid quotient. Hence, a geometric scheme of leaves is a categorical scheme of leaves,
$$
Q=\pi_0(X).
$$ 
As a ringed space, it is isomorphic to the space of leaves,
$$
Q\simeq X^\delta.
$$
\end{proposition}
\begin{proof}
If $L$ is a field extending $\kappa(x)$ and $\kappa(x')$ such that $|O|(x_L)=|O|(x'_L)$, then, using \ref{O-precomp}, we obtain
$$
\Traj(x)=\pi_1(|O|(x_L))=\pi_1(|O|(x'_L))=\Traj(x'),
$$
hence the orbit condition implies the trajectory condition. The second statement follows from \ref{bardq-catq}, and the isomorphism of ringed spaces is constructed in \cite[8.5.5]{bardavid}.
\end{proof}

\begin{definition}\label{univ-quot}
A categorical/Bardavid/geometric quotient $\eta:(X,\delta)\to (Q,0)$ is \emph{universal}, if it remains such after an arbitrary base change $q:Q'\to Q$. A quotient is \emph{uniform}, if it is stable under any flat base change $q$.
\end{definition}
 
 \begin{proposition}\label{unif-geom-quot}
 A faithfully flat qcqs geometric quotient $\eta:(X,\delta)\to (Q,0)$ is uniform. In particular,  if $Q$ is the spectrum of a field, then $\eta$ is a universal geometric quotient. 
 \end{proposition}
 \begin{proof}
 Since $\eta$ is fpqc, it is universally submersive so the topological condition is universally satisfied. The orbit condition is universal by \ref{O-basech}.
 Let $\eta':X'\to Q'$ be a base change of $\eta$ by a flat morphism $q:Q'\to Q$.  Since $\eta$ satisfies the sheaf condition, we have an exact sequence of quasicoherent modules on $Q$,
$$
0\to \cO_Q\to \eta_*\cO_X\stackrel{\eta_*\delta}{\longrightarrow}\eta_*\cO_X.  
$$
Since $q$ is flat, applying $q^*$ yields an exact sequence. The flat base change formula \cite[02KE]{stacks-project} gives that $q^*\eta_*\cO_X\simeq\eta'_*\cO_{X'}$, so we obtain that $\eta'$ also satisfies the sheaf condition. 
\end{proof}

\subsection{Simple differential schemes}

\begin{definition}\label{def-simplicity}
An $S$-differential scheme $(X,\delta_X)$ is \emph{simple} with respect to the pseudofunctor $\cP:\Sch_{\ov S}\to\Cat$, if its categorical scheme of leaves $\pi_0(X)$ exists and the coequaliser
$$
\begin{tikzcd}[cramped, column sep=normal, ampersand replacement=\&]
{X_1}\ar[yshift=2pt]{r}{d_0} \ar[yshift=-2pt]{r}[swap]{d_1} \&{X_0} \ar{r}{\eta} \& \pi_0(X)
\end{tikzcd}
$$
is universal for $\cP$ in the sense of \ref{univ-ce-desc}, i.e., if $\bbX$ has a $\cP$-universal scheme of connected components. 
\end{definition}

\begin{lemma}\label{CX-P}
If an $S$-differential scheme $(X,\delta_X)$ is simple for $\cP$ with categorical scheme of leaves $\eta_X:X\to \pi_0(X)$, the canonical functor
$$
C_X:\cP(\pi_0(X))\simeq \cP^{\Disc{\pi_0(\bbX)}}\stackrel{\eta_{\bbX}^*}{\longrightarrow}\cP^{\bbX}\simeq \delta\da\cP(X,\delta_X)
$$
is fully faithful, where we wrote $\eta_{\bbX}:\bbX\to \Disc{\pi_0(\bbX)}$ for the associated morphism of precategories inducing the pullback of precategory actions $\eta_{\bbX}^*$ as in \ref{part-adj-pi0}.
\end{lemma}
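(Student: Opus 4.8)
The plan is to reduce the statement directly to Lemma~\ref{univ-ce-desc} via the dictionary between differential schemes and precategory actions. First I would invoke Lemma~\ref{cat-schl-pi0} to identify the categorical scheme of leaves $\pi_0(X)$ with the scheme of connected components $\pi_0(\bbX)$ of the $\bbD(S)$-action $\bbX$ associated to $(X,\delta_X)$, and, crucially, to identify the universal coequaliser $X_1 \rightrightarrows X_0 \to \pi_0(X)$ featuring in the simplicity hypothesis~\ref{def-simplicity} with the defining reflexive coequaliser of $\pi_0(\bbX)$ in $\Sch_{\ov S}$.

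Next, since $(X,\delta_X)$ is simple for $\cP$, this coequaliser is by definition universal for $\cP$ in the sense of~\ref{univ-ce-desc}. Applying that lemma with $\C=\bbX$, I obtain that the precategory morphism $\eta_{\bbX}:\bbX\to\Disc{\pi_0(\bbX)}$ is a descent morphism for $\cP$; that is, the induced functor $\eta_{\bbX}^*:\cP^{\Disc{\pi_0(\bbX)}}\to\cP^{\bbX}$ is fully faithful.

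Finally I would assemble the composite defining $C_X$. Under the equivalence $\cP(\pi_0(X))\simeq\cP^{\Disc{\pi_0(\bbX)}}$ (recorded in~\ref{part-adj-pi0}) and the equivalence $\cP^{\bbX}\simeq\delta\da\cP(X,\delta_X)$ (the very Definition~\ref{delta-P} of $\delta\da\cP$), the functor $C_X$ is precisely $\eta_{\bbX}^*$ sandwiched between these two equivalences. Since a composite of a fully faithful functor with equivalences of categories on either side is fully faithful, the conclusion follows.

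I do not expect a genuine obstacle here: the only points needing care are purely bookkeeping --- checking that the identification of $\pi_0(X)$ with $\pi_0(\bbX)$ from~\ref{cat-schl-pi0} is compatible with the respective coequaliser cocones, so that ``$\cP$-universal'' means the same thing on both sides, and that the equivalences $\cP(\pi_0(X))\simeq\cP^{\Disc{\pi_0(\bbX)}}$ and $\cP^{\bbX}\simeq\delta\da\cP(X,\delta_X)$ are exactly those through which $C_X$ is defined in the statement. Both are immediate from the constructions already in place.
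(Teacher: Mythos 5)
Your proposal is correct and follows essentially the same route as the paper: the paper's proof simply invokes Lemma~\ref{univ-ce-desc} (the simplicity hypothesis being exactly that the coequaliser is $\cP$-universal) to conclude that $\eta_{\bbX}^*$ is fully faithful, and then observes that the composite with the surrounding equivalences is fully faithful as well. Your additional bookkeeping remarks about~\ref{cat-schl-pi0} and the identifications of the outer categories are left implicit in the paper but are exactly the right points to check.
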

\begin{proof}
Using \ref{univ-ce-desc}, we have that the arrow in the above diagram
is fully faithful, so the composite is too.
\end{proof}

\begin{remark}
When $\cP$ is a sub-pseudofunctor of the self-indexing of $\Sch_{\ov S}$ as in \ref{diff-self}, we obtain that
$$
C_X(Q\to \pi_0(X))=(X,\delta_X)\times_{C(\pi_0(X))}C(Q),
$$
so $C_X$ agrees with the functor introduced in the classical Galois context \ref{ss:janelidze-gal}. 

In this case, $(X,\delta)$ is simple with respect to $\cP$ if and only if $\eta_X$ is a categorical quotient stable under base change by morphisms from $\cP$. 

For example, \ref{unif-geom-quot} shows that a faithfully flat qcqs geometric quotient is simple with respect to the class of flat scheme morphisms.
\end{remark}

\subsection{Universality of geometric quotients}

\begin{definition}\label{univ-axioms}
Let $\eta:(X,\delta)\to (Q,0)$ be a qcqs morphism of $S$-differential schemes. We will consider the following conditions:
\begin{itemize}
\item[(BC)] for any morphism $g:Q'\to Q$, the pullback 
$$
 \begin{tikzpicture} 
\matrix(m)[matrix of math nodes, row sep=2em, column sep=2em, text height=1.9ex, text depth=0.25ex]
 {
 |(1)|{X'}		& |(2)|{Q'} 	\\
 |(l1)|{X}		& |(l2)|{Q} 	\\
 }; 
\path[->,font=\scriptsize,>=to, thin]
(1) edge node[above]{$\eta'$} (2) edge node[left]{$g'$}  (l1)
(2) edge node[right]{$g$} (l2) 
(l1) edge node[above]{$\eta$}  (l2);
\end{tikzpicture}
$$
satisfies the Beck-Chevalley condition for the structure sheaf, i.e., the natural base change morphism
$$
g^*\eta_*\cO_X{\longrightarrow}\eta'_*{g'}^*\cO_X\simeq \eta'_*\cO_{X'}
$$
is an isomorphism;

\item[(UI)] the morphism 
$$
\eta^\sharp:\cO_Q\to \eta_*\cO_X
$$
is universally injective;

\item[(CF)] the quasicoherent sheaf
$$
\coker(\eta_*\cO_X\stackrel{\eta_*\delta}{\longrightarrow}\eta_*\cO_X)
$$
is flat. 
\end{itemize}
\end{definition}

\begin{lemma}\label{qc-tor-trick}
Consider an exact sequence of quasicoherent modules 
$$
0\to\cF_1\stackrel{\alpha}{\longrightarrow}\cF_2 \stackrel{\beta}{\longrightarrow} \cF_3 \stackrel{\gamma}{\longrightarrow}\cF_4\to0
$$
on a scheme $Y$, such that
\begin{enumerate}
\item $\alpha$ is universally injective, and
\item $\cF_4$ is flat. 
\end{enumerate}
Then, for any morphism $g:Y'\to Y$, the sequence
$$
0\to g^*\cF_1\stackrel{g^*\alpha}{\longrightarrow}g^*\cF_2 \stackrel{g^*\beta}{\longrightarrow} g^*\cF_3 \stackrel{g^*\gamma}{\longrightarrow}g^*\cF_4\to0
$$
is exact.
\end{lemma}
\begin{proof}
Since all the assumptions and the claim are local in the Zariski topology, we may assume that $Y$ and $Y'$ are affine, say $Y=\spec(A)$, $Y'=\spec(A')$. 

Consider the short exact sequences of $A$-modules
$$
0\to F_1\stackrel{\alpha}{\to}F_2\to\im(\beta)\to 0
\ \ \ \text{ and }\ \ \ 
0\to \im(\beta)\to F_3\stackrel{\gamma}{\to}F_4\to 0.
$$
Since $\alpha$ is universally injective, the first sequence is universally exact, so
$$
0\to g^*F_1\stackrel{g^*\alpha}{\longrightarrow}g^*F_2\to g^*\im(\beta)\to 0
$$
is exact, where $g^*F=A'\otimes_A F$. Applying $g^*$ to the second sequence yields a long exact sequence for $\text{\rm Tor}$,
$$
\cdots\to \text{\rm Tor}_1^A(F_4,A')\to g^*\im(\beta)\to g^*F_3\stackrel{g^*\gamma}{\longrightarrow}g^*\cF_4\to0.
$$
Since $F_4$ is a flat $A$-module, we have that $\text{\rm Tor}_1^A(F_4,A')=0$, so we can splice the two short exact sequence into the required one.
\end{proof}

\begin{proposition}\label{universal-sc}
Suppose that a morphism $\eta:(X,\delta)\to (Q,0)$ satisfies the sheaf condition and conditions (BC), (UI), (CF) from \ref{univ-axioms}. Then the sheaf condition for $\eta$ holds universally.
\end{proposition}

\begin{proof}
We need to show that, for an arbitrary $g:Q'\to Q$, the sequence
$$
0\to \cO_{Q'}\stackrel{{\eta'}^\sharp}{\longrightarrow}\eta'_*\cO_{X'}\stackrel{\eta'_*\delta_{X'}}{\longrightarrow}\eta_*\cO_{X'}
$$
is exact, where we used the notation for the pullback from \ref{univ-axioms}. Using (BC), this follows from \ref{qc-tor-trick}.
\end{proof}

\begin{remark}\label{affine-CX-remark}
The affine version of \ref{universal-sc} reads as follows. 

Let $(A,\delta)$ be a differential ring with the ring of constants $k$, and $R$ be a $k$-algebra such that either
\begin{enumerate}
\item $k\to A$ is \emph{universally injective} \cite[Tag~058I]{stacks-project} and $\coker(\delta)$ is a flat $k$-module, or
\item $R$ is flat over $k$.
\end{enumerate}
Then 
$$
\Const((A,\delta)\otimes_{(k,0)}(R,0))\simeq R.
$$ 
This key condition is familiar from classical literature on differential Galois theory. 
\end{remark}

\begin{definition}
We say that a morphism $f:X\to Q$ to a locally Noetherian scheme $Q$ is \emph{cohomologically flat in dimension 0} (CFD0) if it is proper, flat and satisfies condition (BC), \cite[7.8.1]{ega3.2}, \cite{raynaud-pmihes-38}, \cite[Section~2]{gabber-lodh}.
\end{definition}

\begin{lemma}\label{suff-cond-BC}
If a morphism $f:X\to Q$ satisfies either 
\begin{enumerate}
\item $f$ is affine, or
\item $f$ is CFD0,
\end{enumerate}
then $f$ satisfies (BC). 

In both cases, for every $q\in Q$, we have
$$
f_*\cO_X\otimes \kappa(q)\simeq H^0(X_q,\cO_q),
$$
where $X_q=X\times_Q\spec(\kappa(q))$.

Furthermore, in case (2), $f_*\cO_X$ is locally free of finite rank (in particular, flat). 
\end{lemma}

\begin{proof}
In case (1), condition (BC) follows from the the affine base change \cite[02KE]{stacks-project}. In case (2), it holds by definition. 
The stated isomorphism is condition (BC) applied to the base change via the morphism $\spec(\kappa(q))\to Q$. 
The claim about local freeness is in the proof of Proposition~1, Section~2 of \cite{gabber-lodh}, or \cite[Section~7]{ega3.2}.
\end{proof}

\begin{lemma}\label{cond-cfd0}
Suppose $f:X\to Q$ is a proper flat morphism and $Q$ is locally Noetherian. The $f$ is CFD0 if any of the following are satisfied:
\begin{enumerate}
\item $f$ has geometrically reduced fibres;
\item for every $q\in Q$, $H^1(X_q,\cO_q)=0$;
\item $Q$ is reduced and $\dim H^0(X_q,\cO_q)$ is locally constant on $Q$;
\item for every $q\in Q$, $H^0(X_q,\cO_q)$ is a separable extension of $\kappa(q)$. 
\end{enumerate}
\end{lemma}
\begin{proof}
Items (1) and (3) are in Proposition~1, Section~2 of \cite{gabber-lodh}. Item (2) is in Section~5, Chapter~0 of \cite{mumford-git}. Item (4) is \cite[7.8.6]{ega3.2}.
\end{proof}

\begin{remark}\label{rem-cfd0}
The following observations refer to condition (4) of \ref{cond-cfd0}.
\begin{enumerate}
\item By properness of $f$, the extension from the condition is always finite. In characteristic 0, separability is automatic. 
\item When the condition holds, $f_*\cO_X$ is an \'etale $\cO_Q$-algebra. Thus, in the Stein factorisation $X\to Q'\to Q$, the morphism $Q'\to Q$ is finite \'etale and the Stein factorisation commutes with base change. 
\item By \cite[7.8.10]{ega3.2}, the set of points $q\in Q$ at which the condition holds is Zariski open in $Q$. Hence, when $Q$ is of finite type over a field, it is enough to check the condition at the closed points. 
\end{enumerate}
\end{remark}

\begin{lemma}\label{le4}
Suppose that $f:X\to Q$ is proper flat with geometrically reduced connected fibres, and $Q$ is locally Noetherian. Then $f$ is CFD0 and
$$
f_*\cO_X\simeq\cO_Q.
$$
\end{lemma}
\begin{proof}
For an arbitrary $q\in Q$, choose a geometric point $\bar{q}:\spec(\overline{\kappa(q)})\to Q$ factoring through $q$. By flat base change applied to 
$\spec(\overline{\kappa(q)})\to \spec(\kappa(q))$, 
we have that
$$
\dim_{\kappa(q)}H^0(X_q,\cO_q)=\dim_{\overline{\kappa(q)}}(X_{\bar{q}},\cO_{\bar{q}}).
$$
By assumption, the latter is 1, whence by part~(4) of {cond-cfd0}, it follows that $f$ is CFD0 and that $f_*\cO_X$ is locally free of rank 1. Finally, we note that it admits a nowhere vanishing global section $1\in H^0(X,\cO_X)$.
\end{proof}

\begin{proposition}\label{comp-bc}
Suppose that morphisms $f:X\to Y$ and $g:Y\to Z$ satisfy either of the following conditions:
\begin{enumerate}
\item $f_*\cO_X=\cO_Y$ and $g$ satisfies (BC);
\item $f$ satisfies (BC) and $g$ satisfies the Beck-Chevalley condition for every quasicoherent sheaf $\cF$ in place of $\cO_X$. 
\end{enumerate}
Then $g\circ f$ also satisfies (BC). 

Moreover, we have the following. 
\begin{itemize}
\item[(1')] Condition (1) is satisfied when $f$ is proper flat with geometrically reduced and connected fibres. If we further assume that $g$ is CFD0, or $g$ is affine flat, then $(g\circ f)_*\cO_X$ is flat.
\item[(2')] Condition (2) is satisfied it $g$ is affine. If we further assume that $g$ is flat, then $(g\circ f)_*\cO_X$ is flat.
\end{itemize} 
\end{proposition}
\begin{proof}
Statements (1) and (2) are straightforward. The first statement in (1') follows from \ref{le4}. The second claim in (1') follows from that fact that $f_*\cO_X$ is locally free, hence flat, on $Y$ and \ref{suff-cond-BC}. 
The third claim in (1') and (2') follow from the fact that pushforward of a flat quasicoherent sheaf along a flat affine morphism is flat. 
\end{proof}

\begin{proposition}\label{pointw-cond-cf}
Let $\eta:(X,\delta)\to (Q,0)$ be a qcqs morphism of differential schemes with $Q$ locally Noetherian such that $\eta_*\cO_X$ is flat. Suppose $\eta$ satisfies the sheaf condition both in the usual sense that the sequence
$$
0\to \cO_Q\stackrel{}{\longrightarrow}\eta_*\cO_X\stackrel{\eta_*\delta_X}{\longrightarrow}\eta_*\cO_X
$$
is exact, as well as pointwise, i.e., for every $q\in Q$, considered as a morphism $\spec(\kappa(q))\to Q$,  the sequence of quasicoherent sheaves on $\spec(\kappa(q))$
$$
0\to q^*\cO_Q\stackrel{}{\longrightarrow}q^*\eta_*\cO_X\stackrel{q^*\eta_*\delta_X}{\longrightarrow}q^*\eta_*\cO_X
$$
is exact. 

Then the condition (CF) holds, i.e., 
$$
\coker(\eta_*\cO_X\stackrel{\eta_*\delta}{\longrightarrow}\eta_*\cO_X)
$$
is flat.
\end{proposition}

\begin{proof}
Given an affine morphism $f:Y\to Z$ and quasicoherent sheaves $\cF$ on $Y$ and $\cG$ on $Z$, we have the projection formula
$$
f_*(\cF\otimes f^*\cG)\simeq f_*\cF\otimes\cG.
$$
By pushing forward the above exact sequence along the affine morphism $q:\spec(\kappa(q))\to Q$ and applying the projection formula to sheaves $\cO_{\spec(\kappa(q))}$ and $\eta_*\cO_X$, we obtain an exact sequence
\begin{equation}\label{1seq}
0\to \cO_Q\otimes\kappa\stackrel{}{\longrightarrow}\eta_*\cO_X\otimes\kappa\stackrel{}{\longrightarrow}\eta_*\cO_X\otimes\kappa,
\end{equation}
of quasicoherent sheaves on $Q$, where we wrote $\kappa$ for $q_*\cO_{\spec(\kappa(q))}$. 

From now on, we replace $X$ by the local scheme $\spec(\cO_{X,q})$. 

Since $\eta$ satisfies the sheaf condition, we have an exact sequence of quasicoherent modules on $Q$,
$$
0\to \cO_Q\to \eta_*\cO_X\stackrel{\eta_*\delta}{\longrightarrow}\eta_*\cO_X\to C\to 0,
$$
where $C=\coker{\eta_*\delta}$, which we can split into short exact sequences
$$
0\to \cO_Q\to \eta_*\cO_X\to I \to 0,
$$
and
$$
0\to I\to\eta_*\cO_X\to C\to 0,
$$
where $I=\Im(\eta_*\delta)$. 
The long exact sequences for $\text{--}\otimes\kappa$ associated to the above become
\begin{equation}\label{15seq}
\cdots\to \cO_Q\otimes\kappa\to \eta_*\cO_X\otimes\kappa\stackrel{\alpha}{\to}I\otimes\kappa\to0,
\end{equation}
and
\begin{equation}\label{2seq}
0\to\text{\rm Tor}_1(\kappa,C)\to I \otimes\kappa\stackrel{\beta}{\to}\eta_*\cO_X\otimes\kappa\to C\otimes\kappa\to 0.
\end{equation}
In (\ref{2seq}), we 
used the assumption that $\eta_*\cO_X$ is flat, which implies that $\text{\rm Tor}_1(\kappa,\eta_*\cO_X)=0$. 

Splicing the sequences, we obtain the sequence
$$
 \begin{tikzpicture}
[cross line/.style={preaction={draw=white, -,
line width=4pt}}]
\matrix(m)[matrix of math nodes, row sep=1.2em, column sep=1.2em, text height=1.5ex, text depth=0.25ex]
{ |(0)|{\cdots} & |(1)|{\cO_Q\otimes\kappa} &	|(i)|{\eta_*\cO_X\otimes\kappa} 	&			& |(k)|{\eta_*\cO_X\otimes\kappa} & |(3)|{C\otimes\kappa} & |(4)|{0}		\\  
&	&		& |(j)|{I\otimes\kappa} 	&			& &	\\};
\path[->,font=\scriptsize,>=to, thin]
(0) edge (1)
(1) edge (i)
(k) edge (3)
(3) edge (4)
(i) edge node[below,pos=0.4]{$\alpha$} (j) edge node[above,pos=0.5]{} (k) 
(j) edge node[below,pos=0.6]{$\beta$} (k) 
;
\end{tikzpicture}
$$ 
which is exact at the second term by (\ref{1seq}). Using (\ref{15seq}), this happens precisely when  $\beta$ is injective, or, equivalently, by (\ref{2seq}), when  
$$\text{\rm Tor}_1(\kappa, C)=0.$$
It now follows from the Local Criterion of Flatness \cite[00MK]{stacks-project} that $C$ is flat at $q$. Since $q$ was arbitrary, $C$ is flat, as required. 
\end{proof}

\begin{proposition}\label{eq-pointw-sh}
Suppose that a morphism of differential schemes $\eta:(X,\delta)\to (Q,0)$ satisfies (BC) condition. The following are equivalent:
\begin{enumerate}
\item the pointwise sheaf condition from \ref{pointw-cond-cf};
\item for every $q\in Q$, 
$$
\Const(H^0(X_q,\cO_q))=\kappa(q);
$$
\item for every geometric point $\bar{q}\in Q(L)$, 
$$
\Const(H^0(X_{\bar{q}},\cO_{\bar{q}}))=L.
$$
\end{enumerate}
Moreover, if $Q$ is of finite type over a field, it is enough to check either of these conditions at closed points of $Q$. 
\end{proposition}

\begin{proof}
Since $\eta$ satisfies (BC), we have that
$$
q^*\eta_*\cO_X\simeq \eta_{q,*}\cO_{X_q}\simeq \widetilde{H^0(X_q,\cO_q)}.
$$
Hence, the sequence from (1) can be written as
$$
0\to \widetilde{\kappa(q)}\to  \widetilde{H^0(X_q,\cO_q)}\to  \widetilde{H^0(X_q,\cO_q)},
$$
which is equivalent to the sequence of $\kappa(q)$-vector spaces
$$
0\to {\kappa(q)}\to  {H^0(X_q,\cO_q)}\to {H^0(X_q,\cO_q)}.
$$
This proves the equivalence of statements (1) and (2). 

The equivalence between statements (2) and (3) is a consequence of the flat base change formula along the flat morphism $\spec(L)\to \spec(\kappa(q))$, where $q\in Q$ is the image of the geometric point $\bar{q}\in Q(L)$, and we have
$$
H^0(X_{\bar{q}},\cO_{X_{\bar{q}}})\simeq H^0(X_q,\cO_{X_q})\otimes_{\kappa(q)}L.
$$
The last statement follows from \ref{rem-cfd0}.
\end{proof}

\begin{lemma}\label{pure-ui}
If $\eta:X\to Q$ is a pure scheme morphism, then the condition (UI) holds. In particular, a faithfully flat morphism satisfies (UI).  
\end{lemma}
\begin{proof}
Let $\eta':X'\to Q'$ be the base change of $\eta$ by a scheme morphism $q:Q'\to Q$.
Purity tell us that the composite $\cO_{Q'}\simeq q^*\cO_Q \to q^*\eta_*\cO_X\to \eta'_*\cO_{X'}$ is injective, where the last morphism is the base change morphism. It follows that the first morphism is also injective. It is well-known that a faithfully flat morphism is pure. 
\end{proof}

\begin{proposition}\label{univ-sh-c}
Let $\eta:(X,\delta)\to (Q,0)$ be a qcqs faithfully flat morphism satisfying the sheaf condition and the pointwise sheaf condition, i.e., for every $q\in Q$, we have
$$
\Const(H^0(X_q,\cO_q))=\kappa(q).
$$
Equivalently, we can assume an analogous condition for every geometric point of $Q$. When $Q$ is of finite type over a field, it suffices to assume the pointwise sheaf condition for closed points $q$. 

Suppose that  any of the following conditions is satisfied:
\begin{enumerate}
\item $\eta$ is affine;
\item $\eta$ is CFD0 (for sufficient conditions see \ref{cond-cfd0});
\item $\eta$ is a composite as in \ref{comp-bc} (1') or (2') with $g$ affine flat.
\end{enumerate}
Then the sheaf condition holds universally for $\eta$. 
\end{proposition}

\begin{proof}
By \ref{universal-sc}, we need to check conditions (BC), (UI) and (CF). 

Condition (BC) in cases (1) and (2) follow from \ref{suff-cond-BC}. In case (1), $\eta_*\cO_X$ is flat since $\eta$ is affine flat. In case (2), $\eta_*\cO_X$ is flat by \ref{suff-cond-BC}. Case (3) follows from \ref{comp-bc}, which also gives us flatness of $\eta_*\cO_X$.

Condition (UI) follows from \ref{pure-ui}. 

Since (BC) holds, \ref{eq-pointw-sh} gives the pointwise condition from \ref{pointw-cond-cf}. Moreover, 
since $\eta_*\cO_X$ is flat, \ref{pointw-cond-cf} yields (CF).
\end{proof}

\begin{theorem}\label{univ-geom-quot}
Let $\eta:(X,\delta)\to (Q,0)$ be a qcqs faithfully flat morphism satisfying the sheaf condition such that for every geometric point $\bar{q}$ of $Q$, the fibre $X_{\bar{q}}$ is classically simple, and $\eta$ satisfies any of the conditions (1)--(3) from \ref{univ-sh-c}. Then $\eta$ is a universal geometric quotient. 
\end{theorem}

\begin{proof}
Since $\eta$ is fpqc, it is universally submersive and surjective, so the topological condition is stable under arbitrary base change. 

By \ref{pointw-orbit}, we obtain that $\eta$ satisfies the orbit condition, which is stable under arbitrary base change by \ref{O-basech}.

Classical simplicity of geometric fibres implies the pointwise sheaf condition, so \ref{univ-sh-c} yields the universality of the sheaf condition.  
\end{proof}

\begin{lemma}\label{pushfwd-const-sh}
If a differential scheme $(X,\delta_X)$ is simple with respect to  Zariski open immersions, with categorical scheme of leaves given as coequaliser from \ref{def-simplicity}, then it satisfies the sheaf condition for quotients,
$$
\eta_*\Const(\cO_X,\delta_X)=\cO_{\pi_0(X)}.
$$
\end{lemma}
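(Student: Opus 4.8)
The plan is to exhibit a canonical comparison morphism $\iota\colon\cO_{\pi_0(X)}\to\eta_*\Const(\cO_X,\delta_X)$ and then prove it is an isomorphism by evaluating on opens $W\subseteq\pi_0(X)$, using the representability of the relevant section rings by morphisms into $\bbA^1_S$ together with the fact that, by simplicity, each such $W$ is again a categorical scheme of leaves. For the comparison morphism: the categorical scheme of leaves comes with a morphism of $S$-differential schemes $\eta\colon(X,\delta_X)\to C(\pi_0(X))=(\pi_0(X),0)$, so its structure homomorphism $\eta^\sharp\colon\cO_{\pi_0(X)}\to\eta_*\cO_X$ commutes with the derivations, i.e.\ $(\eta_*\delta_X)\circ\eta^\sharp=0$. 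Since $\eta_*$ is left exact, $\eta_*\Const(\cO_X,\delta_X)=\ker(\eta_*\delta_X)$, and hence $\eta^\sharp$ factors uniquely through a morphism of sheaves of rings $\iota\colon\cO_{\pi_0(X)}\to\eta_*\Const(\cO_X,\delta_X)$; it remains to check that $\iota_W$ is bijective for every open $W\subseteq\pi_0(X)$.

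Next I would unwind the simplicity hypothesis. By \ref{def-simplicity} together with \ref{univ-ce-desc} (and the computation in its proof), simplicity of $(X,\delta_X)$ with respect to Zariski open immersions means that for every open $W\subseteq\pi_0(X)$ the canonical map $\pi_0(\eta_{\bbX}^*W)\to W$ is an isomorphism. By \ref{cat-schl-pi0} and \ref{diff-sch-precats} this says precisely that the open differential subscheme $(\eta^{-1}(W),\delta_X|_{\eta^{-1}(W)})$ has categorical scheme of leaves $W$, with structure morphism the restriction $\eta|\colon\eta^{-1}(W)\to W$; equivalently, $W$ is the coequaliser in $\Sch_{\ov S}$ of the two maps $d_0,d_1\colon\eta^{-1}(W)_1\rightrightarrows\eta^{-1}(W)$ forming the $\bbD(S)$-action of $(\eta^{-1}(W),\delta_X|_{\eta^{-1}(W)})$, whose structure homomorphisms (cf.\ the proof of \ref{diff-sch-precats}) are $d_0^\sharp=\id$, the inclusion $\cO\hookrightarrow\cO[\epsilon]/(\epsilon^2)$, and $d_1^\sharp=\id+\epsilon\,\delta_X$.

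Then I would run the $\bbA^1_S$ argument. For every $V\in\Sch_{\ov S}$ there is a natural bijection $\Hom_{\Sch_{\ov S}}(V,\bbA^1_S)\simeq\Gamma(V,\cO_V)$, and for a morphism $g\colon\eta^{-1}(W)\to\bbA^1_S$ corresponding to $s\in\cO_X(\eta^{-1}(W))$ the equality $g\circ d_0=g\circ d_1$ is, by the formulas for $d_0^\sharp$ and $d_1^\sharp$, equivalent to $\delta_X(s)=0$. Hence post-composition with $\eta|$ yields, through the coequaliser property of $W$, a bijection
$$
\cO_{\pi_0(X)}(W)=\Gamma(W,\cO_W)=\Hom_{\Sch_{\ov S}}(W,\bbA^1_S)\ \xrightarrow{\ \sim\ }\ \{\,s\in\cO_X(\eta^{-1}(W)):\delta_X(s)=0\,\}=\bigl(\eta_*\Const(\cO_X,\delta_X)\bigr)(W),
$$
which is exactly $\iota_W$: surjectivity is the existence half of the coequaliser property, and injectivity holds because a coequalising morphism, in particular $\eta|$, is an epimorphism in $\Sch_{\ov S}$. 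These identifications are natural in $W$, so $\iota$ is an isomorphism of sheaves of rings on $|\pi_0(X)|$, which proves the claim.

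The step I expect to require the most care is the second one: translating the abstract simplicity hypothesis — which via \ref{univ-ce-desc} is phrased as a descent/universality condition for the fibration of Zariski open immersions — into the concrete geometric statement that $W=\pi_0(\eta^{-1}(W))$ in $\Sch_{\ov S}$ for every open $W\subseteq\pi_0(X)$, rather than merely the topological consequence that $W\mapsto\eta^{-1}(W)$ is fully faithful on posets of opens. Once this localisation of the scheme of leaves is secured, the remainder is the formal interplay between $\bbA^1_S$-representability of structure rings and the universal property of coequalisers, with no further obstacles.
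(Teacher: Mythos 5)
Your proposal is correct and follows essentially the same route as the paper: construct the comparison morphism $\cO_{\pi_0(X)}\to\eta_*\Const(\cO_X,\delta_X)$ from the (co)equaliser data, use simplicity for Zariski open immersions to see that each open $W\subseteq\pi_0(X)$ is the coequaliser of $\eta^{-1}(W)_1\rightrightarrows\eta^{-1}(W)$, and then apply the universal property to identify sections. The only cosmetic difference is that you test the coequaliser against $\bbA^1_S$ on arbitrary opens, while the paper restricts to affine opens and tests against varying affine schemes; both amount to the same computation $d_1^\sharp(s)-d_0^\sharp(s)=\epsilon\,\delta_X(s)$.
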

\begin{proof}
From the coequaliser diagram of schemes, we obtain a diagram of structure sheaves
$$
\begin{tikzcd}[cramped, column sep=normal, ampersand replacement=\&]
{\eta_*\cO_{X_1}} \&{\eta_*\cO_{X_0}} \ar[yshift=2pt]{l}[swap]{\eta_*\delta_0} \ar[yshift=-2pt]{l}{\eta_*\delta_1}  \& \cO_{\pi_0(X)} \ar{l}
\end{tikzcd}
$$
which yields a unique morphism 
$$\cO_{\pi_0(X)}\to \Eq(\eta_*\delta_0,\eta_*\delta_1)\simeq \eta_*\Eq(\delta_0,\delta_1)=\eta_*\Const(\cO_X,\delta_X).$$
By assumption, for every Zariski open $V\hookrightarrow\pi_0(X)$, the diagram
$$
\begin{tikzcd}[cramped, column sep=normal, ampersand replacement=\&]
{U_1}\ar[yshift=2pt]{r} \ar[yshift=-2pt]{r} \&{U} \ar{r}{} \& V
\end{tikzcd}
$$
with $U=\eta^{-1}V$ and $U_1$ the pullback of $V$ to $X_1$, remains a coequaliser. Substituting $V=\spec(A)$ in the above equaliser of sheaves, we obtain a ring homomorphism
$$
A\to \Eq(\cO_U(U)\doublerightarrow{}{}\cO_{U_1}(U)).
$$
The universal property of $V$ being coequaliser, applied to varying affine schemes, yields that in fact $$\cO_{\pi_0(X)}(V)=A\simeq \Eq(\cO_U(U)\doublerightarrow{}{}\cO_{U_1}(U)=\eta_*\Const(\cO_X)(V),$$
for an arbitrary affine open $V$,  whence we obtain the desired conclusion.  
\end{proof}

The following proposition is a universal version of the `going forward' approach to effectively proving categoricity of quotients that appears in \cite[3.2.4]{ayoub}.

\begin{proposition}\label{our-proof}

Let $(X,\delta_X)$ be an $S$-differential scheme with categorical scheme of leaves $S$ such that:
\begin{enumerate}
\item the canonical morphism
$$
\eta:X\to \pi_0(X)=S
$$
is fpqc and universally open (for example, fppf);
\item for every Zariski open $U$ in $X$, we have
$$
\pi_0(U)=\eta(U);
$$
\item the module $\coker(\eta_*\delta_X)$ is flat over $S$.
\end{enumerate}
Then $(X,\delta_X)$ is simple with respect to $S$-scheme morphisms, i.e., $\eta$ is a universal categorical quotient. 
\end{proposition}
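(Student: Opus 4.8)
The plan is to unwind the claim through Definition~\ref{def-simplicity} and Lemma~\ref{univ-ce-desc}. Since ``simple with respect to $S$-scheme morphisms'' means simplicity for the self-indexing $\cP$ of $\Sch_{\ov S}$, and $\pi_0(X)=S$ is given, what has to be shown is that the coequaliser presentation $X_1\rightrightarrows X\xrightarrow{\eta}S$ of $\pi_0(X)$ (Lemma~\ref{cat-schl-pi0}) stays a coequaliser after an arbitrary base change: for every $Q\to S$ the diagram $X_1\times_S Q\rightrightarrows X\times_S Q\xrightarrow{\eta_Q}Q$ must be a coequaliser in $\Sch$, where $\eta_Q=\eta\times\id_Q$ and the parallel pair is $d_i\times\id_Q$ for the two structure morphisms $d_0,d_1\colon X_1\to X$ of the $\bbD(S)$-action attached to $(X,\delta_X)$, which encode $\delta_X$. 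As $\eta_Q$ is fpqc it is a (universal effective) epimorphism and plainly coequalises the pair (both composites $X_1\times_S Q\to Q$ are the projection), so the whole statement collapses to one implication: every $h\colon X\times_S Q\to T$ with $h\circ(d_0\times\id_Q)=h\circ(d_1\times\id_Q)$ — equivalently, with $h^{\sharp}$ taking values in $\Const(\cO_{X\times_S Q},\delta_X\otimes 1)$, the derivation pulled back from $\delta_X$ — factors through $\eta_Q$.

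First I would record that $(X,\delta_X)$ is already simple for Zariski open immersions: $\eta$ is surjective, so $\eta(\eta^{-1}V)=V$ for every open $V\subseteq S$, and the hypothesis $\pi_0(U)=\eta(U)$ applied to $U=\eta^{-1}V$ is exactly the universality condition of~\ref{univ-ce-desc} for open immersions; Lemma~\ref{pushfwd-const-sh} then yields $\eta_*\Const(\cO_X,\delta_X)=\cO_S$. Because the factorisation to be proved is local on $T$ and on $Q$ and $\eta$ is fpqc and universally open, I would verify it after restricting to a suitable affine open cover of $X$; using universal openness together with the previous point (so that the chosen opens lie over affine opens of $S$ on which $\pi_0$ is represented by the spectrum of the ring of constants) I expect this to reduce matters to the affine situation $S=\spec R$, $X=\spec A$, $Q=\spec B$, $T=\spec C$ with $R\to A$ faithfully flat and $R=\Const(A,\delta_A)$, and with $h^{\sharp}\colon C\to A\otimes_R B$ landing in $\Const(A\otimes_R B,\delta_A\otimes\id)$.

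The key input is then Lemma~\ref{affine-CX-lemma}: a faithfully flat ring map is universally injective (first item of the Remark following~\ref{affine-CX-lemma}), so that lemma, applied with $k=R=\Const(A,\delta_A)$, gives $\Const(A\otimes_R B,\delta_A\otimes\id)=B$; hence $h^{\sharp}$ factors through $B=\cO_Q$, i.e.\ $h$ factors through $\eta_Q$ over each piece. To finish I would glue: each restriction of $\eta_Q$ is fpqc, hence an epimorphism, so the local factorisations are unique, agree on overlaps, and — their images being open in $Q$ by universal openness of $\eta_Q$ — assemble by faithfully flat descent for morphisms into $T$ into a single $\bar h\colon Q\to T$ with $\bar h\circ\eta_Q=h$; $\bar h$ is unique since $\eta_Q$ is epic. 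This exhibits $X_1\times_S Q\rightrightarrows X\times_S Q\xrightarrow{\eta_Q}Q$ as a coequaliser for every $Q$, which is simplicity for $\cP$.

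I expect the main obstacle to be precisely the local computation $\Const(A\otimes_R B,\delta_A\otimes\id)=B$ underlying the reduction to Lemma~\ref{affine-CX-lemma} — in geometric terms, the fact that forming the categorical scheme of leaves commutes with base change. This is where all three hypotheses are genuinely used: fpqc supplies faithful flatness, hence the universal injectivity that is the hypothesis of~\ref{affine-CX-lemma}; while $\pi_0(U)=\eta(U)$ and universal openness of $\eta$ are what force the affine models of the local pieces to have the special shape $A\otimes_R B$ with $R=\Const(A,\delta_A)$ and keep the relevant images open during the gluing. The remaining ingredients — the local reduction, the gluing, and the epimorphism bookkeeping — are routine faithfully flat descent, but the reduction step (passing from ``$\eta$ fpqc'' to ``$R\to A$ faithfully flat with $R$ the constants'') will need to be carried out with some care.
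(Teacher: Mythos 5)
Your proposal is correct and follows essentially the same route as the paper's proof: reduce to showing the coequaliser $X_1\times_SQ\rightrightarrows X_0\times_SQ\to Q$ is preserved under arbitrary base change, localise to affine pieces $U_p\times_{S_p}Q_p$ using universal openness and the hypothesis $\pi_0(U)=\eta(U)$ so that each piece has the shape $\spec(A\otimes_RB)$ with $R=\Const(A,\delta_A)$ and $R\to A$ faithfully flat, invoke Lemma~\ref{affine-CX-lemma} there, and glue the resulting local factorisations over the open cover of $Q$ using that the restrictions of $\eta_Q$ are epimorphisms. The only cosmetic difference is that you phrase the final gluing as faithfully flat descent where the paper just glues Zariski-locally on $Q$, and you make explicit the derivation of universal injectivity from faithful flatness, which the paper leaves implicit.
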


\begin{proof}
Writing $\bbX$ for the precategory in $\Sch_{\ov S}$ associated to $(X,\delta_X)$, our first assumption means that the diagram 
$$
\begin{tikzcd}[cramped, column sep=normal, ampersand replacement=\&]
{X_1}\ar[yshift=2pt]{r}{d_0} \ar[yshift=-2pt]{r}[swap]{d_1} \&{X_0} \ar{r}{} \& S
\end{tikzcd}
$$
is a coequaliser. We need to show that it is universal for $S$-scheme morphisms, i.e., that the base change 
$$
 \begin{tikzpicture} 
\matrix(m)[matrix of math nodes, row sep=2em, column sep=2em, text height=1.5ex, text depth=0.25ex]
 {
|(u2)|{X_1\times_S Q}  & [1em] |(u1)|{X_0\times_S Q}		&[1em] |(u0)|{Q} \\
|(2)|{X_1}  & [1em] |(1)|{X_0}		&[1em] |(0)|{S} \\
 }; 
\path[->,font=\scriptsize,>=to, thin]
([yshift=.2em]2.east) edge node[above=-2pt]{$d_0$} ([yshift=.2em]1.west) 
([yshift=-.2em]2.east) edge node[below=-2pt]{$d_1$} ([yshift=-.2em]1.west) 
(1) edge node[above=-2pt]{$\eta$} (0) 
([yshift=.2em]u2.east) edge node[above=-2pt]{} ([yshift=.2em]u1.west) 
([yshift=-.2em]u2.east) edge node[below=-2pt]{} ([yshift=-.2em]u1.west) 
(u1) edge node[above=-2pt]{} (u0) 
(u2) edge (2)
(u1) edge (1)
(u0) edge node[right]{$q$} (0)
;
\end{tikzpicture}
$$
along any $S$-scheme morphism $Q\to S$ remains a coequaliser. Indeed, let $f:X_0\times_S Q\to Y$ be a morphism of $S$-schemes that coequalises the two arrows from $X_1\times_SQ$. Every point $p\in X_0\times_S Q$ has an open affine neighbourhood $U_p\times_{S_p}Q_p$, where $U_p$, $S_p$ and $Q_p$ are open affine with $\eta(U_p)\subseteq S_p$ and $q(Q_p)\subseteq S_p$ such that  $f_p=f\restriction_{U_p\times_{S_p}Q_p}$ factors through an affine open subset of $Y$. 
Since $\eta$ is open, we may assume that $\eta(U_p)= S_p$. 

As a base-change of a surjective morphism $X_0\to S$, the morphism $X_0\times_SQ\to Q$ is surjective, so, since  
$U_p\times_{S_p}Q_p$ cover $X_0\times_SQ$, we obtain that $Q_p$ cover $Q$. The morphisms $d_0$ and $d_1$ are identites on the underlying topological spaces of $X_1$ and $X_0$, and let us write  $U_{p,1}=U_p\times_S S_1$ for the preimages of $U_p$ in $X_1$.

By assumption,
$$
\begin{tikzcd}[cramped, column sep=normal, ampersand replacement=\&]
{U_{p,1}}\ar[yshift=2pt]{r}{} \ar[yshift=-2pt]{r}[swap]{} \&{U_p} \ar{r}{} \& S_p
\end{tikzcd}
$$
is a coequaliser, and by \ref{affine-CX-remark}, we obtain that
$$
\begin{tikzcd}[cramped, column sep=normal, ampersand replacement=\&]
{U_{p,1}\times_SQ_i}\ar[yshift=2pt]{r}{} \ar[yshift=-2pt]{r}[swap]{} \&{U_p\times_{S_p} Q_p} \ar{r}{\eta_p} \& Q_p
\end{tikzcd}
$$
is a coequaliser in $\Aff_{\ov S_p}$. 

Since $f_p$ factors through an affine open in $Y$, there exists a unique $h_p:Q_p\to Y$ such that 
$$
f_p=h_p\circ \eta_p.
$$
Every point in $Q_p\cap Q_{p'}$ has an affine open neighbourhood $W$ which is standard open in $Q_p$ and $Q_{p'}$, so by the same affine argument applied to $U_p\times_S W$ and $U_{p'}\times_S W$, we obtain that $h_p\restriction_W=h_{p'}\restriction_W$. 

Hence $$h_p\restriction_{Q_{p}\cap Q_{p'}}=h_{p'}\restriction_{Q_p\cap Q_{p'}},$$
so the $h_p$ can be glued into a morphism 
$$
h:Q\to Y
$$
verifying the universal property of coequaliser for $Q$. 
\end{proof}

\begin{proposition}\label{our-proof-cor}
If $(X,\delta)$ is a classically simple $S$-differential scheme, then it is simple with respect to $S$-scheme morphisms. 
\end{proposition}
\begin{proof}
By \ref{bardavid-simple}, for any open subscheme $U$ of $X$,  $\pi_0(U)=S$, so we may apply \ref{our-proof}.

Alternatively, it is straightforward to check that $\eta:X\to S$ is a geometric quotient, so universality follows from \ref{unif-geom-quot} or \ref{univ-geom-quot}.
\end{proof}
This proposition shows categorical simplicity both of classical Picard-Vessiot rings, and of torsors associated with strongly normal extensions.
\subsection{Polarised differential scheme morphisms}

\begin{definition}\label{def-polarised-qproj}
The category of \emph{polarised quasi-projective morphisms} has objects 
$$
(P,\cL_P)\stackrel{p}{\to}U,
$$
consisting of a scheme morphism $p:P\to U$ and a $p$-relatively ample invertible $\cO_P$-module $\cL_P$. 

A morphism $(f,\alpha)$ between polarised quasi-projective morphisms $(P,\cL_P)\stackrel{p}{\to}U$ and $(Q,\cL_Q)\stackrel{q}{\to}V$ is a commutative diagram
$$
 \begin{tikzpicture} 
\matrix(m)[matrix of math nodes, row sep=2em, column sep=2em, text height=1.9ex, text depth=0.25ex]
 {
 |(1)|{(P,\cL_P)}		& |(2)|{(Q,\cL_Q)} 	\\
 |(l1)|{U}		& |(l2)|{V} 	\\
 }; 
\path[->,font=\scriptsize,>=to, thin]
(1) edge node[above]{$(f,\alpha)$} (2) edge node[left]{$p$}  (l1)
(2) edge node[right]{$q$} (l2) 
(l1) edge node[above]{}  (l2);
\end{tikzpicture}
$$
consisting of a scheme morphism $f:P\to Q$ that makes the underlying scheme diagram commutative, together with an $\cO_P$-module \emph{isomorphism} $\alpha:f^*\cL_Q\to\cL_P$.

If $(g,\beta)$ is another morphism from $(Q,\cL_Q)\stackrel{q}{\to}V$ to $(R,\cL_R)\stackrel{r}{\to}W$, the composite is computed as
$$
(g,\beta)\circ(f,\alpha)=(g\circ f, \alpha\circ f^*\beta\circ\text{coherence}),
$$ 
where, more precisely, the $\cO_P$-module isomorphism is given as the composite
$$
(g\circ f)^*\cL_R\simeq f^*g^*\cL_R\stackrel{f^*\beta}{\to}f^*\cL_Q\stackrel{\alpha}{\to}\cL_P.
$$
\end{definition}

\begin{notation}\label{nota-polarised}
The category of polarised quasi-projective morphisms has a natural codomain fibration over the category of schemes.
In this subsection, we write
 $$\cP:\Sch_{\ov S}^\op\to\Cat$$ 
 for the associated pseudofunctor, i.e., $\cP(V)$ is the category consisting of pairs
$$ 
(Q\stackrel{q}{\to}V,\cL), 
$$
where $q$ is a morphism of finite type and $\cL$ is an invertible $q$-ample $\cO_Q$-sheaf.

We will write $$\cS=\mathrm{Self}(\Sch_{\ov S})$$ for the self-indexing of the category of $S$-schemes over itself, and 
$$
U:\cP\to \cS
$$
for the natural forgetful functor. 
\end{notation}

\begin{definition}\label{def-pol-dif}
The category of \emph{polarised quasi-projective differential scheme morphisms} is
the fibered category 
$$
\delta\da\cP
$$
obtained by construction \ref{delta-P}.
\end{definition}

\begin{lemma}\label{descr-poldif}
For an $S$-differential scheme $(X,\delta_X)$, the category of polarised quasi-projective differential scheme morphisms with codomain $(X,\delta)$ is equivalent to the category of pairs $$((P,\delta_P)\stackrel{p}{\to} (X,\delta_X),(\cL_P,\delta_{\cL_P})),$$
where $p:(P,\delta_P)\to (X,\delta_X)$ is a morphism of $S$-differential schemes, and $(\cL_P,\delta_{\cL_P})$ is an invertible $(\cO_P,\delta_P)$-module with $\cL_P$ relatively ample with respect to the underlying scheme morphism $P\to X$. 
\end{lemma}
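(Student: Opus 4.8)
The plan is to unwind both sides of the claimed equivalence through the precategory-action description of $\delta\da\cP$ and then to follow the pattern of the proof of \ref{diff-sch-precats}. By \ref{delta-P} we have $\delta\da\cP(X,\delta_X)=\cP^{\bbX}$, where $\bbX\in\Precat(\Sch_{\ov S})$ is the $\bbD(S)$-action attached to $(X,\delta_X)$; I will use its explicit form recalled in the proof of \ref{diff-sch-precats}, in which $X_1$ and $X_2$ have the same underlying space as $X_0=X$, $X_1$ carries the sheaf $\cO_X[\epsilon]/(\epsilon^2)$, the map $d_0$ is dual to the inclusion, $d_1$ is dual to $\id+\epsilon\delta_X$, and $n$ is dual to $\epsilon\mapsto0$. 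First I would invoke \ref{gen-dd} to present an object of $\cP^{\bbX}$ as a polarised quasi-projective morphism $P_0=(P\xrightarrow{p}X,\cL_P)\in\cP(X)$ together with an isomorphism $\varphi\colon d_0^*P_0\to d_1^*P_0$ in $\cP(X_1)$ satisfying the normalisation condition along $n$ and the cocycle condition over $X_2$.

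Next I would split $\varphi$ into a ``scheme part'' and a ``line-bundle part''. Applying the forgetful functor $U\colon\cP\to\cS$ of \ref{nota-polarised} levelwise turns $(P_0,\varphi)$ into an $\bbX$-action in the self-indexing $\cS=\mathrm{Self}(\Sch_{\ov S})$, so by \ref{diff-sch-precats} and \ref{diff-self} it encodes a morphism of $S$-differential schemes $p\colon(P,\delta_P)\to(X,\delta_X)$ with underlying scheme morphism the unchanged $p\colon P\to X$, which is quasi-projective because $P_0\in\cP(X)$; concretely, as in the proof of \ref{diff-sch-precats}, the underlying scheme part of $\varphi$ is an $\cO_X[\epsilon]/(\epsilon^2)$-algebra isomorphism $\cO_P[\epsilon]/(\epsilon^2)\to\cO_P[\epsilon]/(\epsilon^2)$ (source the pullback along $d_0$, target along $d_1$) of the shape $\id+\epsilon\,\delta_P\circ\eta_P$ for a unique $\delta_P\in\Der_S(\cO_P,\cO_P)$ lifting $\delta_X$. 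Then I would analyse the remaining datum in $\varphi=(f_\varphi,\alpha_\varphi)$ of \ref{def-polarised-qproj}: the isomorphism $\alpha_\varphi\colon f_\varphi^*(d_1^*\cL_P)\to d_0^*\cL_P$ of invertible $\cO_P[\epsilon]/(\epsilon^2)$-modules, which by the normalisation condition reduces modulo $\epsilon$ to $\id_{\cL_P}$ and is therefore of the form $\id+\epsilon\,\delta_{\cL_P}\circ\eta_P$ for a uniquely determined additive map $\delta_{\cL_P}\colon\cL_P\to\cL_P$.

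The crux of the argument — the step I expect to require the most care — is verifying that compatibility of $\alpha_\varphi$ with the $\delta_P$-twisted algebra isomorphism $f_\varphi$ forces $\delta_{\cL_P}$ to be a derivation over $\delta_P$, i.e.\ $\delta_{\cL_P}(as)=\delta_P(a)s+a\,\delta_{\cL_P}(s)$ for local sections $a\in\cO_P$, $s\in\cL_P$; this falls out by comparing the $\epsilon$-components of $\alpha_\varphi(a\cdot s)$ and $a\cdot\alpha_\varphi(s)$, bearing in mind that the $\cO_P$-action on the target has been twisted by $\delta_P$ through $f_\varphi$. Thus $(\cL_P,\delta_{\cL_P})$ becomes an invertible $(\cO_P,\delta_P)$-module, with $\cL_P$ relatively ample for $P\to X$ since that morphism and that sheaf are untouched. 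Once this is in place, the cocycle condition for $\varphi$ is automatic, by the same computation as in the proof of \ref{diff-sch-precats}: the pullbacks of $\varphi$ to $X_2$ are governed, on both the algebra and the module factor, by $\id+\epsilon_0(\,\cdot\,)$, $\id+\epsilon_1(\,\cdot\,)$ and $\id+(\epsilon_0+\epsilon_1)(\,\cdot\,)$, which compose exactly as the condition prescribes.

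Finally I would run the construction in reverse: from a pair $((P,\delta_P)\xrightarrow{p}(X,\delta_X),(\cL_P,\delta_{\cL_P}))$ as in the statement, apply the relative-spectrum-of-quasi-coherent-algebras functor together with the associated invertible modules to the dual-numbers diagram over $\cO_P$ whose $d_1$-components are $\id+\epsilon\delta_P$ on $\cO_P[\epsilon]/(\epsilon^2)$ and $\id+\epsilon\delta_{\cL_P}$ on $\cL_P[\epsilon]/(\epsilon^2)$; this yields a precategory in $\cP$ above $\bbX$ with all structure arrows cartesian, hence an object of $\cP^{\bbX}$ by \ref{action-fibred}, the entries lying in the respective $\cP(X_i)$ since quasi-projectivity and relative ampleness are stable under base change. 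A short check using the morphism description of \ref{gen-dd} identifies morphisms of $\bbX$-actions with pairs consisting of a morphism $(P,\delta_P)\to(P',\delta_{P'})$ of $S$-differential schemes over $(X,\delta_X)$ and a compatible isomorphism of differential invertible modules, and shows these two assignments to be mutually quasi-inverse, establishing the equivalence.
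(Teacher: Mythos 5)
Your proposal is correct and follows essentially the same route as the paper's proof: both unwind the object of $\cP^{\bbX}$ via the precategory-action description, recover the scheme-level derivation $\delta_P$ exactly as in \ref{diff-sch-precats}, and then read off $\delta_{\cL_P}$ from the $\epsilon$-linear part of the polarisation isomorphism, with the identity-section condition forcing the shape $\id+\epsilon\,\delta_{\cL_P}\circ\eta$. You supply more detail than the paper does — the explicit Leibniz-rule verification over the $\varphi$-twisted tensor product, the automatic cocycle condition, the quasi-inverse, and the identification of morphisms — all of which the paper leaves implicit, but the underlying argument is the same.
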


\begin{proof}
An object $P\in\delta\da\cP(X,\delta)=\cP^\bbX$ gives rise to a diagram 
$$
 \begin{tikzpicture}
 [cross line/.style={preaction={draw=white, -,line width=4pt}}, proj/.style={dotted,-}]
 \def\triple#1#2{
 ([yshift=.4em]#1.east) edge 
 ([yshift=.4em]#2.west) 
(#1) edge 
(#2)
([yshift=-.4em]#1.east) edge 
([yshift=-.4em]#2.west) }
\def\double#1#2{
([yshift=.4em]#1.east) edge 
([yshift=.4em]#2.west) 
(#2)  edge 
(#1) 
([yshift=-.4em]#1.east) edge 
([yshift=-.4em]#2.west) }
\def\vtriple#1#2{
 ([xshift=.4em]#1.south) edge 
 ([xshift=.4em]#2.north) 
(#1) edge 
(#2)
([xshift=-.4em]#1.south) edge 
([xshift=-.4em]#2.north) }
\def\vdouble#1#2{
([xshift=.4em]#1.south) edge
([xshift=.4em]#2.north) 
(#2)  edge 
(#1) 
([xshift=-.4em]#1.south) edge[cross line] 
([xshift=-.4em]#2.north) }
\matrix(m)[matrix of math nodes, row sep=2em, column sep=1em,  
text height=1.2ex, text depth=0.25ex]
{
|(P22)|{(P_2,\cL_{P_2})} 	&[1em] |(P12)|{(P_1,\cL_{P_1})}  	&[1em]	 |(P02)|{(P_0,\cL_{P_0})}		\\[0em]
|(P21)|{X_2} 	&[1em] |(P11)|{X_1}  	&[1em]	 |(P01)|{X_0}		\\[0em]
|(P20)|{S_2} 	&[1em] |(P10)|{S_1}  	&[1em]	 |(P00)|{S_0}		\\};
\path[->,font=\scriptsize,>=to, thin,inner sep=1pt]
\triple{P22}{P12}
\double{P12}{P02}

\triple{P21}{P11}
\double{P11}{P01}

\triple{P20}{P10}
\double{P10}{P00}

(P22) edge (P21)
(P21) edge (P20)

(P12) edge (P11)
(P11) edge (P10)

(P02) edge (P01)
(P01) edge (P00)

;
\end{tikzpicture}
$$ 
where the bottom level is the precategory $\bbD(S)$ as in \ref{diff-sch-precats}, and all the arrows in the top and middle levels are cartesian. Hence, forgetting the polarisations, we see that $P$ is also an $S$-differential scheme, whose differential structure is given by an $S_1$-automorphism $P_1\stackrel{f}{\to} P_1$, which is identity on the underlying space, and it is given by an automorphism $\varphi:\cO_{P_0}[\epsilon]/(\epsilon^2)\to\cO_{P_0}[\epsilon]/(\epsilon^2)$ associated to an $S$-derivation $\delta_P$ on $\cO_P$ as in \ref{diff-sch-precats}, and the action on invertible sheaves is given by an isomorphism $(f,\alpha):(p_1,d_{P,0}^*\cL_0)\to (p_1,d_{P,0}^*\cL_0)$, i.e., by an isomorphism
\newcommand\leftidx[3]{%
  {\vphantom{#2}}#1#2#3%
}
$$
\alpha:\cO_{P_0}[\epsilon]/(\epsilon^2)~_\varphi\!{\otimes}_{\cO_{P_0}[\epsilon]/(\epsilon^2)}\cL_0[\epsilon]/(\epsilon^2)\to\cL_0[\epsilon]/(\epsilon^2).
$$
The identity section requirement yields that $$\alpha=\id+\epsilon \delta_\alpha\circ\eta_{L},$$ where $\eta_L: \cL_0[\epsilon]/(\epsilon^2)\to \cL_0$ is the augmentation morphism, and $\delta_\alpha:\cL_0\to \cL_0$ makes $\cL_0$ into an $(\cO_P,\delta_P)$-module.
\end{proof}

\begin{lemma}\label{coeq-desc-inv-sh}
Let $(X,\delta_X)$ be a differential scheme which is simple with respect to Zariski open immersions, with the categorical scheme of leaves given by the coequaliser
$$
\begin{tikzcd}[cramped, column sep=normal, ampersand replacement=\&]
{X_1}\ar[yshift=2pt]{r}{d_0} \ar[yshift=-2pt]{r}[swap]{d_1} \&{X_0} \ar{r}{\eta} \& \pi_0(X).
\end{tikzcd}
$$
Consider invertible sheaves  $\cL$ and $\cL'$ on $\pi_0(X)$ and  an $\cO_{X_0}$-module isomorphism 
$$
\alpha:\eta^*\cL\to \eta^*\cL'
$$
making the diagram
$$
 \begin{tikzpicture}
[cross line/.style={preaction={draw=white, -,
line width=4pt}}]
\matrix(m)[matrix of math nodes, row sep=1.8em, column sep=1.8em, text height=1.5ex, text depth=0.25ex]
{	|(0i)|{d_0^*\eta^*\cL}		& |(1i)| {d_0^*\eta^*\cL'}	\\   [.02em]
	|(0j)|{d_1^*\eta^*\cL}		& |(1j)| {d_1^*\eta^*\cL'}	\\};
\path[->,font=\scriptsize,>=to, thin]
(0i) edge node[above,pos=0.5]{$d_0^*\alpha$} (1i) edge node[below=-2pt,pos=0.5,sloped]{$\sim$} (0j) 
(0j) edge node[above,pos=0.5]{$d_1^*\alpha$} (1j) 
(1i) edge node[above=-2pt,pos=0.5,sloped]{$\sim$} (1j)
;
\end{tikzpicture}
$$ 
commutative. 

Then, there exists a unique isomorphism $\sigma:\cL\to\cL'$ such that $$\alpha=\eta^*\sigma.$$
\end{lemma}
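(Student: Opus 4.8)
The plan is to reduce the descent of $\alpha$ to the descent of ``constant'' units, working locally on $\pi_0(X)$ after trivialising the two invertible sheaves; the key input will be Lemma~\ref{pushfwd-const-sh}, which gives $\eta_*\Const(\cO_X,\delta_X)=\cO_{\pi_0(X)}$ and, in particular, the inclusion $\cO_{\pi_0(X)}\hookrightarrow\eta_*\cO_X$.

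\emph{Step 1: local trivialisations.} Choose an open cover $\{V_i\}$ of $\pi_0(X)$ over which both $\cL$ and $\cL'$ are trivial, fix isomorphisms $\cL|_{V_i}\simeq\cO_{V_i}\simeq\cL'|_{V_i}$, and put $U_i=\eta^{-1}(V_i)$, with $U_{i,1}\subseteq X_1$ its preimage (unambiguous since $d_0,d_1$ are identities on underlying spaces). Pulling the trivialisations back along $\eta$ turns $\alpha|_{U_i}$ into an $\cO_{U_i}$-linear automorphism of $\cO_{U_i}$, i.e.\ multiplication by a unit $u_i\in\Gamma(U_i,\cO_{X_0}^\times)$.

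\emph{Step 2: the cocycle square forces $u_i$ to be constant, hence descends.} Since $\eta d_0=\eta d_1$ as morphisms $X_1\to\pi_0(X)$, the unlabelled vertical isomorphisms in the square of the statement are the coherence isomorphisms identifying $d_0^*\eta^*\cL\simeq d_1^*\eta^*\cL$ (and similarly for $\cL'$); under these identifications, commutativity of that square reads precisely $d_0^*\alpha=d_1^*\alpha$. Transporting this through the chosen trivialisations over $U_i$ — which are compatible with those coherence isomorphisms — yields $d_0^\#(u_i)=d_1^\#(u_i)$ in $\Gamma(U_{i,1},\cO_{X_1})$, where $d_0^\#,d_1^\#:\cO_{X_0}\rightrightarrows\cO_{X_1}$ are the two structure maps. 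Thus $u_i\in\Gamma(U_i,\Eq(d_0^\#,d_1^\#))=\Gamma(U_i,\Const(\cO_X,\delta_X))$, and since $\delta_X(u_i)=0$ implies $\delta_X(u_i^{-1})=0$, in fact $u_i\in\Gamma(U_i,\Const(\cO_X,\delta_X)^\times)$. By Lemma~\ref{pushfwd-const-sh}, $\Gamma(V_i,\cO_{\pi_0(X)})=\Gamma(U_i,\Const(\cO_X,\delta_X))$, so there is a unique $v_i\in\Gamma(V_i,\cO_{\pi_0(X)})$ mapping to $u_i$; applying the same to $u_i^{-1}$ and using that $\Gamma(V_i,\cO_{\pi_0(X)})\hookrightarrow\Gamma(U_i,\cO_{X_0})$ (because $\cO_{\pi_0(X)}=\eta_*\Const(\cO_X,\delta_X)\hookrightarrow\eta_*\cO_X$) shows $v_i$ is a unit.

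\emph{Step 3: gluing and uniqueness.} Let $\sigma_i:\cL|_{V_i}\to\cL'|_{V_i}$ be multiplication by $v_i$ in the chosen trivialisations, so that $\eta^*\sigma_i=\alpha|_{U_i}$. On an overlap $V_{ij}$ the automorphism $\sigma_i|_{V_{ij}}\circ(\sigma_j|_{V_{ij}})^{-1}$ of $\cL'|_{V_{ij}}$ is multiplication by a unit $c_{ij}$ with $\eta^\#(c_{ij})=1$, hence $c_{ij}=1$ by the injectivity $\cO_{\pi_0(X)}\hookrightarrow\eta_*\cO_X$; so the $\sigma_i$ glue to a morphism $\sigma:\cL\to\cL'$, which is an isomorphism because it is locally one, and $\eta^*\sigma=\alpha$ since both agree on the cover $\{U_i\}$ of $X_0$. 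Uniqueness is the same argument: any two solutions $\sigma,\sigma'$ differ by multiplication by a unit $c$ with $\eta^\#(c)=1$, so $c=1$. The only point requiring genuine care is Step~2 — checking that the concrete trivialisations of $\eta^*\cL$ and their $d_0$-, $d_1$-pullbacks match the pseudofunctorial coherence isomorphisms in the square, so that the abstract identity $d_0^*\alpha=d_1^*\alpha$ descends to $d_0^\#(u_i)=d_1^\#(u_i)$; once this bookkeeping is in place, the rest is formal from Lemma~\ref{pushfwd-const-sh}.
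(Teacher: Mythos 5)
Your proof is correct and follows essentially the same route as the paper's: trivialise $\cL,\cL'$ locally on $\pi_0(X)$, use the commuting square to force the local unit $u_i$ into $\Const(\cO_X,\delta_X)$, descend it via Lemma~\ref{pushfwd-const-sh}, and glue using injectivity of $\cO_{\pi_0(X)}\hookrightarrow\eta_*\cO_X$. Your identity $d_0^\#(u_i)=d_1^\#(u_i)$, i.e.\ $u_i=u_i+\epsilon\delta_X(u_i)$, is exactly the paper's explicit $2\times2$ matrix comparison in disguise, and your extra care about $v_i$ being a unit and about uniqueness is a welcome tightening of details the paper leaves implicit.
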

\begin{proof}
Let $V_i$ constitute an open cover of $\pi_0(X)$ such that, for every $i$, $\cL\restriction_{V_i}$ and $\cL'\restriction_{V_i}$ are free of rank 1. Then $U_i=\eta^{-1}(V_i)$ cover $X_0$ and $\eta^*\cL\restriction_{U_i}\simeq \cO_{U_i}e_i$, $\eta^*\cL'\restriction_{U_i}\simeq \cO_{U_i}e'_i$ are free rank 1, for some $e_i$ and $e_i'$, so $\alpha\restriction_{U_i}$ is given as multiplication by some $a_i\in \cO_X(U_i)^{\times}$. 

By identifying 
$$
d_0^*\eta^*\cL\restriction_{U_i}=\cO_{U_i}[\epsilon]/(\epsilon^2)\otimes_{\cO_{U_i}}\cO_{U_i}e_i \simeq \cO_{U_i}(1\otimes e_i)\oplus\cO_{U_i}(\epsilon\otimes e_i),
$$
and similarly for $d_0^*\eta^*\cL'\restriction_{U_i}$, the matrix of $d_0^*\alpha\restriction_{U_i}$ in the pair of bases
$((1\otimes e_i),(\epsilon\otimes e_i))$, $((1\otimes e'_i),(\epsilon\otimes e'_i))$ becomes
$$
\begin{pmatrix}
a_i & 0\\
0 & a_i
\end{pmatrix}.
$$
In the same pair of bases, the matrix of $d_1^*\alpha\restriction_{U_i}$ is
$$
\begin{pmatrix}
a_i & 0\\
\delta_X(a_i) & a_i
\end{pmatrix}.
$$
By the assumption that $d_0^*\alpha$ and $d_1^*\alpha$ agree up to coherences, we conclude that $\delta_X(a_i)=0$, i.e., by \ref{pushfwd-const-sh}, 
$$
a_i\in\Const(\cO_X)(U_i)=\eta_*\Const(\cO_X)(V_i)\simeq\cO_{\pi_0(X)}(V_i),
$$
whence it gives an isomorphism $\cL\restriction_{V_i}\to \cL'\restriction_{V_i}$. 

By the same argument, these isomorphisms agree on the intersections $V_i\cap V_j$ and hence glue uniquely to an isomorphism $\cL\to \cL'$ with the desired property.
\end{proof}

\begin{proposition}\label{reflect-quasiproj-coeq}
The forgetful functor 
$$
\delta\da\cP\to\delta\da\cS
$$
reflects coequalisers associated with connected components of differential schemes which are simple with respect to Zariski open immersions. 
\end{proposition}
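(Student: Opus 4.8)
The plan is to unwind the coequaliser in question and then reduce the reflection statement to the unique descent of invertible sheaves along categorical schemes of leaves recorded in~\ref{coeq-desc-inv-sh}. On the fibre over an $S$-differential scheme with associated precategory $\bbX\in\Precat(\Sch_{\ov S})$, the forgetful functor $\delta\da\cP\to\delta\da\cS$ is the functor $\cP^{\bbX}\to\cS^{\bbX}$ induced by $U\colon\cP\to\cS$ of~\ref{nota-polarised}, and by~\ref{def-conn-comp} the relevant coequalisers are computed in the total categories $\cP$ and $\cS$; so it suffices to prove that $U\colon\cP\to\cS$ reflects them. Given $P\in\cP^{\bbX}$ with associated precategory $\bbP\in\Precat(\cP)$ (\ref{action-fibred}), its image $\bbP^{\mathrm{sch}}=U(\bbP)\in\Precat(\Sch_{\ov S})$ is again a $\bbD(S)$-action, hence by~\ref{diff-sch-precats} corresponds to an $S$-differential scheme $(\hat P,\delta_{\hat P})$, and by~\ref{descr-poldif} the object $P$ is this differential scheme together with an invertible differential $(\cO_{\hat P},\delta_{\hat P})$-module $(\cL,\delta_\cL)$ with $\cL$ relatively ample for $\hat P\to X_0$. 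The hypothesis is that $(\hat P,\delta_{\hat P})$ is simple with respect to Zariski open immersions; by~\ref{cat-schl-pi0},~\ref{def-simplicity} and~\ref{coeq-above-coeq} its categorical scheme of leaves $\bar P=\Coeq(\hat P_1\doublerightarrow{}{}\hat P_0)$ then exists in $\Sch_{\ov S}$, the coequaliser $\hat P_1\doublerightarrow{}{}\hat P_0\xrightarrow{\bar\eta}\bar P$ is universal for Zariski open immersions, and $\bar P$ (over $\pi_0(\bbX)$) is the object of connected components of the underlying $\cS$-action $U(P)$.

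The reflection step goes as follows. Suppose $R\in\cP$ carries a cocone $\rho\colon P_0\to R$ under $\bbP$ whose image under $U$ is the coequaliser $\hat P_1\doublerightarrow{}{}\hat P_0\to\bar P=U(R)$. Then $R=(\bar P,\cL_R)$ with $\cL_R$ relatively ample, $\rho$ underlies $\bar\eta$ together with an isomorphism $\alpha_\rho\colon\bar\eta^*\cL_R\to\cL$, and the statement that $\rho$ coequalises the two legs $d_0,d_1\colon P_1\doublerightarrow{}{}P_0$ of $\bbP$ in $\cP$ says, on invertible sheaves, precisely that $\alpha_\rho$ is compatible with the invertible-sheaf component of the action isomorphism of $P$, i.e.\ that it satisfies the commuting-square hypothesis of~\ref{coeq-desc-inv-sh}. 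To test that this cocone is a coequaliser in $\cP$, take another cocone: $T=(T',\cL_T)\in\cP$ and $\tau\colon P_0\to T$ underlying $\tau'\colon\hat P_0\to T'$ with an isomorphism $\beta\colon\tau'^*\cL_T\to\cL$. The scheme coequaliser $\bar\eta$ furnishes a unique $g\colon\bar P\to T'$ with $g\bar\eta=\tau'$. I would then form the isomorphism $\theta=\beta^{-1}\circ\alpha_\rho\colon\bar\eta^*\cL_R\to\bar\eta^*(g^*\cL_T)$ between the $\bar\eta$-pullbacks of $\cL_R$ and $g^*\cL_T$, observe that the compatibilities of $\alpha_\rho$ and of $\beta$ with the action isomorphism of $P$ combine---after a chase through the coherence isomorphisms---to show that $d_0^*\theta$ and $d_1^*\theta$ agree up to coherence over $\hat P_1$, and apply~\ref{coeq-desc-inv-sh} to obtain a unique isomorphism $\sigma\colon\cL_R\to g^*\cL_T$ with $\bar\eta^*\sigma=\theta$. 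Then $(g,\sigma^{-1})\colon R\to T$ is a morphism of $\cP$, the identity $\bar\eta^*(\sigma^{-1})=\theta^{-1}=\alpha_\rho^{-1}\circ\beta$ forces $(g,\sigma^{-1})\circ\rho=\tau$, and uniqueness of $g$ together with the uniqueness in~\ref{coeq-desc-inv-sh} gives uniqueness of $(g,\sigma^{-1})$. Hence $R$ with its cocone is $\pi_0(\bbP)$ in $\cP$, so $U$ reflects the coequaliser.

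The genuinely non-formal content---that a differential invertible sheaf whose $d_0$- and $d_1$-pullbacks are compatibly identified descends uniquely along the categorical scheme of leaves of a differential scheme simple for Zariski open immersions---is already isolated in~\ref{coeq-desc-inv-sh}, and underneath it in~\ref{pushfwd-const-sh}, so I do not expect difficulty there. The hard part will be the coherence bookkeeping: composition in the polarised category (\ref{def-polarised-qproj}) carries a nontrivial coherence isomorphism, so one must check scrupulously that the two conditions ``$\rho$ coequalises the legs'' and ``$\tau$ coequalises the legs'' combine to give exactly the square of~\ref{coeq-desc-inv-sh} for $\theta$, and that $\bar\eta^*(\sigma^{-1})=\alpha_\rho^{-1}\beta$ really yields $(g,\sigma^{-1})\circ\rho=\tau$ on the nose; everything else is formal once one knows that $\bar\eta$ is universal for Zariski open immersions.
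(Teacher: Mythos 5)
Your proposal is correct and follows essentially the same route as the paper: take the unique scheme morphism supplied by the underlying coequaliser, package the two sheaf components of the cocones into an isomorphism of pullbacks satisfying the compatibility square, and descend it uniquely via Lemma~\ref{coeq-desc-inv-sh}. Your explicit placement of the simplicity hypothesis on the underlying differential scheme of the polarised object (your $\hat P=\delta\da U(P)$), rather than on the base, is in fact the reading that the application of \ref{coeq-desc-inv-sh} requires and that matches the use of the proposition in \ref{sch-pv-setup} and \ref{polarised-pv-thm}.
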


\begin{proof}
Let $(X,\delta_X)\in\delta\da\cS$ be simple for Zariski open immersions, and let $(p_2,p_1,p_0)\in \delta\da\cP(X,\delta_X)$ be such that, in the diagram
$$
 \begin{tikzpicture} 
\matrix(m)[matrix of math nodes, row sep=2em, column sep=2em, text height=1.5ex, text depth=0.25ex]
 {
|(u2)|{(P_1,\cL_{P_1})}  & [1em] |(u1)|{(P_0,\cL_{P_0})}		&[1em] |(u0)|{(Q,\cL_Q)} \\
|(2)|{X_1}  & [1em] |(1)|{X_0}		&[1em] |(0)|{\pi_0(X)} \\
 }; 
\path[->,font=\scriptsize,>=to, thin]
([yshift=.2em]2.east) edge node[above=-2pt]{} ([yshift=.2em]1.west) 
([yshift=-.2em]2.east) edge node[below=-2pt]{} ([yshift=-.2em]1.west) 
(1) edge node[above=-2pt]{} (0) 
([yshift=.2em]u2.east) edge node[above=-2pt]{$(d_0,\delta_0)$} ([yshift=.2em]u1.west) 
([yshift=-.2em]u2.east) edge node[below=-2pt]{$(d_1,\delta_1)$} ([yshift=-.2em]u1.west) 
(u1) edge node[above=-2pt]{$(r,\rho)$} (u0) 
(u2) edge node[left]{$p_1$} (2)
(u1) edge node[left]{$p_0$} (1)
(u0) edge (0)
;
\end{tikzpicture}
$$
we have that $(r,\rho)$ coequalises $(d_0,\delta_0)$ and $(d_1,\delta_1)$, and the underlying diagram of scheme morphisms is a coequaliser. 

We need to show that the diagram is a coequaliser of polarised quasi-projective morphisms. Let $(h,\chi):(P_0,\cL_{P_0})\to (T,\cL_T)$ be an arbitrary morphism that coequalises $(d_0,\delta_0)$ and $(d_1,\delta_1)$. Since the underlying diagram of scheme morphisms is a coequaliser, there is a unique morphism $s:Q\to T$ such that $s\circ r=h$. 

Let $\cL=s^*\cL_T$, $\cL'=\cL_Q$, and consider the isomorphism $\alpha:r^*\cL\to r^*\cL'$ given as the composite
$$
r^*s^*\cL_T\simeq h^*\cL_T\stackrel{\chi}{\to}\cL_{P_0}\stackrel{\rho^{-1}}{\to}r^*\cL_Q.
$$
Conditions $(h,\chi)\circ(d_0,\delta_1)=(h,\chi)\circ(d_0,\delta_1)$ and $(r,\rho)\circ(d_0,\delta_1)=(r,\rho)\circ(d_0,\delta_1)$ show that the diagram
$$
 \begin{tikzpicture} 
\matrix(m)[matrix of math nodes, row sep=1.5em, column sep=2em, text height=1.5ex, text depth=0.25ex]
 {
|(u2)|{d_0^*h^*\cL_T}  & [1em] |(u1)|{d_0^*\cL_{P_0}}		&[1em] |(u0)|{d_0^*r^*\cL_Q} \\
				    &  |(mid)|{\cL_{P_1}} 				&					\\
|(2)|{d_1^*h^*\cL_T}  & [1em] |(1)|{d_0^*\cL_{P_0}}		&[1em] |(0)|{d_1^*r^*\cL_Q} \\
 }; 
\path[->,font=\scriptsize,>=to, thin]
(2) edge node[above=-2pt]{$d_1^*\chi$} (1) 
(1) edge node[above=-2pt]{$d_1^*\rho^{-1}$} (0) 
(u2) edge node[above=-2pt]{$d_0^*\chi$} (u1) 
(u1) edge node[above=-2pt]{$d_0^*\rho^{-1}$} (u0) 
(u2) edge node[below=-2pt,pos=0.5,sloped]{$\sim$}(2)
(u1) edge node[left=-2pt]{$\delta_0$} (mid)
(1) edge node[left=-2pt]{$\delta_1$} (mid) 
(u0) edge node[above=-2pt,pos=0.5,sloped]{$\sim$}(0)
;
\end{tikzpicture}
$$
commutes, so $d_0^*\alpha$ and $d_1^*\alpha$ agree up to coherences. By \ref{coeq-desc-inv-sh}, we obtain an isomorphism 
$$
\sigma:s^*\cL_T=\cL\to\cL'=\cL_Q
$$
such that $\alpha=r^*\sigma$, and it follows that
$$
(s,\sigma)\circ(r,\rho)=(h,\chi),
$$
so $(Q,\cL_Q)$ has the universal property of a coequaliser, as required. 
\end{proof}

\subsection{Differential descent}

\begin{proposition}\label{diffl-desc}
Let $\cP:\Sch_{\ov S}^\op\to \Cat$ be a pseudofunctor, and let $$f:(X,\delta_X)\to (Y,\delta_Y)$$ be a morphism of $S$-differential schemes such that
\begin{enumerate}
\item the underlying morphism $f_0:X\to Y$ is a morphism of effective descent for $\cP$;
\item $f_1=d_0^*f_0: X_1\to Y_1$ is a descent morphism for $\cP$, where $d_0:S_1\to S_0=S$ is the source morphism of precategory $\bbD(S)$. 
\end{enumerate} 
Then $f$ is a morphism of effective descent for $\delta\da\cP$. 
\end{proposition}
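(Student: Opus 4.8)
The plan is to reduce the assertion to the precategorical descent criterion \ref{prop:desc-precat}, through the equivalence $\DSch_S\simeq(\Sch_{\ov S})^{\bbD(S)}$ of \ref{diff-sch-precats}. Under this equivalence, $(X,\delta_X)$ and $(Y,\delta_Y)$ become $\bbD(S)$-actions, i.e.\ (via \ref{action-fibred}) precategories $\bbX=(X_2,X_1,X_0)$ and $\mathbb{Y}=(Y_2,Y_1,Y_0)$ in $\Sch_{\ov S}$ over $\bbD(S)$ with cartesian structure maps, where $X_i=X\times_SS_i$ and $Y_i=Y\times_SS_i$, and $f$ becomes the induced morphism of precategories, still denoted $f=(f_2,f_1,f_0)\colon\bbX\to\mathbb{Y}$, in which each $f_i$ is the base change of $f_0$ along the structure morphism $S_i\to S$. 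By Definition \ref{delta-P} one has $\delta\da\cP(X,\delta_X)=\cP^{\bbX}=\tilde\cP(\bbX)$ and likewise $\delta\da\cP(Y,\delta_Y)=\tilde\cP(\mathbb{Y})$, with $\tilde\cP$ the pseudofunctor of \ref{prop:desc-precat}. The equivalence \ref{diff-sch-precats} is moreover compatible with fibre products --- the precategory attached to $(X,\delta_X)\times_{(Y,\delta_Y)}(X,\delta_X)$ is the levelwise product $\bbX\times_{\mathbb{Y}}\bbX$, because $(X\times_YX)\times_SS_i\simeq X_i\times_{Y_i}X_i$, and similarly for the triple fibre product. Hence the kernel-pair groupoid $\bbG_f$ of $f$ computed in $\DSch_S$ goes over to the kernel-pair groupoid of the precategory morphism $f$ computed in $\Precat(\Sch_{\ov S})$, so that $\DD_{\delta\da\cP}(f)\simeq\DD_{\tilde\cP}(f)$ compatibly with the comparison functors. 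It therefore suffices to prove that $f\colon\bbX\to\mathbb{Y}$ is of effective descent for $\tilde\cP$.

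For that we verify the three hypotheses of \ref{prop:desc-precat}: (a) $f_0$ is of effective descent for $\cP$; (b) $f_1$ is a descent morphism for $\cP$; (c) $f_2^*\colon\cP(Y_2)\to\cP(X_2)$ is faithful. Conditions (a) and (b) are exactly assumptions (1) and (2) of the present proposition --- recall that $d_0=d_1$ in $\bbD(S)$ by \ref{C-for-precat}, so the $f_1$ above coincides with the $d_0^*f_0$ of assumption (2). Only (c) requires an argument.

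To establish (c), note that $f_2$ is the base change of $f_0$ along the affine structure morphism $S_2\to S$, and this morphism admits a section, induced by the augmentation $\cO_S[\epsilon_0,\epsilon_1]/(\epsilon_0^2,\epsilon_0\epsilon_1,\epsilon_1^2)\to\cO_S$ sending $\epsilon_0,\epsilon_1\mapsto0$. Pulling this section back along $X\to S$ and along $Y\to S$ produces sections $s_X\colon X\to X_2$ and $s_Y\colon Y\to Y_2$ of the respective projections, and by naturality $f_2\circ s_X=s_Y\circ f_0$, so $s_X^*\circ f_2^*\cong f_0^*\circ s_Y^*$ as functors $\cP(Y_2)\to\cP(X)$. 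Since $f_0$ is of effective descent it is, in particular, a descent morphism, so $f_0^*\colon\cP(Y)\to\cP(X)$ is faithful --- being the composite of the fully faithful functor $\cP(Y)\to\DD_\cP(f_0)$ with the (always faithful) forgetful functor $\DD_\cP(f_0)\to\cP(X)$. Hence $f_0^*\circ s_Y^*$ is faithful, so $s_X^*\circ f_2^*$ is faithful, and consequently $f_2^*$ is faithful. Now \ref{prop:desc-precat} applies to $f\colon\bbX\to\mathbb{Y}$ and the conclusion follows.

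The one genuinely delicate point is the bookkeeping in the first paragraph: one must check with care that effective descent of $f$ for the pseudofunctor $\delta\da\cP$ on differential schemes is the same notion as effective descent of the associated precategory morphism for $\tilde\cP$ --- that is, that $\DSch_S\simeq(\Sch_{\ov S})^{\bbD(S)}$ transports the kernel-pair groupoid of $f$ correctly and matches the two comparison functors. Once this reduction is made, the remaining content is the short faithfulness computation above together with a direct appeal to \ref{prop:desc-precat}.
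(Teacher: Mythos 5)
Your reduction to \ref{prop:desc-precat} via the equivalence \ref{diff-sch-precats} is exactly the route the paper takes, and the bookkeeping in your first paragraph is fine. The genuine problem is your verification of hypothesis (c), the faithfulness of $f_2^*$. From $f_2\circ s_X=s_Y\circ f_0$ you correctly get $s_X^*\circ f_2^*\cong f_0^*\circ s_Y^*$, but you then assert that $f_0^*\circ s_Y^*$ is faithful ``since $f_0^*$ is faithful''. That inference is invalid: a composite $F\circ G$ with $F$ faithful is faithful only if $G$ is, and $s_Y^*$ --- restriction along the zero section $s_Y:Y\to Y_2$ of an infinitesimal thickening, a closed immersion --- has no reason to be faithful for a general $\cP$. (What the section argument of \ref{secteff} gives is that the \emph{projection} $p_Y^*$ is faithful, not $s_Y^*$.) Nor do hypotheses (1) and (2) imply faithfulness of $f_2^*$ in general: $f_2$ is merely a base change of $f_0$ (equivalently of $f_1$), and being a descent morphism is not stable under base change for an arbitrary pseudofunctor.

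The fix is not to prove (c) but to observe that it is not needed here: in the proof of \ref{prop:desc-precat}, faithfulness of $f_2^*$ is used only at the very last step, to transfer the cocycle condition from $\alpha$ down to the descended isomorphism $\beta$; and for $\bbD(S)$-actions the cocycle condition is automatic --- as shown in the proof of \ref{diff-sch-precats}, the pullbacks $r_0^*$, $r_1^*$, $m^*$ of an action automorphism compose as prescribed for purely formal reasons. This is precisely how the paper concludes: apply \ref{prop:desc-precat} with the condition on $f_2$ omitted because the cocycle condition is superfluous for differential schemes. Your argument is therefore salvageable, but its last third should be replaced by this observation rather than a faithfulness claim.
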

\begin{proof}
We consider the morphism of $S$-differential schemes as a morphism $f:\bbX\to \mathbb{Y}$ of $\bbD(S)$-actions, and apply \ref{prop:desc-precat}, noting that the cocycle condition is superfluous for differential schemes so we may omit the condition on $f_2$. 
\end{proof}

\begin{corollary}\label{diffl-desc-qp}
Let $f:(X,\delta_X)\to (Y,\delta_Y)$ be a morphism of differential schemes with 
codomain the spectrum of a differential field and $X$ quasi-compact.  Then $f$ is a morphism of effective descent for the class of differential quasi-projective morphisms. 
\end{corollary}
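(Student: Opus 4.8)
The plan is to obtain the corollary by applying the differential descent criterion \ref{diffl-desc} with $\cP$ the pseudofunctor of quasi-projective scheme morphisms; by \ref{diff-self} and the Example following it, $\delta\da\cP$ is then precisely the pseudofunctor of differential quasi-projective morphisms, so the task reduces to checking the two hypotheses of \ref{diffl-desc} for $f$ and this $\cP$. Concretely, one must show that the underlying scheme morphism $f_0\colon X\to Y$ is of effective descent for quasi-projective morphisms, and that $f_1=d_0^*f_0\colon X_1\to Y_1$ is a descent morphism for quasi-projective morphisms.

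The first hypothesis is exactly Proposition~\ref{efdesc-qp}: the underlying scheme of $(Y,\delta_Y)$ is $\spec(K)$, the spectrum of a field, so $f_0\colon X\to\spec(K)$ is of effective descent for quasi-projective morphisms. (As is tacit throughout, one assumes $X\neq\emptyset$.)

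For the second hypothesis I would argue as follows. First, $f_0$ is automatically fpqc: every scheme over a field is flat, $X\to\spec(K)$ is surjective because $X$ is non-empty, and $X$ is quasi-compact by assumption. Second, $X_1=X\times_{S_0}S_1$ is canonically $X\times_Y Y_1$, and under this identification $f_1=d_0^*f_0$ is the base change of $f_0$ along the projection $Y_1\to Y$; hence $f_1$ is again fpqc. Third, an fpqc morphism of schemes is a descent morphism for the codomain fibration — morphisms of schemes satisfy descent along it — so the pullback functor from schemes over $Y_1$ to descent data along $f_1$ is fully faithful; restricting to the full subcategories cut out by quasi-projectivity (which is stable under base change) shows that $\cP(Y_1)\to\DD(f_1)$ is fully faithful, i.e.\ that $f_1$ is a descent morphism for $\cP$.

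With both hypotheses established, \ref{diffl-desc} yields directly that $f$ is of effective descent for $\delta\da\cP$, which is the claim. The only non-formal ingredient is Proposition~\ref{efdesc-qp}, whose proof exploits the field hypothesis to produce a section of $f_0$ after a suitable base change; everything else is formal. The one point deserving care — and the only thing I would expect to trip up a careless argument — is the handling of $f_1$: it lives over the non-reduced base $Y_1=\spec(K[\epsilon]/(\epsilon^2))$, where effective descent for quasi-projective morphisms is genuinely delicate, but \ref{diffl-desc} asks of $f_1$ only that it be a \emph{descent} morphism, and this is free from the fpqc-ness of $f_1$, so that true effective descent is required only over the field $Y$.
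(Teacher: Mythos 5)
Your proposal is correct and follows essentially the same route as the paper: invoke Proposition~\ref{efdesc-qp} for effective descent of $f_0$ over the field, observe that $f_1$ is an fpqc base change of $f_0$ and hence a descent morphism for all scheme morphisms, and conclude via \ref{diffl-desc}. Your explicit remarks on why $f_0$ is fpqc (using quasi-compactness of $X$) and on restricting full faithfulness to the quasi-projective subfibration are details the paper leaves implicit, but the argument is the same.
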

\begin{proof}
The underlying morphism $f_0:X\to Y$ is (trivially) fpqc because the target is the spectrum of a field, and \ref{efdesc-qp} show that it is effective descent for quasi-projective scheme morphisms. Its base change $f_1=d_0^*f$ is again fpqc, hence a morphism of descent for all scheme morphisms by \cite[VIII, 5.2]{sga1}.
\end{proof}

\begin{corollary}\label{difl-qproj-desc}
Let $f:(X,\delta_X)\to (Y,\delta_Y)$ be a morphism of differential schemes whose underlying scheme morphism is fpqc. 
Then $f$ is a morphism of effective descent for the class of differential polarised quasi-projective morphisms. 
\end{corollary}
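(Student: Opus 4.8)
Working over the terminal base $S=\spec(\Z)$, so that $\DSch\simeq\DSch_S$ and $\Sch_{\ov S}\simeq\Sch$, the plan is to deduce the statement from Proposition~\ref{diffl-desc} applied to the pseudofunctor $\cP\colon\Sch_{\ov S}^\op\to\Cat$ of polarised quasi-projective morphisms fixed in \ref{nota-polarised}; by \ref{def-pol-dif} (see also the concrete description in \ref{descr-poldif}), $\delta\da\cP$ is then precisely the fibration of polarised quasi-projective differential morphisms appearing in the statement. It therefore suffices to verify the two hypotheses of \ref{diffl-desc}: that the underlying scheme morphism $f_0\colon X\to Y$ is of effective descent for $\cP$, and that $f_1\colon X_1\to Y_1$ is a descent morphism for $\cP$.

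For the first, I would show more generally that every faithfully flat quasi-compact morphism is of effective descent for $\cP$, which applies to $f_0$ since $f$ is fpqc by hypothesis. An object of $\cP(Y)$ consists of a finite-type morphism $p\colon Q\to Y$ together with a $p$-ample invertible $\cO_Q$-module $\cL_Q$, and this pair is reconstructed from the quasi-coherent graded $\cO_Y$-algebra $\bigoplus_{n\ge 0}p_*\cL_Q^{\otimes n}$ by relative $\mathrm{Proj}$ — the polarisation realising $Q$ as a locally closed subscheme of a projective bundle over $Y$ and $\cL_Q$ as a twisting sheaf — and this reconstruction is compatible with arbitrary base change. Since quasi-coherent modules, hence quasi-coherent graded algebras together with their relative $\mathrm{Proj}$'s and chosen relative polarisations, satisfy effective fpqc descent (cf.~\cite[VIII]{sga1}, \cite{stacks-project}), the fibration $\cP$ is an fpqc stack; in particular $f_0$ is of effective descent for $\cP$. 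It is here that the passage to the \emph{polarised} setting is essential: bare quasi-projective morphisms do not form an fpqc stack, owing to Brauer-type obstructions that a relatively ample sheaf kills.

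For the second hypothesis, $f_1$ is the base change of $f_0$ along the projection $Y_1=Y\times_{S_0}S_1\to Y$, and $S_1$ is finite locally free of rank $2$ over $S_0=S$ (being the relative spectrum of $\cO_S[\epsilon]/(\epsilon^2)$), so this projection, and hence $f_1$, is again fpqc; therefore $f_1$ is in particular a morphism of descent for $\cP$, either by the previous paragraph or directly by \cite[VIII, 5.2]{sga1}. Both hypotheses of \ref{diffl-desc} now hold — the condition on $f_2$ being vacuous in the differential setting, since the cocycle condition is automatic for $\bbD(S)$-actions, cf.~the proof of \ref{diff-sch-precats} — and we conclude that $f$ is of effective descent for $\delta\da\cP$. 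The one substantive ingredient is the effectivity of fpqc descent for polarised quasi-projective morphisms used in the first step; once the polarisation is exploited to present such morphisms in terms of descent-friendly quasi-coherent data, the remainder is base-change bookkeeping and the formal descent machinery of \ref{prop:desc-precat} packaged in \ref{diffl-desc}.
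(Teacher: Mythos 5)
Your proposal is correct and follows essentially the same route as the paper: both reduce to Proposition~\ref{diffl-desc} and verify its two hypotheses by invoking effective fpqc descent for polarised quasi-projective morphisms (the paper simply cites \cite[VIII, 7.8]{sga1} where you sketch the relative $\mathrm{Proj}$ argument behind that reference) and by noting that $f_1$, as a base change of $f_0$, is again fpqc. The only difference is expository detail, not substance.
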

\begin{proof}
The underlying scheme morphism $f_0:X\to Y$ is fpqc, so of effective descent for polarised quasi-projective morphisms by \cite[VIII, 7.8]{sga1}, and the same holds for $f_1$ as the base change of $f_0$. 
\end{proof}

\section{Affine Picard-Vessiot theory}\label{s:aff-PV}

\subsection{Picard-Vessiot Galois theory for differential field extensions}\label{pv-classical}

In this section, we follow the Hopf-theoretic approach to Picard-Vessiot theory explained in \cite{amano-masuoka-takeuchi}. 

\begin{definition}
An extension $L/K$ of differential fields is \emph{Picard-Vessiot}, if
\begin{enumerate}
\item the extension $L$ contains no new constants, i.e., $L_0=K_0$, and we write $k$ for the common field of constants;
\item there exists a differential $K$-subalgebra $A$ of $L$ such that $\text{\rm Frac}(A)=L$ and 
$$
H=(A\otimes_KA)_0
$$
generates the left $A$-module $A\otimes_KA$ in the sense that
$$
(A\otimes_KK)H=A\otimes_KA.
$$
\end{enumerate}
Such an $A$ is called a \emph{Picard-Vessiot ring} for the extension $L/K$.
\end{definition}

\begin{fact}\label{fact-pvring}
In the situation from the above definition, we have the following.
\begin{enumerate}
\item $A$ is unique;
\item $H$ is a Hopf algebra;
\item there is a comodule structure $\theta:A\to A\otimes_kH$ such that 
$$
A^{\text{\rm co}H}=\{a\in A: \theta(a)=a\otimes 1\}= K,
$$
and
\begin{align}\label{selfsplitting}
A\otimes_KA\simeq A\otimes_{(k,0)}{(H,0)}.\tag{$\dagger$}
\end{align}
\item The linear algebraic group $G=\Gal^\text{\rm PV}(L/K)=\spec(H)$ over $k$ is called the \emph{Picard-Vessiot Galois group} of $L/K$ and we have that
$$
G(k)\simeq \Aut_{\DRng}(L/K).
$$
\end{enumerate}
\end{fact}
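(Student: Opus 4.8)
The plan is to follow the Hopf-algebraic development of Picard--Vessiot theory in \cite{amano-masuoka-takeuchi}, organised around the self-splitting isomorphism $(\dagger)$, which is the affine ring-theoretic incarnation of the torsor condition recalled in \ref{janelidze-affine}. Everything reduces to two substantive inputs: (a) that $A$ is faithfully flat over $K$ and that $(\dagger)$ holds as a \emph{canonical} isomorphism of differential $A$-algebras, and (b) that $A$ is differentially simple. Granting (a) and (b), items (1), (2), (4) and the identity $A^{\mathrm{co}H}=K$ are formal consequences of faithfully flat descent and the elementary theory of Hopf--Galois objects.

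For input (a): the generation hypothesis $(A\otimes_KK)H=A\otimes_KA$ says that $H=(A\otimes_KA)_0$ spans the left $A$-module $A\otimes_KA$, and the absence of new constants, $L_0=K_0=k$, is exactly what upgrades ``spans'' to ``freely spans'' --- a $k$-linearly independent family in $H$ that became $A$-linearly dependent in $A\otimes_KA$ would, by a Wronskian-type argument over the differential field $L$, produce a constant of $L$ outside $k$. Hence $A\otimes_KA\cong A\otimes_kH$ as left $A$-modules; since $k$ is a field the right-hand side is faithfully flat over $A$, so $A$ is faithfully flat over $K$. Tracking the natural maps $A\rightrightarrows A\otimes_KA$ and their images makes the isomorphism canonical and compatible with derivations ($H$ carries the zero derivation, and Lemma~\ref{affine-CX-lemma} confirms $\Const(A\otimes_kH)=H$ because the field injection $k\to A$ is automatically universally injective), giving $(\dagger)$. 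Transporting the cosimplicial structure on $A\to A\otimes_KA\to A\otimes_KA\otimes_KA$ across $(\dagger)$, and using $(A\otimes_KA\otimes_KA)_0\cong H\otimes_kH$, equips $\spec H$ with a group structure over $\spec k$ (comultiplication from the outer two tensor factors, counit and antipode from the identity section and from the flip of the two copies of $A$); this is item (2), and it exhibits $X=\spec A$ as a $\spec H$-torsor over $Y=\spec K$.

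For item (3): the comodule structure is $\theta\colon A\xrightarrow{\,a\mapsto 1\otimes a\,}A\otimes_KA\xrightarrow{(\dagger)}A\otimes_kH$, and $A^{\mathrm{co}H}=K$ is precisely faithfully flat descent of the base along $X\to Y$, namely $K=\mathrm{Eq}(A\rightrightarrows A\otimes_KA)=\mathrm{Eq}(A\rightrightarrows A\otimes_kH)=A^{\mathrm{co}H}$, since the two maps $A\to A\otimes_KA$ correspond under $(\dagger)$ to $a\mapsto a\otimes 1$ and to $\theta$. Finite generation of $H$ over $k$ follows from finite generation of $A$ over $K$ via $(\dagger)$, so $G=\spec H$ is an affine algebraic group over $k$; embedding the finitely many $k$-algebra generators of $H$ into a finite-dimensional subcomodule realises $G$ as a closed subgroup of some $\mathrm{GL}_n$, hence linear.

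For item (1) and the remaining half of item (4) I would invoke input (b), differential simplicity of $A$ (proved in \cite{amano-masuoka-takeuchi} from the generation axiom and $L_0=k$). If $A'$ is a second Picard--Vessiot ring for $L/K$, the $K$-subalgebra $R=A\cdot A'\subseteq L$ is differential, finitely generated, and one checks it again satisfies the generation axiom, hence is differentially simple; since $A\hookrightarrow R$ and $A'\hookrightarrow R$ are then localisations of differentially simple rings sharing the fraction field $L$, both are isomorphisms, so $A=R=A'$. For $G(k)\simeq\Aut_{\DRng}(L/K)$: a character $\chi\colon H\to k$ yields the $K$-algebra map $(\mathrm{id}_A\otimes\chi)\circ\theta\colon A\to A$, which commutes with $\delta$, is invertible because $G$ is a group, and extends uniquely to $L=\mathrm{Frac}(A)$; conversely, given $\sigma\in\Aut_{\DRng}(L/K)$, the subring $\sigma(A)$ is again a Picard--Vessiot ring, so $\sigma(A)=A$ by item (1), whence $\sigma$ restricts to a differential $K$-algebra automorphism of $A$, and such an automorphism has the form $(\mathrm{id}_A\otimes\chi_\sigma)\circ\theta$ for a unique character $\chi_\sigma$ --- this is the assertion that the faithfully flat $H$-Galois object $A$, with coinvariants $K$, has automorphism group exactly $G(k)$. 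These two assignments are mutually inverse group homomorphisms. The main obstacle is exactly inputs (a) and (b): converting the deceptively weak generation axiom into the strong Galois-object property $(\dagger)$ and into differential simplicity of $A$ is where the hypothesis $L_0=k$ does all the work, and it is the one place I would expect to reproduce genuine content rather than routine descent bookkeeping.
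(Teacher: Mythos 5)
The paper does not prove this statement at all: it is recorded as a \emph{Fact} imported from the Hopf-theoretic treatment of Amano--Masuoka--Takeuchi, which is precisely the source you are reconstructing, so there is no in-paper argument to compare routes with. Your outline of items (2)--(4) is essentially the standard development from that reference: the linear-independence-of-constants lemma (which is where $L_0=k$ is used) upgrades the generation axiom to the isomorphism $(\dagger)$; the Hopf structure on $H$ is transported from the cosimplicial object $A\to A\otimes_KA\to A\otimes_KA\otimes_KA$ via $(A\otimes_KA\otimes_KA)_0\simeq H\otimes_kH$; and $A^{\mathrm{co}H}=K$ is descent along $K\to A$, which is free because $K$ is a field. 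Two small blemishes: faithful flatness of $A\otimes_kH$ over $A$ does not by itself yield faithful flatness of $A$ over $K$ (you do not need this inference, since $K$ is a field), and your appeal to finite generation of $A$ over $K$ to get $H$ finitely generated is not licensed by the definition as stated, which imposes no finiteness on $A$.

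The genuine gap is in your argument for item (1). The step ``$A\hookrightarrow R$ and $A'\hookrightarrow R$ are inclusions of differentially simple rings sharing the fraction field $L$, hence isomorphisms'' is false as a general principle: $k[x]\subsetneq k[x,x^{-1}]$ with $\delta=d/dx$ is a strict inclusion of differentially simple rings with common fraction field $k(x)$. (Moreover, that $R=A\cdot A'$ again satisfies the generation axiom is asserted rather than checked.) Uniqueness of the Picard--Vessiot ring in the Amano--Masuoka--Takeuchi framework is obtained differently, by characterising $A$ intrinsically in terms of $L/K$ through the constants of $L\otimes_KL$ and the splitting $(\dagger)$, not by comparing two candidates inside their compositum. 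Since you also use item (1) to see that $\sigma(A)=A$ for $\sigma\in\Aut_{\DRng}(L/K)$, this gap propagates into your proof of $G(k)\simeq\Aut_{\DRng}(L/K)$ in item (4).
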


\begin{fact}[Classical Picard-Vessiot Galois correspondence]\label{class-pv-corr}
Let $L/K$ be a Picard-Vessiot extension.

There is a one-to-one correspondence between intermediate differential field extensions and closed subgroups of the linear algebraic group $\Gal^\text{\rm PV}(L/K)$ given by
$$
M\mapsto {\Gal}^\text{\rm PV}(L/M).
$$
Moreover, an intermediate field $M$ in $L/K$ is Picard-Vessiot over $K$ if and only if ${\Gal}^\text{\rm PV}(L/M)$ is normal in ${\Gal}^\text{\rm PV}(L/K)$. In this case, we have that
$$
{\Gal}^\text{\rm PV}(M/K)\simeq {\Gal}^\text{\rm PV}(L/K)/ {\Gal}^\text{\rm PV}(L/M).
$$
\end{fact}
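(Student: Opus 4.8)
The statement is classical, and the plan is to follow the Hopf-theoretic proof of \cite{amano-masuoka-takeuchi}, organised around the torsor relation \eqref{selfsplitting}. Write $\theta\colon A\to A\otimes_kH$ for the comodule structure of \ref{fact-pvring}, so that $A^{\mathrm{co}H}=K$, $A$ is faithfully flat over $K$, and $(X,\delta_X)=\spec(A,\delta_A)\to(Y,\delta_Y)=\spec(K,\delta_K)$ is a $G$-torsor with $G=\spec(H)$.

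First I would build the two assignments. Given an intermediate differential field $K\subseteq M\subseteq L$, the extension $L/M$ is again Picard--Vessiot: it has no new constants since $k=K_0\subseteq M_0\subseteq L_0=k$, and a Picard--Vessiot ring is the differential $M$-subalgebra $AM$ of $L$ generated by $A$, whose fraction field is $L$. Hence $H_M:=(AM\otimes_MAM)_0$ is a Hopf algebra over $k$, and the base-change map $A\otimes_KA\to AM\otimes_MAM$ restricts on constants to a $k$-Hopf-algebra homomorphism $\pi_M\colon H\to H_M$; comparing the torsor relation \eqref{selfsplitting} with its analogue for $L/M$ and using faithful flatness shows $\pi_M$ is surjective, so $\Gal^{\mathrm{PV}}(L/M)=\spec(H_M)$ is a closed subgroup of $G$. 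Conversely a closed subgroup $G'\le G$ amounts to a Hopf-algebra surjection $p\colon H\twoheadrightarrow H'$; composing $\theta$ with $\id_A\otimes p$ makes $A$ an $H'$-comodule algebra, and $A':=A^{\mathrm{co}H'}$ is a $\delta_A$-stable $K$-subalgebra of $A$ (a subring, and $\delta_A$-stable because $\theta$ is a morphism of differential rings, hence preserves coinvariants), so $M_{G'}:=\text{\rm Frac}(A')$ is an intermediate differential field. Both assignments are evidently inclusion-reversing.

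Next I would check that the two assignments are mutually inverse. For $M\mapsto\Gal^{\mathrm{PV}}(L/M)\mapsto M_{\Gal^{\mathrm{PV}}(L/M)}$: the commuting square relating $\theta$ to the $H_M$-comodule structure on $AM$ identifies $A^{\mathrm{co}H_M}=A\cap M$, while $(AM)^{\mathrm{co}H_M}=M$ by \ref{fact-pvring}(3) applied to $L/M$; together with $\text{\rm Frac}(AM)=L$ this gives $\text{\rm Frac}(A\cap M)=M$, as wanted. For $G'\mapsto M_{G'}\mapsto\Gal^{\mathrm{PV}}(L/M_{G'})$ one needs that $A$ is faithfully flat over $A^{\mathrm{co}H'}$, equivalently that $\spec(A)\to\spec(A^{\mathrm{co}H'})$ is a $G'$-torsor; granting this, passing to fraction fields and restricting to constants yields $(AM_{G'}\otimes_{M_{G'}}AM_{G'})_0\simeq H'$, i.e.\ $\Gal^{\mathrm{PV}}(L/M_{G'})=G'$. \emph{This faithful-flatness input is the main obstacle}; it is supplied by Takeuchi's theorem that a commutative Hopf algebra is faithfully flat over any Hopf subalgebra, together with faithfully-flat base change, as in \cite{amano-masuoka-takeuchi}.

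Finally, for the normality clause I would use the standard dictionary between normal closed subgroups $G'\trianglelefteq G$ and short exact sequences of Hopf algebras $k\to\tilde H\to H\to H'\to k$, with $G/G'=\spec(\tilde H)$. If $M/K$ is Picard--Vessiot, then (by uniqueness \ref{fact-pvring}(1) and standard Picard--Vessiot theory) $M$ is stable under the action of $G$ on $L$, so restriction gives a surjection $G\to\Gal^{\mathrm{PV}}(M/K)$ with kernel $\Gal^{\mathrm{PV}}(L/M)$; hence the latter is normal and $\Gal^{\mathrm{PV}}(M/K)\simeq G/\Gal^{\mathrm{PV}}(L/M)$. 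Conversely, if $\Gal^{\mathrm{PV}}(L/M)$ is normal, the quotient group scheme $G/\Gal^{\mathrm{PV}}(L/M)=\spec(\tilde H)$ acts on $\spec(A^{\mathrm{co}H_M})$ as a torsor over $Y$, exhibiting $A^{\mathrm{co}H_M}$ as a Picard--Vessiot ring for $M/K$, so $M/K$ is Picard--Vessiot. Combining the two bijections with this normality criterion yields the full statement.
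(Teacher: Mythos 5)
The paper offers no proof of this statement: it is recorded as a \emph{Fact}, recalled from the classical literature, with the section explicitly following the Hopf-theoretic treatment of Amano--Masuoka--Takeuchi. Your sketch reconstructs exactly that cited argument --- the two inclusion-reversing assignments $M\mapsto\Gal^{\mathrm{PV}}(L/M)$ and $H'\mapsto \mathrm{Frac}(A^{\mathrm{co}H'})$, Takeuchi's faithful-flatness theorem as the key input for one composite, and the dictionary between normal closed subgroups and Hopf algebra quotients for the normality clause --- so there is nothing in the paper to compare it against beyond the citation. Two points in your sketch silently lean on nontrivial lemmas from that reference: (a) in the composite $M\mapsto H_M\mapsto \mathrm{Frac}(A\cap M)$ you need $\mathrm{Frac}(A^{\mathrm{co}H_M})=L^{\mathrm{co}H_M}=M$, i.e.\ that taking coinvariants commutes with passing to fraction fields, which is a genuine theorem rather than a formality (your argument only directly gives $A^{\mathrm{co}H_M}=A\cap M$, and $\mathrm{Frac}(A\cap M)=M$ is not automatic); and (b) the surjectivity of the restriction map $G\to\Gal^{\mathrm{PV}}(M/K)$ in the normality clause requires the extension theorem for differential automorphisms (or its scheme-theoretic torsor version). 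Both are supplied by the cited source, so as a blind reconstruction of a stated classical fact your proposal is sound.
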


\subsection{Janelidze's categorical framework for Picard-Vessiot theory}\label{pv-janelidze}

\begin{definition}\label{affine-pv}
In \cite{janelidze-pv}, Janelidze makes the following choices for the classical categorical setup as in \ref{ss:janelidze-gal}.

\begin{enumerate}
\item $\cA=\DAff=\DRng^\op$, the category of affine differential schemes;
\item $\cX=\Aff=\Rng^\op$, the category of affine schemes;
\item $S=\Const^\op$ is the functor of constants, i.e., $S(\spec(A,\delta))=\spec(\Const(A,\delta))$, often written as $S(X)=X_0$;
\item $C(Spec(R))=Spec(R,0)$ transforms a ring into a differential ring with a trivial derivation $0$. 
\end{enumerate}

\end{definition}

\begin{remark}
\begin{enumerate}
\item Given $X\in \cA$, the functor $C_X:\cX_{\ov X_0}\to \cA_{\ov X}$ is given by
$$
C_X(Q\to X_0)=X\times_{C(X_0)}C(Q)=(X,\delta)\times_{(X_0,0)}(Q,0).
$$
\item An object $P\stackrel{p}{\to} Y$ in $\cA_{\ov Y}$ is split by a morphism $X\stackrel{f}{\to}Y$ if the natural morphism 
$f^*p\to C_XS_X(f^*p)$ is an isomorphism, i.e., if
$$
X\times_YP\simeq X\times_{(X_0,0)}((X\times_YP)_0,0).
$$
\end{enumerate}
\end{remark}

\begin{theorem}[{\cite{janelidze-pv}}]\label{janelidze-affine}
Let $A$ be the Picard-Vessiot ring for a differential field extension $L/K$, and let $$f=\spec(K\to A):X=\spec(A)\to Y=\spec(K)$$ be the associated morphism in $\cA$. 

Then $f$ is a morphism of Galois descent, the categorical Galois groupoid agrees with the Picard-Vessiot Galois group,
$$
G=\Gal[f]=\textstyle{\Gal^{\mathop{\rm PV}}}(L/K),
$$
and $X$ is a $G$-torsor over $Y$ in the sense that
$$
X\times_YX \simeq X\times_{(X_0,0)}(G,0).
$$
There is an equivalence of categories 
$$
\Split_Y[f]\simeq [G,\cX]
$$
between the category of objects $P\stackrel{p}{\to} Y$ in $\cA_{\ov Y}$ split by $f$, in the sense that $$X\times_YP=f^*(p)\simeq C_X(q)=X\times_{X_0}Q$$ for some $Q\stackrel{q}{\to} X_0$ in $\cX_{\ov X_0}$, and the category of $G$-actions in $\cX$.
\end{theorem}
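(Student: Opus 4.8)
The plan is to verify, for the morphism $f=\spec(K\to A):X=\spec(A)\to Y=\spec(K)$, the three conditions defining a morphism of \emph{relative Galois descent} in \ref{ss:janelidze-gal}, then to invoke Janelidze's Galois theorem and to compute the Galois precategory $\Gal[f]=S(\bbG_f)$ explicitly. Before doing so I would record the two standing facts that make everything run. Since $A\subseteq L$ and $L$ contains no new constants, $\Const(A,\delta)=k$, so $X_0=S(X)=\spec k$ and $CS(X)=\spec(k,0)$; and since $k$ is a field and $A\neq 0$, the homomorphism $k\to A$ is faithfully flat, hence universally injective, so Lemma \ref{affine-CX-lemma} applies to $(A,\delta)$: $\Const\big((A,\delta)\otimes_{(k,0)}(R,0)\big)\simeq R$ for every $k$-algebra $R$. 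The key structural input is the self-splitting isomorphism \eqref{selfsplitting} of Fact \ref{fact-pvring}, which, read with $X_0=\spec k$ and $(G,0)=C(G)$, becomes $X\times_YX\simeq X\times_{(X_0,0)}(G,0)=C_X(G\to X_0)$; this is precisely the torsor assertion, and, since $f^*(X\xrightarrow{f}Y)=(X\times_YX\xrightarrow{\pi_1}X)$, it also says that $f$ will be split by itself.

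Next I would verify relative Galois descent. For Condition (1): $K\to A$ is faithfully flat, so $f$ is of effective descent (classical faithfully flat descent, or \ref{diffl-desc} with $\cP$ the self-indexing of $\Aff$), and in particular $f^*:\cA_{\ov Y}\to\cA_{\ov X}$ is monadic. For Condition (2): given $E\xrightarrow{e}X_0$ in $\cX_{\ov X_0}$, one has $C_X(e)=(X,\delta)\times_{(X_0,0)}(E,0)$, and the counit $\epsilon^X_e:S_XC_X(e)\to e$ is, on coordinate rings, the inclusion of constants $R\hookrightarrow\Const(A\otimes_kR)$ (with $R$ the coordinate ring of $E$), hence invertible by Lemma \ref{affine-CX-lemma}. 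For Condition (3): $f_!C_X(e)$ is the composite $(X,\delta)\times_{(X_0,0)}(E,0)\to(X,\delta)\xrightarrow{f}(Y,\delta)$, and pulling back along $f$ and applying \eqref{selfsplitting} gives
\begin{align*}
f^*f_!C_X(e)\;&\simeq\;(X\times_YX)\times_{(X_0,0)}(E,0)\\
&\simeq\;(X,\delta)\times_{(X_0,0)}\big(G\times_{X_0}E,\,0\big)\;=\;C_X\big(G\times_{X_0}E\to X_0\big),
\end{align*}
so $f_!C_X(e)$ is split by $f$. Hence $f$ is of relative Galois descent, $\Gal[f]=S(\bbG_f)$ is automatically an internal groupoid in $\cX$, and Janelidze's Galois theorem (recalled in \ref{ss:janelidze-gal}) yields the equivalence $\Split_Y(f)\simeq\cX^{\Gal[f]}$.

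It then remains to identify $\Gal[f]$. Its object of objects is $S(X)=\spec k$; iterating \eqref{selfsplitting} gives $X\times_YX\simeq X\times_{(X_0,0)}(H,0)$ and $X\times_YX\times_YX\simeq(X\times_YX)\times_X(X\times_YX)\simeq X\times_{(X_0,0)}(H\otimes_kH,0)$, so Lemma \ref{affine-CX-lemma} yields
\begin{align*}
S(X\times_YX)&\simeq\spec H=G,\\
S(X\times_YX\times_YX)&\simeq\spec(H\otimes_kH)=G\times_{\spec k}G.
\end{align*}
I would then chase $S$ through the source, target, identity and composition morphisms of $\bbG_f$ — using that \eqref{selfsplitting} is induced by the $H$-comodule structure $\theta:A\to A\otimes_kH$ and that $k=\Const(A)\subseteq A^{\mathrm{co}H}=K$ — to see that $S(\pi_1)$ and $S(\pi_2)$ are both the structure morphism $G\to\spec k$, that the identity section becomes the counit of $H$, and that the composition $G\times_{\spec k}G\to G$ becomes the comultiplication of $H$; that is, $\Gal[f]$ is the one-object $k$-groupoid $G=\spec H=\Gal^{\mathrm{PV}}(L/K)$. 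Consequently $\cX^{\Gal[f]}\simeq\cX^{G}=[G,\cX]$, the category of $G$-actions in $\cX$, which completes the argument. I expect the main obstacle to be exactly this last piece of bookkeeping — checking that $S$ transports the kernel-pair groupoid structure of $\bbG_f$ onto the Hopf-algebra structure of $H$, and not onto an a priori larger groupoid on $G$ — together with the (standard but derivation-sensitive) monadicity claimed in Condition (1).
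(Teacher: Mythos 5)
Your proposal is correct and follows essentially the same route as the paper: verify the three conditions of relative Galois descent (faithful flatness of $K\to A$ for monadicity, Lemma~\ref{affine-CX-lemma} for the counit, and the self-splitting isomorphism \eqref{selfsplitting} for the splitting condition), then invoke Janelidze's Galois theorem and identify $\Gal[f]_1=S(X\times_YX)=\spec\big((A\otimes_KA)_0\big)=\spec H$ over the point $S(X)=\spec k$. You are in fact slightly more explicit than the paper in checking condition (3) for arbitrary $e$ and in matching the groupoid structure maps with the Hopf structure of $H$; the paper outsources the monadicity step to B\'enabou--Roubaud, which is the standard reference for the point you flag at the end.
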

\begin{proof}
\begin{enumerate}
\item The morphism $f$ is faithfully flat (given as a spectrum of an algebra over a field) and hence it is a morphism of effective descent for affine morphisms in the sense of algebraic geometry. Using Benabou-Roubaud \cite{benabou-roubaud}, we obtain that the pullback functor $f^*$ is monadic.
\item Using \ref{affine-CX-remark}, the counit $S_XC_X\to \id$ is an isomorphism, or, equivalently, $C_X$ is fully faithful.
\item The morphism $f$ is self-split by the property \ref{selfsplitting} from \ref{fact-pvring}.
\end{enumerate}
Categorical Galois theory stipulates that the object of morphisms of the groupoid $\Gal[f]$ is
$$
G=\Gal[f]_1=S(f^*f)=(X\times_YX)_0=\spec((A\otimes_KA)_0)=\spec(H),
$$
while the object of objects 
$$
\Gal[f]_0=S(X)=X_0=\spec(k)
$$
is a point, so we obtain a linear algebraic group $G$ over $k$, exactly as in the Picard-Vessiot case. The torsor equation is precisely the self-splitting of $f$, written in terms of $G$, and the equivalence of categories follows from categorical Galois theory \ref{ss:janelidze-gal} specialised to the framework \ref{affine-pv}.
\end{proof}

\begin{fact}[Algebraic group quotients and effective subgroups]\label{effective-alg-subgp}
Let $H\to G$ be a monomorphism/closed immersion of algebraic groups over a field $k$. Combining \cite[5.24, 5.28, 8.42--8.44, B.37, B.38]{milne}, or, by using \cite[Expos\'e V]{sga3.1}, we obtain:
\begin{enumerate}
\item $G$ is quasi-projective; 
\item the quotient $G/H$ is representable by a quasi-projective scheme over $k$; 
\item the quotient morphism $G\to G/H$ is faithfully flat; 
\item we have 
$$
G\times H\simeq G\times_{G/H}G.
$$
\end{enumerate}
We deduce that all closed subgroups of an algebraic group over a field $k$ are effective in the category of schemes with quasi-projective morphisms. 

In the category of affine schemes, a closed subgroup $H$ of an affine algebraic group $G$ is effective if and only if $G/H$ is affine. If $H$ is a normal closed subgroup, then it is effective (\cite[5.29]{milne}).
\end{fact}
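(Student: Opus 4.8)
\emph{Proof strategy.} The four itemised assertions (1)--(4) are genuinely quoted from the indicated places in \cite{milne} and \cite{sga3.1}, so the only thing left to argue is the deduction about effectivity, and the plan is to match the categorical notion of an effective subgroup (recalled in the Fact preceding \ref{th-cmj-corr}) with the geometric quotient $G/H$. First I would unwind the definition: writing $G_0=\spec(k)$, the closed subgroup $H=(H,G_0)$ corresponds under $\Sub(G)\simeq\Equiv(\tilde G)$ to the equivalence relation
$$
R_H=G\times_{G_0}H=G\times H
$$
on the self-action $\tilde G\in\cX^G$, with structure maps the first projection $\pi_1$ and the multiplication $m\colon G\times H\to G$. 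By definition $H$ is effective exactly when $R_H$ is the kernel pair of its coequaliser in $\cX^G$.

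Next I would reduce this to a statement in $\cX$ about the morphism $q\colon G\to G/H$. The residual left-translation action of $G$ on $G/H$ descends along $q$, making $q$ the coequaliser of $R_H\rightrightarrows\tilde G$ in $\cX^G$, and its kernel pair there is computed on underlying schemes; since the forgetful functor $\cX^G\to\cX$ is conservative it suffices to check that $G\times H\rightrightarrows G$ is the kernel pair of its coequaliser in $\cX$. Now (4) identifies $G\times H$ with $G\times_{G/H}G$, i.e.\ with the kernel pair of $q$; (3) says $q$ is faithfully flat, hence (by fpqc descent) an effective epimorphism, that is, the coequaliser of its own kernel pair; and (2) places $G/H$ inside the subcategory of quasi-projective $k$-schemes, so the coequaliser really exists there. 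Hence $R_H$ is effective in the category of schemes with quasi-projective morphisms, which is the first claim.

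For the affine addendum I would run the same computation inside $\Aff$: by (3) and (4), $q\colon G\to G/H$ is still faithfully flat with kernel pair $G\times H$, so $H$ is effective in $\Aff$ precisely when the coequaliser $G/H$ of $G\times H\rightrightarrows G$ can be formed in $\Aff$, i.e.\ when $G/H$ is affine; and when $H\trianglelefteq G$ is normal, $G/H$ is an affine algebraic group by the classical quotient theorem \cite[5.29]{milne}, so effectivity follows. The only delicate point in the whole argument --- and the one I would write out carefully --- is the bookkeeping in the second paragraph, namely that ``effective equivalence relation on $\tilde G$ in $\cX^G$'' genuinely descends to ``effective equivalence relation in $\cX$''; this hinges on the forgetful functor $\cX^G\to\cX$ being monadic, so that it creates the relevant coequaliser and reflects isomorphisms, together with $q$ being $G$-equivariant for the left action. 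Everything else is a direct appeal to descent and to the quoted structure theory.
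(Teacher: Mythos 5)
The paper states this result as a \emph{Fact} and offers no argument beyond the citations, so there is no internal proof to compare against; what you have supplied is the omitted deduction, and for the main claim it is correct and runs exactly along the lines the paper intends. Unwinding the definition from the Fact preceding \ref{th-cmj-corr}, the closed subgroup $H$ corresponds to the equivalence relation $G\times H\rightrightarrows G$ on $\tilde G$ with legs $\pi_1$ and multiplication; items (2)--(4) say that $q\colon G\to G/H$ is a faithfully flat (and, $G$ being of finite type, quasi-compact, hence effective) epimorphism with quasi-projective target whose kernel pair is precisely $G\times H$, so the relation is the kernel pair of its own coequaliser in the category of quasi-projective $k$-schemes. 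Your attention to where the coequaliser is computed ($\cX^G$ versus $\cX$) is well placed: monadicity of the forgetful functor, together with the fact that the monad $G\times(-)$ preserves this particular coequaliser (it is stable under flat base change, and in the affine case $\otimes_k$ is exact), is what legitimises transporting the computation down to $\cX$. It would be worth one sentence making that preservation explicit, since creation of colimits by a monadic functor is conditional on it.

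The one step that does not go through as written is the ``only if'' half of the affine criterion. You reduce it to ``the coequaliser $G/H$ of $G\times H\rightrightarrows G$ can be formed in $\Aff$, i.e.\ $G/H$ is affine'', but the coequaliser of this pair in $\Aff$ \emph{always} exists: it is $\spec(\cO(G)^H)$, the affinization of $G/H$. So effectivity in $\Aff$ is not the existence of a coequaliser; it is the assertion that the kernel pair of $G\to\spec(\cO(G)^H)$ is again $G\times H$, and deducing from this that $\spec(\cO(G)^H)\simeq G/H$ is genuinely the content of the cited results rather than of your one-line reduction. The ``if'' direction, and the normal case via \cite[5.29]{milne}, are fine. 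Either leave that direction to the citation, as the paper does, or write it out as a separate argument.
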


\begin{proposition}[Affine Picard-Vessiot correspondence]\label{affine-pv-corr}
With assumptions of \ref{janelidze-affine}, there is a one-to-one correspondence between split affine quotients of $f:X\to Y$
and effective subgroups of the linear algebraic group $G=\Gal[f]$ which  takes
$$
 \begin{tikzpicture}
[cross line/.style={preaction={draw=white, -,
line width=4pt}}]
\matrix(m)[matrix of math nodes, row sep=.9em, column sep=.5em, text height=1.5ex, text depth=0.25ex]
{			& |(ij)| {X}	&				\\   [.02em]
	|(i)|{P} 	&			& 		\\  [.02em]
			& |(0)|{Y} 		&				\\};
\path[->,font=\scriptsize,>=to, thin]
(ij) edge node[left,pos=0.2]{$$} (i) edge node[right,pos=0.5]{$f$} (0) 
(i) edge node[below left,pos=0.5]{$p$} (0) 
;
\end{tikzpicture}
$$ 
to 
$$\Gal[X\to P].$$
Conversely, if $G'$ is an effective subgroup in the sense that it is a closed subgroup such that the coset space $G/G'$ has a structure of an affine scheme (\ref{effective-alg-subgp}), it corresponds to the quotient 
$$
X/G',
$$
which is $f$-split by the scheme $G/G'$. 

Moreover, this correspondence restricts to a one-to-one correspondence between split quotients $P$ such that $P\to Y$ is Picard-Vessiot, and closed normal subgroups of $G$. In this case, 
$$
\Gal[P\to Y]\simeq \Gal[X\to Y]/\Gal[X\to P].
$$
\end{proposition}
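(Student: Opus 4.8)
The plan is to obtain the first (unrefined) correspondence as a direct specialisation of the Carboni--Magid--Janelidze correspondence \ref{th-cmj-corr} to the affine Picard--Vessiot setting of \ref{affine-pv}, and then to read off the refinement for normal subgroups from the classical Picard--Vessiot correspondence \ref{class-pv-corr}.

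First I would invoke \ref{janelidze-affine}: there $f\colon X\to Y$ is shown to be a morphism of relative Galois descent in the sense of \ref{ss:janelidze-gal}, with Galois groupoid $\Gal[f]$ equal to the linear algebraic group $G=\Gal^{\mathrm{PV}}(L/K)$ regarded as an internal groupoid in $\cX=\Aff$ whose object of objects is the single point $X_0=\spec(k)$. Over such a one-point object a subgroupoid with the same object of objects is the same thing as a closed subgroup $G'\le G$, and a $\Gal[f]$-action in $\cX$ is the same thing as a $G$-action on an affine scheme. I would then apply \ref{th-cmj-corr} verbatim: since $\cA=\DAff$, every quotient of $X$ over $Y$ is automatically an affine differential scheme, so the ordered set $\text{SplitQuo}[f]$ of \ref{th-cmj-corr} is exactly the ordered set of split affine quotients of $f$; and by Fact \ref{effective-alg-subgp} a closed subgroup $G'$ is effective in the sense of \ref{th-cmj-corr} (its associated equivalence relation on $\widetilde G$ is a kernel pair) if and only if $G/G'$ is representable by an affine scheme, so $\text{EffSub}(\Gal[f])$ is exactly the set of effective subgroups of $G$ from the statement. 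The anti-isomorphism furnished by \ref{th-cmj-corr} then reads $P\mapsto\Gal[X\to P]$ and $G'\mapsto X/G'$, and the $f$-splitting of $X/G'$ by $G/G'$ is already part of that proof, where $f^{*}(X/G')\simeq C_X(\widetilde G/G')$ and $\widetilde G/G'$ has underlying scheme $G/G'$ (with zero derivation) carrying its residual $G$-action. This gives the first assertion.

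For the refinement I would take a split affine quotient $X\to P\to Y$ with $P=\spec(B)$; since $X\to P$ is a regular epimorphism in $\DAff=\DRng^\op$, $B$ is a differential $K$-subalgebra of $A$, and I set $M=\mathrm{Frac}(B)$ and $G'=\Gal[X\to P]$. From the proof of \ref{th-cmj-corr} one has $X\times_PX\simeq C_X(G')=X\times_{(X_0,0)}(G',0)$; together with Lemma \ref{affine-CX-lemma} (applied to compute $(A\otimes_BA)_0$) and faithful flatness of $A$ over $B=A^{G'}$, this exhibits $A$ as a Picard--Vessiot ring over $B$ and identifies $G'\simeq\Gal^{\mathrm{PV}}(L/M)$. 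Now \ref{class-pv-corr} says precisely that $M$ is Picard--Vessiot over $K$ if and only if $G'=\Gal^{\mathrm{PV}}(L/M)$ is normal in $G$, and in that case $\Gal^{\mathrm{PV}}(M/K)\simeq G/G'$; then $B=A^{G'}$ is the Picard--Vessiot ring of $M/K$, so $P\to Y$ is a Picard--Vessiot morphism, and \ref{janelidze-affine} applied to $\spec(B)\to\spec(K)$ identifies $\Gal[P\to Y]$ with $\Gal^{\mathrm{PV}}(M/K)\simeq G/G'=\Gal[X\to Y]/\Gal[X\to P]$. Since a normal closed subgroup is automatically effective (Fact \ref{effective-alg-subgp}), the correspondence of the first part restricts to the claimed bijection between Picard--Vessiot split quotients and closed normal subgroups.

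The step I expect to be the real obstacle is not the formal transfer in the previous paragraph but the bookkeeping in the refinement: one must carefully match the categorical group $\Gal[X\to P]$ with the classical $\Gal^{\mathrm{PV}}(L/M)$, verify that the subalgebra $B$ coincides with the ring of invariants $A^{G'}$, and check that when $G'$ is normal this invariant ring is a genuine Picard--Vessiot ring for $M/K$ --- i.e.\ that ``$P\to Y$ is Picard--Vessiot'' as a morphism of differential schemes really does coincide with ``$M/K$ is Picard--Vessiot'' in the classical sense. This rests on the torsor relation $X\times_PX\simeq X\times_{(X_0,0)}(G',0)$, on Lemma \ref{affine-CX-lemma} for constants after base change, and on faithful flatness of $A$ over $A^{G'}$; all are available but must be assembled with some care. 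An alternative route, sidestepping fraction fields, would be to run the argument entirely at the level of differential rings and Hopf algebras as in the Amano--Masuoka--Takeuchi framework recalled in \ref{pv-classical}.
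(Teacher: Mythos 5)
Your proposal matches the paper's proof, which is literally the one-line observation that the statement is a direct consequence of \ref{janelidze-affine} and \ref{th-cmj-corr}; your first paragraph spells out exactly the identifications (one-point object of objects, subgroupoids as closed subgroups, effectivity via \ref{effective-alg-subgp}) that make that specialisation work. Your more detailed treatment of the ``moreover'' clause via \ref{class-pv-corr} goes beyond what the paper writes down, but is consistent with its remark that this clause recovers the normal-subgroup correspondence of Maurischat, so there is nothing to object to.
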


\begin{proof}
The statement is a direct consequence of \ref{janelidze-affine} and \ref{th-cmj-corr}.
\end{proof}

\begin{remark}
\begin{enumerate}
\item The equivalence of categories form of Picard-Vessiot theory from \ref{janelidze-affine} is new and as of yet unexplored in differential algebra. 
\item The affine Galois correspondence from \ref{affine-pv-corr} does not fully recover the classical Picard-Vessiot Galois correspondence \ref{class-pv-corr} because it only refers to \emph{effective} subgroups of the Galois group, while the classical correspondence is for all closed subgroups. The `moreover' clause does recover the correspondence for Picard-Vessiot quotients and normal groups from \cite[8.1]{maurischat}.
\end{enumerate}

\end{remark}

\section{Categorical Galois theory for differential schemes}\label{s:sat-gal-dif}

\subsection{Indexed framework for scheme-theoretic Picard-Vessiot theory}


\begin{definition}\label{sch-pv-setup}
The indexed framework for \emph{pre-Picard-Vessiot} differential Galois theory consists of the following choices for objects needed to apply \ref{indexed-gal-th}. 

\begin{enumerate}
\item Let $S$ be a base scheme, and let $\cX=\Sch_{\ov S}$.

\item Let $\cA$ be the category of $S$-differential schemes that have a categorical scheme of leaves. By this choice, we have a `categorical scheme of leaves' functor 
$$
\pi_0:\cA\to \cX.
$$

\item Let $\cP:\cX^\op\to \Cat$ be a pseudofunctor, which yields a pseudofunctor $\delta\da\cP:\cA^\op\to \Cat$ by \ref{delta-P}.

\item Let $C=C^\cP: \cP\circ \pi_0\Rightarrow \delta\da\cP$ be the pseudo-natural transformation whose $(Z,\delta_Z)$-component is the canonical functor
$$
C_Z:\cP(\pi_0(Z,\delta_Z))\to \delta\da\cP(Z,\delta_Z)
$$
from \ref{CX-P}.
\newcounter{nameOfYourChoice}
\setcounter{nameOfYourChoice}{\value{enumi}}

\end{enumerate}

For \emph{Picard-Vessiot} differential Galois theory, we choose the following additional structure.
\begin{enumerate}
\setcounter{enumi}{\value{nameOfYourChoice}}
\item Let $\cS:\cX^\op\to \Cat$ be a full sub-pseudofunctor of the self-indexing $\mathrm{Self}(\Sch_{\ov S})$ of the category of $S$-schemes over itself, so that, for an $S$-scheme $Z$,  $\cS(Z)$ is a full subcategory of $\Sch_{\ov Z}$. It gives rise to  pseudofunctor  $\delta\da\cS:\cA^\op\to \Cat$. 
\item Let $C^\cS:\cS\circ \pi_0\Rightarrow \delta\da\cP$ be the pseudo-natural transformation corresponding to $\cS$.
\item Let $U:\cP\Rightarrow\cS$ be a faithful pseudo-natural transformation. It gives rise to a morphism of fibrations, taking cartesian morphisms to cartesian, hence we obtain a pseudo-natural transformation $\delta\da U:\delta\da\cP\Rightarrow \delta\da\cS$ such that the diagram
$$
 \begin{tikzpicture} 
\matrix(m)[matrix of math nodes, row sep=2em, column sep=2em, text height=1.9ex, text depth=0.25ex]
 {
 |(1)|{\cP\circ\pi_0}		& |(2)|{\delta\da\cP} 	\\
 |(l1)|{\cS\circ\pi_0}		& |(l2)|{\delta\da\cS} 	\\
 }; 
\path[->,font=\scriptsize,>=to, thin]
(1) edge node[above]{$C^\cP$} (2) edge node[left]{$U$}  (l1)
(2) edge node[right]{$\delta\da U$} (l2) 
(l1) edge node[above]{$C^\cS$}  (l2);
\end{tikzpicture}
$$
commutes.
\item We require that $\delta\da U$ reflects (coequalisers associated with) $\cS$-universal connected components, i.e., 
if $P\in \delta\da\cP(X,\delta_X)$ for some $(X,\delta_X)\in\cA$, and we have a diagram 
$$
\begin{tikzcd}[cramped, column sep=normal, ampersand replacement=\&]
{P_1}\ar[yshift=2pt]{r}{d_0} \ar[yshift=-2pt]{r}[swap]{d_1} \&{P_0} \ar{r}{r} \& Q.
\end{tikzcd}
$$
with $r\circ d_0=r\circ d_1$, which $U$ maps onto an $\cS$-universal coequaliser
$$
 \begin{tikzpicture} 
\matrix(m)[matrix of math nodes, row sep=2em, column sep=2em, text height=1.5ex, text depth=0.25ex]
 {
|(u2)|{UP_1}  & [1em] |(u1)|{UP_0}		&[1em] |(u0)|{\pi_0(\delta\da U(P))} \\
|(2)|{X_1}  & [1em] |(1)|{X_0}		&[1em] |(0)|{\pi_0(X)} \\
 }; 
\path[->,font=\scriptsize,>=to, thin]
([yshift=.2em]2.east) edge node[above=-2pt]{} ([yshift=.2em]1.west) 
([yshift=-.2em]2.east) edge node[below=-2pt]{} ([yshift=-.2em]1.west) 
(1) edge node[above=-2pt]{} (0) 
([yshift=.2em]u2.east) edge node[above=-2pt]{} ([yshift=.2em]u1.west) 
([yshift=-.2em]u2.east) edge node[below=-2pt]{} ([yshift=-.2em]u1.west) 
(u1) edge node[above=-2pt]{} (u0) 
(u2) edge node[left]{} (2)
(u1) edge node[left]{} (1)
(u0) edge (0)
;
\end{tikzpicture}
$$
associated to the $\cS$-simple differential scheme $\delta\da U(P)$,
then the original diagram was already a coequaliser.
\end{enumerate}

\end{definition}

\begin{remark}
In view of \ref{no-pi0-affine}, the above setup with the functor $\pi_0$ and the pseudofunctor $C$ is \emph{not} an extension of the adjunction $()_0\dashv C$ we used in \ref{affine-pv}, given that the functor $()_0$ on affine differential schemes does not extend to a functor on differential schemes. 
\end{remark}

\begin{remark}
The category 
$$
\Split_C(f)
$$
consists of objects $P\in\delta\da\cP(Y)$ such that, for some $Q\in\cP(S(X))$,
$$
f^*P\simeq C_X(Q).
$$
\end{remark}

\begin{definition}\label{def-pre-pv}
A morphism $f:(X,\delta_X)\to (Y,\delta_Y)$ of $S$-differential schemes is \emph{pre-Picard-Vessiot} with respect to $U:\cP\Rightarrow\cS$ provided:
\begin{enumerate}
\item[(0)] $f$ is a morphism in $\cA$, i.e., $(X,\delta_X)$ and $(Y,\delta_Y)$ have categorical schemes of leaves over $S$;
\item $f$ is a morphism of effective descent for $\delta\da\cP$;
\item $X$, $X\times_YX$ and $X\times_YX\times_YX$ are simple for $\cS$.
\end{enumerate}
We say that $f$ is \emph{pre-Picard-Vessiot} with respect to $\cP$, if it is so with respect to $\id: \cP\Rightarrow\cP$. 
\end{definition}

\begin{definition}\label{def-pv}
A morphism $f:(X,\delta_X)\to (Y,\delta_Y)$ of $S$-differential schemes is \emph{Picard-Vessiot} with respect to $U:\cP\Rightarrow\cS$ provided:
\begin{enumerate}
\item[(0)] $f$ is a morphism in $\cA$, i.e., $(X,\delta_X)$ and $(Y,\delta_Y)$ have categorical schemes of leaves over $S$;
\item $f$ is a morphism of effective descent for $\delta\da\cP$;
\item $X$ is simple for $\cS$;
\item $f$ is auto-split with respect to $\cS$, i.e., $f\in\Split_{C^\cS}(f)$.
\end{enumerate}

If $\cP$ is already a sub-pseudofunctor of the self-indexing of $\Sch_{\ov S}$, we say that $f$ is \emph{Picard-Vessiot} with respect to $\cP$, if it is such with respect to $\id:\cP\Rightarrow\cP$. 
\end{definition}

\begin{remark}\label{pv-for-P}
%
A morphism $f:(X,\delta_X)\to (Y,\delta_Y)$ in $\cA$ is \emph{Picard-Vessiot} with respect to $\cP$ provided:
\begin{enumerate}
\item $f$ is a morphism of effective descent for $\delta\da\cP$;
\item $X$ is simple for $\cP$;
\item $f$ is auto-split, i.e., $f\in\Split_C(f)$.
\end{enumerate}
\end{remark}

\begin{lemma}\label{properties-pv}
\begin{enumerate}
\item If $(X,\delta_X)$ is simple for $\cS$, then it is simple for $\cP$.
\item Given an $S$-differential scheme $(Y,\delta_Y)$, the functor $\delta\da U_Y$ restricts to $$\Split_{C^\cP}(f)\to \Split_{C^\cS}(f).$$
\end{enumerate}
\end{lemma}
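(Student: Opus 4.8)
The plan is to handle the two parts separately: part (2) is a short diagram chase, whereas part (1) is where the reflection requirement built into \ref{sch-pv-setup} carries the weight.

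For part (2), suppose $P\in\Split_{C^\cP}(f)$, say $f^*P\simeq C^\cP_X(Q)$ for some $Q\in\cP(\pi_0(X))$. Since $\delta\da U\colon\delta\da\cP\Rightarrow\delta\da\cS$ is pseudo-natural it commutes (up to coherent isomorphism) with the pullback functor $f^*$, and the square relating $C^\cP$, $C^\cS$ and $U$ in \ref{sch-pv-setup}, read at $(X,\delta_X)$, gives $\delta\da U_X\circ C^\cP_X\simeq C^\cS_X\circ U_{\pi_0(X)}$. Chaining these isomorphisms, $f^*\bigl(\delta\da U_Y(P)\bigr)\simeq\delta\da U_X(f^*P)\simeq\delta\da U_X\bigl(C^\cP_X(Q)\bigr)\simeq C^\cS_X\bigl(U_{\pi_0(X)}(Q)\bigr)$, and since $U_{\pi_0(X)}(Q)\in\cS(\pi_0(X))$ this exhibits $\delta\da U_Y(P)$ as $C^\cS$-split by $f$. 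Functoriality of the restriction is inherited from $\delta\da U_Y$, so there is nothing more to do.

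For part (1), I would first recall from \ref{CX-P} and (the proof of) \ref{univ-ce-desc} that $(X,\delta_X)$ is simple for $\cP$ exactly when $C^\cP_X$ is fully faithful, exactly when $\pi_0\bigl(C^\cP_X(Q)\bigr)\simeq Q$ for every $Q\in\cP(\pi_0(X))$. Faithfulness of $C^\cP_X$ is immediate: $C^\cS_X$ is fully faithful since $(X,\delta_X)$ is $\cS$-simple (\ref{CX-P}), $U_{\pi_0(X)}$ is faithful by hypothesis, and the two sit in the commutative square above, so any pair of morphisms identified by $C^\cP_X$ is already equal. Thus only the coequaliser statement (i.e.\ fullness) needs work, and for this I would invoke the last requirement of \ref{sch-pv-setup}, that $\delta\da U$ reflects $\cS$-universal connected components. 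Concretely: fix $Q\in\cP(\pi_0(X))$, set $P=C^\cP_X(Q)$, and consider its canonical cocone $P_1\doublerightarrow{d_0}{d_1}P_0\to Q$ in $\cP$. Applying $\delta\da U$ and the commutative square identifies this with the canonical $\pi_0$-cocone of the differential scheme $C^\cS_X(Q')$, where $Q'=U_{\pi_0(X)}(Q)\in\cS(\pi_0(X))$. If $C^\cS_X(Q')$ is $\cS$-simple with scheme of leaves $Q'$, this image cocone is precisely an $\cS$-universal connected-components coequaliser attached to the $\cS$-simple differential scheme $\delta\da U(P)$, so the reflection requirement applies and forces $P_1\doublerightarrow{}{}P_0\to Q$ to be a coequaliser, i.e.\ $\pi_0\bigl(C^\cP_X(Q)\bigr)\simeq Q$, as wanted.

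The main obstacle is therefore the auxiliary claim used above: if $(X,\delta_X)$ is $\cS$-simple and $Q'\in\cS(\pi_0(X))$, then $C^\cS_X(Q')\simeq(X,\delta_X)\times_{C(\pi_0(X))}C(Q')$ is again $\cS$-simple with scheme of leaves $Q'$. The identification $\pi_0\bigl(C^\cS_X(Q')\bigr)\simeq Q'$ is immediate, since it is the coequaliser $X_1\times_{\pi_0(X)}Q'\doublerightarrow{}{}X_0\times_{\pi_0(X)}Q'\to Q'$, and $\cS$-simplicity of $(X,\delta_X)$ says exactly that the coequaliser of $\bbX$ remains a coequaliser after pulling back along $Q'$. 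Proving that this new coequaliser is itself $\cS$-universal requires a further base change along an arbitrary $Q''\to Q'$ in $\cS(Q')$ and recognising the composite $Q''\to\pi_0(X)$ as again lying in $\cS(\pi_0(X))$, which uses stability of the class $\cS$ under composition. This holds in every setting in which the theory is applied — all $S$-scheme morphisms, as in \ref{polarised-pv-thm}, and quasi-projective morphisms over a field, as in \ref{qproj-pv-corr} (via the Segre embedding) — so I would either add composition-stability of $\cS$ to the standing hypotheses of \ref{sch-pv-setup} or verify it in each application. The residual work — tracking coherence isomorphisms in the identification $\delta\da U\bigl(C^\cP_X(Q)\bigr)\simeq C^\cS_X\bigl(U_{\pi_0(X)}(Q)\bigr)$ — is supplied by commutativity of the square in \ref{sch-pv-setup}.
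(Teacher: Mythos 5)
Your proof is correct and follows essentially the same route as the paper's: part (2) is the observation that $U$ preserves cartesian morphisms (your chase through the commuting square of \ref{sch-pv-setup}), and part (1) applies the reflection hypothesis to the canonical cocone of $C^\cP_X(Q)$, whose image under $\delta\da U$ is identified with the $\cS$-universal coequaliser attached to the $\cS$-simple object $C^\cS_X(UQ)$. The auxiliary point you isolate --- that $\cS$-simplicity of $(X,\delta_X)$ yields $\cS$-universality of the coequaliser for $C^\cS_X(UQ)$ only after a further base change, which implicitly uses stability of the class $\cS$ under composition --- is indeed passed over silently in the paper's one-line assertion, but it is harmless in the applications at hand, where $\cS$ is either the full self-indexing or quasi-projective morphisms over a field.
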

\begin{proof}
For the first claim, suppose $(X,\delta_X)$ is simple with respect to $\cS$, and let $Q\in \cP(\pi_0(X))$. Let $P=C^\cP_X(Q)$ and consider the associated diagram 
$$
\begin{tikzcd}[cramped, column sep=normal, ampersand replacement=\&]
{P_1}\ar[yshift=2pt]{r}{d_0} \ar[yshift=-2pt]{r}[swap]{d_1} \&{P_0} \ar{r}{r} \& Q.
\end{tikzcd}
$$
Since $(X,\delta_X)$ is simple with respect to $\cS$, the analogous diagram for $\delta\da U (C^\cP_X(Q))=C^\cS_X(UQ)$ is an $\cS$-universal coequaliser and $\pi_0(C^\cS_X(UQ)=UQ$. Using the fact that $\delta\da U$ reflects connected components, we deduce that the original diagram is a coequaliser and $\pi_0(P)=Q$.

The second claim follows directly from the fact that $U$ preserves cartesian morphisms.
\end{proof}

\begin{lemma}\label{PV-pre-PV}
If $f$ is a Picard-Vessiot morphism of differential schemes with respect to $U:\cP\Rightarrow\cS$, then $f$ is pre-Picard-Vessiot with respect to $U$.  
\end{lemma}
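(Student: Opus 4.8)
The plan is to verify the three clauses of Definition~\ref{def-pre-pv} using the four clauses of Definition~\ref{def-pv} for the given $U\colon\cP\Rightarrow\cS$. Clauses (0) and (1) of \ref{def-pre-pv} are verbatim clauses (0) and (1) of \ref{def-pv}, so nothing is needed there; it remains to establish clause~(2), i.e.\ that $X$, $X\times_Y X$ and $X\times_Y X\times_Y X$ are simple for $\cP$. For $X$ this is immediate: by \ref{def-pv}(3), $X$ is simple for $\cS$, and Lemma~\ref{properties-pv}(1) upgrades this to simplicity for $\cP$.

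For the fibre products I would feed in the auto-splitting clause \ref{def-pv}(4): $f\in\Split_{C^\cS}(f)$ supplies an object $Q\in\cS(\pi_0(X))$ together with an isomorphism $f^{*}f\simeq C^\cS_X(Q)=(X,\delta_X)\times_{C(\pi_0(X))}C(Q)$ in $\delta\da\cS(X,\delta_X)$; in particular the underlying differential scheme $X\times_Y X$ is isomorphic to $C^\cS_X(Q)$. The crucial intermediate fact is that, for any $R\in\cS(\pi_0(X))$, the differential scheme $C^\cS_X(R)$ is again simple for $\cS$ with $\pi_0(C^\cS_X(R))\simeq R$. This is exactly the computation already carried out inside the proof of Lemma~\ref{properties-pv}(1): the precategory underlying $C^\cS_X(R)$ is obtained from the precategory $\bbX$ of $(X,\delta_X)$ by base change along $R\to\pi_0(X)$, so its connected-components coequaliser is the base change along $R\to\pi_0(X)$ of the coequaliser $X_1\rightrightarrows X_0\to\pi_0(X)$, and this stays a coequaliser and remains $\cS$-universal precisely because $X$ is $\cS$-simple (Definitions~\ref{def-simplicity} and \ref{univ-ce-desc}). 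Applying this with $R=Q$ shows $X\times_Y X$ is simple for $\cS$, hence for $\cP$ by \ref{properties-pv}(1).

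For $X\times_Y X\times_Y X$ I would use the swap involution $\tau$ of $X\times_Y X$: writing $p_1,p_2\colon X\times_Y X\to X$ for the two projections, $\tau$ is an isomorphism of differential schemes with $p_1\circ\tau=p_2$, so $(X\times_Y X,\,p_2)\simeq(X\times_Y X,\,p_1)\simeq C^\cS_X(Q)$ over $(X,\delta_X)$ as well. Writing the triple product as $(X\times_Y X)\times_{(X,\delta_X)}(X\times_Y X)$ with its two factors structured over $(X,\delta_X)$ through $p_2$ and $p_1$, and using that $C$ commutes with fibre products (both sides carry the zero derivation on the underlying scheme-theoretic fibre product), one gets
$$
X\times_Y X\times_Y X\;\simeq\;C^\cS_X(Q)\times_{(X,\delta_X)}C^\cS_X(Q)\;\simeq\;C^\cS_X\!\big(Q\times_{\pi_0(X)}Q\big).
$$
Re-applying the intermediate fact with $R=Q\times_{\pi_0(X)}Q$ yields that $X\times_Y X\times_Y X$ is simple for $\cS$, hence for $\cP$ by \ref{properties-pv}(1); this completes the check of \ref{def-pre-pv}.

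The step I expect to require the most attention is the intermediate fact that $C^\cS_X$ preserves $\cS$-simplicity, and in particular the implicit point that $Q$ and $Q\times_{\pi_0(X)}Q$, equipped with their canonical structure maps to $\pi_0(X)$, genuinely lie in $\cS(\pi_0(X))$; this uses that $\cS$ is closed under the composites that arise here (automatic when $\cS$ is the full self-indexing, as in the polarised quasi-projective application). The rest is routine bookkeeping with the precategory-action description of differential schemes from \ref{diff-sch-precats}.
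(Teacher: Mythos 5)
Your proposal is correct and follows essentially the same route as the paper's proof: auto-splitting places $X\times_Y X$, and hence $X\times_Y X\times_Y X\simeq C^\cS_X\bigl(Q\times_{\pi_0(X)}Q\bigr)$, in the essential image of $C^\cS_X$, whose objects are automatically $\cS$-simple by universality of the coequaliser for $X$, after which Lemma~\ref{properties-pv}(1) upgrades $\cS$-simplicity to $\cP$-simplicity. Your explicit use of the swap involution and your flagging of the closure of $\cS$ under the relevant pullbacks and composites are points the paper leaves implicit, but they do not change the argument.
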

\begin{proof}
Writing $G_0=\pi_0(X)$, since 
 $f$ is auto-split with respect to $C^\cS$,  for some $G_1\in \cS(G_0)$, 
$$
X\times_YX\simeq C^\cS_X(G_1)=X\times_{C(G_0)}C(G_1).
$$
By the definition of simplicity using universal coequalisers, any object in the essential image of $C^\cS_X$ is automatically simple for $\cS$, whence $X\times_YX$ is simple for $\cS$ and we have $\pi_0(X\times_YX)\simeq G_1$. Moreover, 
\begin{multline*}
X\times_YX\times_YX\simeq (X\times_YX)\times_X(X\times_YX)\\
\simeq (C(G_1)\times_{C(G_0)}X)\times_X(X\times_{C(G_0)}C(G_1))
\simeq X\times_{C(X_0)}C(G_1\times_{G_0} G_1)\\ 
\simeq C^\cS_X(G_1\times_{G_0}G_1),
\end{multline*}
whence $X\times_YX\times_YX$ is also simple for $\cS$ with $\pi_0(X\times_YX\times_YX)\simeq G_1\times_{G_0}G_1$, as required. 
\end{proof}

\begin{remark}\label{galprecatdef}
The assumption that $f$ is pre-Picard-Vessiot for $\cP$ ensures that the kernel-pair groupoid 
$$
 \begin{tikzpicture} 
\matrix(m)[matrix of math nodes, row sep=0em, column sep=3em, text height=1.5ex, text depth=0.25ex]
 {
|(n)|{\bbG_f:} &[1em] |(2)|{X\times_Y X\times_Y X}  & [1em] |(1)|{X\times_Y X}		&[1em] |(0)|{X} \\
 }; 
\path[->,font=\scriptsize,>=to, thin]
([yshift=1em]2.east) edge node[above=-2pt]{$\pi_{01}$} ([yshift=1em]1.west) 
(2) edge node[above=-2pt]{$\pi_{02}$} (1)
([yshift=-1em]2.east) edge node[above=-2pt]{$\pi_{12}$} ([yshift=-1em]1.west) 
([yshift=1em]1.east) edge node[above=-2pt]{$\pi_0$} ([yshift=1em]0.west) 
(0)  edge node[above=-2pt]{$\Delta$} (1) 
([yshift=-1em]1.east) edge node[above=-2pt]{$\pi_1$} ([yshift=-1em]0.west) 
;
\end{tikzpicture}
$$
is a category in $\cA$, and that the \emph{Galois precategory}
$$
\Gal[f]=\pi_0(\bbG_f)
$$
exists as a precategory in $\cX$.

If $f$ is Picard-Vessiot, then $G[f]$ is an internal \emph{category} (actually a groupoid without the inversion of arrows named) in $\cX$, by the argument in \ref{PV-pre-PV}.
 \end{remark}

\begin{theorem}\label{scheme-dif-Galois}
A pre-Picard-Vessiot morphism $f$ for $\cP$ induces an equivalence of categories
$$
\Split_C(f)\simeq \cP^{\Gal[f]}
$$
between the category of objects of $\delta\da\cP(Y)$ $C$-split by $f$ and the category of $\cP$-actions of the precategory $\Gal[f]$.

Moreover, if $f$ is Picard-Vessiot for $U:\cP\Rightarrow\cS$, the latter becomes the category of $\cP$-actions of the groupoid $\Gal[f]$. 
\end{theorem}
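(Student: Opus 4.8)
The plan is to recognise this theorem as a direct instance of the indexed categorical Galois theorem~\ref{indexed-gal-th}, applied to the data assembled in~\ref{sch-pv-setup}. In the notation of~\ref{indexed-gal-th} I would take $S:=\pi_0\colon\cA\to\cX$, $K:=\cP$, $L:=\delta\da\cP$ and $\alpha:=C=C^\cP$, and I would supply the \emph{kernel-pair} precategorical decomposition of $f$,
$$
\Disc{X}\xrightarrow{\ i\ }\bbG_f\xrightarrow{\ \pi\ }\Disc{Y},
$$
where $\bbG_f$ is the internal groupoid of~\ref{precat-ker-pair} associated to $f$, the morphism $i$ has components the diagonals $\id_X\colon X\to X$, $\Delta\colon X\to X\times_YX$ and $X\to X\times_YX\times_YX$ (so that $i_0=\id_X$), and $\pi$ has components the canonical augmentations $X\times_Y\dots\times_YX\to Y$ (so that $\pi\circ i=\Disc{f}$). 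The simplicity clause of~\ref{def-pre-pv} guarantees that $X$, $X\times_YX$ and $X\times_YX\times_YX$ lie in $\cA$, so $S\circ\bbG_f=\pi_0\circ\bbG_f$ is a genuine precategory in $\cX$ — namely the Galois precategory $\Gal[f]=\pi_0(\bbG_f)$ of~\ref{galprecatdef} — and by construction $\Split_\alpha(f)$ is precisely $\Split_C(f)$.

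It then remains only to check the two hypotheses of~\ref{indexed-gal-th}, which I would do as follows. The components of $\bbG_f$ are $C_0=X$, $C_1=X\times_YX$, $C_2=X\times_YX\times_YX$, and the component of $\alpha=C^\cP$ at a differential scheme $Z\in\cA$ is the functor $C_Z\colon\cP(\pi_0(Z))\to\delta\da\cP(Z)$, which by Lemma~\ref{CX-P} is fully faithful exactly when $Z$ is simple for $\cP$. Hence the requirement that $\alpha_{C_0}$, $\alpha_{C_1}$, $\alpha_{C_2}$ be fully faithful is literally the simplicity clause of~\ref{def-pre-pv}. Next, via the dictionary of Section~\ref{s:descent} identifying classical descent data along $f$ with $\bbG_f$-actions, i.e.\ $\DD_{\delta\da\cP}(f)\simeq(\delta\da\cP)^{\bbG_f}$, the requirement that the decomposition $(f,(i,\bbG_f,\pi))$ be of effective descent for $L=\delta\da\cP$ unwinds to exactly the assumption that $f$ be a morphism of effective descent for $\delta\da\cP$. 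With both hypotheses in place, \ref{indexed-gal-th} yields the equivalence
$$
\Split_C(f)\;\simeq\;K^{S\circ\bbG_f}\;=\;\cP^{\Gal[f]},
$$
which is the first assertion.

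For the \emph{moreover} clause, suppose $f$ is Picard-Vessiot for $U\colon\cP\Rightarrow\cS$. Then $f$ is pre-Picard-Vessiot for $\cP$ by Lemma~\ref{PV-pre-PV}, so the equivalence just established applies, and what remains is to recognise $\Gal[f]$ as an internal groupoid. Here I would re-use the computation in the proof of~\ref{PV-pre-PV}: auto-splitting and $\cS$-simplicity furnish $G_0=\pi_0(X)$ and $G_1\in\cS(G_0)$ with $X\times_YX\simeq C^\cS_X(G_1)$ and $X\times_YX\times_YX\simeq C^\cS_X(G_1\times_{G_0}G_1)$, while $\pi_0\circ C^\cS_X$ is the identity on $\cS(G_0)$; applying $\pi_0$ levelwise to $\bbG_f$ then identifies $\Gal[f]$ with the internal groupoid $\bigl(G_1\times_{G_0}G_1\doublerightarrow{}{}G_1\doublerightarrow{}{}G_0\bigr)$, whose groupoid structure is inherited from that of $\bbG_f$ (this is the last assertion of~\ref{galprecatdef}). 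Consequently $\cP^{\Gal[f]}$ is the category of $\cP$-actions of this groupoid, which completes the proof.

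The genuinely substantive work has all been discharged earlier — the indexed Galois theorem~\ref{indexed-gal-th} itself, the differential effective-descent criterion~\ref{diffl-desc}, and the full faithfulness of $C_X$ on simple objects~\ref{CX-P} — so the hard part here will be organisational rather than mathematical. Concretely, I expect the main obstacle to be confirming that the ``effective descent of a precategorical decomposition'' hypothesis of~\ref{indexed-gal-th} is matched \emph{precisely} by the effective-descent condition of~\ref{def-pre-pv} when the decomposing precategory is taken to be the kernel-pair groupoid $\bbG_f$ (rather than some other precategorical decomposition), and checking that the pseudo-naturality coherences of $C^\cP$ recorded in~\ref{sch-pv-setup} are exactly those the theorem demands, so that it applies verbatim.
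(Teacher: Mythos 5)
Your proposal is correct and follows essentially the same route as the paper: both invoke the indexed Galois theorem~\ref{indexed-gal-th} with the kernel-pair decomposition $\bbG_f$, obtain full faithfulness of the components of $C$ from the simplicity hypothesis via~\ref{CX-P}, match the effective-descent hypothesis with condition (i) of~\ref{def-pre-pv}, and handle the \emph{moreover} clause through~\ref{PV-pre-PV} and the groupoid identification of~\ref{galprecatdef}. Your unwinding of the hypotheses is somewhat more explicit than the paper's, but there is no substantive difference.
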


\begin{proof}
When $f$ is pre-Picard-Vessiot, \ref{galprecatdef} ensures that $\bbG_f$ gives a precategorical decomposition of $f$ in $\cA$, and that the Galois precategory $\Gal[f]$ is well-defined in $\cX$. By assumption, $f$ is of effective descent for $\bbG_f$. 

By the assumption on simplicity of $X$, $X\times_YX$, $X\times_YX\times_YX$ and \ref{CX-P}, we get that the functors $C_X$, $C_{X\times_YX}$, $C_{X\times_YX\times_YX}$ are fully faithful.  

Hence, all the assumptions of \ref{indexed-gal-th} are satisfied. When $f$ is Picard-Vessiot, it is also pre-Picard-Vessiot by \ref{PV-pre-PV}.

Therefore, in both cases we get the desired equivalence involving actions of the precategory $\Gal[f]$, which happens to be a groupoid in the Picard-Vessiot case, as observed in \ref{galprecatdef}.
\end{proof}

\subsection{Specialisation of the Galois precategory}

\begin{proposition}\label{special-gal}
Suppose $f:X\to Y$ is a pre-Picard-Vessiot morphism of differential schemes with respect to $U:\cP\Rightarrow\cS$, where $\cS$ is a class of morphisms stable under base change, and let $q:Q\to\pi_0(Y)$ be a morphism in $\cS$. Then $q^*(f)=f_Q:X_Q\to Y_Q$ is pre-Picard-Vessiot and its Galois precategory is
$$
\Gal[q^*(f)]\simeq q^*\Gal[f]=\Gal[f]\times_{\pi_0(Y)}Q.
$$
Moreover, if $f$ is Picard-Vessiot for $U$, so is $f_Q$, and the same specialisation formula holds for Galois groupoids. 
\end{proposition}
\begin{proof} 
Assuming that $f$ is pre-Picard-Vessiot, $f_Q$ is an effective descent morphism for $\delta\da\cP$ as a base change of an effective descent morphism $f$. 

Writing $X^2=X\times_YX$ and $X^3=X\times_YX\times_YX$, the Galois precategory of $f$ is $\Gal[f]=(G_2,G_1,G_0)$ with $G_i=\pi_0(X^{i+1})$. By defining $Q_i=G_i\times_{\pi_0(Y)}Q=q^*(G_i)$, we obtain a diagram
$$
 \begin{tikzpicture}
 [cross line/.style={preaction={draw=white, -,line width=4pt}}, proj/.style={dotted,-}]
 \def\triple#1#2{
 ([yshift=.4em]#1.east) edge 
 ([yshift=.4em]#2.west) 
(#1) edge 
(#2)
([yshift=-.4em]#1.east) edge 
([yshift=-.4em]#2.west) }
\def\double#1#2{
([yshift=.4em]#1.east) edge 
([yshift=.4em]#2.west) 
(#2)  edge 
(#1) 
([yshift=-.4em]#1.east) edge 
([yshift=-.4em]#2.west) }
\def\tripleov#1#2{
 ([yshift=.4em]#1.east) edge[cross line] 
 ([yshift=.4em]#2.west) 
(#1) edge[cross line] 
(#2)
([yshift=-.4em]#1.east) edge[cross line] 
([yshift=-.4em]#2.west) }
\def\doubleov#1#2{
([yshift=.4em]#1.east) edge[cross line] 
([yshift=.4em]#2.west) 
(#2)  edge[cross line] 
(#1) 
([yshift=-.4em]#1.east) edge[cross line] 
([yshift=-.4em]#2.west) }
\matrix(m)[matrix of math nodes, row sep=1em, column sep=1em,  
text height=1.2ex, text depth=0.25ex]
{
|(P2)|{X^3} 	&			&[0em] |(P1)|{X^2}  	&[0em]		& |(P0)|{X}	&			& |(P)|{Y}	&	\\[1.2em]
			& |(p2)|{X^3_Q}	& 				&|(p1)|{X^2_Q} 	&			& |(p0)|{X_Q} 	&		& |(p)|{Y_Q}\\[.6em]	
|(Q2)|{G_2} 	&			&[0em] |(Q1)|{G_1}  	&[0em]		& |(Q0)|{G_0}	&			& |(Q)|{\pi_0(Y)} &  \\[1.2em]
			& |(q2)|{Q_2}  	& 				&|(q1)|{Q_1} 	&			& |(q0)|{Q_0}  	&		& |(q)|{Q}\\};
\path[->,font=\scriptsize,>=to, thin,inner sep=1pt]
\triple{P2}{P1}
\double{P1}{P0}
(P0) edge (P)

\triple{Q2}{Q1}
\double{Q1}{Q0}
(Q0) edge (Q)

(P2) edge node[pos=0.25,left]{}(Q2)
(P1) edge node[pos=0.25,left]{}(Q1)
(P0) edge node[pos=0.25,left]{}(Q0)
(P) edge node[pos=0.25,left]{}(Q)
\tripleov{p2}{p1}
\doubleov{p1}{p0}
(p0) edge[cross line] (p)

\tripleov{q2}{q1}
\doubleov{q1}{q0}
(q0) edge[cross line] (q)

(q2) edge node[pos=0.5,below left]{} (Q2)
(q1) edge node[pos=0.5,below left]{}(Q1)
(q0) edge node[pos=0.5,below left]{}(Q0)
(q) edge (Q)

(p2) edge node[pos=0.6,above right]{} (P2)
(p1) edge node[pos=0.6,above right]{} (P1)
(p0) edge node[pos=0.6,above right]{} (P0)
(p) edge (P)

(p2) edge[cross line] node[pos=0.25,left]{}(q2)
(p1) edge[cross line] node[pos=0.25,left]{}(q1)
(p0) edge[cross line] node[pos=0.25,left]{}(q0)
(p) edge (q)
;
\end{tikzpicture}
$$ 
where all the squares but those on the front and the rear face are cartesian. 
Since $X^i$ is simple for $\cS$, so is $X^i_Q\simeq (X_Q)^i$ and 
$$
\pi_0((X_Q)^{i+1})=\pi_0(X^{i+1}\times_{G_i}Q_i)=\pi_0(X^{i+1}\times_{\pi_0(X^{i+1})}Q_i)\simeq Q_i,
$$
whence we obtain that $\Gal[f_Q]=(Q_2,Q_1,Q_0)$, proving the specialisation formula.   

If $f$ is Picard-Vessiot, then the middle rear square is cartesian, i.e.,  $X\times_Y X\simeq X\times_{G_0}G_1$, so the middle front square is too, and we obtain
$$
X_Q\times_{Y_Q}X_Q\simeq X^2_Q\simeq X_Q\times_{Q_0}Q_1,
$$
so $f_Q$ is auto-split and thus Picard-Vessiot.
\end{proof}

\section{Applications}\label{s:applications}

\subsection{Quasi-projective Picard-Vessiot theory}

\begin{theorem}\label{qproj-gal-th}
Let $(K,\delta)$ be a differential field of characteristic 0 with constants $k$, and write $S=\spec(k)$. Let $f:X\to Y=\spec(K,\delta)$ be a morphism of $S$-differential schemes such that
\begin{enumerate}
\item $X$ is integral quasi-projective over $Y$ and its only leaf is the generic point;
\item $f$ is auto-split, witnessed by a quasi-projective $k$-scheme $G$,
$$
X\times_YX\simeq X\times_{C(S)}C(G).
$$
Then $f$ is Picard-Vessiot, $\Gal[f]$ is an $S$-algebraic group isomorphic to $G$, and there is an equivalence of categories
\begin{multline*}
\{\text{quasi-projective $S$-differential morphisms to $Y$ split by $f$}\} \\
\simeq
\{\text{quasi-projective $S$-scheme actions of $G$}\} 
\end{multline*}
\end{enumerate}
\end{theorem}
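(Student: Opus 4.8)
The plan is to deduce the statement from the template Galois theorem~\ref{scheme-dif-Galois}, by instantiating the Picard--Vessiot framework of \ref{sch-pv-setup} with $\cX=\Sch_{\ov S}$, with $\cA$ the category of $S$-differential schemes admitting a categorical scheme of leaves, and with $\cP=\cS$ chosen to be the pseudofunctor of quasi-projective morphisms, viewed as a full sub-pseudofunctor of the self-indexing of $\Sch_{\ov S}$ (quasi-projective morphisms being stable under base change); the comparison $U$ is then the identity, so conditions~(5)--(8) of \ref{sch-pv-setup} are immediate. It therefore suffices to verify that $f$ is Picard--Vessiot with respect to $\cP$ in the sense of \ref{pv-for-P}, to apply \ref{scheme-dif-Galois}, and to identify the resulting Galois groupoid with $G$.

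First I would check the conditions of \ref{pv-for-P}. For condition~(0): since $\mathrm{char}(k)=0$ and $X$ is integral with its only leaf the generic point, Proposition~\ref{our-proof-cor} gives that $(X,\delta)$ is simple with respect to $S$-scheme morphisms with $\pi_0(X)=S$; and $Y=\spec(K,\delta)$ has categorical scheme of leaves $S$, because a morphism of differential schemes from $\spec(K,\delta)$ to a constant scheme $C(T)$ has structure map landing in $\Const(K)=k$, whence $\DSch_S(\spec(K,\delta),C(T))\simeq \Sch_{\ov S}(S,T)$ naturally in $T$. So $f$ is a morphism in $\cA$. For condition~(1): $f$ has codomain the spectrum of the differential field $(K,\delta)$ and $X$ is quasi-compact, being quasi-projective over a field, so Corollary~\ref{diffl-desc-qp} shows that $f$ is of effective descent for $\delta\da\cP$. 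For condition~(2): a reflexive coequaliser that is universal for \emph{all} $S$-scheme morphisms is a fortiori universal for the sub-pseudofunctor of quasi-projective ones, so the simplicity of $X$ with respect to $S$-scheme morphisms just recalled yields simplicity of $X$ with respect to $\cP$. For condition~(3): recalling that $C_X$ on a sub-pseudofunctor of the self-indexing is given by $C_X(Q)=X\times_{C(\pi_0 X)}C(Q)$, we have $C_X(G)=X\times_{C(S)}C(G)$, which by hypothesis~(2) is isomorphic to $X\times_YX=f^*f$; and $G$ lies in $\cP(S)=\cP(\pi_0(X))$ as it is quasi-projective over $k$, so $f\in\Split_{C}(f)$.

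By \ref{PV-pre-PV}, $f$ is then pre-Picard--Vessiot for $\cP$, so by \ref{galprecatdef} the kernel-pair category $\bbG_f$ lives in $\cA$ and $\Gal[f]=\pi_0(\bbG_f)$ is an internal groupoid in $\cX$. Its object of objects is $\pi_0(X)=S=\spec(k)$, so it is a group scheme over $k$; its object of morphisms is $\pi_0(X\times_YX)$, and since $X\times_YX\simeq C_X(G)$ lies in the essential image of $C_X$ we get $\pi_0(X\times_YX)\simeq G$, exactly as in the computation in the proof of \ref{PV-pre-PV} (and likewise $X\times_YX\times_YX\simeq C_X(G\times_k G)$, so the groupoid composition transports to a group law on $G$). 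Hence $\Gal[f]$ is the $S$-algebraic group with underlying scheme $G$. Finally \ref{scheme-dif-Galois} produces an equivalence $\Split_C(f)\simeq\cP^{\Gal[f]}$; unravelling the definitions, $\delta\da\cP(Y)$ is the category of quasi-projective $S$-differential morphisms to $Y$, so $\Split_C(f)$ is the full subcategory of those split by $f$, while $\cP^{\Gal[f]}=\cP^{G}$ is the category of quasi-projective $S$-scheme actions of $G$, which is the asserted equivalence. I expect the only real friction to be bookkeeping: checking that the ``simple'' and ``effective descent'' hypotheses feeding \ref{scheme-dif-Galois} hold verbatim for the quasi-projective class rather than for an abstract $\cP$, and that the groupoid structure passes correctly through $\pi_0$ --- but all of this is supplied by \ref{our-proof-cor}, \ref{diffl-desc-qp} and \ref{PV-pre-PV}, so no genuinely new argument is required.
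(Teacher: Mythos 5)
Your proposal is correct and follows essentially the same route as the paper: instantiate the template Theorem~\ref{scheme-dif-Galois} with $\cP$ the pseudofunctor of quasi-projective morphisms, obtain simplicity of $X$ from \ref{our-proof-cor}, effective descent from the quasi-projective descent corollary, and auto-splitness from hypothesis~(2). You supply somewhat more detail than the paper's terse proof (e.g.\ the check that $Y=\spec(K,\delta)$ has categorical scheme of leaves $S$, and the explicit identification of $\Gal[f]$ with $G$ via the computation in \ref{PV-pre-PV}), and your citation of \ref{diffl-desc-qp} for the descent step is in fact the apt one.
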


\begin{proof}
We follow the template of \ref{scheme-dif-Galois} for $\cP$ the fibred category of quasi-projective morphisms. By assumption (1) and \ref{our-proof-cor}, $X$ is simple with categorical scheme of leaves $S$. By \ref{difl-qproj-desc}, $f$ is of effective descent for $\delta\da\cP$. Thus, $X$ is Picard-Vessiot and \ref{scheme-dif-Galois} gives the desired equivalence. 
\end{proof}

\begin{corollary}[Quasi-projective differential Galois correspondence]\label{qproj-pv-corr}
With notation of \ref{qproj-gal-th}, there is an order-reversing one-to-one correspondence between split $S$-differential quasi-projective fpqc quotients of $f:X\to Y$ in $\cA$
and closed subgroups of the algebraic group $G=\Gal[f]$ which  takes
$$
 \begin{tikzpicture}
[cross line/.style={preaction={draw=white, -,
line width=4pt}}]
\matrix(m)[matrix of math nodes, row sep=.9em, column sep=.5em, text height=1.5ex, text depth=0.25ex]
{			& |(ij)| {X}	&				\\   [.02em]
	|(i)|{P} 	&			& 		\\  [.02em]
			& |(0)|{Y} 		&				\\};
\path[->,font=\scriptsize,>=to, thin]
(ij) edge node[left,pos=0.2]{$$} (i) edge node[right,pos=0.5]{$f$} (0) 
(i) edge node[below left,pos=0.5]{$p$} (0) 
;
\end{tikzpicture}
$$ 
to 
$$\Gal[X\to P].$$
Conversely, a closed subgroup $G'$ corresponds to the quotient 
$$
X/G',
$$
which is $f$-split by the quasi-projective scheme $G/G'$. 

Moreover, this correspondence restricts to a one-to-one correspondence between split fpqc quotients $P$ such that $P\to Y$ is Picard-Vessiot, and closed normal subgroups of $G$. In this case, 
$$
\Gal[P\to Y]\simeq \Gal[X\to Y]/\Gal[X\to P].
$$
\end{corollary}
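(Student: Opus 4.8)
The plan is to combine Theorem~\ref{qproj-gal-th} with the Carboni--Magid--Janelidze correspondence, in the same way that \ref{affine-pv-corr} combined \ref{janelidze-affine} with \ref{th-cmj-corr}, the essential improvement being that in the quasi-projective setting \emph{every} closed subgroup is effective. First, hypotheses (1) and (2) are exactly those of \ref{qproj-gal-th}, so $f$ is Picard-Vessiot for the fibration $\cP$ of quasi-projective morphisms, $\Gal[f]$ is an $S$-algebraic group isomorphic to $G$ whose object of objects is the single point $\pi_0(X)=S=\spec(k)$, and \ref{scheme-dif-Galois} gives an equivalence $\Split_C(f)\simeq\cP^{G}$; moreover (2) says precisely that $X$ is a differential $G$-torsor over $Y$ (with $G$ carrying the zero derivation), so every $f$-split quasi-projective differential morphism to $Y$ arises by faithfully flat descent of a quasi-projective $G$-scheme over $S$ along $f$.

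Next I would identify, inside $\Split_C(f)\simeq\cP^{G}$, the split \emph{quotients} of $X$ over $Y$, i.e.\ the $P$ for which $X\to P$ is fpqc. As in the proof of \ref{th-cmj-corr}, these correspond to effective quotients of the canonical action $\tilde G\in\cP^{G}$, hence to effective subgroupoids of $\Gal[f]=G$ having the same object of objects; since that object of objects is a point, such subgroupoids are simply closed subgroups of $G$, and by \ref{effective-alg-subgp} \emph{every} closed subgroup of an algebraic group over $k$ is effective once one works with quasi-projective rather than merely affine morphisms, so the list is all closed subgroups of $G$. Concretely, $P\mapsto\Gal[X\to P]=\pi_0(X\times_PX)$ lands among closed subgroups because $X\times_PX$ is a closed subscheme of $X\times_YX\simeq X\times_{C(S)}C(G)$ of the form $X\times_{C(S)}C(G')$; conversely a closed subgroup $G'\le G$ is sent to $P:=X/G'$, the quotient of $X$ by the $G'$-action inherited from the torsor structure, for which faithfully flat descent along $f$ yields $f^*(X/G')\simeq X\times_{C(S)}C(G/G')=C_X(G/G')$, so that $X/G'\in\Split_C(f)$, is quasi-projective over $Y$ (as $G/G'$ is, by \ref{effective-alg-subgp}), carries an induced $S$-derivation, and lies in $\cA$ with $\pi_0(X/G')=S$ (because $G'$ acts trivially on $\pi_0(X)=\spec(k)$). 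That the two assignments are mutually inverse is then the content of the last paragraph of the proof of \ref{th-cmj-corr}, transported: $\Gal[X\to X/G']\simeq G'$ by effectivity of $G'$ (the map $X\to X/G'$ being a $G'$-torsor, $X\times_{X/G'}X\simeq X\times_{C(S)}C(G')$, whose $\pi_0$ is $G'$), and $X/\Gal[X\to P]\simeq P$ because an $f$-split quotient of $X$ necessarily descends a homogeneous space $G/G'$ with $G'=\Gal[X\to P]$; the correspondence is order-reversing because $G_1'\subseteq G_2'$ produces an epimorphism $X/G_1'\twoheadrightarrow X/G_2'$ over $Y$.

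For the ``moreover'' clause I would show that $P=X/G'\to Y$ is Picard-Vessiot exactly when $G'$ is normal in $G$. If $G'$ is normal, $G/G'$ is again an algebraic group, quasi-projective by \ref{effective-alg-subgp}, and quotienting the torsor relation $X\times_YX\simeq X\times_{C(S)}C(G)$ by $G'$ on both factors gives
$$
(X/G')\times_Y(X/G')\;\simeq\;\bigl(X\times_{C(S)}C(G/G')\bigr)/G',
$$
in which the residual $G'$-action on $C(G/G')$ is trivial precisely by normality ($g'\cdot gG'=g\,(g^{-1}g'g)\,G'=gG'$); hence $(X/G')\times_Y(X/G')\simeq(X/G')\times_{C(S)}C(G/G')$, so $X/G'\to Y$ is auto-split, simple for $\cP$ (via \ref{our-proof} applied to $X/G'\to S$) and of effective descent for $\delta\da\cP$ (by \ref{diffl-desc-qp}), hence Picard-Vessiot, with $\Gal[X/G'\to Y]=\pi_0\bigl((X/G')\times_Y(X/G')\bigr)\simeq G/G'$. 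Conversely, if $P\to Y$ is Picard-Vessiot, then $X\to P\to Y$ together with the torsor relations induces a faithfully flat quotient $G=\Gal[X\to Y]\twoheadrightarrow\Gal[P\to Y]$, whose kernel—computed from $X\times_PX\simeq(X\times_YX)\times_{P\times_YP}P$—is $G'=\Gal[X\to P]$; so $G'$ is normal and $\Gal[P\to Y]\simeq G/G'$.

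The main obstacle is the second paragraph: concretely, checking that for an \emph{arbitrary} closed subgroup $G'$ the quotient $X/G'$ genuinely lands in $\cA$ as a quasi-projective $S$-differential scheme over $Y$ (this rests on descent along the $G$-torsor $X\to Y$ and on \ref{effective-alg-subgp} to propagate quasi-projectivity of $G/G'$), and on matching the abstract Carboni--Magid--Janelidze bookkeeping of \ref{th-cmj-corr}, which is phrased in the classical Galois setting, with the indexed equivalence furnished by \ref{scheme-dif-Galois}; the normality clause is the only other point requiring real work, and it is dispatched by the explicit torsor computation above.
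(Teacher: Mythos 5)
Your proposal is correct and follows essentially the same route as the paper: both deduce the correspondence from Theorem~\ref{qproj-gal-th} together with the Carboni--Magid--Janelidze strategy of \ref{th-cmj-corr}, the decisive input being that by \ref{effective-alg-subgp} \emph{every} closed subgroup is effective for quasi-projective morphisms. The only content the paper spells out that you compress is the two-way transfer of the fpqc property under the equivalence (that $S_Xf^*$ sends an fpqc quotient $h$ to an effective epimorphism of $k$-schemes, hence to a quotient $G\to G/G'$, and conversely that $h$ is fpqc because $f^*(h)\simeq C_X(g)$ is and fpqc-ness is fpqc-local on the base), but this is implicit in your descent argument.
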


\begin{proof}
The claimed correspondence is more specific than a claim that could be extracted from \ref{th-cmj-corr}, so we provide an explicit proof following an analogous strategy. 

If $h:X\to P$ is $f$-split by $g:G\to Q$, and $h$ is fpqc, then $f^*(h)$ is fpqc, hence an universal effective epimorphism by \cite[Tag 023P]{stacks-project}. Since $S_X$ preserves colimits as a left adjoint, it follows that $g=U(h)=S_Xf^* (h)$ is a regular epimorphism, hence again an effective epimorphism in the presence of pullbacks. Since $g$ corresponds to a quotient of an algebraic group, it is fpqc again by \ref{effective-alg-subgp}.

Conversely, if $G'\leq G$ is a closed subgroup, then, by \ref{effective-alg-subgp}, $g:G\to G/G'=Q$ is fpqc. The quotient $h:X\to P$ corresponding to it satisfies $f^*(h)\simeq C_X(g)$, whose underlying scheme morphis is fpqc since $C_X$ acts as base change on $g$. Since $f$ is fpqc, using the fact \cite[Tag 02YJ]{stacks-project} that the properties of being `faithfully flat and quasi-compact' are local in the fpqc topology, it follows that $h$ is fpqc itself. 
\end{proof}

\begin{remark}
The theory above shows that even in \emph{linear Picard-Vessiot theory,} with $X=(A,\delta_A)$ the spectrum of a Picard-Vessiot ring over a differential field $(K,\delta)$, in order to extend the correspondence \ref{affine-pv-corr} to all closed subgroups od the Galois group, we are forced to consider split quasi-projective quotients of $X$. 
\end{remark}

\begin{remark}
The above theory applies to \emph{strongly normal} differential Galois theory of \cite{kolchin-sn}.
\end{remark}

\subsection{Polarised quasi-projective differential Galois theory}

\begin{theorem}\label{polarised-pv-thm}

Let $f:(X,\delta_X)\to (Y,\delta_Y)$ be a  morphism of $S$-differential schemes such that, 
with notation \ref{nota-polarised}, 
\begin{enumerate}
\item the underlying $S$-scheme morphism $X\to Y$ is fpqc;
\item $(X,\delta_X)$ is simple for $\cS$ with scheme of leaves $G_0$;
\item there is  
an $S$-morphism $G_1\to G_0$ such that
$$
(X,\delta_X)\times_{(Y,\delta_Y)}(X,\delta_X)\simeq (X,\delta_X)\times_{(G_0,0)}(G_1,0).
$$
\end{enumerate}
Then $f$ is Picard-Vessiot for $U:\cP\Rightarrow\cS$, $\Gal[f]$ is the groupoid $(G_1\doublerightarrow{}{} G_0)$ and we have an equivalence between the category of quasi-projective polarised $S$-differential morphisms $(P,\cL_P)\to Y$ split by $f$ and the category of quasi-projective polarised actions $(Q,\cL_Q)\to G_0$ of $\Gal[f]$.
\end{theorem}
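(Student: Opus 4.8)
The plan is to reduce the statement to the template Galois theorem~\ref{scheme-dif-Galois}, instantiated with the indexed framework of \ref{nota-polarised}: $\cX=\Sch_{\ov S}$, $\cP$ the pseudofunctor of polarised quasi-projective morphisms, $\cS=\mathrm{Self}(\Sch_{\ov S})$, and $U\colon\cP\Rightarrow\cS$ the forgetful pseudo-natural transformation. First I would record that this framework is suitable in the sense of \ref{sch-pv-setup}; the one substantive requirement, that $\delta\da U$ reflect coequalisers associated with $\cS$-universal connected components, is exactly \ref{reflect-quasiproj-coeq}, since an $\cS$-simple differential scheme is a fortiori simple for Zariski open immersions.

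Next I would verify that $f$ is Picard-Vessiot for $U$ in the sense of \ref{def-pv}. Hypothesis (2) is precisely the condition that $X$ be simple for $\cS$, and by \ref{cat-schl-pi0} this means $\pi_0(X)=G_0$. Effective descent of $f$ for $\delta\da\cP$ is \ref{difl-qproj-desc} applied to hypothesis (1). For the auto-split condition $f\in\Split_{C^\cS}(f)$: since $\cS$ is a sub-pseudofunctor of the self-indexing, the remark after \ref{CX-P} identifies $C^\cS_X(G_1\to G_0)$ with $(X,\delta_X)\times_{(G_0,0)}(G_1,0)$, and hypothesis (3) says precisely that this is $f^*(f)$, i.e.\ the first projection $X\times_YX\to X$. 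Finally one must check that $(Y,\delta_Y)$ also has a categorical scheme of leaves, so that $f$ is a morphism in $\cA$: since $f$ is fpqc it is an effective epimorphism of differential schemes, and a short descent computation shows that $\pi_0(Y)$ is the coequaliser of $\pi_0(X\times_YX)\rightrightarrows\pi_0(X)$ in $\Sch_{\ov S}$ whenever the latter exists; here $\cS$-simplicity of $X$ makes the coequaliser $X_1\rightrightarrows X_0\to G_0$ persist after base change along $G_1\to G_0$, so $\pi_0(X\times_YX)=G_1$ and $\pi_0(Y)=\Coeq(G_1\rightrightarrows G_0)$.

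With $f$ Picard-Vessiot for $U$ in hand, \ref{PV-pre-PV} upgrades it to pre-Picard-Vessiot for $\cP$, so by \ref{galprecatdef} the kernel-pair precategory $\bbG_f$ is an internal category and $\Gal[f]=\pi_0(\bbG_f)$ is an internal groupoid in $\cX$. Its object of objects is $\pi_0(X)=G_0$ and its object of arrows is $\pi_0(X\times_YX)=G_1$ by the same base-change argument, so $\Gal[f]=(G_1\rightrightarrows G_0)$. Then \ref{scheme-dif-Galois} yields $\Split_{C^\cP}(f)\simeq\cP^{\Gal[f]}$; using \ref{descr-poldif} to describe $\delta\da\cP(Y)$ as the category of polarised differential morphisms $\bigl((P,\delta_P)\to(Y,\delta_Y),(\cL_P,\delta_{\cL_P})\bigr)$, the left-hand side is the category of quasi-projective polarised $S$-differential morphisms to $Y$ split by $f$, and by the definition of $\cP$-actions of a precategory the right-hand side is the category of quasi-projective polarised actions $(Q,\cL_Q)\to G_0$ of the groupoid $(G_1\rightrightarrows G_0)$, which is the asserted equivalence.

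The bulk of the argument is assembly of results already in place; the step I expect to require the most care is the verification that $Y$ lies in $\cA$ together with the clean computation $\pi_0(X\times_YX)\simeq G_1$, both of which rest on the fact that the coequaliser exhibiting $G_0$ as the scheme of leaves of $X$ is $\cS$-universal and hence survives the relevant pullbacks.
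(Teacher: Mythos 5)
Your proposal is correct and follows essentially the same route as the paper: effective descent via \ref{difl-qproj-desc}, reflection of $\cS$-universal connected components via \ref{reflect-quasiproj-coeq}, verification of the Picard-Vessiot conditions, and then the template Theorem~\ref{scheme-dif-Galois}. You additionally spell out details the paper leaves implicit (the auto-split identification via the remark after \ref{CX-P}, the computation of $\Gal[f]$, and the check that $Y$ lies in $\cA$), which is welcome but does not change the argument.
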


\begin{proof}
Using \ref{difl-qproj-desc}, we obtain that $f$ is of effective descent for $\delta\da\cP$. By \ref{reflect-quasiproj-coeq}, the forgetful functor $\delta\da U$ reflects $\cS$-universal categorical schemes of leaves. Hence, $f$ is indeed Picard-Vessiot for $U$ and we can apply the template Theorem~\ref{scheme-dif-Galois}.
\end{proof}

\subsection{A parametrised family of strongly normal extensions}\label{s:elliptic}

This example is inspired by Kolchin's example of Weierstrassian extensions of differential fields \cite[III.6]{kolchin-sn}, with more explicit calculations in \cite{kovacic-tams03, kovacic-tams06} where the authors consider the strongly normal extension associated to a vector field on an elliptic curve. We are able to treat \emph{families} of elliptic curves over a base scheme as parametrised families of strongly normal extensions. 

We consider the Weierstrass family of elliptic curves $$E\to S=\spec \Q[u,v,(4u^3+27v^2)^{-1}]$$ obtained by projectivising the naive equation $y^2=x^3+ux+v$. It has a globally defined invariant differential $\omega_E\in\Omega^1_{E/S}$. Let 
$$(Y,\delta_Y)=\spec(\Q(t),\frac{d}{dt})\times (S,0),$$ 
and let $p:(Y,\delta_Y)\to (S,0)$ be the projection, considered as a morphism of $S$-differential schemes. We focus on the scheme
$$
X=Y\times_S E.
$$ 
Its sheaf of differentials is
$$
\Omega_{X/S}\simeq \pi_1^*\Omega_{Y/S}\oplus\pi_2^*\Omega_{E/S}=\pi_1^*\Omega_{Y/S}\oplus \langle\omega_X\rangle,
$$
for some $\omega_X$.

We endow $X$ with a differential scheme structure over $(Y,\delta_Y)$ by stipulating
$$
 \omega_X\mapsto \alpha\in\Gamma(X,\cO_X),
$$
and we write 
$$
f:(X,\delta_X)\to (Y,\delta_Y)
$$
for the corresponding morphism of $S$-differential schemes. 

We claim that, for s suitable $\alpha$, the morphism $$\eta=p\circ f:(X,\delta_X)\to (S,0)$$ is a universal geometric quotient. Indeed, we can verify this by \ref{univ-geom-quot} as follows.

\begin{enumerate}
\item The morphism $\eta$ is a composite of proper smooth morphism $f$ with geometrically integral fibres and an affine faithfully flat morphism $p$. In particular, it is qcqs faithfully flat.
\item Using the fact that $f$ is proper smooth with geometrically connected fibres, we obtain the sheaf condition
$$
\Const(\eta_*\cO_X)=\Const(p_*(f_*\cO_X))\simeq\Const(p_*\cO_Y)\simeq \cO_S.
$$

\item
The fibre at a geometric point $\bar{s}:\spec(L)\to S$ is an $L$-differential scheme
$$
(X_{\bar{s}},\delta)\simeq \spec(L(t),\frac{d}{dt})\times_{\spec(L,0)}(E_{\bar{s}},0).
$$
whose differential structure is associated with a logarithmic differential equation $$\ell\delta(P)=\alpha_{\bar{s}},$$
where $\alpha_{\bar{s}}$ is the pullback of $\alpha$ to $X_{\bar{s}}$. 
If $\alpha_{\bar{s}}$ is not a logarithmic derivative of some point in $X_{\bar{s}}(L(t)^{\text{alg}})$, then \cite[2.1]{bertrand-pillay} and the calculations of \cite{kolchin-sn,kovacic-tams03} prove that $(X_{\bar{s}},\delta)$ is a torsor associated with the strongly normal extension $\kappa(X_{\bar{s}})$ of $L(t)$ generated by a transcendental solution of the logarithmic differential equation, which is classically simple over $L$ with Galois group $E_{\bar{s}}$, as required. 

A sufficient condition for the above is that $\alpha=\delta_X(f^\sharp\lambda)$ for some $\lambda\in\Gamma(Y, \cO_Y)$  and $\alpha_{\bar{s}}\neq 0$ for all geometric points $\bar{s}$. Indeed, the solution of the above logarithmic differential equation is given by 
$$
P=\exp_{E_{\bar{s}}}(\lambda_{\bar{s}}(t))=(\wp_{\bar{s}}(\lambda_{\bar{s}}(t)),\wp'_{\bar{s}}(\lambda_{\bar{s}}(t))),
$$
where $\wp_{\bar{s}}$ is the Weierstrass function of $E_{\bar{s}}$. Using the Ax-Schanuel functional transcendence result \cite[1.1]{kirby}, we obtain that 
$\text{tr.deg}_L(L(\lambda_{\bar{s}},\wp(\lambda_{\bar{s}})))=2$, whence 
$\kappa(X_{\bar{s}})$ is generated over $L(t)$ by a transcendental solution.

\end{enumerate}

Thus, we have that $\pi_0(X)=\pi_0(Y)=S$, $X$ is simple with respect to arbitrary scheme morphisms and we claim that $f$ is self-split, i.e.,
$$
X\times_Y X\simeq X\times_{(S,0)}(E,0).
$$
The underlying scheme isomorphism is
$$
\psi: X\times_S E\simeq Y\times_S E\times_S E\stackrel{\id\times(\mu,\pi_1)}{\longrightarrow}Y\times_S E\times_S E\simeq(Y\times_SE)\times_Y(Y\times_SE)\simeq X\times_YX,
$$
where $\mu:E\times_SE\to E$ is the group operation on $E$. 

In order to show that $\psi$ is a morphism of differential schemes, it suffices to verify that the diagram
$$
 \begin{tikzpicture} 
\matrix(m)[matrix of math nodes, row sep=2em, column sep=2em, text height=1.9ex, text depth=0.25ex]
 {
 |(1)|{\psi^*\Omega_{X\times_YX/S}}		& |(2)|{\Omega_{X\times_SE/S}}	\\
								& |(l2)|{\cO_{X\times_SE}} 	\\
 }; 
\path[->,font=\scriptsize,>=to, thin]
(1) edge node[above]{} (2) 
(2) edge node[right]{} (l2) 
(1) edge (l2);
\end{tikzpicture}
$$
commutes. Bearing in mind that both $\Omega_{X\times_YX/S}$ and $\Omega_{X\times_SE/S}$ are isomorphic to 
$$\Omega_{Y\times_SE\times_SE/S}\simeq \pi_1^*\Omega_{Y/S}\oplus\pi_2^*\Omega_{E/S}\oplus\pi_3^*\Omega_{E/S},$$
we name the generators as
$$
\psi^*\Omega_{X\times_YX/S}=\pi_1^*\Omega_{Y/S}\oplus \langle\omega_{X,1},\omega_{X,2}\rangle, \ \ \ \ 
\Omega_{X\times_SE/S}=\pi_1^*\Omega_{Y/S}\oplus \langle\omega_X,\omega_E\rangle.
$$
The horizontal  arrow acts as identity on $\pi_1^*\Omega_{Y/S}$ and maps
$$
 \omega_{X,1}\mapsto \omega_X+\omega_E, \ \ \ \omega_{X,2}\mapsto\omega_X,
$$ 
while the arrows to $\cO_{X\times_SE}$ map
$$
 \omega_{X,1}\mapsto \alpha, \ \ \ \omega_{X,2}\mapsto \alpha, \ \ \ \omega_X\mapsto \alpha, \ \ \ \omega_E\mapsto 0,
$$
so the diagram commutes. These considerations follow formally from properties of invariant differentials and do not require explicit calculations in coordinates as in \cite{kolchin-sn,kovacic-tams03}. 

It follows that the Galois groupoid is 
$$
\Gal[f]=E\doublerightarrow{}{} S.
$$
Note that \ref{special-gal} explains the variation in parameters from $S$. If $s\in S(L)$ for a field $L$, then
$$
\Gal[f_s]\simeq \Gal[f]\times_S\spec(L)\simeq (E\doublerightarrow{}{} S)\times_S\spec(L)\simeq E_s\doublerightarrow{}{}\spec(L)\simeq E_s,
$$
considered as an algebraic group over $L$. Hence, our Galois groupoid specialises to the classical differential Galois groups of strongly normal extensions associated to logarithmic-differential equations on elliptic curves $f_s:X_s\to Y_s$ uniformly in parameter $s$. 

\subsection{Galois groupoid of the Airy equation}\label{s:airy}

On the example of the Airy equation
$$
y''=xy,
$$
we show that Galois theory of linear differential equations can be done in a more canonical way through a Galois groupoid, rather than following the classical route of constructing a Picard-Vessiot extension in a non-canonical way in order to obtain a Galois group. 

Let $S=\spec(k)$ be the spectrum of a field of characteristic 0, let $Y=\spec (k(x),\frac{d}{dx})$ and $X=\spec(A)$ with $A=k(x)[u,\det(u)^{-1}]$, the `full universal solution algebra'  of the Airy equation. We make the projection
$$
f:X=Y\times_S\GL_2=\spec(A)\to Y
$$
into  a  morphism of differential $S$-schemes by endowing $X$ with a vector field
$$
\frac{\partial}{\partial x}+u_{21}\frac{\partial}{\partial u_{11}}+u_{22}\frac{\partial}{\partial u_{12}}+xu_{11}\frac{\partial}{\partial u_{21}}+xu_{12}\frac{\partial}{\partial u_{22}},
$$
which also determines the differential structure on $Y$. In terms of differentials,
$$
\Omega_{X/S}=f^*\Omega_{Y/S}\oplus\pi_2^*\Omega_{\GL_2/S}=\langle dx,\omega_{11},\omega_{12},\omega_{21},\omega_{22}\rangle,
$$
where $\omega=du\cdot u^{-1}$ is the invariant differential on $\GL_2$, the $S$-differential scheme structure on $X$ and $Y$ is given by assigning
$$
dx\mapsto 1, \ \ \ \omega\mapsto\begin{pmatrix}0 & 1\\ x & 0\end{pmatrix}. 
$$

We claim that the composite
$$
\eta: X\stackrel{\pi_2}{\longrightarrow}{\GL_2}\stackrel{\det}{\longrightarrow} \bbG_m
$$
is a geometric quotient. Indeed, we can deduce it from \ref{univ-geom-quot} as follows.

\begin{enumerate}
\item We observe that $\eta$ is affine faithfully flat. 
\item Given that we are in the affine setting, the sheaf condition follows from the folklore fact that 
$$
\Const(A)=k[\det(u),\det(u)^{-1}],
$$
where $A=k[x,u,\det(u)^{-1}]$, i.e., the `only' algebraic relation that a fundamental set of solutions of the Airy equation satisfies is that its Wronskian is a constant. 

\item A choice of a geometric point $\bar{s}:\spec(L)\to \bbG_m$ correspond to fixing a value $\lambda\in L$ for $\det(u)$, whence 
$$
X_{\bar{s}}=\spec(L(x)[u]/(\det(u)-\lambda)),
$$
where the latter is a Picard-Vessiot ring over $L(x)$ by construction \cite[3.4, 4.29]{magid-lect}, so
$X_{\bar{s}}$ is classically simple over $L$,  as required. 
\end{enumerate}

Thus, we obtain that $X$ is simple with respect to arbitrary scheme morphisms with
$$
\pi_0(X)=G_0=\bbG_m.
$$
We claim that $f:X\to Y$ is self-split, i.e., there is a natural quotient morphism
$$
\eta_1:X\times_YX\simeq Y\times_S\GL_2\times_S\GL_2\to G_1=(\GL_2\times_S\GL_2)/\SL_2. 
$$
that yields an isomorphism of differential schemes
$$
X\times_Y X\simeq X\times_{(G_0,0)}(G_1,0).
$$
Indeed, for the last isomorphism in the definition of $\eta_1$, given a pair of invertible matrices $(u,v)$, consider the augmented matrix
$$
\begin{pmatrix}
u_{11} & u_{12} & v_{11} & v_{12}\\
u_{21} & u_{22} & v_{21} & v_{22}\\
\end{pmatrix}
$$
and send it to the the homogeneous sextuple of determinants $p_{ij}$ of its $2\times 2$ minors for $1\leq i<j\leq 4$, considered as the Pl\"ucker coordinates of the Grassmanian $\text{\rm Gr}[2,4]$, satisfying the familiar relation $p_{12}p_{34}-p_{13}p_{24}+p_{14}p_{23}=0$. The definition is $SL_2$ invariant, so it factors through $(\GL_2\times_S\GL_2)/\SL_2$. Using the derivation on $X\times_Y X$, we explicitly verify that all the $p_{ij}$ are constants, so $\eta_1$ is a differential scheme morphism  to $(\text{\rm Gr}[2,4],0)$. 

The torsor isomorphism 
\begin{equation*}
\begin{split}
\psi:X\times_S (\GL_2\times_S\GL_2)/\SL_2\simeq Y\times_S\GL_2\times_S (\GL_2\times_S\GL_2)/\SL_2 \\ 
\longrightarrow Y\times_S\GL_2\times_S\GL_2\simeq X\times_YX,
\end{split}
\end{equation*}
takes a tuple $(y,u,[u_1,v_1])$ and maps it to $(y,u,sv_1)$, where $s\in\SL_2$ is the unique matrix such that $u=su_1$. Its inverse is given as $(\pi_{Y\times_S\GL_2)},\eta_1)$, i.e., in coordinates, by $(y,u,v)\mapsto (y,u,[u,v])$.

In order to show that $\psi$ is a morphism of differential schemes, it suffices to verify that the diagram
$$
 \begin{tikzpicture} 
\matrix(m)[matrix of math nodes, row sep=2em, column sep=2em, text height=1.9ex, text depth=0.25ex]
 {
 |(1)|{\psi^*\Omega_{X\times_YX/S}}		& |(2)|{\Omega_{X\times_{G_0}G_1/S}}	\\
								& |(l2)|{\cO_{X\times_{G_0}G_1}} 	\\
 }; 
\path[->,font=\scriptsize,>=to, thin]
(1) edge node[above]{} (2) 
(2) edge node[right]{} (l2) 
(1) edge (l2);
\end{tikzpicture}
$$
commutes. 
Using the fact that both $\Omega_{X\times_YX/S}$ and $\Omega_{X\times_{G_0}G_1/S}$ are isomorphic to 
$$\Omega_{Y\times_S\GL_2\times_S\GL_2/S}\simeq \pi_1^*\Omega_{Y/S}\oplus\pi_2^*\Omega_{\GL_2/S}\oplus\pi_3^*\Omega_{GL_2/S},$$
we can name the generators, thought of as pullbacks of the invariant differentials on $GL_2$, as
$$
\psi^*\Omega_{X\times_YX/S}= \langle dx, \omega_{1},\omega_{2}\rangle, \ \ \ \ 
\Omega_{X\times_{G_0}G_1/S}=\langle dx,\omega,\omega_0\rangle,
$$
so that the horizontal  arrow maps
$$
dx\mapsto dx, \ \ \  \omega_{1}\mapsto \omega, \ \ \ \omega_{2}\mapsto\omega+\omega_0,
$$ 
while the arrows to $\cO_{X\times_SE}$ map
$$
 \omega_{1}\mapsto \begin{pmatrix}0 & 1\\ x & 0\end{pmatrix}, \ \ \ \omega_{2}\mapsto \begin{pmatrix}0 & 1\\ x & 0\end{pmatrix}, \ \ \ \omega\mapsto \begin{pmatrix}0 & 1\\ x & 0\end{pmatrix}, \ \ \ \omega_0\mapsto 0,
$$
so the diagram commutes.

From the self-splitting, we conclude that $$\pi_0(X\times_YX)\simeq G_1,$$
and our Galois theory gives the Galois groupoid 
\begin{align*}
\Gal[f] &=G_1\doublerightarrow{}{} G_0\\ 
& = (\GL_2\times_S\GL_2)/\SL_2 \doublerightarrow{}{} \GL_2/\SL_2=\bbG_m.
\end{align*}

Intuitively, at least in the case where $k$ is algebraically closed, the points of the object of objects $G_0$ correspond to a choice of a Picard-Vessiot extension with Wronskian $\lambda$, and the object of morphisms $G_1$ encodes the isomorphisms between different choices of $\lambda$. The stabiliser of a chosen object is its usual Picard-Vessiot Galois group $\SL_2$. 

The Malgrange groupoid in this example is given by \cite[2.8]{casale-blazquez} as
$$
\bbA^1\times_S\bbA^1\times_S\SL_2\doublerightarrow{}{} \bbA^1.
$$
While both groupoids show that the symmetries of the Airy equation are governed by the group $\SL_2$, they appear to do so in different ways.

\bibliographystyle{plain}

\end{document}